\pgfplotsset{compat=1.16}
\theoremstyle{plain}
\newtheorem{proposition}{Proposition}[section]
\newtheorem{theorem}[proposition]{Theorem}
\newtheorem{lemma}[proposition]{Lemma}
\newtheorem{corollary}[proposition]{Corollary}
\theoremstyle{definition}
\newtheorem{example}[proposition]{Example}
\newtheorem{definition}[proposition]{Definition}
\newtheorem{observation}[proposition]{Observation}
\theoremstyle{remark}
\newtheorem{remark}[proposition]{Remark}
\DeclareMathOperator{\diam}{diam}
\DeclareMathOperator{\id}{id}
\DeclareMathOperator{\dist}{d}
\DeclareMathOperator{\Euc}{Euc}
\DeclareMathOperator{\Gr}{Gr}
\DeclareMathOperator{\Cc}{\mathcal{C}}
\DeclareMathOperator{\Oc}{\mathcal{O}}
\DeclareMathOperator{\Cb}{\mathbb{C}}
\DeclareMathOperator{\Db}{\mathbb{D}}
\DeclareMathOperator{\Kb}{\mathbb{K}}
\DeclareMathOperator{\Nb}{\mathbb{N}}
\DeclareMathOperator{\Rb}{\mathbb{R}}
\DeclareMathOperator{\Sb}{\mathbb{S}}
\DeclareMathOperator{\Zb}{\mathbb{Z}}
\newcommand{\abs}[1]{\left|#1\right|}
\newcommand{\norm}[1]{\left\|#1\right\|}
\newcommand{\ip}[1]{\left\langle #1\right\rangle}
\begin{document}

\title[Gromov hyperbolicity of intrinsic metrics]{Gromov hyperbolicity of intrinsic metrics from isoperimetric inequalities}

\author[Wang]{Tianqi Wang}
\email{tq.wang@yale.edu}
\address{Department of Mathematics, Yale University, USA}

\author[Zimmer]{Andrew Zimmer}
\email{amzimmer2@wisc.edu}
\address{Department of Mathematics, University of Wisconsin-Madison, USA}

\date{\today}

\keywords{}
\subjclass[2020]{}

\begin{abstract} In this paper we investigate the Gromov hyperbolicity of the classical Kobayashi and Hilbert metrics, and the recently introduced minimal metric. Using the linear isoperimetric inequality characterization of Gromov hyperbolicity, we show if these metrics have an ``expanding property'' near the boundary, then they are Gromov hyperbolic. This provides a new characterization of the convex domains whose Hilbert metric is Gromov hyperbolic, a new proof of Balogh--Bonk's result that the Kobayashi metric is Gromov hyperbolic on a strongly pseudoconvex domain, a new proof of the second author's result that the Kobayashi metric is Gromov hyperbolic on a convex domain with finite type, and a new proof of Fiacchi's result that the minimal metric is Gromov hyperbolic on a strongly minimally convex domain. We also characterize the smoothly bounded convex domains where the  minimal metric is Gromov hyperbolic.

\end{abstract}

\maketitle

\setcounter{tocdepth}{1}
\tableofcontents

\section{Introduction}

In recent years there has been considerable interest in understanding when certain intrinsic metrics are Gromov hyperbolic. For instance, 
\begin{itemize}
\item Benoist~\cite{Benoist2003} characterized the convex domains for which the Hilbert metric is Gromov hyperbolic. 
\item Balogh--Bonk~\cite{BaloghBonk2000} proved that the Kobayashi metric is Gromov hyperbolic on a strongly pseudoconvex domain. The second author~\cite{ZimmerMathAnn2016} proved that the Kobayashi metric is Gromov hypberolic on a smoothly bounded convex domain if and only if the domain has finite type (see~\cite{ZimmerAdv2022} for a characterization in the non-smooth case). Fiacchi~\cite{Fiacchi2022} proved that the Kobayashi metric is Gromov hyperbolic on a finite type pseudoconvex domain in $\Cb^2$. Recently, Wang~\cite{Wang2024} and Li--Pu--Wang~\cite{LiPuWang2024} discovered new proofs that finite type implies Gromov hyperbolicity in the convex and dimension two case respectively.
\item Fiacchi~\cite{Fiacchi2023} showed that the minimal metric (a metric recently introduced by  Forstneri{\v c}--Kalaj~\cite{FK2021}) is Gromov hyperbolic on a  strongly minimally convex domain (an analogue of strongly pseudoconvex domains). 
\end{itemize}

In this paper we observe that the metrics in these previously considered cases share a common expansion property near the boundary (described in Theorem~\ref{thm:expanding metrics intro} below) and we show that this expansion property implies Gromov hyperbolicity via a linear isoperimetric inequality.

We are also able to use this approach to prove a new result: on a smoothly bounded convex domain, the minimal metric is Gromov hyperbolic if and only if every real affine 2-plane has finite order contact with the boundary. 

\subsection{Expanding metrics} 

In this paper we study distances on manifolds $M$ which are induced by a ``strongly integrable pseudo-metric'' (see Section~\ref{sec:psuedometrics} for the precise definition). These are families $\{ \norm{\cdot}_p\}_{p \in M}$ of functions $\norm{\cdot}_p : T_p M \rightarrow [0,+\infty)$ with certain regularity properties that induce a distance defined by 
$$
\dist(p,q) : = \inf \left\{ \int_a^b \norm{\sigma'(t)}_{\sigma(t)} dt : \begin{array}{cc} \sigma : [a,b] \rightarrow M \text{ absolutely continuous and } \\ \sigma(a) = p, \, \sigma(b) = q \end{array} \right\}.
$$
Using the linear  isoperimetric inequality characterization of Gromov hyperbolicity we will prove the following. 

\begin{theorem}[see Theorem~\ref{thm:expanding metrics}]\label{thm:expanding metrics intro}
Suppose $M$ is a compact manifold and $\norm{\cdot}$ is a strongly integrable pseudo-metric on $M \times (0,1]$ with the following properties:
\begin{enumerate}[label=(\alph*)]
\item\label{item:normal lines in intro} There exists $C_1 > 1$ such that
$$
\frac{1}{C_1t} \leq \norm{ \left( 0, \frac{d}{d t} \right) }_{(p,t)} \leq C_1 \frac{1}{t}
$$
for all $(p,t) \in M \times (0,1]$. 
\item\label{item:orthogonality in intro} There exists $C_2 >0$ such that 
$$
\norm{ \left( v, 0 \right) }_{(p,t)}+\norm{ \left( 0, \frac{d}{d t} \right) }_{(p,t)} \leq C_2 \norm{ \left( v, \frac{d}{d t} \right) }_{(p,t)}
$$
for all $(p,t) \in M \times (0,1]$ and $v \in T_p M$.
\item\label{item:expanding in intro} There exist $C_3, \lambda > 0$ such that 
$$
\norm{ \left( v,0 \right) }_{(p,s)} \geq C_3 \left(\frac{t}{s}\right)^{\lambda} \norm{ \left( v,0 \right) }_{(p,t)} 
$$
for all $p \in M$, $0 < s \leq t \leq 1$, and $v \in T_p M$. 
\end{enumerate}
Then the distance on $M \times (0,1]$ induced by $\norm{\cdot}$ is Gromov hyperbolic. 
\end{theorem}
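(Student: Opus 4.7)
The plan is to verify a linear isoperimetric inequality --- showing that every null-homotopic Lipschitz loop $\gamma \colon S^1 \to M \times (0,1]$ of $\norm{\cdot}$-length $L$ bounds a Lipschitz disk of $\norm{\cdot}$-area $O(L)$, with uniform constants --- which for length spaces is equivalent to Gromov hyperbolicity (Bridson--Haefliger, Chapter III.H). As a preliminary I would pass to the logarithmic vertical coordinate $u := -\log t \in [0, \infty)$, which converts (a) to $\norm{(0, \partial/\partial u)}_{(p, u)} \asymp 1$ uniformly, and (c) to the exponential expansion
$$\norm{(v, 0)}_{(p, u_2)} \geq C_3 \, e^{\lambda(u_2 - u_1)} \, \norm{(v, 0)}_{(p, u_1)} \qquad (u_2 \geq u_1 \geq 0);$$
equivalently, moving upward by $\Delta u$ contracts horizontal norms by a factor of at least $C_3^{-1} e^{-\lambda \Delta u}$, while (a) says such vertical motion costs $\asymp \Delta u$ in length.

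Given a loop $\gamma(s) = (p(s), u(s))$ parameterized by arc-length $s \in [0, L]$, I would take the \emph{push-up homotopy}
$$H(s, \tau) := (p(s),\, (1 - \tau) u(s)), \qquad (s,\tau) \in [0,L] \times [0,1],$$
which interpolates from $\gamma$ at $\tau = 0$ to the shallow loop $\gamma_\flat(s) := (p(s), 0)$ at $\tau = 1$. Its tangent vectors are $\partial_s H = (p'(s), (1 - \tau) u'(s))$ and $\partial_\tau H = (0, -u(s))$; the crucial observation is that the vertical part of $\partial_s H$ is \emph{parallel} to $\partial_\tau H$ and therefore contributes nothing to the wedge $\partial_s H \wedge \partial_\tau H$. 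Extracting from condition (b) a Finsler-area bound of the form $|\partial_s H \wedge \partial_\tau H|_{H(s, \tau)} \lesssim \norm{(p'(s), 0)}_{H(s,\tau)} \cdot \norm{(0, u(s))}_{H(s, \tau)}$, and applying (c) on the first factor and (a) on the second, I obtain
$$|\partial_s H \wedge \partial_\tau H|_{H(s, \tau)} \lesssim e^{-\lambda \tau u(s)} \, \norm{(p'(s), 0)}_{(p(s), u(s))} \cdot u(s).$$
Integrating $\int_0^1 u(s) e^{-\lambda \tau u(s)} d\tau \leq 1/\lambda$ in $\tau$, and then $\int_0^L \norm{(p'(s), 0)}_{(p(s), u(s))} ds \leq C_2 L$ in $s$ (using (b) and arc-length), gives $\mathrm{Area}(H) \lesssim L$. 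The pushed loop $\gamma_\flat$ lies in the compact slice $M \times \{0\}$ and has length $\leq C_3^{-1} C_2 L$ by the same (c)-estimate at $\tau = 1$. If $\gamma$ is null-homotopic in $M \times (0, 1]$, so is $\gamma_\flat$ in $M$, and compactness of $M$ yields (via a basepoint cone, or subdivision into short sub-loops below the injectivity radius) a disk bounding $\gamma_\flat$ of area $O(L)$. Gluing this to $H$ along $\gamma_\flat$ produces a disk filling $\gamma$ of total area $O(L)$.

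The main obstacle is the Finsler area bound for $H$: I need the sharp estimate $|v \wedge w|_{\norm{\cdot}} \lesssim \norm{v_{\mathrm{hor}}} \cdot \norm{w_{\mathrm{vert}}}$ when $w$ is vertical, rather than the trivial $|v \wedge w| \leq \norm{v} \cdot \norm{w}$. Expanding $\norm{\partial_s H}$ via the triangle inequality would introduce a parasitic $\int |u'(s)| u(s) ds$ term, which can grow as $L_v \cdot u_{\max} = O(L^2)$ and destroys linearity. Making the sharp bound rigorous requires unpacking the precise definition of $2$-area for strongly integrable pseudo-metrics and showing that condition (b)'s near-orthogonality of horizontal and vertical directions forces the area form to annihilate the parallel ``vertical$\wedge$vertical'' piece. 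A secondary technical point is the filling of $\gamma_\flat$ when $\pi_1(M) \neq 0$: this is handled because the linear isoperimetric characterization applies only to null-homotopic loops, reducing the step to the corresponding filling in the compact manifold $M$.
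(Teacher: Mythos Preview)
Your strategy --- push the loop up to the slice $M\times\{1\}$ and bound the area of the homotopy --- is morally the same geometric picture the paper exploits, but the gap you flag as ``the main obstacle'' is genuine and not merely technical. A strongly integrable pseudo-metric in the paper's sense does not even satisfy the triangle inequality fiberwise (only homogeneity and local Euclidean comparability are assumed), so there is no canonical $2$-dimensional area form, no Busemann--Hausdorff or Holmes--Thompson volume to appeal to, and hence no precise meaning for $|\partial_s H\wedge\partial_\tau H|_{\norm{\cdot}}$. Your determinant observation that $\partial_s H\wedge\partial_\tau H=(p'(s),0)\wedge(0,-u(s))$ is correct as an identity in $\Lambda^2(T_pM\oplus\Rb)$, but converting it into a length-times-length bound presupposes exactly the structure that is missing. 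Without it the parasitic $\int|u'(s)|\,u(s)\,ds$ term you identified survives and the estimate is only $O(L^2)$. The secondary issue --- that your Lipschitz-disk filling only makes sense for null-homotopic loops --- is also real: the isoperimetric characterization the paper invokes (Bridson--Haefliger III.H.2.9) is stated for \emph{all} rectifiable loops using the coarse $R$-filling area, where the filling map $\overline{\Db}\to X$ need not be continuous; your continuous homotopy cannot produce such fillings when $\pi_1(M)\neq 0$.

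The paper sidesteps both problems by working entirely with the coarse $R$-area, which counts triangles of diameter $\leq R$ in a (possibly discontinuous) triangulated disk and never needs any infinitesimal $2$-form. Concretely, the paper first replaces an arbitrary loop by one built from ``vertical'' segments $\{p\}\times[e^{-(k+1)T_0},e^{-kT_0}]$ and ``horizontal'' segments in fixed slices $M\times\{e^{-kT_0}\}$ of bounded $\norm{\cdot}$-length, at the cost of $O(L)$ extra triangles. It then proves by induction on the number $N$ of such pieces that $\mathrm{Area}_R\leq N$: the inductive step either cancels a back-and-forth vertical pair, or pushes the deepest horizontal pieces up one level --- where condition (c) halves their total length, so four adjacent pieces merge into one and $N$ strictly decreases. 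This is a discretized version of your push-up, but by working at \emph{constant} heights $e^{-kT_0}$ and counting pieces rather than integrating an area density, it avoids both the undefined $2$-area and the variable-height cross term, and the discontinuous $R$-fillings handle non-null-homotopic loops for free.
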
 

We will apply Theorem~\ref{thm:expanding metrics intro} as follows: suppose $\Omega$ is a smoothly bounded domain in $\Rb^d$ or $\Cb^d$. For $x \in \partial \Omega$, let $\mathbf{n}(x)$ denote the inward pointing unit normal vector at $x$. Then for $\delta > 0$ sufficiently small, the map $F : \partial \Omega \times (0, 1] \rightarrow \Omega$ defined by 
$$
F(x,t) = x + \delta t \mathbf{n}(x)
$$
is a diffeomorphism onto its image. Given an infinitesimal metric on $\Omega$, we can pull it back to an infinitesimal metric on $\partial \Omega \times (0,1]$. Since $\Omega \setminus F( \partial \Omega \times (0, 1])$ is relatively compact in $\Omega$, the induced distances will be quasi-isometric and so to verify that the distance on $\Omega$ is Gromov hyperbolic, it suffices to verify that the pull back metric satisfies the hypothesis of Theorem~\ref{thm:expanding metrics intro}. In this setting, 
\begin{itemize}
\item Condition~\ref{item:normal lines in intro} essentially says that normal lines can be parametrized to be quasi-geodesics, 
\item Condition~\ref{item:orthogonality in intro} says that the tangential and normal directions are uniformly transverse with respect to the metric, and 
\item Condition~\ref{item:expanding in intro}  says that the metric in the tangential direction is uniformly expanding as one approaches the boundary. 
\end{itemize} 

As a corollary we obtain new proofs of several prior results. 

\begin{corollary}[Balogh--Bonk~\cite{BaloghBonk2000}] The Kobayashi metric on a strongly pseudoconvex domain is Gromov hyperbolic. 
\end{corollary}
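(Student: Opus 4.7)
The plan is to apply Theorem~\ref{thm:expanding metrics intro} with $M = \partial\Omega$ to the Kobayashi infinitesimal metric $k_\Omega$ pulled back to $\partial\Omega \times (0,1]$ along the normal parametrization $F(x,t) = x + \delta t\,\mathbf{n}(x)$ for some sufficiently small $\delta > 0$. As explained in the excerpt, $\Omega \setminus F(\partial\Omega\times(0,1])$ is relatively compact in $\Omega$, so the Kobayashi distance on $\Omega$ is quasi-isometric to the induced pull-back distance on $\partial\Omega \times (0,1]$; since Gromov hyperbolicity is a quasi-isometry invariant, it suffices to verify conditions~\ref{item:normal lines in intro}--\ref{item:expanding in intro} for the pull-back.

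The essential input is Graham's boundary asymptotic for $k_\Omega$ on a strongly pseudoconvex domain: in a one-sided neighborhood of $\partial\Omega$,
$$
k_\Omega(z,v) \asymp \frac{\abs{v_N}}{\delta_\Omega(z)} + \frac{\abs{v_T}}{\sqrt{\delta_\Omega(z)}},
$$
where $v = v_N + v_T$ is the decomposition into complex-normal and complex-tangential components at the foot of $z$, and $\delta_\Omega$ is Euclidean distance to $\partial\Omega$. In the $F$-coordinates, $\delta_\Omega(F(x,t)) \asymp t$, $dF$ sends $\partial/\partial t$ to a complex-normal vector, and $T_x\partial\Omega$ decomposes real-orthogonally as $\Hc_x \oplus \Rb\cdot J\mathbf{n}(x)$, where $\Hc_x$ is the Levi distribution (complex-tangential) and $J\mathbf{n}(x)$ is complex-normal but real-tangent to $\partial\Omega$. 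Hence, writing $v = v_H + v_J$ with $v_H \in \Hc_x$ and $v_J \in \Rb\cdot J\mathbf{n}(x)$, Graham's estimate takes the uniform form
$$
\norm{(v,c)}_{(x,t)} \asymp \frac{\abs{c}+\abs{v_J}}{t} + \frac{\abs{v_H}}{\sqrt{t}}
$$
on all of $\partial\Omega \times (0,1]$.

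Given this formula, each condition is immediate. Condition~\ref{item:normal lines in intro} is the statement $\norm{(0,d/dt)}_{(x,t)} \asymp 1/t$. Condition~\ref{item:orthogonality in intro} holds because the right-hand side of the displayed asymptotic is comparable to the sum $\norm{(v,0)}_{(x,t)} + \norm{(0,d/dt)}_{(x,t)}$. For condition~\ref{item:expanding in intro} with $\lambda = 1/2$, observe that for $0 < s \leq t \leq 1$ one has $1/\sqrt{s} = (1/\sqrt{t})\sqrt{t/s}$ and $1/s \geq (1/t)\sqrt{t/s}$, so
$$
\norm{(v,0)}_{(x,s)} \asymp \frac{\abs{v_H}}{\sqrt{s}} + \frac{\abs{v_J}}{s} \geq \sqrt{t/s}\left(\frac{\abs{v_H}}{\sqrt{t}} + \frac{\abs{v_J}}{t}\right) \asymp \sqrt{t/s}\,\norm{(v,0)}_{(x,t)}.
$$
Theorem~\ref{thm:expanding metrics intro} then yields Gromov hyperbolicity. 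The main obstacle is purely in assembling the inputs: confirming that $k_\Omega$ fits the strongly integrable pseudo-metric framework of Section~\ref{sec:psuedometrics} and quoting Graham's estimate in the uniform form above. Both are standard for strongly pseudoconvex domains.
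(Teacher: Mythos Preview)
Your proposal is correct and follows essentially the same approach as the paper's proof: both pull back the Kobayashi metric via $F(x,t)=x+\delta t\,\mathbf{n}(x)$, invoke the classical boundary estimates (the paper simply cites~\cite{MR1121150}), and conclude via Theorem~\ref{thm:expanding metrics intro} together with quasi-isometry invariance. The only difference is that you spell out explicitly how Graham's asymptotic yields each of conditions~\ref{item:normal lines in intro}--\ref{item:expanding in intro} (with $\lambda=1/2$), whereas the paper leaves this verification to the reader.
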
 

Since the proof is short we include it here. 

\begin{proof} Suppose $\Omega \subset \Cb^d$ is a strongly pseudoconvex domain. As described above, for $\delta > 0$ sufficiently small, consider the map $F : \partial \Omega \times (0, 1] \rightarrow \Omega$ defined by 
$$
F(x,t) = x + \delta t \mathbf{n}(x).
$$
Classical estimates for the Kobayashi metric (for instance~\cite{MR1121150}) imply that the pull back of the Kobayashi metric under $F$ satisfies Theorem~\ref{thm:expanding metrics intro}. Hence the induced distance on $\partial \Omega \times (0, 1]$ is Gromov hyperbolic. Since $\Omega \setminus F(\partial \Omega \times (0,1])$ is relatively compact in $\Omega$, the Kobayashi distance on $\Omega$ and the distance on $\partial \Omega \times (0, 1]$ are quasi-isometric. Since Gromov hyperbolicity is a quasi-isometric invariant, the Kobayashi distance on $\Omega$ is Gromov hyperbolic. 
\end{proof} 

Using the estimates for the minimal metric in~\cite[Theorem 3.2]{Fiacchi2022} one can similarly deduce the following. 

\begin{corollary}[Fiacchi~\cite{Fiacchi2022}] The minimal metric on a strongly minimally convex domain is Gromov hyperbolic. 
\end{corollary}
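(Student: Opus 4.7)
The plan is to mimic the Balogh--Bonk proof verbatim, replacing the Kobayashi metric with the minimal metric and the classical strongly pseudoconvex estimates with Fiacchi's estimates from \cite[Theorem 3.2]{Fiacchi2022}. Concretely, let $\Omega \subset \Rb^d$ be a strongly minimally convex domain; since $\partial \Omega$ is smooth, for $\delta > 0$ sufficiently small the map $F : \partial \Omega \times (0,1] \to \Omega$ defined by $F(x,t) = x + \delta t \mathbf{n}(x)$ is a diffeomorphism onto a neighborhood of $\partial \Omega$ in $\Omega$. I would pull the minimal metric back via $F$ to obtain a strongly integrable pseudo-metric $\norm{\cdot}$ on $\partial \Omega \times (0,1]$, and verify conditions \ref{item:normal lines in intro}, \ref{item:orthogonality in intro}, \ref{item:expanding in intro} of Theorem~\ref{thm:expanding metrics intro} directly from Fiacchi's boundary estimates.

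The main step is the translation of \cite[Theorem 3.2]{Fiacchi2022} into the three expansion conditions. Since the normal line $t \mapsto F(x,t)$ lies on a real geodesic ray terminating at $x \in \partial \Omega$, Fiacchi's estimate on the minimal length of the normal direction gives a two-sided bound of the form $\norm{F_* \mathbf{n}(x)}_{F(x,t)} \asymp 1/t$, which is precisely condition~\ref{item:normal lines in intro} after absorbing the constant $\delta$. The same estimate splits the tangent space at $F(x,t)$ into a normal component and a tangential component that are uniformly transverse, which yields the triangle-style inequality in condition~\ref{item:orthogonality in intro}. The tangential estimate in \cite[Theorem 3.2]{Fiacchi2022} is of the form $\norm{F_* v}_{F(x,t)} \asymp \abs{v}/\sqrt{t}$ for $v \in T_x \partial \Omega$, so condition~\ref{item:expanding in intro} holds with exponent $\lambda = 1/2$ (and some constant $C_3$) because
$$
\norm{(v,0)}_{(x,s)} \asymp \frac{\abs{v}}{\sqrt{s}} = \left(\frac{t}{s}\right)^{1/2} \cdot \frac{\abs{v}}{\sqrt{t}} \asymp \left(\frac{t}{s}\right)^{1/2} \norm{(v,0)}_{(x,t)}
$$
for $0 < s \leq t \leq 1$.

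Once the three conditions are checked, Theorem~\ref{thm:expanding metrics intro} gives that the distance on $\partial \Omega \times (0,1]$ induced by $\norm{\cdot}$ is Gromov hyperbolic. Since $\Omega \setminus F(\partial \Omega \times (0,1])$ is relatively compact in $\Omega$ and the minimal distance is locally bi-Lipschitz to the Euclidean distance in the interior, the inclusion $F(\partial \Omega \times (0,1]) \hookrightarrow \Omega$ is a quasi-isometry between the induced distances, and Gromov hyperbolicity passes to $\Omega$.

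The only real obstacle is bookkeeping: confirming that Fiacchi's estimates in \cite[Theorem 3.2]{Fiacchi2022} (which are stated intrinsically in terms of the boundary distance function) give exactly the quantitative form needed by conditions~\ref{item:normal lines in intro}--\ref{item:expanding in intro} after composing with the specific diffeomorphism $F$. This is a routine change-of-variables exercise in the defining function of $\partial \Omega$, since $\delta t$ is comparable to the Euclidean distance to the boundary along the normal $\mathbf{n}(x)$, so no new analytic input beyond \cite{Fiacchi2022} is required.
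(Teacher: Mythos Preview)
Your proposal is correct and follows exactly the approach the paper intends: the paper does not give a separate proof of this corollary but simply remarks that ``using the estimates for the minimal metric in \cite[Theorem 3.2]{Fiacchi2022} one can similarly deduce'' the result, i.e.\ repeat the Balogh--Bonk argument verbatim with Fiacchi's estimates in place of the strongly pseudoconvex Kobayashi estimates. Your expansion of the details (normal direction $\asymp 1/t$, tangential direction $\asymp 1/\sqrt{t}$ giving $\lambda = 1/2$) is the intended verification.
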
 

In ~\cite{BaloghBonk2000} and~\cite{Fiacchi2022}, the authors first establish a precise estimate for the (infinitesimal) metric and then use intricate arguments to estimate the associated distance to directly verify Gromov hyperbolicity. In contrast, using Theorem~\ref{thm:expanding metrics intro} we only require estimates for  the (infinitesimal) metrics to deduce Gromov hyperbolicity. 

For finite type pseudoconvex  domains in $\Cb^2$, it is possible to use Catlin's estimates~\cite{Catlin1989} to show that the Kobayashi metric satisfies the hypothesis of Theorem~\ref{thm:expanding metrics intro} on a neighborhood of the boundary. Thus Theorem~\ref{thm:expanding metrics intro} can also be used to provide a new proof of Fiacchi's~\cite{Fiacchi2022} result that the Kobasyashi metric is Gromov hyperbolic on a finite type pseudoconvex domain in $\Cb^2$. 

\subsection{Generalized quasi-hyperbolic metrics}  Next we apply Theorem~\ref{thm:expanding metrics intro} to study the Gromov hyperbolicity of the Hilbert, Kobayashi, and minimal metric on bounded convex domains. We will study all of these metrics simultaneously  by introducing a new family of metrics.

Suppose $\Kb$ is either the real numbers $\Rb$ or the complex numbers $\Cb$. Let $\Omega \subset \Kb^d$ be a bounded domain. Given $p \in \Omega$ let 
$$
\delta_\Omega(p): = \dist_{\rm Euc}(p,\partial \Omega)
$$
denote the Euclidean distance between $p$ and the boundary $\partial \Omega$. Then the well-studied  \emph{quasi-hyperbolic metric on $\Omega$} is defined by 
$$
\mathfrak{q}_\Omega(p;v) = \frac{\norm{v}}{\delta_\Omega(p)}
$$
where $p \in \Omega$ and $v \in \Kb^d \simeq T_p \Omega$. 

We define variants of the quasi-hyperbolic metric as follows. First given $p \in \Omega$ and non-zero $v \in \Kb^d \simeq T_p \Omega$, the distance to the boundary from $p$ in the direction of $v$ is 
$$
\delta_\Omega(p;v) := \dist_{\rm Euc}(p, (p + \Kb\cdot v) \cap \partial \Omega). 
$$
More generally,  for $k=1,\dots, d$ we define 
\begin{align*}
\delta^{(k)}_\Omega(p;v) = \sup\left\{ \dist_{\rm Euc}(p, (p+V) \cap \partial \Omega)  : \begin{array}{c}
    V \subset \Kb^d \text{ a $\Kb$-linear subspace with $v \in V$} \\
    \text{and } {\rm dim}_{\Kb} V = k
  \end{array} \right\}.
\end{align*}

Then the \emph{generalized $k$-quasi-hyperbolic metric on $\Omega$} is defined by 
$$
\mathfrak{q}^{(k)}_\Omega(p;v) = \frac{\norm{v}}{\delta_\Omega^{(k)}(p;v)}
$$
where $p \in \Omega$ and $v \in \Kb^d \simeq T_p \Omega$. Let $\dist_\Omega^{(k)}$ denote the distance on $\Omega$ induced by $\mathfrak{q}^{(k)}_\Omega$, i.e. 
$$
\dist_\Omega^{(k)}(p,q) = \inf \left\{ \int_a^b \mathfrak{q}^{(k)}_\Omega(\sigma(t); \sigma'(t))dt: \begin{array}{c} \sigma : [a,b] \rightarrow \Omega \text{ absolutely continuous } \\ \text{and } \sigma(a) = p, \, \sigma(b) = q \end{array} \right\}.
$$ 

The definition of this metric may seem somewhat unnatural, but it encompasses all of the metrics we are interested in. 

\begin{example}\label{example: special quasi hyperbolic metric} \,
\begin{enumerate}
\item When $k=d$, the metric $\mathfrak{q}^{(d)}_\Omega$ coincides with the quasi-hyperbolic metric $\mathfrak{q}_\Omega$.
\item When $\Omega$ is convex, $\Kb = \Rb$, and $k=1$, the metric $\mathfrak{q}^{(1)}_\Omega$ is biLipschitz to the Hilbert metric (this follows from the definition, see for instance~\cite[Section 3.2.1]{Benoist2004}). 
\item When $\Omega$ is convex, $\Kb = \Cb$, and $k=1$, the metric  $\mathfrak{q}^{(1)}_\Omega$ is biLipschitz to the Kobayashi metric (see for instance~\cite[Theorem 4.1]{BP1994}, \cite[Theorem 5]{Graham1991}, or \cite[Theorem 2.2]{Frankel1991}).
\item When $\Omega$ is convex, $\Kb = \Rb$, $d\geq 3$, and $k=2$, the metric  $\mathfrak{q}^{(2)}_\Omega$ is biLipschitz to the minimal metric. This follows from results in the literature~\cite{Fiacchi2023,DDF2021}, see Proposition~\ref{prop:minimal_metruc_is_Lip_to_affine} below. 
\end{enumerate} 
\end{example}

We will establish the following characterization of Gromov hyperbolicity for the generalized quasi-hyperbolic metrics.

\begin{theorem}[see Theorems~\ref{thm:2 implies 1 in main theorem} and~\ref{thm:1 implies 2 in main theorem}]\label{thm:characterization in nonsmooth case} Suppose $\Omega \subset \Kb^d$ is a bounded convex domain. Fix $p \in \Omega$. Then for $x \in \partial\Omega$ and $t \in [0,1]$, let 
$$
x_t : = (1-t)x+tp.
$$
Then the following are equivalent: 
\begin{enumerate}
\item $(\Omega, \dist_\Omega^{(k)})$ is Gromov hyperbolic.
\item There exist $C, \lambda > 0$ such that: if $0 <  s < t \leq 1$, $x \in \partial \Omega$, and $V \in \Gr_k(\Kb^d)$, then 
$$
 \dist_{\rm Euc}(x_s, (x_s+V) \cap \partial \Omega) \leq C \left( \frac{s}{t} \right)^{\lambda}  \dist_{\rm Euc}(x_t, (x_t+V) \cap \partial \Omega).
$$
\end{enumerate} 
\end{theorem}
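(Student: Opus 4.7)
The plan has two parts. For $(2)\Rightarrow(1)$, I would apply Theorem~\ref{thm:expanding metrics intro} after transferring to a cylinder via the radial-slice parametrization $F\colon\partial\Omega\times(0,1]\to\overline{\Omega}$, $F(x,t)=x_t$. For $(1)\Rightarrow(2)$, I would argue by contradiction, using that the radial curves $\tau\mapsto x_\tau$ are $\dist^{(k)}_\Omega$-quasi-geodesics and exploiting the bounded-divergence behaviour of quasi-geodesic rays with a common ideal endpoint in a Gromov hyperbolic space.

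For the forward direction, $F$ is a Lipschitz homeomorphism onto $\overline{\Omega}\setminus\{p\}$ and $\Omega\setminus F(\partial\Omega\times(0,1/2])$ is relatively compact, so it suffices to verify the three hypotheses of Theorem~\ref{thm:expanding metrics intro} for the pullback of $\mathfrak{q}^{(k)}_\Omega$. A quick computation gives $F_{*}(v,\dot t)=(1-t)v+\dot t(p-x)$. Condition~\ref{item:normal lines in intro} reduces to the uniform estimate $\delta^{(k)}_\Omega(x_t;p-x)\asymp t$: the upper bound $\leq t\|p-x\|$ holds because every $k$-plane $V\ni p-x$ has $x\in(x_t+V)\cap\partial\Omega$, and the lower bound follows from $\delta^{(k)}_\Omega\geq\delta_\Omega\gtrsim t$ (convexity, together with the fact that $p$ is interior). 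Condition~\ref{item:expanding in intro} is a direct translation of hypothesis~(2): from the identity
$$
\frac{\|(v,0)\|_{(x,s)}}{\|(v,0)\|_{(x,t)}}=\frac{1-s}{1-t}\cdot\frac{\delta^{(k)}_\Omega(x_t;v)}{\delta^{(k)}_\Omega(x_s;v)},
$$
applying hypothesis~(2) to the $k$-plane attaining the supremum that defines $\delta^{(k)}_\Omega(x_s;v)$ yields the required $(t/s)^{\lambda}$ growth.

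For the reverse direction, I would argue by contradiction. If (2) fails, then for every $C,\lambda>0$ there exist $x\in\partial\Omega$, $V\in\Gr_k(\Kb^d)$, and $0<s<t\leq 1$ with $\delta^{(k)}_\Omega(x_s;V)>C(s/t)^\lambda\,\delta^{(k)}_\Omega(x_t;V)$. Choosing $w\in V$ of Euclidean norm $\asymp\delta^{(k)}_\Omega(x_s;w)$ and setting $c_1(\tau)=x_\tau$ and $c_2(\tau)=(x+\rho w)_\tau$ for an appropriate rescaling $\rho$, both curves are $\dist^{(k)}_\Omega$-quasi-geodesics sharing $p$ as an endpoint by the forward computation. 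The failure of~(2) would force the two curves to remain at bounded $\dist^{(k)}_\Omega$-distance on an arc-length interval far longer than the Gromov product of their boundary endpoints ought to allow, contradicting the bounded-divergence (equivalently, linear isoperimetric) property of a Gromov hyperbolic space.

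I expect the main obstacle to be the verification of condition~\ref{item:orthogonality in intro} in the forward direction. Since $\delta^{(k)}_\Omega(x_t;u)$ is defined as a supremum over $k$-planes $V\ni u$, decomposing a mixed tangential+radial vector into its pieces is not automatic: the plane optimizing the supremum can depend subtly on the ratio of components. The key lemma will be that the optimal $V$ can always be adjusted to contain the radial direction $p-x$ with only bounded loss, after which the boundary intersection in $V$ decouples into a radial piece pinned at $x$ and an independent tangential piece, the latter controlled by hypothesis~(2). A secondary concern is that $\partial\Omega$ is only Lipschitz in the convex setting, whereas Theorem~\ref{thm:expanding metrics intro} as stated requires a smooth compact manifold; this likely needs either the more general formulation presumably proved in the body of the paper or a smooth-approximation argument with uniform control on the constants.
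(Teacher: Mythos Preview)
Your plan for $(2)\Rightarrow(1)$ matches the paper's approach, and you correctly identify condition~\ref{item:orthogonality in intro} as the main hurdle: the paper devotes Sections~\ref{sec:distance to the boundary} and~\ref{sec:QH metric almost orthogonal directions} (Propositions~\ref{prop:dist to the boundary in a direction} and~\ref{prop:estimate on QH in tang/non-tang direction}) to exactly this. Your anticipated key lemma is not quite what the paper proves, however: rather than adjusting the optimal $V$ to contain the radial direction, the paper shows it can be taken tangential, i.e.\ with $(x+V)\cap\Omega=\emptyset$ (the ``moreover'' clause of Proposition~\ref{prop:estimate on QH in tang/non-tang direction}). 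You also correctly flag the non-smoothness of $\partial\Omega$; the paper resolves this not by approximation but by parametrizing via the unit sphere and invoking the more general Theorem~\ref{thm:expanding metrics}, which only requires $\Phi$ to be locally biLipschitz. One point you underestimate: in the complex case, verifying condition~\ref{item:expanding in intro} is not a direct translation of hypothesis~(2), because a vector real-tangent to $\partial\Omega$ need not span a complex line disjoint from $\Omega$; the paper handles this by a further decomposition into a complex-tangential part and a residual radial part (the last lemma of Section~\ref{sec:a sufficient condition for GH}).

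Your sketch for $(1)\Rightarrow(2)$ has a genuine gap. The assertion that ``the failure of~(2) would force the two curves to remain at bounded $\dist^{(k)}_\Omega$-distance on an arc-length interval far longer than the Gromov product \dots\ ought to allow'' is not substantiated, and it is unclear how to make it precise: the failure of~(2) for a single pair $(s,t)$ says nothing directly about the slices $(x_\tau+V)\cap\partial\Omega$ at intermediate $\tau\in(s,t)$, which is what you would need to bound $\dist^{(k)}_\Omega(c_1(\tau),c_2(\tau))$ along the whole segment. The paper's argument is structurally different and considerably more delicate. It first reduces to two statements: no affine $k$-disc lies in $\partial\Omega$ (Proposition~\ref{prop:no affine disks in boundary}), and the slice radius contracts by a definite factor over any sufficiently long log-time interval (Proposition~\ref{prop:definite contraction}). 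Each is proved by constructing explicit quasi-geodesic triangles or rectangles that fail to be $M$-slim for arbitrarily large $M$. The rectangle construction in particular requires careful supporting-hyperplane estimates (Lemmas~\ref{lem:yn versus ynprime}--\ref{lem:non slim QG rectangles}) to show that all four sides are uniform quasi-geodesics and that a midpoint of one side is far from the other three; none of this is captured by your outline.
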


We prove (2) $\Rightarrow$ (1) using a generalization of Theorem~\ref{thm:expanding metrics intro} (see Section~\ref{sec:expanding metrics}). The converse direction involves constructing non-slim geodesic triangles and uses ideas from~\cite{ZimmerMathAnn2016,ZimmerAdv2022}.

Recall that when $\Omega$ is convex and $\Kb = \Rb$, the metric $\mathfrak{q}^{(1)}_\Omega$ is biLipschitz to the Hilbert metric. So Theorem~\ref{thm:characterization in nonsmooth case} provides a characterization of the bounded convex domains in $\Rb^d$ where the Hilbert metric is Gromov hyperbolic. Previously, Benoist established a different characterization~\cite{Benoist2003}, but it does not seem easy to show that the two characterizations are equivalent. So in this special case Theorem~\ref{thm:characterization in nonsmooth case}  should be viewed as complementary to Benoist's theorem instead of providing a new proof. It would be very interesting to know if a version of Benoist's characterization extends to generalized quasi-hyperbolic metrics. 

Recall that when $\Omega$ is convex and $\Kb = \Cb$, the metric $\mathfrak{q}^{(1)}_\Omega$ is biLipschitz to the Kobayashi metric. In this case, a characterization of Gromov hyperbolicity was given in~\cite{ZimmerAdv2022} and a rescaling argument (as in~\cite[Section 6]{ZimmerAdv2022}) can be used to show that this characterization is equivalent to the one given in Theorem~\ref{thm:characterization in nonsmooth case}.

When the convex domain is smoothly bounded, condition (2) in Theorem~\ref{thm:characterization in nonsmooth case} can be reformulated in terms of affine planes having finite order of contact with the boundary (see Definition~\ref{defn:finite order contact} for a precise definition). 

\begin{theorem}[see Theorem~\ref{thm:smooth case}]\label{thm:characterization in smooth case} Suppose $\Omega \subset \Kb^d$ is a smoothly bounded convex domain. Then the generalized $k$-quasi-hyperbolic metric on $\Omega$ is Gromov hyperbolic if and only if every $\Kb$-affine $k$-plane has finite order contact with $\partial \Omega$. 
\end{theorem}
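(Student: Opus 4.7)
The plan is to deduce Theorem~\ref{thm:characterization in smooth case} from Theorem~\ref{thm:characterization in nonsmooth case} by showing that, when $\Omega$ is smoothly bounded and convex, condition~(2) of the latter theorem is equivalent to every $\Kb$-affine $k$-plane having finite order contact with $\partial\Omega$. Fix a smooth convex defining function $\rho$ with $\Omega=\{\rho<0\}$ and $d\rho\neq 0$ on $\partial\Omega$, and for $(x,V)\in\partial\Omega\times\Gr_k(\Kb^d)$ let $m(x,V)\in\{1,2,\ldots,\infty\}$ denote the order of vanishing of $\rho|_{x+V}$ at $x$, so that $x+V$ has infinite order contact with $\partial\Omega$ precisely when $m(x,V)=\infty$.

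For the direction (2) $\Rightarrow$ finite contact I argue contrapositively. Suppose some $V_0\subset T_x\partial\Omega$ has $m(x,V_0)=\infty$ at some $x\in\partial\Omega$. Then $\rho|_{x+V_0}$ vanishes to infinite order at $x$, and a direct convexity computation using $\rho(x_s)\asymp -s$ forces
$$
\dist_{\rm Euc}\bigl(x_s,(x_s+V_0)\cap\partial\Omega\bigr)
$$
to decay more slowly than every power $s^{\lambda}$ as $s\to 0^+$. Since the same distance at a fixed $t$ remains bounded below, condition~(2) is violated.

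For the converse direction, the plan is first to establish the uniform bound $M:=\sup_{(x,V)}m(x,V)<\infty$ by a compactness argument. Define
$$
A_m := \bigl\{(x,V)\in\partial\Omega\times\Gr_k(\Kb^d): V\subset T_x\partial\Omega\text{ and }m(x,V)\ge m\bigr\}.
$$
Each $A_m$ is closed because the vanishing of the Taylor coefficients of $\rho|_{x+V}$ up to order $m-1$ is a closed condition on $(x,V)$; the sequence is decreasing, and $\bigcap_m A_m=\emptyset$ by the finite-contact hypothesis. Hence on the compact space $\partial\Omega\times\Gr_k(\Kb^d)$ we must have $A_{M+1}=\emptyset$ for some $M<\infty$. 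Since $m(x,V)=1$ for nontangent $V$, this gives the global bound. Combining a uniform Taylor expansion of $\rho|_{x+V}$ of order at most $M$ with convexity-based sublevel estimates then produces the ratio inequality
$$
\frac{\dist_{\rm Euc}(x_s,(x_s+V)\cap\partial\Omega)}{\dist_{\rm Euc}(x_t,(x_t+V)\cap\partial\Omega)} \le C\left(\frac{s}{t}\right)^{1/M}
$$
valid for all $(x,V)$ and $0<s<t\le 1$, which is exactly condition~(2) with $\lambda=1/M$.

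The principal obstacle is this last uniform ratio inequality. As $V$ varies continuously the value $m(x,V)$ may jump (for instance, tilting $V$ out of the tangent hyperplane drops $m$ to~$1$), so the distance $t\mapsto \dist_{\rm Euc}(x_t,(x_t+V)\cap\partial\Omega)$ does not behave like a single power $t^{1/m(x,V)}$ uniformly in $V$; instead it can exhibit a regime change, transitioning from $t^{1/m}$-like behavior at larger scales to linear behavior at smaller scales. Handling these regime transitions simultaneously and extracting the uniform ratio estimate requires convexity-based monotonicity arguments for the function $t\mapsto \dist_{\rm Euc}(x_t,(x_t+V)\cap\partial\Omega)/t^{1/M}$, with constants depending only on $M$ and~$\rho$.
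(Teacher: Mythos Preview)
Your plan has the right structure, and the compactness argument for the uniform bound $M<\infty$ is sound (it is essentially the Observation following Definition~\ref{defn:finite order contact}). But the proposal is not a proof: you explicitly identify the ``principal obstacle''---the uniform ratio inequality across all $V\in\Gr_k(\Kb^d)$---and then describe the difficulty (regime changes as $V$ tilts out of the tangent space) without resolving it. Saying the estimate ``requires convexity-based monotonicity arguments'' is not the same as supplying one; this is the entire content of the hard direction, and the paragraph you wrote is a description of what needs to be done rather than a way to do it.

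The paper avoids your obstacle by not using Theorem~\ref{thm:characterization in nonsmooth case} directly. Instead it invokes the strengthened sufficiency result Theorem~\ref{thm:2 implies 1 in main theorem}, which only requires the ratio estimate for \emph{tangential} $V$, i.e.\ those with $(x+V)\cap\Omega=\emptyset$; this eliminates the regime-change issue since $m(x,V)\ge 2$ throughout. The remaining uniform estimate over tangential $V$ is then obtained by contradiction: a bad sequence $(x_n,V_n,s_n,t_n)$ is normalized via uniformly biLipschitz affine maps into graphs $\{t>f_n(y)\}$ over $\Rb^{d_0}$, and the degree-$L$ Taylor polynomials $P_n$ of $f_n$ satisfy $\inf_n\|P_n\|>0$ by the finite-contact hypothesis. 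Elementary polynomial scaling inequalities (Lemma~\ref{lem:estimates on polynomials}) then give $\dist_{\rm Euc}(te_1,(te_1+V_0)\cap\partial\Omega_n)\asymp\min\{r:\max_{\|y\|\le r}|P_n(y)|\ge t\}$ uniformly in $n$, from which the ratio bound with $\lambda=1/L$ follows and contradicts the choice of sequence. Your contrapositive sketch for the other direction is correct in spirit and matches Section~\ref{sec:GH implies finite type}, though there too the paper makes the contradiction quantitative by selecting $t_n$ at which $\dist/t_n^{1/n}$ first attains a large value and invoking Theorem~\ref{thm:1 implies 2 in main theorem}.
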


For a smoothly bounded convex domain in $\Cb^d$, every complex line having finite order contact with the boundary is equivalent to having finite type in the sense of D'Angelo~\cite{McNeal92,BS1992} and so Theorem~\ref{thm:characterization in smooth case} has the following corollary.

\begin{corollary}[Z. \cite{ZimmerMathAnn2016}] Suppose $\Omega \subset \Cb^d$ is a smoothly bounded convex domain. Then the Kobayashi metric on $\Omega$ is Gromov hyperbolic if and only if $\partial\Omega$ has finite type in the sense of D'Angelo. 
\end{corollary}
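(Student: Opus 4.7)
The plan is to reduce the corollary to Theorem~\ref{thm:characterization in smooth case} specialized to $\Kb = \Cb$ and $k=1$, together with two auxiliary facts from the literature: one comparing the Kobayashi metric to $\mathfrak{q}^{(1)}_\Omega$ on a convex domain, and one identifying two a priori different notions of ``finite type.'' Since Gromov hyperbolicity is a biLipschitz (indeed quasi-isometric) invariant, the task essentially splits into recording the correct dictionary between the four relevant conditions.

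First, I would invoke Example~\ref{example: special quasi hyperbolic metric}(3): since $\Omega \subset \Cb^d$ is bounded and convex, the generalized $1$-quasi-hyperbolic metric $\mathfrak{q}^{(1)}_\Omega$ is biLipschitz equivalent to the Kobayashi metric on $\Omega$ (this is a standard Frankel/Graham/Barth--Pflug style estimate in the convex setting). Hence the Kobayashi distance is Gromov hyperbolic if and only if $\dist_\Omega^{(1)}$ is Gromov hyperbolic, since biLipschitz maps preserve the linear isoperimetric characterization underlying Gromov hyperbolicity.

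Next, applying Theorem~\ref{thm:characterization in smooth case} with $\Kb = \Cb$ and $k = 1$, the distance $\dist_\Omega^{(1)}$ is Gromov hyperbolic precisely when every $\Cb$-affine line has finite order of contact with $\partial \Omega$ in the sense of Definition~\ref{defn:finite order contact}. To finish, I would cite the equivalence, for a smoothly bounded convex domain in $\Cb^d$, between ``every complex affine line has finite order of contact with $\partial \Omega$'' and ``$\partial \Omega$ has finite D'Angelo type'' — this is precisely the content of McNeal~\cite{McNeal92} and Boas--Straube~\cite{BS1992}, which establish that on convex hypersurfaces D'Angelo type, regular type, and line type all coincide. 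Chaining these equivalences yields the corollary.

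The only conceptual subtlety is to make sure that the order-of-contact notion used inside Definition~\ref{defn:finite order contact} (which is phrased in terms of the Euclidean distance $\dist_{\rm Euc}(x_t, (x_t + V) \cap \partial \Omega)$ decaying polynomially in $t$) agrees with the classical line type appearing in~\cite{McNeal92,BS1992}. For smoothly bounded convex domains this is straightforward: along the inward normal ray $x_t$, the smoothness and convexity of $\partial \Omega$ ensure that the Euclidean distance from $x_t$ to $\partial \Omega$ along a complex affine line through $x_t$ is comparable, up to a constant, to the distance from $x$ to $\partial \Omega$ measured along that line, so the two notions of finite order contact coincide. This is the one place the argument is not purely formal, but it is a routine Taylor expansion near the boundary point and does not require any of the hard work; the substance of the corollary lies entirely in Theorem~\ref{thm:characterization in smooth case}.
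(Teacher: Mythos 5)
Your proof is correct and follows exactly the paper's route: reduce to Theorem~\ref{thm:characterization in smooth case} with $\Kb=\Cb$, $k=1$ via the biLipschitz equivalence of $\mathfrak{q}^{(1)}_\Omega$ with the Kobayashi metric (Example~\ref{example: special quasi hyperbolic metric}(3)), then cite McNeal~\cite{McNeal92} and Boas--Straube~\cite{BS1992} for the equivalence of line type with D'Angelo type on smooth convex hypersurfaces. One small correction: Definition~\ref{defn:finite order contact} is stated in terms of the order of vanishing $\nu(r\circ T)$ of the defining function restricted to an affine embedding, not in terms of the Euclidean distance decay appearing in Theorem~\ref{thm:characterization in nonsmooth case}; so the paper's finite order contact condition for $k=1$ is already literally the classical line type, and the ``conceptual subtlety'' you raise at the end is not there --- McNeal/Boas--Straube applies directly with no further Taylor-expansion comparison needed.
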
 

Theorem~\ref{thm:characterization in smooth case} also provides a characterization of the smoothly bounded convex domains where the minimal metric is Gromov hyperbolic. 

\begin{corollary}[see Corollary~\ref{cor:characterization in smooth case for minimal metric in paper}]\label{thm:characterization in smooth case for minimal metric} Suppose $d \geq 3$ and $\Omega \subset \Rb^d$ is a smoothly bounded convex domain. Then the minimal metric on $\Omega$ is Gromov hyperbolic if and only if every real affine $2$-plane has finite order contact with $\partial \Omega$. 
\end{corollary}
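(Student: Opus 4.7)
The plan is to deduce this corollary directly from Theorem~\ref{thm:characterization in smooth case} combined with the identification of the minimal metric with $\mathfrak{q}^{(2)}_\Omega$ recorded in Example~\ref{example: special quasi hyperbolic metric}(4). Concretely, I would proceed in three short steps.

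First, I would invoke the forthcoming Proposition~\ref{prop:minimal_metruc_is_Lip_to_affine}, together with the already cited results of Fiacchi~\cite{Fiacchi2023} and~\cite{DDF2021}, to assert that on the bounded convex domain $\Omega \subset \Rb^d$ with $d \geq 3$, the minimal (infinitesimal) metric is biLipschitz to the generalized $2$-quasi-hyperbolic metric $\mathfrak{q}^{(2)}_\Omega$. Consequently, the induced distances on $\Omega$ are biLipschitz equivalent.

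Second, I would use the fact that Gromov hyperbolicity is invariant under quasi-isometries, and in particular under biLipschitz equivalences of distances on the same underlying space. Hence the minimal distance on $\Omega$ is Gromov hyperbolic if and only if $\dist^{(2)}_\Omega$ is Gromov hyperbolic.

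Third, I would apply Theorem~\ref{thm:characterization in smooth case} with $\Kb = \Rb$ and $k = 2$ to conclude that $(\Omega,\dist^{(2)}_\Omega)$ is Gromov hyperbolic if and only if every real affine $2$-plane has finite order contact with $\partial \Omega$. Chaining these two equivalences yields exactly the statement of the corollary. There is no real obstacle here beyond correctly invoking Proposition~\ref{prop:minimal_metruc_is_Lip_to_affine}; the whole content is packaged in Theorem~\ref{thm:characterization in smooth case}, and this corollary is just the specialization to $(\Kb, k) = (\Rb, 2)$ combined with the biLipschitz identification of the minimal metric.
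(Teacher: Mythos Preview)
Your proposal is correct and matches the paper's approach exactly: the paper simply states that the corollary follows from Proposition~\ref{prop:minimal_metruc_is_Lip_to_affine} (the biLipschitz equivalence of the minimal metric with $\mathfrak{q}^{(2)}_\Omega$) together with Theorem~\ref{thm:smooth case}, which is precisely the chain of equivalences you describe.
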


\subsection{Outline of paper} Section~\ref{sec:prelim} is expository: we fix any possibly ambiguous notation and recall some important definitions.  In Section~\ref{sec:expanding metrics} we prove a more technical version of Theorem~\ref{thm:expanding metrics intro} (the technicality being necessary to handle non-smooth domains in Theorem~\ref{thm:characterization in nonsmooth case}).

In Sections ~\ref{sec:distance to the boundary}, ~\ref{sec:QH metric almost orthogonal directions},  and ~\ref{sec:a sufficient condition for GH} we apply the more technical version of Theorem~\ref{thm:expanding metrics intro} to prove the implication $(2) \Rightarrow (1)$ in  Theorem~\ref{thm:characterization in nonsmooth case}. The most difficult part is in Sections ~\ref{sec:distance to the boundary} and ~\ref{sec:QH metric almost orthogonal directions} where we establish estimates on the generalized quasi-hyperbolic metrics to verify Condition~\ref{item:orthogonality in intro} in Theorem~\ref{thm:expanding metrics intro}.

In Sections~\ref{sec:distance estimates for QH} and ~\ref{sec:a necessary condition for GH} we prove the implication $(1) \Rightarrow (2)$ in  Theorem~\ref{thm:characterization in nonsmooth case}. In Section~\ref{sec:distance estimates for QH} we show that certain Euclidean line segments can be parametrized to be quasi-geodesics. In Section~\ref{sec:a necessary condition for GH} we use these quasi-geodesics to prove $(1) \Rightarrow (2)$.

In Section~\ref{sec:the smooth case} we prove Theorem~\ref{thm:characterization in smooth case} and in Section~\ref{sec:minimal metric} we use results in the literature to show that the minimal metric is biLipschitz to a generalized quasi-hyperbolic metric. 

\subsection*{Acknowledgements} Before joining Yale University, Wang was partially supported by the NUS-MOE grant A-8000-458-00-00 and by the Merlion PhD program 2021. Zimmer was partially supported by a Sloan research fellowship and grant DMS-2105580  from the National Science Foundation.

Wang also thanks University of Wisconsin-Madison for hospitality during a visit where the work on this project started.

\section{Preliminaries}\label{sec:prelim}

\subsection{Possibly ambiguous notations} We fix some possibly ambiguous notations:
\begin{enumerate} 
\item A domain is a connected open set. 
\item $\norm{\cdot}$ is the $\ell^2$-norm on $\Kb^d$, $\dist_{\rm Euc}(x,y) = \norm{x-y}$ is the associated distance, and $B_{\Kb^d}(p,r)$ is the Euclidean ball of radius $r$ centered at $p$. 
\item $\Gr_k(\Kb^d)$ is the Grassmanian of linear subspaces in $\Kb^d$ with $\Kb$-dimension $k$. 

\end{enumerate} 

\subsection{Geodesic, length, and proper metric spaces} Suppose $(X,\dist)$ is a metric space. A \emph{(unit speed) geodesic} is a map $\sigma : I \rightarrow X$ of an interval $I \subset \Rb$ where $\dist(\sigma(s), \sigma(t)) = \abs{t-s}$ for all $s,t \in I$. A metric space is \emph{geodesic} if every two points are joined by a geodesic and \emph{proper} if every closed and bounded set is compact. 

The \emph{length} of a curve $\sigma : [a,b] \rightarrow X$ is 
$$
L(\sigma): = \sup \left\{ \sum_{i=1}^{n-1} \dist(\sigma(t_i), \sigma(t_{i+1})) : a = t_1 < \dots < t_n = b\right\}
$$
and $\sigma$ is \emph{rectifiable} if $L(\sigma) < +\infty$. The metric $\dist$ is a \emph{length metric} if 
$$
\dist(p,q) = \inf\{ L(\sigma) :  \sigma :[a,b] \rightarrow X \text{ is rectifiable and } \sigma(a) = p, \, \sigma(b) = q\}
$$
for all $p,q \in X$. 

Clearly a geodesic space is a length space and for proper spaces, the converse is also true (see for instance Proposition 3.7 and Corollary 3.9 in Chapter I.1 of~\cite{BH1999}). 

\begin{proposition}\label{prop:HopfRinow} If $(X,\dist)$ is a proper length metric space, then $(X,\dist)$ is geodesic. \end{proposition}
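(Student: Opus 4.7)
The plan is to prove this by a standard Arzelà–Ascoli argument on a sequence of nearly length-minimizing curves. Fix $p,q \in X$ and set $L := \dist(p,q)$. If $p=q$ the constant map suffices, so assume $L>0$. Since $\dist$ is a length metric, for each integer $n \geq 1$ I can choose a rectifiable curve $\gamma_n : [a_n,b_n] \to X$ with $\gamma_n(a_n)=p$, $\gamma_n(b_n)=q$, and length $L_n \leq L + 1/n$. Each such curve admits a unit-speed (arc-length) reparametrization $\tilde\gamma_n : [0,L_n] \to X$, which is automatically $1$-Lipschitz, and I then rescale the domain linearly to obtain $\sigma_n : [0,L] \to X$ with Lipschitz constant $L_n/L \to 1$.

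Next I would verify the hypotheses of Arzelà–Ascoli. By construction the family $\{\sigma_n\}$ is uniformly equicontinuous (eventually $2$-Lipschitz, say). For pointwise precompactness, note that for every $t \in [0,L]$,
\[
\dist(p,\sigma_n(t)) \leq L(\tilde\gamma_n|_{[0,(L_n/L)t]}) \leq L_n \leq L+1,
\]
so the orbit of each $t$ lies in the closed ball $\bar B(p,L+1)$, which is compact by the properness assumption. Arzelà–Ascoli therefore furnishes a subsequence (still denoted $\sigma_n$) converging uniformly to some $\sigma : [0,L] \to X$ with $\sigma(0)=p$, $\sigma(L)=q$, and $\sigma$ is $1$-Lipschitz as a uniform limit of curves whose Lipschitz constants tend to $1$.

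Finally I would show $\sigma$ is a (unit speed) geodesic. For any $0 \leq s \leq t \leq L$, the Lipschitz bound gives $\dist(\sigma(s),\sigma(t)) \leq t-s$. For the reverse inequality, the triangle inequality forces
\[
L = \dist(p,q) \leq \dist(\sigma(0),\sigma(s)) + \dist(\sigma(s),\sigma(t)) + \dist(\sigma(t),\sigma(L)) \leq s + (t-s) + (L-t) = L,
\]
so each term must be an equality; in particular $\dist(\sigma(s),\sigma(t)) = t-s$.

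I do not expect a serious obstacle here: the only subtlety is ensuring the $\sigma_n$ form an equicontinuous, pointwise-precompact family, which is exactly where properness is used (to upgrade boundedness of the curves to compactness of their image closure). This is precisely the classical Hopf–Rinow–Cohn-Vossen argument, and matches the reference to \cite{BH1999} already cited.
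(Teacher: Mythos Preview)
Your argument is correct and is the standard Hopf--Rinow--Cohn-Vossen proof. The paper does not actually prove this proposition; it simply states it and cites \cite{BH1999}, so there is nothing to compare against beyond noting that your approach is precisely the one found in that reference.
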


\subsection{Gromov hyperbolic spaces} In this subsection we recall some basic properties of Gromov hyperbolic metric spaces, for more details see~\cite{BH1999} or~\cite{MR3558533}.

A \emph{geodesic triangle} in a metric space $(X,\dist)$ is a choice of three points in $X$ and a choice of geodesic segments joining these points. A geodesic triangle is said to be \emph{$\delta$-slim} if any point on any of the sides of the triangle is within distance $\delta$ of the other two sides.

\begin{definition} 
A proper geodesic metric space is \emph{$\delta$-hyperbolic} if every geodesic triangle is $\delta$-slim, and it is \emph{Gromov hyperbolic} if it is $\delta$-hyperbolic for some $\delta \geq 0$.
\end{definition} 

Given $A \geq 1$, $B \geq 0$, a \emph{$(A,B)$-quasi-geodesic} in a metric space $(X,\dist)$ is a map $\sigma : I \rightarrow X$ of an interval $I \subset \Rb$ such that 
$$
\frac{1}{A} \abs{t-s} - B \leq \dist(\sigma(s), \sigma(t)) \leq  A\abs{t-s}+B
$$
for all $s,t \in I$. 

Recall that the Hausdorff pseudo-distance between two subsets $A,B$ in a metric space is
$$
\dist^{\rm Haus}(A,B) = \max\left\{ \sup_{a \in A} \dist(a,B), \sup_{b \in B} \dist(b,A)\right\}.
$$
Gromov hyperbolic metric spaces have the following fellow traveling property for quasi-geodesics (for a proof see for instance Theorem 1.7 in Chapter III.H of~\cite{BH1999}).

\begin{theorem}\label{thm:shadowing property} Suppose $(X,\dist)$ is Gromov hyperbolic. For any $A \geq 1$ and $B \geq 0$ there exists $D = D(A, B)> 0$ such that: If $\sigma_1 : [a,b] \rightarrow X$  and $\sigma_2 : [a',b'] \rightarrow X$ are $(A,B)$-quasi-geodesics with the same endpoints, then 
$$
\dist^{\rm Haus}\big(\sigma_1([a,b]), \sigma_2([a',b'])\big) \leq D.
$$
\end{theorem}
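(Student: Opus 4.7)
The plan is to prove the result via the classical Morse-type stability for quasi-geodesics: I will show that there exists $D_0 = D_0(A, B, \delta)$ such that every $(A, B)$-quasi-geodesic $\sigma$ from $p$ to $q$ lies within Hausdorff distance $D_0$ of any geodesic from $p$ to $q$. Granting this, the theorem follows by choosing a geodesic $\gamma$ with the common endpoints of $\sigma_1$ and $\sigma_2$ (which exists since $X$ is a proper geodesic space, cf.\ Proposition~\ref{prop:HopfRinow}) and applying the triangle inequality for the Hausdorff pseudo-distance:
\[
\dist^{\rm Haus}(\sigma_1([a,b]), \sigma_2([a',b'])) \leq 2 D_0.
\]

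To prove the stability lemma, first I would tame $\sigma$ by sampling at integer times and joining consecutive samples by geodesic segments, producing a continuous rectifiable $(A', B')$-quasi-geodesic $\wt\sigma$ with the same endpoints as $\sigma$, whose constants depend only on $A, B$ and whose image lies within Hausdorff distance $A + B$ of $\sigma([a,b])$. So it suffices to bound $\dist^{\rm Haus}(\wt\sigma([a,b]), \gamma)$. Set $D := \sup_s \dist(\wt\sigma(s), \gamma)$ and argue by contradiction. Choose $s_0$ with $\dist(\wt\sigma(s_0), \gamma) \geq D - 1$, and let $[s_-, s_+]$ be the maximal subinterval around $s_0$ on which $\dist(\wt\sigma(s), \gamma) \geq D/2$. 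By $\delta$-slimness of the geodesic triangle with vertices $\wt\sigma(s_\pm)$ and a suitable point of $\gamma$ near both, one bounds $\dist(\wt\sigma(s_-), \wt\sigma(s_+))$ linearly in $D$; since $\wt\sigma$ is piecewise geodesic, the $(A', B')$-quasi-geodesic inequality then yields a linear-in-$D$ upper bound $L(\wt\sigma|_{[s_-, s_+]}) = O(D)$. On the other hand, $\wt\sigma|_{[s_-, s_+]}$ is a continuous path joining two points within $D/2$ of $\gamma$ while passing through a point at distance $\geq D - 1$ from $\gamma$; iterating $\delta$-slimness (the exponential-divergence of geodesics in hyperbolic spaces) forces $L(\wt\sigma|_{[s_-, s_+]}) \gtrsim 2^{D/(2\delta)}$. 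These two bounds become incompatible once $D$ exceeds an explicit $D_0 = D_0(A', B', \delta)$, giving the desired one-sided bound. The reverse inequality $\sup_{y \in \gamma} \dist(y, \wt\sigma([a,b])) \leq D_0 + \delta$ follows from $\delta$-slimness of the triangle with vertices $\wt\sigma(a), \wt\sigma(b), y$ together with continuity of $\wt\sigma$, which forces some $\wt\sigma(s)$ to lie within $\delta$ of one of the two sides incident to $y$.

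The main obstacle is the quantitative exponential-vs-linear length comparison in the contradiction argument, which requires careful bookkeeping of constants and must be done so that $D_0$ depends only on $A, B, \delta$ and does not blow up; everything else (taming the quasi-geodesic, extracting $D_0$, and deriving the theorem from the stability lemma via the Hausdorff triangle inequality) is routine.
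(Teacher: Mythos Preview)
The paper does not prove this theorem; it simply cites Theorem~1.7 in Chapter~III.H of Bridson--Haefliger~\cite{BH1999}. Your proposal is essentially the classical Morse Lemma argument found there: taming the quasi-geodesic, an exponential-versus-linear length comparison, and a connectedness argument for the reverse inclusion.

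One step in your sketch is misformulated, however. Having chosen $[s_-,s_+]$ as the \emph{maximal} interval on which $\dist(\wt\sigma(s),\gamma)\ge D/2$, you cannot bound $\dist(\wt\sigma(s_-),\wt\sigma(s_+))$ linearly in $D$ from $\delta$-slimness alone: these endpoints could project to points of $\gamma$ that are as far apart as $\dist(p,q)$, so nothing prevents $\dist(\wt\sigma(s_-),\wt\sigma(s_+))$ (and hence $L(\wt\sigma|_{[s_-,s_+]})$) from being arbitrarily large compared to $D$. The usual remedy, as in~\cite{BH1999}, is instead to choose $s_\pm$ at a prescribed arc-length (of order $D$) from $s_0$ along the tamed curve, which makes the linear length bound automatic; the exponential lower bound then follows by applying the divergence lemma to the detour path through the nearest-point projections on $\gamma$. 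With this adjustment your outline is correct and matches the cited reference.
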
 

Next we recall the linear isoperimetric inequality characterization of Gromov hyperbolicity. Let $(X,\dist)$ be a proper geodesic metric space. Given a closed curve $\sigma :\Sb^1 \to X$ and  $R>0$, a pair $(\Phi,P)$ is an \emph{$R$-filling} of $\sigma$ if $P$ is a triangulation of the closed disc $\overline{\Db}$ and $\Phi:\overline{\Db} \to X$ is a (not necessarily continuous) map where $\Phi|_{\Sb^1} = \sigma$ and the $\Phi$-image of any triangle in $P$ has diameter at most $R$. Then, the \emph{$R$-area of $\sigma$} is 
\[
{\rm Area}_R(\sigma) := \min \{\text{number of triangles in }P: (\Phi,P)\text{ an }R \text{-filling of }\sigma \}.
\]

\begin{theorem}[{see for instance \cite[Chapter H, Theorem 2.9]{BH1999}}]\label{thm:isomperimetric char of GH} Assume $(X,\dist)$ is a proper geodesic metric space. If there exist $R,A>0$ and $B\geq 0$ such that 
\[{\rm Area}_R(\sigma)\leq A L(\sigma) + B\]
for any rectifiable closed curve $\sigma:\Sb^1 \to X$,  then $(X,\dist)$ is Gromov hyperbolic.
\end{theorem}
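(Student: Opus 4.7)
Plan: I aim to show that under the linear isoperimetric hypothesis, every geodesic triangle in $(X,\dist)$ is $\delta$-slim for a uniform $\delta=\delta(R,A,B)$, which is exactly $\delta$-hyperbolicity. I argue by contrapositive, along the classical lines of Papasoglu's theorem that a subquadratic — and hence in particular linear — isoperimetric function forces hyperbolicity.

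\textbf{Step 1 (Fat triangles produce large ``Euclidean-like'' quadrilaterals).} Suppose, toward contradiction, that $X$ is not $\delta$-hyperbolic for any $\delta$. Then for each $K>0$ one finds a geodesic triangle with sides $\sigma_0,\sigma_1,\sigma_2$ and a point $p\in\sigma_0$ with $\dist(p,\sigma_1\cup\sigma_2)\geq K$. From such a fat triangle I want to extract a geodesic quadrilateral $Q_K$ (with sides labelled ``top'', ``right'', ``bottom'', ``left'') whose four sides have length $O(K)$ but whose opposite sides (top/bottom and left/right) are pairwise at distance $\gtrsim K$ from one another in $X$. Concretely, one takes a long sub-arc of $\sigma_0$ around $p$, realizes shortest connecting geodesics (using that $X$ is proper) to $\sigma_1\cup\sigma_2$, and closes up with a sub-arc of $\sigma_1\cup\sigma_2$ between the feet.

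\textbf{Step 2 (Quadratic area lower bound).} Apply the hypothesis to the closed curve $\partial Q_K$ to get an $R$-filling $(\Phi,P)$ of $\partial Q_K$ with $\#P\leq AL(\partial Q_K)+B=O(K)$ triangles. For each vertex $v$ of $P$ set $h_v:=\dist(\Phi(v),\text{top side of }Q_K)$; then $h_v\in[0,\mathrm{gap}]$ with $\mathrm{gap}\gtrsim K$. For each integer $j$ with $jR\leq\mathrm{gap}$ the edge cut $C_j:=\{\text{edges }(u,w): h_u\leq jR<h_w\}$ separates $\Phi^{-1}(\text{top})$ from $\Phi^{-1}(\text{bottom})$ inside $\overline\Db$, and planarity of the disc triangulation forces $C_j$ to contain a path of edges running from the left side of $\partial Q_K$ to the right side. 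Since consecutive edges of $P$ move $\Phi$-image by at most $R$ while the left and right sides of $Q_K$ are $\gtrsim K$ apart in $X$, each such path uses $\gtrsim K/R$ edges. Summing over the $\gtrsim K/R$ pairwise disjoint cuts yields $\#P\gtrsim K^2/R^2$.

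\textbf{Step 3 (Contradiction) and main obstacle.} The two bounds $\#P=O(K)$ and $\#P\gtrsim K^2/R^2$ are incompatible once $K$ is large, completing the argument. The hardest step is Step 1: the ``Euclidean-like'' quadrilateral $Q_K$ must simultaneously have top-to-bottom and left-to-right distances comparable to $K$, so that the shelling in Step 2 actually generates a quadratic lower bound rather than a merely linear one. Extracting such a quadrilateral from bare non-slimness of a triangle is the key insight of Papasoglu and requires an iterative geometric construction exploiting the hypothetical failure of $\delta$-hyperbolicity at every scale; without this ``balanced'' feature of $Q_K$ the cuts $C_j$ would only be guaranteed nonempty rather than long. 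Once $Q_K$ is in hand, the combinatorial coarea estimate in Step 2 is essentially a planar graph-theoretic argument via Euler's formula, and Step 3 is immediate.
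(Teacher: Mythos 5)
The paper does not prove this theorem; it states it with a citation to Bridson--Haefliger, Chapter III.H, Theorem 2.9, and uses it as a black box. So there is no paper proof to compare yours against, and I can only assess the proposal on its own terms.

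Your overall strategy --- contrapositive via a coarea-type quadratic lower bound on filling area --- is one of the standard routes to this result, in the spirit of Papasoglu/Bowditch/Ol$'$shanskii. Steps 2 and 3 are essentially sound: \emph{if} one is handed a geodesic quadrilateral $Q_K$ of perimeter $O(K)$ whose opposite pairs of sides are both $\gtrsim K$ apart in $X$, then slicing by the $R$-level sets of the distance to the top side, and using that adjacent vertices of the triangulation move the $\Phi$-image by at most $R$, forces any $R$-filling to contain $\gtrsim K/R$ pairwise disjoint ``cuts'' each of length $\gtrsim K/R$, hence $\gtrsim K^2/R^2$ triangles, contradicting the assumed bound $A L(\partial Q_K)+B = O(K)$ once $K$ is large.

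The genuine gap is the one you flag yourself in Step 1, and it is not a cosmetic omission --- it is the theorem. The concrete construction you offer (take a subarc of $\sigma_0$ of length $\approx 2K$ centered at $p$, drop shortest connectors to $\sigma_1\cup\sigma_2$, close up along $\sigma_1\cup\sigma_2$) does not produce a quadrilateral with the ``balanced'' property Step 2 requires. Nothing prevents your two connector geodesics (the left and right sides) from converging: the endpoints $a,b$ of the top side are $2K$ apart, but their feet $a',b'$ on $\sigma_1\cup\sigma_2$ can be arbitrarily close, in which case the cuts $C_j$ each have length $O(1)$, Step 2 only yields a \emph{linear} lower bound, and there is no contradiction. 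Likewise $\dist(p,\sigma_1\cup\sigma_2)\geq K$ controls only the single point $p$, not the entire top side, so the top-to-bottom separation is also not established. The iterative re-selection that fixes this (Papasoglu's argument) is precisely what is missing. As written the proposal is a correct description of the \emph{shape} of a known proof, with the crucial geometric construction explicitly deferred; it is not a proof.

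A minor technical caution for Step 2, should you complete Step 1: the definition of an $R$-filling allows $\Phi$ to be discontinuous on the interior of $\overline{\Db}$, and the vertices of $P$ need not align with the corners of $Q_K$ on $\Sb^1$. Your separation argument must therefore be phrased purely combinatorially in terms of the triangulation $P$ and the values $h_v$, using only $\Phi|_{\Sb^1}=\sigma$ and the diameter bound on triangle images. This is doable but needs to be said; ``$C_j$ separates $\Phi^{-1}(\text{top})$ from $\Phi^{-1}(\text{bottom})$'' is not meaningful as stated since $\Phi$ is not continuous and the top/bottom sides need not be saturated sets of the triangulation.
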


\subsection{Pseudometrics}\label{sec:psuedometrics}

Suppose  $M$ is a connected $n$-manifold. A \emph{pseudo-metric} on $M$ is a family $\{\norm{\cdot}_{p} \}_{p \in M}$ where 
\begin{enumerate} 
\item $\norm{\cdot}_p : T_p M \rightarrow [0,+\infty)$;
\item\label{item:scaling} $\norm{tv}_p = \abs{t} \norm{v}_p$ for all $t \in \Rb$ and $v \in T_pM$;
\item\label{item:locally Euclidean} for every coordinate chart $(U,\psi)$ and $p \in U$, there exist $C > 1$ and an open neighborhood $V$ of $p$ in $U$  such that 
$$
\frac{1}{C}\norm{d(\psi)_q v} \leq \norm{v}_q \leq C \norm{d(\psi)_q v}
$$
for all $q \in V$ and $v \in T_q M$. 
\end{enumerate} 

If $I \subset \Rb$ is an interval, a curve $\sigma : I \rightarrow M$ is \emph{(locally) absolutely continuous} if for every $t_0 \in I$ and every coordinate chart $(U,\psi)$ containing $\sigma(t_0)$ there exists $\epsilon > 0$ such that 
$$
\psi \circ \sigma|_{I \cap [t_0-\epsilon, t_0+\epsilon]} : I \cap [t_0-\epsilon, t_0+\epsilon] \rightarrow \Rb^n
$$
is absolutely continuous. Notice that for such a curve, the derivative $\sigma'(t) \in T_{\sigma(t)} M$ exists for almost-every $t \in I$ with respect to the Lebesgue measure. 

Following~\cite{Venturini1989}, we say that a pseudo-metric $\{\norm{\cdot}_{p} \}_{p \in M}$ is \emph{strongly integrable} if for every absolutely continuous curve $\sigma : [a,b] \rightarrow M$ the almost everywhere defined function 
$$
t \in [a,b] \mapsto \norm{\sigma'(t)}_{\sigma(t)} \in [0,+\infty)
$$
is Lebesgue integrable. Given such a curve, its \emph{integrated length} is 
$$
L_I(\sigma) : = \int_a^b \norm{\sigma'(t)}_{\sigma(t)} dt.
$$
The \emph{integrated distance} $\dist : M \times M \rightarrow [0,\infty)$ associated to $\{\norm{\cdot}_{p} \}_{p \in M}$ is  defined  by 
$$
\dist(p,q) : = \inf \left\{ L_I(\sigma) : \sigma : [a,b] \rightarrow M \text{ abs. cont. and } \sigma(a) = p, \, \sigma(b) = q\right\}.
$$
Then $\dist$ is a metric on $M$ which generates the standard topology:
\begin{enumerate}[label=(\alph*)]
\item By construction, $\dist$ satisfies the triangle inequality. 
\item Property~\eqref{item:scaling} implies that $\dist(p,q) = \dist(q,p)$ for all $p,q \in M$. 
\item Property~\eqref{item:locally Euclidean} implies that $\dist(p,q) > 0$ when $p \neq q$ and that $\dist$ generates the standard topology. 
\end{enumerate} 
Notice that, by definition, $(M,\dist)$ is also a length metric space. 

Finally, we show that the generalized quasi-hyperbolic metrics are strongly integrable. 

\begin{proposition}\label{prop:q is strongly integrable} If $\Omega \subset \Kb^d$ is a bounded domain, then $\mathfrak{q}_\Omega^{(k)}$ is a strongly integrable pseudo-metric. 
\end{proposition}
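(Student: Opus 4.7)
The plan is to verify the three axioms of a pseudo-metric and then the strong integrability condition; only the last requires nontrivial work, and it reduces to a measurability statement that I single out as the main obstacle.

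For $t \neq 0$ the equivalence $tv \in V \iff v \in V$ gives $\delta^{(k)}_\Omega(p; tv) = \delta^{(k)}_\Omega(p; v)$, so $\mathfrak{q}^{(k)}_\Omega(p; tv) = \abs{t}\, \mathfrak{q}^{(k)}_\Omega(p; v)$, which handles absolute homogeneity. The locally Euclidean condition follows from the elementary sandwich
$$
\delta_\Omega(p) \;\leq\; \delta^{(1)}_\Omega(p; v) \;\leq\; \delta^{(k)}_\Omega(p; v) \;\leq\; \diam(\Omega),
$$
valid for all $p \in \Omega$ and $v \neq 0$: on a small neighborhood of any $p_0$ in any chart, $\delta_\Omega$ is bounded below by a positive constant, so this pinches $\mathfrak{q}^{(k)}_\Omega(p; v)$ between constant multiples of $\norm{v}$, which after pre-composing with the derivative of any chart map yields the required two-sided comparison.

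For strong integrability, fix an absolutely continuous $\sigma : [a,b] \to \Omega$. The image $K := \sigma([a,b])$ is compact in $\Omega$, so $c_K := \min_{p \in K} \delta_\Omega(p) > 0$, and the same sandwich gives
$$
0 \;\leq\; \mathfrak{q}^{(k)}_\Omega(\sigma(t); \sigma'(t)) \;\leq\; \frac{\norm{\sigma'(t)}}{c_K}
$$
for almost every $t$. Since $\norm{\sigma'(\cdot)}$ is Lebesgue integrable on $[a,b]$, integrability follows once the integrand is shown to be Lebesgue measurable.

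\textbf{The main obstacle} is this measurability. My approach is to prove that $F(p,v) := \mathfrak{q}^{(k)}_\Omega(p; v)$ is upper semicontinuous on $\Omega \times (\Kb^d \setminus \{0\})$, equivalently that $\delta^{(k)}_\Omega$ is lower semicontinuous there. First I would verify that $G(p, V) := \dist_{\rm Euc}(p, (p+V) \cap \partial \Omega)$ is lower semicontinuous on $\Omega \times \Gr_k(\Kb^d)$: for $(p_n, V_n) \to (p, V)$ and minimizers $q_n \in (p_n + V_n) \cap \partial \Omega$, boundedness of $\Omega$ and closedness of $\partial \Omega$ force a subsequential limit $q \in (p + V) \cap \partial \Omega$, and then $\liminf G(p_n, V_n) = \norm{p - q} \geq G(p, V)$. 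Second, to transfer this to $\delta^{(k)}_\Omega(p; v) = \sup_{V \ni v} G(p, V)$ despite the $v$-dependence of the index set, I would build a local continuous selection: given $v_0 \in V_0 \in \Gr_k(\Kb^d)$, complete $v_0$ to a $\Kb$-basis $(v_0, w_1, \dots, w_{k-1})$ of $V_0$ and set $V(v) := \mathrm{span}_\Kb(v, w_1, \dots, w_{k-1})$ for $v$ near $v_0$. Then $V(v) \in \Gr_k(\Kb^d)$, $v \in V(v)$, and $V(v) \to V_0$, so taking a near-optimal $V_0$ for $(p_0, v_0)$ and applying lower semicontinuity of $G$ at $(p_0, V(v))$ transfers lower semicontinuity to $\delta^{(k)}_\Omega$. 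Consequently $F$ is Borel, and $t \mapsto F(\sigma(t), \sigma'(t))$ is Lebesgue measurable as a composition.
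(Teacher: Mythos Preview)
Your proof is correct and follows the same approach as the paper: verify the pseudo-metric axioms via the sandwich $\delta_\Omega(p) \leq \delta_\Omega^{(k)}(p;v) \leq \diam(\Omega)$, then establish upper semicontinuity of $(p,v) \mapsto \mathfrak{q}_\Omega^{(k)}(p;v)$ to deduce measurability and strong integrability. The paper asserts upper semicontinuity in one line (``since $\Omega$ is open''), whereas you supply the details via lower semicontinuity of $G(p,V)$ on $\Omega \times \Gr_k(\Kb^d)$ together with a local continuous selection $v \mapsto V(v)$; this is exactly the argument that justifies the paper's assertion.
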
 

\begin{proof} Property~\eqref{item:scaling} follows from the definition. To verify Property~\eqref{item:locally Euclidean}, fix $p \in \Omega$. Then fix $r,R > 0$ such that $B_{\Kb^d}(p,2r) \subset \Omega \subset B_{\Kb^d}(p,R)$. Then 
$$
\frac{\norm{v}}{R} \leq \mathfrak{q}_\Omega^{(k)}(q;v) \leq \frac{\norm{v}}{r}
$$
for all $q \in B_{\Kb^d}(p,r)$ and $v \in \Kb^d$. So Property~\eqref{item:locally Euclidean} holds on the chart $(B_{\Kb^d}(p,r), \id)$. Hence $\mathfrak{q}_\Omega^{(k)}$ is a pseudo-metric. Since $\Omega$ is open, the function 
$$
(p,v) \mapsto \mathfrak{q}_\Omega^{(k)}(p;v)
$$
is upper semicontinuous. So $\mathfrak{q}_\Omega^{(k)}$ is a strongly integrable.

\end{proof}


\section{Expanding metrics}\label{sec:expanding metrics}


In this section we prove a more technical version of Theorem~\ref{thm:expanding metrics intro} from the introduction. This added technicality is needed to handle domains with non-smooth boundary in Theorem~\ref{thm:characterization in nonsmooth case}.

Suppose $\Omega$ is a manifold, $M$ is a closed manifold, and $\Phi : M \times (0,1] \rightarrow \Omega$ is a map with the following properties: 
\begin{enumerate} 
\item $\Phi$ is a homeomorphism onto its image,
\item $\Phi$ is locally biLipschitz (i.e. locally biLipschitz in any choice of local coordinates), and 
\item $\Omega \setminus \Phi(M \times (0,1])$ is relatively compact in $\Omega$. 
\end{enumerate}

\begin{theorem}\label{thm:expanding metrics}  Suppose $\{ \norm{\cdot}_z \}_{z \in \Omega}$ is a strongly integrable pseudo-metric on $\Omega$ and $\dist$ is the associated integrated distance. Assume:
\begin{enumerate}[label=(\alph*)]
\item\label{item:vertical estimate} There exists $C_1 > 1$ such that: if $\sigma : [a,b] \rightarrow (0,1]$ is absolutely continuous and $p \in M$, then
$$
\frac{1}{C_1}\frac{\abs{\sigma'(t)}}{\sigma(t)} \leq \norm{ \frac{d}{dt} \Phi(p,\sigma(t)) }_{\Phi(p,\sigma(t))} \leq C_1\frac{\abs{\sigma'(t)}}{\sigma(t)}
$$
for almost every $t \in [a,b]$. 
\item\label{item:almost_orthogonal_splitting} There exists $C_2 > 1$ such that: if $\sigma = (\sigma^1, \sigma^2) : [a,b] \rightarrow M \times (0,1]$ is absolutely continuous, then 
\begin{align*}
 \norm{ \left. \frac{d}{ds}\right|_{s=t} \Phi(\sigma^1(s),\sigma^2(t)) }_{\Phi(\sigma(t))} + \norm{ \left. \frac{d}{ds}\right|_{s=t} \Phi(\sigma^1(t),\sigma^2(s)) }_{\Phi(\sigma(t))} \leq C_2 \norm{ \frac{d}{dt} \Phi(\sigma(t)) }_{\Phi(\sigma(t))}
\end{align*}
for almost every $t \in [a,b]$. 
\item\label{item:expansion near boundary} There exist $C_3, \lambda > 0$ such that: if $\sigma : [a,b] \rightarrow M$ is absolutely continuous and $0 < s_1 \leq s_2 \leq 1$, then 
$$
\norm{ \frac{d}{dt} \Phi(\sigma(t),s_1) }_{\Phi(\sigma(t),s_1)} \geq C_3 \left(\frac{s_2}{s_1}\right)^{\lambda} \norm{ \frac{d}{dt}\Phi(\sigma(t),s_2) }_{\Phi(\sigma(t),s_2)} 
$$
for almost every $t \in [a,b]$. 
\end{enumerate} 
Then $(\Omega, \dist)$ is Gromov hyperbolic. 
\end{theorem}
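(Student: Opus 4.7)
The plan is to verify the linear isoperimetric inequality characterization of Gromov hyperbolicity (Theorem~2.6). Since $\Omega \setminus \Phi(M \times (0,1])$ is relatively compact and $\Phi$ is locally biLipschitz, a bounded modification argument reduces to verifying the isoperimetric inequality for the pulled back pseudo-metric on $M \times (0,1]$. The geometric content of conditions~\ref{item:vertical estimate}--\ref{item:expansion near boundary} is that this pseudo-metric behaves like a warped product analogous to hyperbolic space: vertical curves $t \mapsto \Phi(p,t)$ are quasi-geodesics going from the ``boundary at infinity'' $\{t \to 0\}$ to the compact ``deep'' region $\{t = 1\}$, vertical and horizontal directions are quasi-orthogonal, and the horizontal metric contracts by at least a factor $2^{-\lambda}$ per dyadic doubling of the depth parameter.

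Fix a large constant $R$ depending on $C_1, C_2, C_3, \lambda$ and an upper bound for $\norm{\cdot}$ on the compact set $M \times \{1\}$. Given a rectifiable closed curve $\sigma$ of length $L$, parameterize by arc length and discretize into $N = O(L)$ points $z_i = (p_i, t_i)$ along $\sigma$ with consecutive metric distance $\le R/2$ and $\abs{\log t_{i+1} - \log t_i} \le \log 2$. The log-depth constraint only multiplies $N$ by a constant: indeed, combining~\ref{item:vertical estimate} and~\ref{item:almost_orthogonal_splitting} one has $\int_\sigma \abs{t'(s)}/t(s)\, ds \le C_1 C_2 L$. For each $k \ge 0$, consider the polygonal curve at depth $t^{(k)} := \min(1, 2^k \min_i t_i)$ with vertices $(p_i, t^{(k)})$. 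Using~\ref{item:expansion near boundary}, together with~\ref{item:vertical estimate} and~\ref{item:almost_orthogonal_splitting} to convert the bound $\dist(z_i, z_{i+1}) \le R/2$ into a horizontal metric bound at depth $\max(t_i, t_{i+1})$, the consecutive horizontal distance at depth $t^{(k)}$ is $O(2^{-k\lambda} R)$. Hence the polygonal curve at level $k$ can be coarsened to $N_k = O(N \cdot 2^{-k\lambda})$ representatives spaced at metric distance $\le R$.

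The filling of the disk is built level by level. The ``base layer'' between $\sigma$ and the level-$0$ polygon consists of $N$ triangles of diameter $\le R$ (original arcs together with short vertical lifts). Between level $k$ and level $k+1$, the region is bridged by $O(N_k)$ quadrilaterals (each split into two triangles) whose horizontal sides have length $\le R$ by construction and whose vertical sides have length $\le C_1 \log 2$ by~\ref{item:vertical estimate}. Summing the geometric series, the total contribution is $\sum_k O(N \cdot 2^{-k\lambda}) = O(N) = O(L)$. Finally the ``ceiling'' polygon at $t = 1$ has bounded diameter (by compactness of $M$ and local boundedness of the pseudo-metric) and is filled with $O(1)$ additional triangles. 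This yields a filling by $O(L)$ triangles of diameter $\le R$, establishing the linear isoperimetric inequality.

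The main obstacle will be ensuring the level-by-level coarsening is tree-compatible, so that the bridges actually assemble into a combinatorially valid triangulation of $\overline{\Db}$; this is achieved by choosing a nested sequence of subdivisions of the original discretization indexed by $k$. The secondary, but technically essential, obstacle is cleanly translating $\dist(z_i, z_{i+1}) \le R/2$ into horizontal metric bounds at each depth $t^{(k)}$. This requires an absolute continuity argument that decomposes $\sigma'(s) = (p'(s),t'(s))$ and applies~\ref{item:almost_orthogonal_splitting} to isolate the horizontal component, followed by vertical lifts bounded via~\ref{item:vertical estimate}, before applying~\ref{item:expansion near boundary} to accumulate the factor $2^{-k\lambda}$ at each dyadic scale.
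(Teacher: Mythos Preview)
Your overall strategy---verify the linear isoperimetric inequality via a layered, cone-like filling that exploits the horizontal contraction in~\ref{item:expansion near boundary}---matches the paper's. The paper likewise reduces to $M\times(0,1]$, replaces the curve by a staircase of horizontal and vertical pieces at discrete heights $e^{-kT_0}$, and then inductively pushes the deepest part of the staircase up one level at a time, using~\ref{item:expansion near boundary} to contract horizontal length by a fixed factor $>2$ per step.

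There is, however, a genuine gap in your specific construction. You place \emph{all} level-$0$ vertices at the common depth $t^{(0)}=\min_i t_i$ and assert that the base layer consists of $N$ triangles of diameter $\le R$. But a rectifiable closed curve can span arbitrarily many dyadic height levels: if some $z_i$ has $t_i\approx 1$ while $\min_j t_j=\epsilon\ll 1$, the vertical segment from $(p_i,t_i)$ to $(p_i,t^{(0)})$ has length of order $C_1\log(1/\epsilon)$, not $\le R$. Worse, moving such a shallow edge \emph{down} to depth $t^{(0)}$ \emph{expands} horizontal lengths by~\ref{item:expansion near boundary} (the inequality goes the wrong way when $t^{(k)}<\max(t_i,t_{i+1})$), so your claim that the consecutive horizontal distance at depth $t^{(k)}$ is $O(2^{-k\lambda}R)$ is false for small $k$. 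A curve that makes one deep vertical excursion and then a long horizontal loop near $t=1$ already breaks the argument: the level-$0$ polygon has edges of unbounded length and the base-layer quadrilaterals have unbounded diameter.

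The fix is not to flatten $\sigma$ to a single height but to let the approximating polygon follow the height profile of $\sigma$: replace each short arc $\sigma|_{[s_i,s_{i+1}]}$ by a horizontal piece at the discrete height immediately above it, joined to its neighbors by short vertical pieces (this is the paper's reduction to ``property-$(\star)$'' curves). One then pushes up \emph{only the currently deepest horizontal pieces} one level at a time; your coarsening and geometric-series bookkeeping applies locally to that deepest stratum at each stage and yields the linear bound.
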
 

\begin{remark} Notice that Theorem~\ref{thm:expanding metrics intro} is Theorem~\ref{thm:expanding metrics} in the special case when $\Omega = M \times (0,1]$ and $\Phi = \id$. \end{remark} 

The rest of the section is devoted to a proof of the theorem. We first define a distance on $M \times (0,1]$. Given an absolutely continuous curve $\sigma : [a,b] \rightarrow M \times (0,1]$, we define 
$$
L_{I}(\sigma): =  \int_a^b \norm{(\Phi \circ\sigma)'(t)}_{(\Phi \circ \sigma)(t)} dt.
$$
Since $\Phi$ is locally biLipschitz, a curve  $\sigma : [a,b] \rightarrow M \times (0,1]$ is absolutely continuous if and only if $\Phi \circ \sigma : [a,b] \rightarrow \Omega$ is absolutely continuous. Hence $L_I(\sigma)$ is well defined. 

Then we define a distance on $M \times [0,1)$ by 
$$
\dist_1(p,q) : = \inf \left\{ L_{I}(\sigma) : \begin{array}{c} \sigma : [a,b] \rightarrow M \times (0,1] \text{ abs. cont. and }\\  \sigma(a) = p, \, \sigma(b) = q \end{array} \right\}.
$$
Since $\Phi$ is locally biLipschitz and $\norm{\cdot}$ is a strongly integrable pseudo-metric, $\dist_1$ is a length metric on $M \times [0,1)$ which generates the standard topology. 

Since $\Omega \setminus \Phi(M \times (0,1])$ is relatively compact in $\Omega$, we have the following. 

\begin{observation} The metric spaces $(\Omega, \dist)$ and $(M \times (0,1], \dist_1)$ are quasi-isometric, i.e. there exist $A > 1$, $B > 0$, and $R > 0$ such that 
$$
\frac{1}{A} \dist_1(p,q) - B \leq \dist(\Phi(p), \Phi(q)) \leq A \dist_1(p,q) + B
$$
for all $p,q \in M \times (0,1]$ and 
$$
\dist(x, {\rm image}(\Phi)) \leq R
$$
for all $x \in \Omega$.
\end{observation}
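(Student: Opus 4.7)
The plan is to prove the two distance estimates and the coverage bound separately, each leveraging the compactness of $K := \overline{\Omega \setminus \Phi(M \times (0,1])}$.

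For the upper estimate on $\dist$, the inequality $\dist(\Phi(p),\Phi(q)) \leq \dist_1(p,q)$ is tautological: any absolutely continuous curve $\sigma:[a,b] \to M \times (0,1]$ from $p$ to $q$ gives, via $\Phi$, an absolutely continuous curve $\Phi \circ \sigma$ in $\Omega$ from $\Phi(p)$ to $\Phi(q)$ whose integrated $\dist$-length equals $L_I(\sigma)$ by the very definition of $L_I$; infimizing over $\sigma$ yields the bound with $A=1, B=0$.

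For the lower estimate, I would show $\dist_1(p,q) \leq \dist(\Phi(p),\Phi(q)) + C$ for some fixed $C < \infty$ by modifying a near-minimizing $\dist$-curve. Set $M_1 := \Phi(M \times \{1\})$; since $M$ is closed, $M_1$ is compact, so $K \cup M_1$ is a compact subset of $\Omega$. Because $\dist$ generates the standard topology, the $\dist$-diameter of $K \cup M_1$ is finite; by the same token $M \times \{1\}$ has finite $\dist_1$-diameter. Given $\gamma : [a,b] \to \Omega$ with $L(\gamma) \leq \dist(\Phi(p),\Phi(q)) + \epsilon$, either $\gamma$ avoids the closed set $K$ (in which case $\gamma \subset \Phi(M \times (0,1])$ and lifts through $\Phi^{-1}$ to a competitor for $\dist_1(p,q)$ of the same length), or I let $t_1, t_2$ be the first entry and last exit times of $\gamma$ into $K$ and replace $\gamma|_{[t_1,t_2]}$ with any path inside $M_1$ joining $\gamma(t_1)$ and $\gamma(t_2)$. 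The modified curve then lies in $\Phi(M \times (0,1])$, lifts to a curve from $p$ to $q$ in $M \times (0,1]$, and has $\dist_1$-length exceeding $L(\gamma)$ by at most the finite $\dist$-diameter of $M_1$.

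For the coverage bound, if $x \in \Phi(M \times (0,1])$ then $\dist(x, \mathrm{image}(\Phi)) = 0$; otherwise $x \in \Omega \setminus \mathrm{image}(\Phi) \subset K$, and since $M_1 \subset \mathrm{image}(\Phi)$, we get $\dist(x, \mathrm{image}(\Phi)) \leq \dist(x, M_1) \leq \diam_\dist(K \cup M_1)$. So $R := \diam_\dist(K \cup M_1)$ works.

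The main obstacle will be justifying that the crossing points $\gamma(t_1), \gamma(t_2)$ genuinely lie in $M_1$, so that the detour is well-defined and the lifted curve is continuous at the gluing points. The key topological input is that $\Phi(M \times (0,1))$ is open in $\Omega$ by invariance of domain (valid since $\Phi$ is locally biLipschitz and a homeomorphism onto its image, so $\dim \Omega = \dim M + 1$); this open set is disjoint from $\Omega \setminus \mathrm{image}(\Phi)$, hence disjoint from the closed set $K$, forcing $\mathrm{image}(\Phi) \cap K \subset M_1$. So any crossing point that lies in the image automatically lands in $M_1$. The residual possibility that $\gamma(t_1)$ is a limit of interior image points lying outside $\mathrm{image}(\Phi)$ altogether can be absorbed by cutting $\gamma$ slightly before $t_1$ and after $t_2$ so the endpoints lie safely in $\Phi(M \times (0,1))$, then detouring through $M_1$ by paths whose length is again controlled by the finite diameters above.
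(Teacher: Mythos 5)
Your decomposition into the trivial inequality $\dist(\Phi(p),\Phi(q))\le\dist_1(p,q)$, a detour argument for the reverse inequality, and compactness for coverage is the natural approach, and you have correctly isolated the one genuine subtlety: whether the first and last crossing points $\gamma(t_1),\gamma(t_2)$ of $K$ lie in $M_1$. The difficulty is that your fix for the ``residual'' case (cutting at $t_1'<t_1$ and $t_2'>t_2$) does not yield a uniform bound. The cut point $\gamma(t_1')$ does lie in $\mathrm{image}(\Phi)$, but its preimage $(m_1,s_1)$ has $s_1\in(0,1]$ with no a priori lower bound, and by Lemma~\ref{lem:vertical estimates} the $\dist_1$-distance from $(m_1,s_1)$ up to $M\times\{1\}$ is at least $\frac{1}{C_1C_2}\abs{\log s_1}$. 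This is controlled by neither the $\dist$-diameter of $K\cup M_1$ nor the $\dist_1$-diameter of $M\times\{1\}$, so the assertion that the detour ``is again controlled by the finite diameters above'' is unjustified as written.

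The clean resolution is to rule out the residual case altogether, and this requires a metric input rather than a purely topological one. The lift $\sigma_1:=\Phi^{-1}\circ\gamma|_{[a,t_1)}$ is absolutely continuous with $L_I(\sigma_1)\le L_I(\gamma)<\infty$; by the lower bound of Lemma~\ref{lem:vertical estimates} (which uses only conditions~\ref{item:vertical estimate} and~\ref{item:almost_orthogonal_splitting} and is logically independent of this Observation), the $(0,1]$-coordinate of $\sigma_1$ is therefore bounded away from $0$ on $[a,t_1)$. Since $M$ is compact, $\sigma_1$ extends continuously to $t_1$ with value in $M\times(0,1]$, which forces $\gamma(t_1)=\Phi(\sigma_1(t_1))\in K\cap\mathrm{image}(\Phi)\subset M_1$ and hence $\sigma_1(t_1)\in M\times\{1\}$. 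No cutting is needed; the detour runs entirely inside $M\times\{1\}$ and contributes an additive error of at most its finite $\dist_1$-diameter. This step is not optional: without it one cannot even exclude the possibility that $\mathrm{image}(\Phi)$ fails to be closed in $\Omega$, and a sequence $\Phi(m_n,s_n)\to y\in\Omega\setminus\mathrm{image}(\Phi)$ with $s_n\to 0$ would then have $\dist$-bounded image but (again by Lemma~\ref{lem:vertical estimates}) $\dist_1$-unbounded preimage, contradicting the very quasi-isometry you are trying to prove.
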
 

Since Gromov hyperbolicity is preserved by quasi-isometries (see Theorem 1.9 in Chapter III.H in~\cite{BH1999}), it suffices to show that $(M \times (0,1], \dist_1)$ is Gromov hyperbolic. 

We first establish bounds on $\dist_1$ and verify that $\dist_1$ is proper.

\begin{lemma}\label{lem:vertical estimates} If $x,y \in M$ and $s,t \in (0,1]$, then 
\begin{equation*}
\frac{1}{C_1C_2} \abs{\log \frac{s}{t}} \leq \dist_1\Big( (x,s), (y,t)\Big)
\end{equation*}
and 
$$
\dist_1\Big( (x,s), (x,t)\Big) \leq C_1\abs{\log \frac{s}{t}} .
$$

\end{lemma}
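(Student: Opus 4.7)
The plan is to prove the two inequalities separately, using hypothesis~\ref{item:vertical estimate} for the upper bound and combining hypotheses~\ref{item:vertical estimate} and~\ref{item:almost_orthogonal_splitting} for the lower bound.

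For the upper bound, I would construct an explicit competitor: the purely vertical curve that linearly interpolates the logarithm of the height coordinate. More precisely, take $\gamma : [0,L] \to M \times (0,1]$ defined by $\gamma(u) = (x, s \cdot e^{\pm u})$, where the sign and length $L = |\log(s/t)|$ are chosen so that $\gamma(0) = (x,s)$ and $\gamma(L) = (x,t)$. Writing $\gamma(u) = (x,\sigma(u))$, one has $|\sigma'(u)|/\sigma(u) = 1$ almost everywhere, so hypothesis~\ref{item:vertical estimate} gives $\norm{(\Phi \circ \gamma)'(u)}_{\Phi(\gamma(u))} \leq C_1$, and integrating yields $\dist_1((x,s),(x,t)) \leq L_I(\gamma) \leq C_1 |\log(s/t)|$.

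For the lower bound, I would show that every absolutely continuous curve $\sigma = (\sigma^1, \sigma^2) : [a,b] \to M \times (0,1]$ joining $(x,s)$ to $(y,t)$ satisfies $L_I(\sigma) \geq \frac{1}{C_1 C_2} |\log(s/t)|$. At almost every $v \in [a,b]$, apply hypothesis~\ref{item:vertical estimate} to the vertical curve $u \mapsto \Phi(\sigma^1(v),\sigma^2(u))$ at $u = v$ to obtain
\[
\norm{ \left.\frac{d}{du}\right|_{u=v} \Phi(\sigma^1(v), \sigma^2(u)) }_{\Phi(\sigma(v))} \geq \frac{1}{C_1} \frac{|(\sigma^2)'(v)|}{\sigma^2(v)}.
\]
Then hypothesis~\ref{item:almost_orthogonal_splitting} bounds this vertical derivative from above by $C_2 \norm{(\Phi \circ \sigma)'(v)}_{\Phi(\sigma(v))}$. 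Combining the two estimates and integrating from $a$ to $b$ gives
\[
L_I(\sigma) \geq \frac{1}{C_1 C_2} \int_a^b \frac{|(\sigma^2)'(v)|}{\sigma^2(v)} \, dv \geq \frac{1}{C_1 C_2} \left| \int_a^b \frac{(\sigma^2)'(v)}{\sigma^2(v)} \, dv \right| = \frac{1}{C_1 C_2} \left| \log \frac{t}{s} \right|.
\]
Taking the infimum over such $\sigma$ yields the desired bound.

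There is no serious obstacle here; the only mild subtlety is lining up the notation in hypotheses~\ref{item:vertical estimate} and~\ref{item:almost_orthogonal_splitting} (both phrased for arbitrary absolutely continuous curves) with the pointwise comparisons needed at a.e.\ $v$, which is handled simply by freezing either the $M$-coordinate or the height coordinate at the value $\sigma(v)$ and treating the other as the variable curve.
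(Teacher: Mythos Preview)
Your proposal is correct and follows essentially the same approach as the paper: the upper bound via the explicit vertical curve $r \mapsto (x, e^{\pm r} s)$ together with hypothesis~\ref{item:vertical estimate}, and the lower bound by applying~\ref{item:almost_orthogonal_splitting} then~\ref{item:vertical estimate} pointwise along an arbitrary curve and integrating $|(\sigma^2)'|/\sigma^2$.
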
 

\begin{proof} We can assume that $t \geq s$. Consider an absolutely continuous curve $\sigma=(\sigma^1,\sigma^2) : [a,b] \rightarrow M \times (0,1]$ joining $(x,s)$ to $(y,t)$. By ~\ref{item:almost_orthogonal_splitting} and~\ref{item:vertical estimate} we have 
\begin{align*}
L_I(\sigma) & \geq \frac{1}{C_2} \int_a^b \norm{\left.\frac{d}{dr}\right|_{r=t} \Phi(\sigma^1(t),\sigma^2(r)) }_{\Phi(\sigma(t))} dt \geq  \frac{1}{C_1C_2} \int_a^b \frac{\abs{\sigma^{2\prime}(t)}}{\sigma^2(t)} dt \\
& \geq \frac{1}{C_1C_2} \log \frac{\sigma^2(b)}{\sigma^2(a)} = \frac{1}{C_1C_2} \log \frac{t}{s} = \frac{1}{C_1C_2} \abs{\log \frac{s}{t}}.
\end{align*}
Since $\sigma$ was an arbitrary absolutely continuous curve joining $(x,s)$ to $(y,t)$, 
$$
\frac{1}{C_1C_2} \abs{\log \frac{s}{t}} \leq \dist_1\Big( (x,s), (y,t)\Big).
$$

The upper bound follows from considering the curve $r \mapsto (x, e^{-r} t)$ and using~\ref{item:vertical estimate}. 
\end{proof} 

Since $M$ is compact, Lemma~\ref{lem:vertical estimates} implies that $\dist_1$ is proper, and hence geodesic by Proposition~\ref{prop:HopfRinow}. We will verify that $\dist_1$ is Gromov hyperbolic using the linear isoperimetric inequality characterization of Gromov hyperbolicity stated in Theorem~\ref{thm:isomperimetric char of GH}. This argument  has three main steps: 
\begin{enumerate} 
\item First, we show that any rectifiable closed curve can be approximated by an absolutely continuous closed curve (Lemma~\ref{lem:rectifiable to abs cont}). 
\item Second, we show that any absolutely continuous closed curve can be approximated by a curve which consists of ``horizontal'' and ``vertical'' pieces (Lemma~\ref{lem:abs cont to vert/hori}). 
\item Finally, we verify the linear isoperimetric inequality for curves consisting of ``horizontal'' and ``vertical'' pieces (Lemma~\ref{lem:area bound for vert/hori curves}). 
\end{enumerate} 
The argument in Step 3 is  inspired by the proof of~\cite[Proposition 3.7]{MR2448064}.

We begin by fixing some constants. First fix $T_0 > 0$ such that 
$$
C_3 e^{\lambda T_0} >2.
$$
Then let 
$$
L := \max\left\{C_1 T_0,\frac{C_2}{C_3}\right\}
$$
and 
 $$
 R := \max\left\{ 2, \, 2C_1(C_1C_2 + T_0), \, \diam\left(M \times [e^{-T_0}, 1]\right), \, \frac{9L}{2} \right\}. 
 $$

\begin{lemma}\label{lem:rectifiable to abs cont} Given any rectifiable closed curve $\sigma : [a,b] \rightarrow M \times (0,1]$ there exists an absolutely continuous closed curve $\tilde\sigma : [a.b] \rightarrow M \times (0,1]$ such that 
$$
L_I(\tilde \sigma) \leq L(\sigma) + 1
$$
and 
$$
{\rm Area}_R(\tilde \sigma) \leq {\rm Area}_R(\sigma) + L(\sigma)+1.
$$
\end{lemma}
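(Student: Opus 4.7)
The plan is to approximate $\sigma$ by a concatenation of short near-geodesic absolutely continuous segments, and then build an $R$-filling of the approximation by grafting a thin annular ``transition region'' onto a near-optimal $R$-filling of $\sigma$.

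First I would choose a partition $a = t_0 < t_1 < \cdots < t_N = b$ so that each $L(\sigma|_{[t_{i-1}, t_i]}) = L(\sigma)/N$ (using continuity of the length function along $\sigma$), with $N$ chosen to simultaneously make each transition triangle small and keep the total count within $L(\sigma) + 1$; concretely, any $N$ in the range $(2L(\sigma)+1)/R \leq N \leq L(\sigma) + 1$ works, and this range is nonempty because $R \geq 2$. Because $\dist_1$ is a length metric on $M \times (0,1]$ (in fact proper geodesic by Lemma~\ref{lem:vertical estimates} and Proposition~\ref{prop:HopfRinow}), I can then select for each $i$ an absolutely continuous curve $\tilde\sigma_i$ from $\sigma(t_{i-1})$ to $\sigma(t_i)$ with
$$L_I(\tilde\sigma_i) \leq \dist_1(\sigma(t_{i-1}), \sigma(t_i)) + \frac{1}{N}.$$
Concatenating these (and reparametrizing to have domain $[a,b]$) yields an absolutely continuous closed curve $\tilde\sigma$ (closed because $\sigma(t_0) = \sigma(t_N)$), and summing gives
$$L_I(\tilde\sigma) = \sum_{i=1}^N L_I(\tilde\sigma_i) \leq \sum_{i=1}^N \dist_1(\sigma(t_{i-1}), \sigma(t_i)) + 1 \leq L(\sigma) + 1,$$
where the last inequality uses the definition of length as the supremum of polygonal sums.

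For the area bound I would take an optimal $R$-filling $(\Phi_0, P_0)$ of $\sigma$ and decompose $\overline{\Db}$ as a smaller concentric disc $\overline{\Db_1}$ glued to an annulus $A$. Transplanting $(\Phi_0, P_0)$ to $\overline{\Db_1}$ leaves the inner boundary of $A$ parametrizing $\sigma$; on $A$ I then place matched pairs of vertices at the $\sigma(t_i)$ on the inner boundary and at $\tilde\sigma(t_i) = \sigma(t_i)$ on the outer boundary. The $N$ resulting bigon regions (bigons rather than quadrilaterals, since the would-be ``left'' and ``right'' sides collapse to single points) are each turned into a single topological triangle by adjoining one midpoint vertex on one of their arcs. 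Each such transition triangle has image contained in $\sigma([t_{i-1}, t_i]) \cup \tilde\sigma_i([t_{i-1},t_i])$, so has $\dist_1$-diameter at most $2L(\sigma)/N + 1/N \leq R$ by the choice of $N$. The total triangle count is then
$${\rm Area}_R(\sigma) + N \leq {\rm Area}_R(\sigma) + L(\sigma) + 1,$$
and the resulting pair is an $R$-filling of $\tilde\sigma$.

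The main obstacle is the constant juggling: one must simultaneously ensure each transition triangle has image of $\dist_1$-diameter at most $R$ while capping the number of transition triangles at $L(\sigma) + 1$. The decisive move is treating each bigon as a single (topologically degenerate) triangle rather than splitting it into two with a diagonal, which would otherwise force a stronger lower bound on $R$ than the available $R \geq 2$.
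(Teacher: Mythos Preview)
Your argument follows the same route as the paper's: partition $\sigma$ into $N$ short arcs, replace each by an absolutely continuous curve with the same endpoints and comparable length, and pass from a filling of $\sigma$ to one of $\tilde\sigma$ by adjoining $N$ transition cells of diameter at most $R$. The paper's bookkeeping is slightly cleaner: it takes $N=\lfloor L(\sigma)\rfloor +1$, chooses the partition so that each arc has length strictly less than $1$, and then (since $\dist_1(\sigma(t_{i-1}),\sigma(t_i))<1$) picks each $\tilde\sigma_i$ with $L_I(\tilde\sigma_i)<1$ outright, giving each bigon diameter $<2\le R$ with no further constant-matching.

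Two small issues in your version. First, your range $(2L(\sigma)+1)/R\le N\le L(\sigma)+1$ need not contain an integer when $R=2$: for $L(\sigma)=0.7$ it becomes $[1.2,1.7]$. The fix is immediate once you note that the diameter of a bigon is at most half its perimeter, so your bound $2L(\sigma)/N+1/N$ can be halved, widening the admissible range enough to always catch an integer; alternatively, just use the paper's choice of $N$. Second, your ``decisive move'' of treating each bigon as a single triangle is delicate: the triangulation $P$ lives on the disc $\overline{\Db}$, where each annular cell is a genuine quadrilateral (the radial edges do not collapse in the domain, only in the $\Phi$-image). The paper also asserts ${\rm Area}_R(\tilde\sigma)\le{\rm Area}_R(\sigma)+N$ without spelling out this combinatorics, so you are in good company; if pressed, the safe route is to split each cell by a diagonal and accept the bound $+2N\le 2L(\sigma)+2$, which is harmless for the eventual linear isoperimetric inequality.
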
 

\begin{proof} Let $N : = \lfloor L(\sigma) \rfloor+1$. Then we can find a partition $a = t_1 < \dots < t_{N+1} = b$ such that 
 $$
L\left(\sigma|_{[t_i,t_{i+1}]}\right) < 1
 $$
 for all $i=1,\dots, N$. Then 
 $$
  \dist_1(\sigma(t_i), \sigma(t_{i+1})) \leq L\left(\sigma|_{[t_i,t_{i+1}]}\right) < 1.
  $$
 So by the definition of $\dist_1$, for each $i=1,\dots, N$ there exists an absolutely continuous closed curve $\tilde\sigma_i : [t_i,t_{i+1}] \rightarrow M \times (0,1]$ such that $\tilde\sigma_i(t_i) = \sigma(t_i)$, $\tilde\sigma_i(t_{i+1}) = \sigma(t_{i+1})$, and 
 $$
 L_I(\tilde \sigma_i) <1.
 $$
 So if $\tilde \sigma$ is the concatenation of $\tilde\sigma_1,\dots, \tilde \sigma_N$, then 
  $$
 L_I(\tilde \sigma) <N \leq L(\sigma)+1.
 $$
Further, 
  $$
 {\rm diam}\left(\sigma|_{[t_i,t_{i+1}]} \cup \tilde \sigma_i \right) < 2 \leq R
 $$
 and so 
 \begin{equation*}
{\rm Area}_R(\tilde \sigma) \leq {\rm Area}_R(\sigma) + N \leq {\rm Area}_R(\sigma)+L(\sigma)+1. \qedhere
\end{equation*}

\end{proof}

Next we show that an absolutely continuous closed curve can be approximated by a curve which consists of ``horizontal'' and ``vertical'' pieces. More precisely, we say that a curve $\sigma=(\sigma^1,\sigma^2) :[a,b] \rightarrow M \times (0,1]$ is 
\begin{itemize}
\item \emph{vertical} if $\sigma^1$ is constant and $\sigma^2$ induces a diffeomorphism 
$$[a,b] \rightarrow [e^{-kT_0}, e^{-(k+1)T_0}]$$
 for some $k \in \Zb_{\geq 0}$, 
\item \emph{horizontal} if $\sigma^2$ is constant and equals $e^{-kT_0}$ for some $k \in \Zb_{\geq 0}$ and $L_I(\sigma) \leq L$.
\end{itemize}

Notice that if $\sigma$ is vertical, then Condition~\ref{item:vertical estimate}  implies that 
\begin{equation}\label{eqn:vertical segment length}
\frac{1}{C_1} T_0 \leq L_I(\sigma) \leq C_1T_0 \leq L. 
\end{equation} 
A curve $\sigma:[a,b] \rightarrow M \times (0,1]$ has \emph{property-$(\star)$} if there exists a partition $a=t_0 < \cdots < t_N = b$ where each $\sigma|_{[t_i,t_i+1]}$ is either vertical or horizontal. In this case, let $N(\sigma)$ denote the minimal such $N$.

\begin{lemma}\label{lem:abs cont to vert/hori} Given any absolutely continuous closed curve $\sigma : [a,b] \rightarrow M \times (0,1]$ there exists a closed curve $\tilde\sigma : [a.b] \rightarrow M \times (0,1]$ with property-$(\star)$ such that 
$$
N(\tilde \sigma) \leq (1+L_I(\sigma))  \left( 2 +  \frac{2C_1 C_2}{T_0} \right)
$$
and 
$$
{\rm Area}_R(\tilde \sigma) \leq {\rm Area}_R(\sigma) + 2L_I(\sigma)+2.
$$
\end{lemma}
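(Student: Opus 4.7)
The plan is to partition $[a,b]$ into short subintervals, round the vertical coordinate $\sigma^2$ at each partition point to the discrete grid $\{e^{-kT_0}\}_{k \in \Zb_{\geq 0}}$, and then replace $\sigma$ on each subinterval by one horizontal arc at the rounded level followed by a short stack of unit vertical segments bridging to the next level. Concretely, choose $a = t_0 < \cdots < t_N = b$ with $L_I(\sigma|_{[t_i,t_{i+1}]}) \leq 1$ for every $i$ (with equality except possibly for the last piece), so that $N \leq 1 + L_I(\sigma)$. Combining~\ref{item:vertical estimate} with~\ref{item:almost_orthogonal_splitting} as in the proof of Lemma~\ref{lem:vertical estimates} yields
$$
\abs{\log \tfrac{\sigma^2(t_{i+1})}{\sigma^2(t_i)}} \leq C_1 C_2 L_I(\sigma|_{[t_i,t_{i+1}]}) \leq C_1 C_2,
$$
so on each subinterval $\sigma^2$ varies by a factor of at most $e^{C_1 C_2}$. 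Set $m := \lceil C_1 C_2/T_0 \rceil$ and $k_i := \max\{0, \lfloor -\log\sigma^2(t_i)/T_0\rfloor - m\}$, so that $e^{-k_iT_0} \geq \sigma^2(t)$ uniformly for $t \in [t_i, t_{i+1}]$; standard integer rounding then gives $|k_{i+1} - k_i| \leq 1 + C_1 C_2/T_0$. Closedness of $\sigma$ forces $\sigma^1(a) = \sigma^1(b)$ and $k_0 = k_N$.

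Define $\tilde\sigma$ by concatenating, over $i = 0, \ldots, N-1$, the horizontal arc $r \mapsto (\sigma^1(r), e^{-k_i T_0})$ on $[t_i, t_{i+1}]$ followed by $|k_{i+1} - k_i|$ unit vertical segments from $(\sigma^1(t_{i+1}), e^{-k_i T_0})$ to $(\sigma^1(t_{i+1}), e^{-k_{i+1}T_0})$. Since $\sigma^2(t) \leq e^{-k_i T_0}$ throughout $[t_i, t_{i+1}]$, applying~\ref{item:expansion near boundary} with $s_1 = \sigma^2(t)$ and $s_2 = e^{-k_i T_0}$ bounds the horizontal norm at level $e^{-k_i T_0}$ by $C_3^{-1}$ times the horizontal component of $\sigma$'s norm; combined with~\ref{item:almost_orthogonal_splitting} this gives each horizontal arc integrated length at most $(C_2/C_3) L_I(\sigma|_{[t_i, t_{i+1}]}) \leq C_2/C_3 \leq L$, so the arc qualifies as horizontal in the sense of the definition. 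Counting then gives
$$
N(\tilde\sigma) \leq N + \sum_{i=0}^{N-1}|k_{i+1}-k_i| \leq N\Bigl(2 + \tfrac{C_1 C_2}{T_0}\Bigr) \leq (1 + L_I(\sigma))\Bigl(2 + \tfrac{2 C_1 C_2}{T_0}\Bigr).
$$

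For the area estimate, fix an optimal $R$-filling of $\sigma$ on a subdisc $\overline{\Db}_{1/2} \subset \overline{\Db}$ and extend it across the annulus $\overline{\Db} \setminus \Db_{1/2}$ using the standard $2N$-triangle triangulation obtained by placing marked vertices $\sigma(t_0),\ldots,\sigma(t_{N-1})$ on the inner circle and $(\sigma^1(t_0), e^{-k_0 T_0}), \ldots, (\sigma^1(t_{N-1}), e^{-k_{N-1} T_0})$ on the outer circle (treating each $\tilde\sigma|_{[t_i,t_{i+1}]}$ as a single boundary edge of the triangulation), then inserting one diagonal in each resulting quadrilateral. The radial distance $\dist_1(\sigma(t_i), (\sigma^1(t_i), e^{-k_i T_0}))$ is bounded by the upper estimate in Lemma~\ref{lem:vertical estimates} as $\leq C_1(m+1)T_0 \leq C_1 C_2 + 2 C_1 T_0$; combined with $L_I(\sigma|_{[t_i,t_{i+1}]}) \leq 1$ and $L_I(\tilde\sigma|_{[t_i,t_{i+1}]}) \leq L + C_1(m+1)T_0$, every one of these $2N$ triangles has all pairwise vertex distances at most $R$, by the choice $R \geq \max\{2C_1(C_1 C_2+T_0),\, 9L/2\}$. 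Since the filling map need not be continuous, each triangle can be realized inside a $\dist_1$-ball of diameter $R$, so we obtain a valid $R$-filling of $\tilde\sigma$ with $\mathrm{Area}_R(\sigma) + 2N \leq \mathrm{Area}_R(\sigma) + 2L_I(\sigma) + 2$ triangles.

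The main technical point is the twin role of the integer shift $m \approx C_1 C_2/T_0$ in the definition of $k_i$: it must be large enough that the regime $\sigma^2(t) \leq e^{-k_i T_0}$ of~\ref{item:expansion near boundary} applies uniformly on each subinterval (so the horizontal length at level $e^{-k_i T_0}$ stays $\leq L$), yet the resulting discrete jumps $|k_{i+1}-k_i|$ and radial distances must still fit under the prescribed thresholds built into the piece count $(1+L_I(\sigma))(2 + 2C_1 C_2/T_0)$ and into the diameter constant $R$ respectively. The constants $L$ and $R$ fixed just before the lemma are calibrated precisely for this purpose.
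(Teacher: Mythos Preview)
Your overall construction matches the paper's: partition $[a,b]$ into pieces of $L_I$-length at most $1$, push each piece to a nearby grid level $e^{-kT_0}$ to get a horizontal arc of length $\leq C_2/C_3\leq L$, then insert at most $1+C_1C_2/T_0$ vertical segments to bridge adjacent levels. Your $N(\tilde\sigma)$ bound is correct (indeed slightly sharper than needed). There is a small arithmetic slip: $C_1(m+1)T_0 \leq C_1^2C_2+2C_1T_0$, not $C_1C_2+2C_1T_0$; but this is harmless since the larger quantity is still dominated by $2C_1(C_1C_2+T_0)\leq R$.

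The gap is in the area estimate. You check only that the four \emph{vertex} images in each annular quadrilateral are pairwise within $R$, then assert that ``since the filling map need not be continuous, each triangle can be realized inside a $\dist_1$-ball of diameter $R$.'' This is not correct: in an $R$-filling the map $\Phi$ is constrained on $\Sb^1$ (here $\Phi|_{\Sb^1}=\tilde\sigma$), and you have also committed to $\Phi=\sigma$ on the inner circle in order to import the optimal filling of $\sigma$. Hence the image of each annular triangle must contain the full arc of $\tilde\sigma$ (resp.\ $\sigma$) lying on its boundary edge, not just the endpoints. For the triangle carrying the outer edge, this arc is $\tilde\sigma|_{[t_i,t_{i+1}]}$, whose length is up to $L+C_1(m+1)T_0$; together with the inner vertex $\sigma(t_i)$ the resulting diameter is bounded only by roughly $\tfrac12\bigl(1+L+3C_1(m+1)T_0\bigr)$, which need not be $\leq R$ for all admissible choices of $C_1,C_2,C_3,T_0$.

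The paper circumvents this by a two-step bigon replacement rather than a single quadrilateral: first replace $\sigma|_{[t_i,t_{i+1}]}$ by $V_{1,i}\cup H_i\cup V_{2,i}$ (where $V_{1,i},V_{2,i}$ are the short vertical connectors from $\sigma(t_i),\sigma(t_{i+1})$ up to level $e^{-h_iT_0}$), and then separately replace $V_{2,i}^{-1}\cup V_{1,i+1}$ by $V_i$. Each of these two bigons is a closed loop whose full image (not just its vertices) has diameter $\leq R$; this is where the specific form of the constant $R\geq 2C_1(C_1C_2+T_0)$ is used. Adopting this splitting, or equivalently inserting the auxiliary vertices $(\sigma^1(t_{i+1}),e^{-k_iT_0})$ on the outer circle so that the horizontal and vertical portions of $\tilde\sigma$ land in separate triangles, would close your gap.
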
 

\begin{proof} Let $N_0 : = \lfloor L_I(\sigma) \rfloor+1$. Then we can find a partition $a = t_1 < \dots < t_{N_0+1} = b$ such that 
 $$
L_I\left(\sigma|_{[t_i,t_{i+1}]}\right) < 1
 $$
 for all $i=1,\dots, N_0$. Let $h_i$ be the largest integer such that 
$$
\sigma^2([t_i, t_{i+1}]) \leq e^{-h_i T_0}. 
$$
Define $H_i : [t_i, t_{i+1}] \rightarrow M \times (0,1]$ by 
$$
H_i(t) = (\sigma^1(t), e^{-h_i T_0} ). 
$$
Then ~\ref{item:almost_orthogonal_splitting} and ~\ref{item:expansion near boundary}  imply that 
$$
L_I(H_i) \leq \frac{C_2}{C_3} L_I(\sigma|_{[t_i,t_{i+1}]}) \leq \frac{C_2}{C_3} \leq L. 
$$

Next let $V_i$ be a union of vertical curves joining $(\sigma^1(t_{i+1}), e^{-h_{i} T_0})$ to $(\sigma^1(t_{i+1}), e^{-h_{i+1} T_0})$. Notice that Lemma~\ref{lem:vertical estimates} implies that 
\begin{align*}
\abs{h_i-h_{i+1}} &  \leq 1 + \max_{s,t \in [t_i, t_{i+2}] } \frac{1}{T_0} \log \frac{\sigma^2(t)}{\sigma^2(s)} \leq 1+ \frac{C_1 C_2}{T_0} L_I( \sigma|_{[t_i, t_{i+2}]}) \\
& \leq 1+ \frac{2C_1 C_2}{T_0}.
\end{align*}
So $V_i$ consists of the union of at most $1+\frac{2C_1 C_2}{T_0}$ vertical curves.

Let $\tilde \sigma$ be the concatenation of the curves $\{H_i \cup V_i\}_{i=1}^{N_0}$. Then 
\begin{equation*}
N(\tilde \sigma) \leq N_0 \left( 2 + \frac{2C_1 C_2}{T_0} \right) \leq (1+L_I(\sigma))  \left( 2 +  \frac{2C_1 C_2}{T_0} \right). 
\end{equation*}

Next let $V_{1,i}$ be a smooth parametrization of $\{\sigma^1(t_i)\} \times [\sigma^2(t_i), e^{-h_{i} T_0}]$ and let $V_{2,i}$ be a smooth parametrization of $\{\sigma^1(t_{i+1})\} \times [\sigma^2(t_{i+1}), e^{- h_{i} T_0}]$. Lemma~\ref{lem:vertical estimates} implies that 
$$
\max_{s,t \in [t_i, t_{i+1}] } \log \frac{\sigma^2(t)}{\sigma^2(s)} \leq C_1 C_2 L_I\left(\sigma|_{[t_i,t_{i+1}]}\right) < C_1 C_2. 
$$
So using Lemma~\ref{lem:vertical estimates} again, $V_{1,i}$ and $V_{2,i}$ each have $L_I$-length at most $C_1(C_1C_2+T_0)$. 

Notice that $\sigma|_{[t_i, t_{i+1}]}$ and $V_{1,i} \cup H_i \cup V_{2,i}$ have the same endpoints and 
\begin{align*}
{\rm diam} & \left( \sigma|_{[t_i, t_{i+1}]} \cup V_{1,i} \cup H_i \cup V_{2,i}\right) \leq \frac{1}{2}L_I\left( \sigma|_{[t_i, t_{i+1}]} \cup V_{1,i} \cup H_i \cup V_{2,i}\right) \\
& \leq \frac{1}{2} + \frac{C_2}{2C_3} + C_1(C_1C_2 + T_0) \leq R. 
\end{align*}
Further, $V_i$ and $V_{2,i} \cup V_{1,i+1}$ have the same endpoints and 
\begin{align*}
{\rm diam} & \left( V_i \cup V_{1,i} \cup  V_{2,i}\right) \leq \frac{1}{2}L_I\left( V_i \cup V_{1,i} \cup  V_{2,i} \right) \leq L_I\left(  V_{1,i} \cup  V_{2,i} \right) \\
& \leq 2C_1(C_1C_2 + T_0) \leq R. 
\end{align*}
So
\begin{equation*}
{\rm Area}_R(\tilde \sigma) \leq {\rm Area}_R(\sigma) + 2N_0 \leq {\rm Area}_R(\sigma) +2L_I(\sigma) +2. \qedhere
\end{equation*}

 \end{proof} 
 
Finally, we verify the linear isoperimetric inequality for curves with property-$(\star)$.
 
 \begin{lemma}\label{lem:area bound for vert/hori curves} If $\sigma : [a,b] \rightarrow M \times (0,1]$ is a closed curve with property-$(\star)$, then 
 $$
{\rm Area}_R(\sigma) \leq N(\sigma).
$$
\end{lemma}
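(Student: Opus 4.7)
The plan is to construct an $R$-filling $(\Phi,P)$ with at most $N = N(\sigma)$ triangles by induction on $N$. For the base case, observe that each piece has $L_I$-length at most $L$, so a closed curve with $N$ pieces has total length at most $NL$ and hence $\dist_1$-diameter at most $NL/2$. When $N\leq 9$, the diameter is at most $9L/2\leq R$, so the trivial triangulation of $\overline{\Db}$ into a single triangle (with $\Phi$ extending $\sigma$ arbitrarily into its own image) is a valid $R$-filling and $1\leq N$.

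For the inductive step ($N\geq 10$) the key idea is to locate a \emph{peak} of $\sigma$: a vertex, or a maximal plateau of consecutive vertices, attaining the maximum level $k^* := \max_i k_i$, where $k_i \in \Zb_{\geq 0}$ is defined by $\sigma^2(t_i)=e^{-k_i T_0}$. Since horizontal pieces preserve level and vertical pieces change it by $\pm 1$, a peak is always flanked by two vertical pieces entering and leaving level $k^*$. If the peak is a single vertex $v_j$, then both flanking vertical pieces lie on the fiber $\{\sigma^1(t_j)\}\times(0,1]$ and trace the same vertical segment in opposite directions; in particular $v_{j-1}=v_{j+1}$ in $M\times(0,1]$. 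Adding the chord from $v_{j-1}$ to $v_{j+1}$ in $\overline{\Db}$ cuts off a ``bigon'' triangle whose $\Phi$-image is a single vertical segment of diameter at most $C_1T_0\leq L\leq R$; the complementary region is bounded by a reduced closed curve $\sigma'$ with $N(\sigma')=N-2$ and property-$(\star)$, which by induction admits an $R$-filling with at most $N-2$ triangles. Total: $1+(N-2)=N-1\leq N$.

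If the peak is a plateau with $m\geq 1$ horizontal pieces at level $k^*$, the main tool is condition~\ref{item:expansion near boundary} together with the choice $C_3 e^{\lambda T_0}>2$: precomposing a horizontal piece at level $k^*$ with the vertical projection $(x,e^{-k^*T_0})\mapsto(x,e^{-(k^*-1)T_0})$ produces a path at level $k^*-1$ of $L_I$-length at most half that of the original. Concatenating the lifted pieces in pairs yields $\lceil m/2\rceil$ horizontal pieces at level $k^*-1$, each of length $\leq L$; substituting these for the $m+2$ peak pieces produces a reduced closed curve $\sigma'$ with property-$(\star)$ and $N(\sigma') = N-(m+2)+\lceil m/2\rceil \leq N-2$. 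In the disc this corresponds to cutting off a peak region bounded by the $m+2$ peak arcs and the new chord; using condition~\ref{item:vertical estimate}, condition~\ref{item:almost_orthogonal_splitting}, Lemma~\ref{lem:vertical estimates}, and the per-piece length bound $L$, one shows that pairwise distances among the boundary vertices of the peak region are controlled, so that any polygon triangulation of this region yields triangles of $\Phi$-image diameter $\leq R$.

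The main obstacle will be the bookkeeping for wide plateaus, since a naive polygon triangulation of the peak region uses $m+\lceil m/2\rceil$ triangles, which exceeds the $m+2-\lceil m/2\rceil$ pieces removed once $m\geq 3$. The resolution is to refine the reduction: either split a wide plateau into narrower sub-plateaus by inserting auxiliary ``dip'' vertices and reducing iteratively, or lift further (from level $k^*$ to $k^*-K$ for larger $K$) so that the lifted pieces, with $L_I$-length at most $L/2^K$, can be concatenated into far fewer pieces, in the extreme case until they land in the top slab $M\times[e^{-T_0},1]$ (which has diameter $\leq R$ by our choice). The specific values $R\geq 9L/2$ and $R\geq 2C_1(C_1C_2+T_0)$ appearing in the definition of $R$ are tailored precisely so that both the ``connector'' triangles between the peak region and the reduced curve and the base-case fill respect the diameter bound, allowing the total triangle count amortized over all inductive reductions to remain $\leq N$.
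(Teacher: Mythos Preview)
Your induction setup, base case, and the single-vertex peak reduction are all fine and match the paper's Case~1. The genuine gap is in your handling of plateaus.

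First, your case analysis is incomplete: you assert that a peak is ``always flanked by two vertical pieces,'' but this fails when the entire curve is horizontal at a single level. The paper treats this separately (Case~2), lifting by \emph{two} levels so that four consecutive pieces merge into one; this gives $N(\tilde\sigma)\le N(\sigma)/4+1$ and a fill of the intermediate region by $\le N(\sigma)/4+1$ cells, each of diameter $\le 9L/2\le R$, yielding ${\rm Area}_R(\sigma)\le N(\sigma)/2+2\le N(\sigma)$ once $N(\sigma)\ge 4$. A one-level lift here would leave you off by one.

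Second, for a flanked plateau of width $m\ge 1$, you correctly identify that a naive polygon triangulation of the peak region costs too many triangles when $m\ge 3$, but your proposed fixes (inserting auxiliary ``dip'' vertices, or lifting by an unspecified $K$) are not carried out and the final appeal to ``amortized'' counting is not a proof. In fact the bookkeeping worry is a red herring: the $R$-filling is allowed to have interior edges, so you may insert vertical connectors between consecutive lifted pairs and fill the peak region with $\lceil m/2\rceil$ ``pair-quadrilaterals,'' each of boundary length $\le 5L$ and hence diameter $\le 5L/2\le R$; then ${\rm Area}_R(\sigma)\le \lceil m/2\rceil + N(\sigma') \le N(\sigma)-1$. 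The paper avoids even this computation by never processing a whole plateau at once: it treats only the local patterns $V_1HV_2$ (Case~3(a), replacing three pieces by one lifted piece) and $VH_1H_2$ (Case~3(b), replacing three pieces by one lifted piece and one new vertical), each time adding exactly one triangle and decreasing $N$ by at least one.
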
 

\begin{proof} We apply induction on $N(\sigma)$. Fix a  closed curve $\sigma=(\sigma^1,\sigma^2) : [a,b] \rightarrow M \times (0,1]$ with property-$(\star)$. Let $h(\sigma) \in \Zb_{\geq 0}$ be the number satisfying 
$$
e^{-h(\sigma)T_0} = \min\left\{ \sigma^2(t) : t \in [a,b] \right\}. 
$$
If $h(\sigma) \leq 1$, then 
$$
{\rm diam}(\sigma) \leq {\rm diam}(M \times [e^{-T_0}, 1]) \leq R
$$
and so 
$$
{\rm Area}_R(\sigma) \leq 1 \leq N(\sigma).
$$
If $N(\sigma) \leq 3$, then
$$
{\rm diam}(\sigma) \leq \frac{1}{2} L_I(\sigma) \leq \frac{3L}{2} \leq R
$$
 and so 
 $$
{\rm Area}_R(\sigma) \leq 1 \leq N(\sigma).
$$
Hence we can assume that $h(\sigma) \geq 2$ and $N(\sigma) \geq 4$. 

Fix a partition $a=t_0 < \cdots < t_{N(\sigma)} = b$ where each $\sigma|_{[t_i,t_i+1]}$ is either vertical or horizontal. Then, by definition,
$$
\max L_I\left(\sigma|_{[t_i,t_i+1]}\right) \leq L.
$$
 
 \medskip
 
\noindent \textbf{Case 1:} Assume that no horizontal piece of $\sigma$ has image in $M \times \{e^{-h(\sigma)T_0}\}$. Then there exist $x \in M$ and two adjacent vertical pieces $V_1$ and $V_2$ of $\sigma$ both parameterizing $\{ x\} \times [ e^{-(h(\sigma)-1)T_0}, e^{-h(\sigma)T_0}]$, but with opposite orientation. Let $\tilde \sigma$ be obtained from $\sigma$ by removing $V_1$ and $V_2$. By~\eqref{eqn:vertical segment length},
 $$
\diam(V_1 \cup V_2) \leq L \leq R.
 $$
So  
 $$
{\rm Area}_R(\sigma) \leq {\rm Area}_R(\tilde \sigma) + 1
$$
and hence, by induction, 
  $$
{\rm Area}_R(\sigma) \leq N(\tilde \sigma) + 1 \leq N(\sigma)-1. 
$$

\medskip

\noindent \textbf{Case 2:} Assume $\sigma \subset  M \times \{e^{-h(\sigma)T_0}\}$, i.e. $\sigma$ consists of only horizontal pieces. Let $\tilde \sigma(t)$ be the curve $\tilde \sigma(t) = (\sigma_1(t), e^{-(h(\sigma)-2)T_0})$. Then let $H_i : = \sigma|_{[t_i,t_i+1]}$ and let $\tilde H_i = \tilde \sigma|_{[t_i,t_i+1]}$. Since $C_3 e^{\lambda T_0} >2$, if $k \leq 3$, then ~\ref{item:expansion near boundary} implies that  
$$
L_I( \tilde H_i \cup \cdots \cup \tilde H_{i+k}) \leq L.
$$
Hence $N(\tilde \sigma) \leq \frac{N(\sigma)}{4}+1 $. Let $V_i$ be the union of two vertical curves joining $\sigma(t_i)$ to $\tilde\sigma(t_i)$. By~\eqref{eqn:vertical segment length}, $L_I(V_i) \leq 2L$. Then  for $k \leq 3$, 
\begin{align*}
{\rm diam}& \left( V_i \cup H_i \cup \cdots \cup H_{i+k} \cup V_{i+k} \cup \tilde H_{i+k} \cup\cdots \cup \tilde H_i\right) \\
& \leq \frac{1}{2} L_I\left( V_i \cup H_i \cup \cdots \cup H_{i+k} \cup V_{i+k} \cup \tilde H_{i+k} \cup\cdots \cup \tilde H_i\right)\\
& \leq \frac{9}{2}L \leq R.
\end{align*}
 So 
  $$
{\rm Area}_R(\sigma) \leq {\rm Area}_R(\tilde \sigma) +  \frac{N(\sigma)}{4}+1.
$$
 Thus, by induction and the fact that $N(\sigma) \geq 4$,
  $$
{\rm Area}_R(\sigma) \leq N(\tilde \sigma)+\frac{N(\sigma)}{4}+1 \leq  \frac{N(\sigma)}{2}+2 \leq N(\sigma).
$$

\medskip

\noindent \textbf{Case 3:} Assume $\sigma$ has a horizontal piece in $M \times \{e^{-h(\sigma)T_0}\}$ and $\sigma$ is not contained in $M \times \{e^{-h(\sigma)T_0}\}$. Then at least one of the following subcases must occur.

\medskip

\noindent \textbf{Case 3(a):} There exist $x,y \in M$ and adjacent pieces $V_1$, $H$, $V_2$ where $V_1$ joins $(x,e^{-(h(\sigma)-1)T_0})$ to $(x,e^{-h(\sigma)T_0})$, $H$ joins $(x,e^{-h(\sigma)T_0})$ to $(y,e^{-h(\sigma)T_0})$, and $V_2$ joins $(y,e^{-h(\sigma)T_0})$ to $(y,e^{-(h(\sigma)-1)T_0})$. 

Let $H= (H^1, e^{-h(\sigma)T_0})$ and then define 
$$
\tilde H(t) = (H^1(t), e^{-(h(\sigma)-1)T_0}).
$$
Let $\tilde \sigma$ be obtained from $\sigma$ by adding $\tilde H$ and removing $V_1$, $H$, $V_2$. Since $C_3 e^{\lambda T_0} >2$, ~\ref{item:expansion near boundary} implies that  
$$
L_I( \tilde H) \leq \frac{1}{2} L_I(H) \leq \frac{L}{2}. 
$$
So $N(\tilde \sigma) \leq N(\sigma)-2$. Further, using ~\eqref{eqn:vertical segment length},
\begin{align*}
{\rm diam}& \left( H \cup V_1 \cup \tilde H \cup V_2 \right) \leq \frac{1}{2} L_I\left(  H \cup V_1 \cup \tilde H \cup V_2 \right) \\
& \leq 2L \leq R.
\end{align*}
 Thus, by induction, 
  $$
{\rm Area}_R(\sigma) \leq {\rm Area}_R(\tilde \sigma) +  1 \leq N(\tilde \sigma) + 1 \leq N(\sigma) -1. 
$$

\noindent \textbf{Case 3(b):} There exist $x_1,x_2,x_3 \in M$ and  adjacent pieces $V$, $H_1$, $H_2$ where $V$ joins $(x_1,e^{-(h(\sigma)-1)T_0})$ to $(x_1,e^{-h(\sigma)T_0})$, $H_1$ joins $(x_1,e^{-h(\sigma)T_0})$ to $(x_2,e^{-h(\sigma)T_0})$, and $H_2$ joins $(x_2,e^{-h(\sigma)T_0})$ to $(x_3,e^{-h(\sigma)T_0})$. 

Let $\tilde V$ be a vertical curve joining $(x_3,e^{-(h(\sigma)-1)T_0})$ to $(x_3,e^{-h(\sigma)T_0})$. Let $H_i = (H^1_i, e^{-h(\sigma)T_0})$ and then define 
$$
\tilde H_i(t) = (H_i^1(t), e^{-(h(\sigma)-1)T_0}).
$$
Let $\tilde \sigma$ be obtained from $\sigma$ by adding $\tilde H_1$, $\tilde H_2$, $\tilde V$ and removing $V$, $H_1$, $H_2$. Since $C_3 e^{\lambda T_0} >2$, ~\ref{item:expansion near boundary} implies that  
$$
L_I( \tilde H_1 \cup \tilde H_2) \leq \frac{1}{2} L_I(H_1 \cup H_2) \leq L. 
$$
So $N(\tilde \sigma) \leq N(\sigma) -1$.  Further, using ~\eqref{eqn:vertical segment length},
\begin{align*}
{\rm diam}& \left( V \cup H_1 \cup H_2  \cup \tilde H_1 \cup \tilde H_2 \cup \tilde V \right) \leq \frac{1}{2} L_I\left( V \cup H_1 \cup H_2  \cup \tilde H_1 \cup \tilde H_2\cup \tilde V \right) \\
& \leq \frac{5}{2}L \leq R.
\end{align*}
 Thus, by induction, 
  \begin{equation*}
{\rm Area}_R(\sigma) \leq {\rm Area}_R(\tilde \sigma) +  1 \leq N(\tilde \sigma) + 1 \leq N(\sigma). \qedhere
\end{equation*}

\end{proof} 

Now Lemmas~\ref{lem:rectifiable to abs cont}, ~\ref{lem:abs cont to vert/hori}, and~\ref{lem:area bound for vert/hori curves} imply that there exists $A,B > 0$ such that 
$$
{\rm Area}_R(\sigma) \leq AL(\sigma) + B 
$$
for any rectifiable closed curve. Hence  $(M \times (0,1], \dist_1)$ is Gromov hyperbolic by Theorem~\ref{thm:isomperimetric char of GH}


\section{Quasi-normal vectors and distance to the boundary}\label{sec:distance to the boundary}


In this section we introduce quasi-normal vectors, which will allow us to decompose a vector into ``quasi-normal'' and ``tangential'' components when the domain has non-smooth boundary.

Given $r,\delta > 0$ and a bounded convex domain $\Omega \subset \Kb^d$, a unit vector $\mathbf{n} \in  \Sb^{({\rm dim}_{\Rb} \Kb)d -1}$ is a \emph{$(r,\delta)$-quasi-normal vector at $x \in \partial \Omega$} if 
$$
\delta_\Omega( x + r\mathbf{n}) \geq \delta
$$
(Notice that we must have $\delta \leq r$).

\begin{example}[Smooth case] Suppose $\Omega \subset \Kb^d$ is a bounded convex domain with $\Cc^1$-smooth boundary. If $\mathbf{n} : \partial \Omega \rightarrow \Sb^{({\rm dim}_{\Rb} \Kb)d -1}$ denotes the inward pointing unit normal vector field, then $\mathbf{n}(x)$ is a quasi-normal vector at $x \in \partial\Omega$ and the constants can be chosen to be independent of $x$. 
\end{example}

\begin{example}[Non-smooth case]\label{ex:non-smooth quasi-normal vectors} Suppose $\Omega \subset \Kb^d$ is a bounded convex domain. Fix $p_0 \in \Omega$ then define $\mathbf{n} : \partial \Omega \rightarrow \Sb^{({\rm dim}_{\Rb} \Kb)d -1}$ by 
$$
\mathbf{n}(x) = \frac{p_0-x}{\norm{p_0-x}}. 
$$
Then $\mathbf{n}(x)$ is a quasi-normal vector at $x \in \partial\Omega$ and the constants can be chosen to be independent of $x$. 
\end{example}

In this section we estimate the distance to the boundary in a direction in terms a ``quasi-normal and tangential decomposition'' of the direction.

\begin{proposition}\label{prop:dist to the boundary in a direction}  Suppose $\Omega \subset \Kb^d$ is a bounded convex domain. For any $r,\delta > 0$ there exists $C > 1$ such that: If 
\begin{enumerate}
\item $x \in \partial \Omega$ and $\mathbf{n}$ is a $(\delta, r)$-quasi-normal vector at $x$, 
\item $p = x+t\mathbf{n}$ where $0 < t \leq r$, 
\item $v = s \mathbf{n}+v_0$ where $s \in \mathbb{R}$ and $(x+\Rb \cdot v_0) \cap \Omega = \emptyset$, 
\end{enumerate} 
then 
$$
\frac{1}{C} t \leq \delta_\Omega(p;\mathbf{n}) \leq t
$$
and
$$
\frac{1}{C}\delta_\Omega(p;v) \leq \frac{\norm{v}}{\frac{\abs{s}}{\delta_\Omega(p;\mathbf{n})}+ \frac{\norm{v_0}}{\delta_\Omega(p;v_0)}} \leq \delta_\Omega(p;v)
$$
In particular, 
$$
\delta_\Omega(p;v) \leq C\min\left\{  \frac{\norm{v}}{\abs{s}} \delta_\Omega(p;\mathbf{n}),  \frac{\norm{v}}{\norm{v_0}} \delta_\Omega(p;v_0)\right\}. 
$$
\end{proposition}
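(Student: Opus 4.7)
The plan is to divide the proposition into three parts: the two-sided bound on $\delta_\Omega(p;\mathbf{n})$, the right inequality of the sandwich in (2), and the left inequality of the sandwich, which carries the main geometric content. The upper bound $\delta_\Omega(p;\mathbf{n})\leq t$ is immediate because $x=p-t\mathbf{n}$ lies on $\partial\Omega\cap(p+\Kb\mathbf{n})$ at Euclidean distance $t$ from $p$. For the lower bound $\delta_\Omega(p;\mathbf{n})\geq t/C$ I would use the defining property $B_{\Kb^d}(x+r\mathbf{n},\delta)\subset\Omega$ of a quasi-normal vector: intersecting this ball and the point $x$ with the $\Kb$-line $p+\Kb\mathbf{n}$, convexity of $\Omega$ forces the cross-section to contain a two-dimensional ``ice-cream cone'' with tip at $x$ and scoop of radius $\delta$ around $x+r\mathbf{n}$, and similar triangles give a distance from $p$ to its slant sides of order $(\delta/r)\,t$. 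The right inequality of the sandwich reduces to the subadditivity $\mathfrak{q}(v)\leq\mathfrak{q}(s\mathbf{n})+\mathfrak{q}(v_0)$ of the function $\mathfrak{q}(w):=\norm{w}/\delta_\Omega(p;w)$, which in turn follows from sublinearity of the Minkowski functional $\mu_p$ of $\Omega-p$: one writes $\mathfrak{q}(w)=\max\{\mu_p(w),\mu_p(-w)\}$ when $\Kb=\Rb$ and $\mathfrak{q}(w)=\sup_{\theta}\mu_p(e^{i\theta}w)$ when $\Kb=\Cb$, and both operations preserve subadditivity.

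The core argument is the left inequality, equivalent to $\mathfrak{q}(v)\geq\frac{1}{C}\bigl(\mathfrak{q}(s\mathbf{n})+\mathfrak{q}(v_0)\bigr)$. I would establish this from two independent lower bounds on $\mathfrak{q}(v)$. First, the tangency hypothesis $(x+\Rb v_0)\cap\Omega=\emptyset$ together with the convexity of $\Omega$ yields, by a separating-hyperplane argument, a nonzero $\Rb$-linear functional $\xi\colon\Kb^d\to\Rb$ with $\xi\equiv c:=\xi(x)$ on $x+\Rb v_0$ and $\xi>c$ on $\Omega$; in particular $\xi(v_0)=0$ and $\xi(\mathbf{n})>0$, the latter because $p=x+t\mathbf{n}\in\Omega$. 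Writing $p+\tau v-x=(t+\tau s)\mathbf{n}+\tau v_0$, we obtain $\xi(p+\tau v-x)=(t+\tau s)\,\xi(\mathbf{n})$, so $p+\tau v\in\Omega$ forces $t+\tau s>0$. Hence the real line $p+\Rb v\subset p+\Kb v$ exits $\Omega$ by the time $|\tau|=t/|s|$, giving $\delta_\Omega(p;v)\leq(t/|s|)\norm{v}$, i.e., $\mathfrak{q}(v)\geq|s|/t$. Second, applying subadditivity to the identity $v_0=v-s\mathbf{n}$ gives $\mathfrak{q}(v)\geq\mathfrak{q}(v_0)-\mathfrak{q}(s\mathbf{n})$.

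A short case analysis based on whether $\mathfrak{q}(s\mathbf{n})\geq\tfrac{1}{2}\mathfrak{q}(v_0)$ combines these two bounds into $\mathfrak{q}(v)\gtrsim\mathfrak{q}(v_0)+\mathfrak{q}(s\mathbf{n})$: in the first case the tangency bound alone gives $\mathfrak{q}(v)\gtrsim|s|/t\asymp\mathfrak{q}(s\mathbf{n})\gtrsim\mathfrak{q}(v_0)+\mathfrak{q}(s\mathbf{n})$; in the second case the subadditivity bound gives $\mathfrak{q}(v)\geq\tfrac{1}{2}\mathfrak{q}(v_0)\gtrsim\mathfrak{q}(v_0)+\mathfrak{q}(s\mathbf{n})$. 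Together with $\delta_\Omega(p;\mathbf{n})\asymp t$ from the first step, this yields the desired constant $C$. The ``in particular'' statement then follows from the elementary inequality $1/(a+b)\leq\min\{1/a,1/b\}$. The only real obstacle I anticipate is careful bookkeeping of how $C$ depends on $r$ and $\delta$; all the geometric input reduces to the separating hyperplane coming from the tangency at $x$ together with the ice-cream cone coming from the quasi-normal ball.
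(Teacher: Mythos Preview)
Your proof is correct and follows the same overall architecture as the paper's: the same two-sided estimate on $\delta_\Omega(p;\mathbf{n})$ via the cone over $B_{\Kb^d}(x+r\mathbf n,\delta)$, and the same case split on whether $\mathfrak q(s\mathbf n)$ dominates $\tfrac12\mathfrak q(v_0)$ (equivalently, $|s|/\delta_1$ versus $\tfrac12\norm{v_0}/\delta_2$).

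The one genuine difference is how you handle the tangential-dominant case of the left inequality. The paper fixes a boundary point $p+\delta_2 e^{i\theta}\frac{v_0}{\norm{v_0}}$, takes a supporting $\Kb$-hyperplane there, and computes $|\langle v,w\rangle|\geq |\langle v_0,w\rangle|-|s|\,|\langle\mathbf n,w\rangle|$ explicitly to obtain $\mathfrak q(v)\geq\tfrac12\mathfrak q(v_0)$. You instead observe once that $\mathfrak q(w)=\sup_{\theta}\mu_p(e^{i\theta}w)$ is subadditive (where $\mu_p$ is the Minkowski functional of $\Omega-p$) and then simply apply it to $v_0=v-s\mathbf n$ to get $\mathfrak q(v)\geq\mathfrak q(v_0)-\mathfrak q(s\mathbf n)$. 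This is a cleaner route to the same inequality: the subadditivity packages the paper's convex-hull and hyperplane computations into one reusable statement, and it makes the right-hand inequality $\mathfrak q(v)\leq\mathfrak q(s\mathbf n)+\mathfrak q(v_0)$ immediate as well. Conversely, the paper's explicit hyperplane calculation has the minor advantage of giving concrete constants and working entirely inside the given $\Kb$-line, without invoking Minkowski functionals. One small remark: your separating-hyperplane derivation of $\delta_\Omega(p;v)\leq (t/|s|)\norm{v}$ is a slight detour; the paper gets this in one line by noting $p-(t/s)v=x-(t/s)v_0\in x+\Rb v_0\subset\Kb^d\setminus\Omega$.
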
 

For the rest of the section fix $x$, $p=x+t\mathbf{n}$, and $v = s\mathbf n + v_0$ as in the statement of the proposition.

The first estimate is a straightforward consequence of convexity. Since $x \in \partial \Omega$ and $\Omega$ contains the interior of the convex hull of $\{x\} \cup B_{\Kb^d}(x+r\mathbf{n}, \delta)$, we have 
\begin{equation}\label{eqn:normal distance versus t} 
\frac{\delta}{r} t \leq \delta_\Omega(p;\mathbf{n}) \leq t.
\end{equation}

To ease notation in the proof of the second estimate,  let
$$
\delta_1 : = \delta_\Omega(p;\mathbf{n}) \quad \text{and} \quad \delta_2 : = \delta_\Omega(p; v_0). 
$$
One inequality in the second estimate follows quickly from convexity.

\begin{lemma} $$
\delta_\Omega(p;v) \geq \frac{\norm{v}}{\frac{\abs{s}}{\delta_1}+\frac{\norm{v_0}}{\delta_2}}.
$$
\end{lemma}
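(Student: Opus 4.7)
The plan is to exploit convexity by taking a supporting hyperplane at the nearest boundary point along the line $p + \Kb v$. Choose $\lambda \in \Kb$ with $\abs{\lambda}\norm{v} = \delta_\Omega(p;v)$ and set $q := p + \lambda v \in \partial\Omega$; by the Hahn--Banach separation theorem there exists an $\Rb$-affine function $\ell = L + c$ on $\Kb^d$ (with $L$ an $\Rb$-linear functional) satisfying $\ell \leq 0$ on $\Omega$ and $\ell(q) = 0$. Since $p$ is interior to $\Omega$ and $\ell$ is nonconstant, a standard argument shows $\ell(p) < 0$ strictly (otherwise perturbing $p$ along $\nabla L$ would produce a point of $\Omega$ with $\ell > 0$).

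Rewriting the identity $\ell(q) = 0$ as
\[
-\ell(p) = L(\lambda s \mathbf{n}) + L(\lambda v_0),
\]
the task reduces to bounding each summand on the right by an appropriate multiple of $-\ell(p)$. The key geometric input is that the $\Kb$-disc $\{p + \mu \mathbf{n} : \mu \in \Kb,\, \abs{\mu} \leq \delta_1\}$ lies in $\overline{\Omega}$, since $\delta_1$ is precisely the distance from the center $p$ to the boundary of the convex planar slice $\Omega \cap (p + \Kb\mathbf{n})$, and a convex set contains the concentric disc of that radius. The analogous disc $\{p + \mu v_0 : \abs{\mu}\norm{v_0} \leq \delta_2\}$ lies in $\overline\Omega$ for the same reason. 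Applying $\ell \leq 0$ on these discs and using that $\mu \mapsto L(\mu \mathbf{n})$ is an $\Rb$-linear function of $\mu \in \Kb$ (so its values at $\pm \mu$ are negatives of each other, yielding $\abs{L(\mu\mathbf{n})} \leq -\ell(p)$ for $\abs{\mu} \leq \delta_1$), I will deduce
\[
\abs{L(\lambda s \mathbf{n})} \leq \frac{\abs{\lambda s}}{\delta_1}\bigl(-\ell(p)\bigr), \qquad \abs{L(\lambda v_0)} \leq \frac{\abs{\lambda}\norm{v_0}}{\delta_2}\bigl(-\ell(p)\bigr).
\]
Substituting into the displayed identity and dividing by $-\ell(p) > 0$ gives $1 \leq \abs{\lambda}\bigl(\abs{s}/\delta_1 + \norm{v_0}/\delta_2\bigr)$; multiplying by $\norm{v}$ and recalling $\abs{\lambda}\norm{v} = \delta_\Omega(p;v)$ yields the claim.

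The only mildly delicate point is the complex case: supporting hyperplanes of $\Omega \subset \Cb^d$ are $\Rb$-affine rather than $\Cb$-affine, so the whole argument must be phrased in terms of $\Rb$-linear functionals. The needed observation is that the restriction of such an $L$ to a complex line takes the form $\mu \in \Cb \mapsto \Real(a\mu)$ for some $a \in \Cb$, so its supremum over $\abs{\mu} \leq r$ equals $\abs{a}r$, which matches the pointwise bound $\abs{L(\mu\mathbf{n})} \leq \abs{a}\abs{\mu}$ and validates the disc-to-line transfer used above. This is a minor technical check, and the rest is a clean convexity computation with no substantive obstacle.
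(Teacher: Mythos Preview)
Your proof is correct and takes a genuinely different route from the paper. The paper argues \emph{primally}: for $\abs{\lambda}$ below the claimed threshold it writes $p+\lambda v$ explicitly as a convex combination of a point in the disc $p+\delta_1(\Db\cap\Kb)\mathbf{n}$ and a point in the disc $p+\delta_2(\Db\cap\Kb)\tfrac{v_0}{\norm{v_0}}$, both of which lie in $\Omega$. You argue \emph{dually}: you fix the nearest boundary point $q$ on $p+\Kb v$, separate with a real supporting functional $\ell$, and bound $L(\lambda s\mathbf{n})$ and $L(\lambda v_0)$ using that $\ell\le 0$ on the two discs. The two approaches are convex-duality mirror images of one another and yield the same constant. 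The paper's version is slightly more self-contained (no Hahn--Banach or supporting-hyperplane lemma needed), while yours makes the role of linearity over $\Kb$ versus $\Rb$ more explicit, which is a pleasant clarification in the complex case.
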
 

\begin{proof} By convexity, $\Omega$ contains the convex hull of $p+\delta_1(\Db \cap \Kb) \mathbf{n}$ and $p+\delta_2(\Db \cap \Kb) \frac{v_0}{\norm{v_0}}$. So if 
$$
\abs{\lambda} < \frac{1}{\frac{\abs{s}}{\delta_1}+\frac{\norm{v_0}}{\delta_2}}, 
$$
then 
$$
p+\lambda v \in\left[p+\frac{\frac{\abs{s}}{\delta_1}}{\frac{\abs{s}}{\delta_1}+\frac{\norm{v_0}}{\delta_2}} \delta_1(\Db \cap \Kb) \mathbf{n}\right] +\left[ p+ \frac{\frac{\norm{v_0}}{\delta_2}}{\frac{\abs{s}}{\delta_1}+\frac{\norm{v_0}}{\delta_2}} \delta_2(\Db \cap \Kb) \frac{v_0}{\norm{v_0}}\right] \subset \Omega. 
$$
Hence 
\begin{equation*}
\delta_\Omega(p;v) \geq \frac{\norm{v}}{\frac{\abs{s}}{\delta_1}+\frac{\norm{v_0}}{\delta_2}}.\qedhere
\end{equation*}
\end{proof}

We divide the proof of the other inequality into two cases depending on the relative sizes of $\frac{\abs{s}}{\delta_1}$ and $\frac{\norm{v_0}}{\delta_2}$.

\begin{lemma}\label{lem:upper bound from normal direction} $\delta_\Omega(p;v) \leq \frac{r}{\delta}  \frac{\delta_1}{\abs{s}}\norm{v}$. In particular, if $\frac{\abs{s}}{\delta_1} \geq \frac{1}{2} \frac{\norm{v_0}}{\delta_2}$, then 
$$
\delta_\Omega(p;v)\leq \frac{3r}{\delta} \frac{\norm{v}}{\frac{\abs{s}}{\delta_1}+ \frac{\norm{v_0}}{\delta_2}}.
$$
\end{lemma}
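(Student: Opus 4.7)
The proof is short and geometric, built around one explicit exit point from $\Omega$ along the line $p + \Kb \cdot v$. The key observation is that the plane $x + \mathrm{span}_\Rb(\mathbf{n}, v_0)$ contains both $p$ and the line $p + \Rb v$, and we can use the tangency hypothesis on $v_0$ to identify a point on that line that lies outside $\Omega$.

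Concretely, I would compute
\[
p - \frac{t}{s}\,v \;=\; x + t\mathbf{n} - \frac{t}{s}(s\mathbf{n} + v_0) \;=\; x - \frac{t}{s}\,v_0,
\]
which lies on the line $x + \Rb \cdot v_0$. By hypothesis (3), this line is disjoint from $\Omega$, so the point $p - \tfrac{t}{s}v$ is not in $\Omega$. Since $\Omega$ is bounded and $p \in \Omega$, continuity along the segment from $p$ to this exterior point forces $(p + \Kb v) \cap \partial\Omega$ to be nonempty with a boundary point at Euclidean distance at most $\frac{t}{\abs{s}}\norm{v}$ from $p$. Therefore
\[
\delta_\Omega(p;v) \;\leq\; \frac{t}{\abs{s}}\norm{v}.
\]
Combining with the already established lower estimate $\delta_1 \geq \tfrac{\delta}{r}\,t$ from \eqref{eqn:normal distance versus t}, i.e.\ $t \leq \tfrac{r}{\delta}\,\delta_1$, yields the desired bound $\delta_\Omega(p;v) \leq \tfrac{r}{\delta}\tfrac{\delta_1}{\abs{s}}\norm{v}$. (The case $s=0$ is vacuous, interpreting the right side as $+\infty$.)

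For the ``in particular'' statement, the hypothesis $\frac{\abs{s}}{\delta_1} \geq \frac{1}{2}\frac{\norm{v_0}}{\delta_2}$ gives
\[
\frac{\abs{s}}{\delta_1} + \frac{\norm{v_0}}{\delta_2} \;\leq\; 3\,\frac{\abs{s}}{\delta_1},
\]
so $\frac{\delta_1}{\abs{s}} \leq \frac{3}{\frac{\abs{s}}{\delta_1} + \frac{\norm{v_0}}{\delta_2}}$, and substituting into the first bound yields the claimed $\frac{3r}{\delta}$ factor.

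There is essentially no obstacle here: the entire argument rests on recognizing the single algebraic identity $p - \tfrac{t}{s}v = x - \tfrac{t}{s}v_0$, together with the tangency hypothesis and the normal estimate \eqref{eqn:normal distance versus t}. The only thing to be careful about is allowing $\Kb = \Cb$ (so that $p + \Kb v$ is a complex line), but since $\tfrac{t}{s} \in \Rb \subset \Kb$ the chosen exterior point still lies on this line, and Euclidean distances are computed in the same way in both cases.
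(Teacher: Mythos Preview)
Your proof is correct and matches the paper's argument essentially line for line: the paper also computes $p - \tfrac{t}{s}v = x - \tfrac{t}{s}v_0 \in x + \Rb \cdot v_0 \subset \Kb^d \setminus \Omega$, concludes $\delta_\Omega(p;v) \leq \tfrac{t}{\abs{s}}\norm{v}$, and then applies \eqref{eqn:normal distance versus t}. The paper leaves the ``in particular'' implicit, whereas you spell out the inequality $\tfrac{\abs{s}}{\delta_1} + \tfrac{\norm{v_0}}{\delta_2} \leq 3\tfrac{\abs{s}}{\delta_1}$; both are the same derivation.
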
 

\begin{proof} Notice that 
$$
p - \frac{t}{s} v= x - \frac{t}{s} v_0 \in x + \Rb \cdot v_0 \subset \Kb^d \setminus \Omega. 
$$
So by Equation~\eqref{eqn:normal distance versus t}, 
\begin{equation*}
\delta_\Omega(p;v) \leq \frac{t}{\abs{s}}\norm{v}\leq \frac{r}{\delta} \frac{\delta_1}{\abs{s}}\norm{v}. \qedhere
\end{equation*}
\end{proof}

\begin{lemma} If $\frac{\abs{s}}{\delta_1} \leq \frac{1}{2} \frac{\norm{v_0}}{\delta_2}$, then 
$$
\delta_\Omega(p;v)\leq 6 \frac{\norm{v}}{\frac{\abs{s}}{\delta_1}+ \frac{\norm{v_0}}{\delta_2}}.
$$
\end{lemma}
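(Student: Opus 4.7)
Under the hypothesis $\frac{|s|}{\delta_1} \leq \frac{1}{2}\frac{\norm{v_0}}{\delta_2}$, the denominator $\frac{|s|}{\delta_1}+\frac{\norm{v_0}}{\delta_2}$ lies in $[\frac{\norm{v_0}}{\delta_2}, \frac{3}{2}\frac{\norm{v_0}}{\delta_2}]$, so the desired estimate reduces to producing a scalar $\lambda \in \Kb$ with $|\lambda| \leq C\delta_2/\norm{v_0}$ and $p + \lambda v \notin \overline{\Omega}$ for an absolute constant $C$; this directly upper bounds $\delta_\Omega(p;v)$ by $|\lambda|\norm{v}$ and yields the claim.

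Fix $\lambda^* \in \Kb$ with $|\lambda^*|\norm{v_0} = \delta_2$ so that $q^* := p + \lambda^* v_0 \in \partial \Omega$ is the nearest boundary point along $p + \Kb v_0$. For any real $c > 1$ the point $p + c\lambda^* v_0 = q^* + (c-1)\lambda^* v_0$ lies outside $\overline{\Omega}$, since the line $p + \Kb v_0$ exits $\overline\Omega$ at $q^*$ on that side, and the decomposition
\[
p + c\lambda^* v = (p + c\lambda^* v_0) + c\lambda^* s\,\mathbf{n}
\]
exhibits $p + c\lambda^* v$ as a perturbation of this exterior point by a vector of Euclidean norm $c|s|\delta_2/\norm{v_0}$, bounded by $c\delta_1/2$ thanks to the hypothesis.

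To upgrade ``exterior point plus small perturbation'' to ``exterior point,'' apply Hahn--Banach to obtain a real-linear supporting functional $\ell : \Kb^d \to \Rb$ for $\overline{\Omega}$ at $q^*$ with operator norm $1$, $\ell(q^*) = 0$, and $\ell \leq 0$ on $\overline\Omega$. From $\ell(q^*) = 0$ one gets $\ell(\lambda^* v_0) = -\ell(p) = |\ell(p)|$, and the $\Rb$-linearity of $\ell$ yields
\[
\ell(p + c\lambda^* v) = (c-1)|\ell(p)| + c\,\ell(\lambda^* s\,\mathbf{n}),
\]
with $|\ell(\lambda^* s\,\mathbf{n})| \leq |s|\,|\lambda^*|$. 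The spindle inclusion $\mathrm{conv}\bigl(\{x\} \cup B_{\Kb^d}(x+r\mathbf{n},\delta)\bigr) \subset \overline{\Omega}$, combined with $p = x + t\mathbf{n}$ and the bound $\delta_1 \leq t$ from~\eqref{eqn:normal distance versus t}, gives the lower bound $|\ell(p)| \geq \delta_\Omega(p) \geq \frac{\delta}{r}\delta_1$. Consequently, once $c$ exceeds a fixed threshold the right-hand side of the displayed identity is strictly positive, placing $p + c\lambda^* v$ outside $\overline\Omega$ and hence $\delta_\Omega(p;v) \leq c|\lambda^*|\norm{v} \leq c\delta_2\norm{v}/\norm{v_0}$.

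The main obstacle is producing the clean constant $6$ advertised in the statement rather than one that scales with $r/\delta$. This is addressed by a case split: when $|s|/\delta_1$ is comparable to $\norm{v_0}/\delta_2$, the construction of Lemma~\ref{lem:upper bound from normal direction} already yields an exterior point on the tangent line $x + \Rb v_0 \subset \Kb^d \setminus \Omega$ at distance of the right order; for strictly smaller $|s|$ the supporting-hyperplane argument above applies. Balancing the two ranges of $|s|$ against each other and optimizing over $c$ produces the stated constant.
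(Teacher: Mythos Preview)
Your approach—support $\overline{\Omega}$ at the nearest boundary point $q^* = p + \lambda^* v_0$ and evaluate the supporting functional at $p + c\lambda^* v$—is exactly the paper's strategy. The gap is in how you bound the $\mathbf{n}$-component of the functional. You use only $\lVert L\rVert = 1$ to get $|L(\lambda^* s\,\mathbf{n})| \leq |s|\,|\lambda^*|$, and then compensate with $|\ell(p)| \geq \tfrac{\delta}{r}\delta_1$. Combining these, the positivity condition becomes
\[
(c-1)\,\frac{\delta}{r}\,\delta_1 \;>\; c\,|s|\,|\lambda^*| \;\;\text{where}\;\; |s|\,|\lambda^*| \leq \tfrac{\delta_1}{2},
\]
which forces $(c-1)\tfrac{\delta}{r} > \tfrac{c}{2}$. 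When $\delta/r \leq \tfrac{1}{2}$ no $c$ satisfies this, so the argument as written fails outright. Your proposed case split does not rescue the constant either: Lemma~\ref{lem:upper bound from normal direction} itself carries the factor $r/\delta$, so splitting between the two regimes yields at best a bound of order $(r/\delta)^2$, never the absolute $6$.

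The missing observation is that the supporting functional at $q^*$ is automatically almost annihilated by $\mathbf{n}$, because the whole $\Kb$-disk $p + \delta_1(\Db\cap\Kb)\mathbf{n}$ lies in $\Omega$ on the negative side of $\ell$. In your normalization this gives $|L(\mathbf{n})| \leq |\ell(p)|/\delta_1$, hence
\[
|L(\lambda^* s\,\mathbf{n})| \;\leq\; |\lambda^*|\,|s|\,\frac{|\ell(p)|}{\delta_1} \;=\; \frac{|s|}{\delta_1}\cdot\frac{\delta_2}{\lVert v_0\rVert}\,|\ell(p)| \;\leq\; \tfrac{1}{2}|\ell(p)|.
\]
Now $\ell(p + c\lambda^* v) \geq (c-1)|\ell(p)| - \tfrac{c}{2}|\ell(p)|$ is nonnegative once $c \geq 2$, giving $\delta_\Omega(p;v) \leq 2\,\tfrac{\delta_2}{\lVert v_0\rVert}\lVert v\rVert$ and then the constant $6$ after comparing $\tfrac{\lVert v_0\rVert}{\delta_2}$ with $\tfrac{|s|}{\delta_1}+\tfrac{\lVert v_0\rVert}{\delta_2}$. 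This is precisely the paper's computation (phrased there as $|\langle \mathbf{n}, w\rangle| \leq \delta_2/\delta_1$), and it is what makes the bound independent of $r,\delta$.
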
 

\begin{proof} Fix $e^{i\theta} \in \Kb \cap \Sb^1$ such that $p+\delta_2 e^{i \theta} v_0 \in \partial \Omega$. Since $\Omega$ is convex, there exists a real affine hyperplane $H$ such that $p+\delta_2 e^{i \theta} \frac{v_0}{\norm{v_0}} \in H$ and $H \cap \Omega = \emptyset$. Then there exists $w \in \Kb^d$ such that 
$$
H = \left\{ z \in \Kb^d : {\rm Re} \ip{z-p, w} = \delta_2 \right\} \quad \text{and} \quad \Omega \subset \left\{ z \in \Kb^d : {\rm Re} \ip{z-p, w} < \delta_2 \right\}.
$$
Then 
$$
p+\frac{\delta_2}{\ip{v,w}}v \in H
$$
and so 
$$
\delta_\Omega(p;v) \leq \frac{\delta_2}{\abs{\ip{v,w}}}\norm{v}\leq \frac{\delta_2}{\abs{\ip{v_0,w}}-\abs{s}\abs{\ip{\mathbf n, w}}} \norm{v}. 
$$
Since $p+\delta_2(\Db \cap \Kb) \frac{v_0}{\norm{v_0}} \subset \Omega$ and $p+\delta_2 e^{i \theta} \frac{v_0}{\norm{v_0}} \in H$, we must have 
$$
\ip{e^{i\theta} v_0, w} = \norm{v_0}.
$$
Since $p+\delta_1(\Db \cap \Kb) \mathbf n \subset \Omega$, we must have 
$$
\abs{\ip{\mathbf n, w}} \leq \frac{\delta_2}{\delta_1} \leq \frac{\norm{v_0}}{2\abs{s}}. 
$$
Hence
\begin{equation*}
\delta_\Omega(p;v) \leq \frac{\delta_2}{\abs{\ip{v_0,w}}-\abs{s}\abs{\ip{\mathbf n, w}}} \norm{v} \leq 2\frac{\delta_2}{\norm{v_0}}\norm{v}\leq 6 \frac{\norm{v}}{\frac{\abs{s}}{\delta_1}+ \frac{\norm{v_0}}{\delta_2}}. \qedhere
\end{equation*}
\end{proof} 

Thus in all cases, 
$$
\delta_\Omega(p;v) \leq \max\left\{ 6,  \frac{3r}{\delta}\right\} \frac{\norm{v}}{\frac{\abs{s}}{\delta_1}+ \frac{\norm{v_0}}{\delta_2}}
$$
and the proof is complete.


\section{Uniformly transverse splittings}\label{sec:QH metric almost orthogonal directions} 


In this section we estimate the generalized quasi-hyperbolic length of a vector in terms a ``quasi-normal and tangential decomposition'' of the vector.

\begin{proposition}\label{prop:estimate on QH in tang/non-tang direction} Suppose $\Omega \subset \Kb^d$ is a bounded convex domain. For any $k=1,\dots, d$ and $r,\delta > 0$ there exists $C > 0$ such that: If 
\begin{enumerate}
\item $x \in \partial \Omega$ and $\mathbf{n}$ is a $(\delta, r)$-quasi-normal vector at $x$, 
\item $p = x+t\mathbf{n}$ where $0 < t \leq r$, 
\item $v = s \mathbf{n}+v_0$ where $s \in \mathbb{R}$ and $(x+\Rb \cdot v_0) \cap \Omega = \emptyset$, 
\end{enumerate} 
then 
$$
\frac{1}{C} \mathfrak{q}^{(k)}_\Omega(p;v) \leq \mathfrak{q}^{(k)}_\Omega(p;s\mathbf{n})+ \mathfrak{q}^{(k)}_\Omega(p;v_0) \leq C \mathfrak{q}^{(k)}_\Omega(p;v).
$$
Moreover, if $v_0 \neq 0$ and $(x + \Kb \cdot v_0) \cap \Omega = \emptyset$, then
$$
\delta^{(k)}_\Omega(p;v_0) \leq C \sup\left\{ \dist_{\rm Euc}(p, (p+V) \cap \partial \Omega)  : 
    V  \in \Gr_k(\Kb^d), \, v_0 \in V, \text{ and } (x+V) \cap \Omega = \emptyset
 \right\}.
$$
\end{proposition}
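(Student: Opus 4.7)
The plan is to bootstrap the one-dimensional estimates of Proposition~\ref{prop:dist to the boundary in a direction} to $k$-dimensional slices via a ``shear'' construction linking $k$-dimensional subspaces containing $v$ with those containing $v_0$, using the decomposition $v = s\mathbf{n} + v_0$. Quasi-normality of $\mathbf{n}$ forces $\mathbf{n}$ and $v_0$ to be uniformly transverse: since $B_{\Kb^d}(x + r\mathbf{n}, \delta) \subset \Omega$ is disjoint from $x + \Rb v_0 \subset \Kb^d \setminus \Omega$, the angle between them is bounded below by a constant depending only on $\delta/r$, giving $\|v\| \asymp |s| + \|v_0\|$; and the convex hull of $\{x\}$ with $B_{\Kb^d}(x + r\mathbf{n}, \delta)$ gives $\delta_\Omega(p) \gtrsim t$, which combined with $\delta^{(k)}_\Omega(p; \mathbf{n}) \leq \delta_\Omega(p; \mathbf{n}) \leq t$ yields $\delta^{(k)}_\Omega(p; \mathbf{n}) \asymp t$.

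For part (ii), convexity and the hypothesis $(x + \Kb v_0) \cap \Omega = \emptyset$ produce a real supporting hyperplane $H$ of $\Omega$ at $x$ containing $x + \Kb v_0$. A direct computation using $\Real\langle \lambda v_0, w\rangle = 0$ for all $\lambda \in \Kb$ (where $w$ is the real normal to $H - x$) shows the maximal $\Kb$-complex subspace $H_0 \subset H - x$ has complex codimension one and contains $v_0$; in particular $\mathbf{n} \notin H_0$ and $|\langle \mathbf{n}, w\rangle| \gtrsim \delta/r$. Given a near-optimal $V \in \Gr_k(\Kb^d)$ containing $v_0$, I would construct the tangential $V^{\rm tan} \in \Gr_k(\Kb^d)$ by starting with $V \cap H_0$ (which has $\Kb$-dimension $\geq k-1$ and contains $v_0$) and extending by an appropriate vector of $H_0$; a convex-combination argument (analogous to the one used for the shear bound in the upper part of (i) below) then shows $\dist_{\rm Euc}(p, (p + V^{\rm tan}) \cap \partial \Omega) \gtrsim \dist_{\rm Euc}(p, (p + V) \cap \partial \Omega)$, yielding (ii).

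For the upper bound of (i), the estimate $\mathfrak{q}^{(k)}_\Omega(p; s\mathbf{n}) \leq C\mathfrak{q}^{(k)}_\Omega(p; v)$ follows immediately from $\delta^{(k)}_\Omega(p; \mathbf{n}) \gtrsim t$ together with $\delta^{(k)}_\Omega(p; v) \leq \delta_\Omega(p; v) \lesssim t\|v\|/|s|$ from Proposition~\ref{prop:dist to the boundary in a direction}. For $\mathfrak{q}^{(k)}_\Omega(p; v_0) \leq C\mathfrak{q}^{(k)}_\Omega(p; v)$, take a near-optimal $V \ni v$ of $\Kb$-dimension $k$ with $\Delta := \dist_{\rm Euc}(p, (p+V) \cap \partial \Omega)$, write $V = \Kb v \oplus V_0$ with $V_0$ orthogonal to $\{\mathbf{n}, v\}$, and set $V' := \Kb v_0 \oplus V_0$. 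For each unit $u = c v_0 + w \in V'$ the unsheared vector $\tilde{u} := c v + w \in V$ satisfies $u = \tilde{u} - c s \mathbf{n}$; a $\theta$-weighted convex combination of $p + (V \cap B_{\Kb^d}(0, \Delta)) \subset \Omega$ with $B_{\Kb^d}(p, \delta_\Omega(p)) \subset \Omega$ shows $p + \lambda u \in \Omega$ whenever $|\lambda|(\|\tilde{u}\|/\Delta + |c s|/\delta_\Omega(p)) \leq 1$. Maximizing over unit $u \in V'$ and invoking the Proposition~\ref{prop:dist to the boundary in a direction} bound $\Delta \lesssim (\|v\|/|s|)\delta_\Omega(p)$ yields $\dist_{\rm Euc}(p, (p + V') \cap \partial \Omega) \gtrsim (\|v_0\|/\|v\|)\Delta$, hence $\delta^{(k)}_\Omega(p; v_0) \gtrsim (\|v_0\|/\|v\|)\delta^{(k)}_\Omega(p; v)$.

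The lower bound of (i) is the dual construction: using (ii), take a tangential near-optimal $V_* \ni v_0$ with $(x + V_*) \cap \Omega = \emptyset$, write $V_* = \Kb v_0 \oplus V_0'$, and define $V := \Kb v \oplus V_0'$. For each $u = \alpha v + w \in V$ the decomposition $u = \alpha s \mathbf{n} + (\alpha v_0 + w)$ has $\alpha v_0 + w \in V_*$, and $V_*$ tangential forces $(x + \Rb(\alpha v_0 + w)) \cap \Omega = \emptyset$; Proposition~\ref{prop:dist to the boundary in a direction} then gives $\delta_\Omega(p; u) \gtrsim \|u\|/(|\alpha s|/t + \|\alpha v_0 + w\|/\delta^{(k)}_\Omega(p; v_0))$, using the ball in $V_*$ of radius $\asymp \delta^{(k)}_\Omega(p; v_0)$ to bound $\delta_\Omega(p; \alpha v_0 + w)$ from below. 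Taking an infimum over unit $u \in V$ recovers the claimed lower bound on $\dist_{\rm Euc}(p, (p + V) \cap \partial \Omega) \leq \delta^{(k)}_\Omega(p; v)$. The main obstacle is the shear construction for the upper bound of (i): the in-slice ball alone cannot absorb the shift $-cs\mathbf{n}$, so one must simultaneously invoke the full Euclidean ball $B_{\Kb^d}(p, \delta_\Omega(p))$ together with the Proposition~\ref{prop:dist to the boundary in a direction} bound on $\Delta$ to balance the two contributions inside the convex combination, and the lower bound of (i) is only possible because (ii) supplies a tangential $V_*$ so that the hypothesis of Proposition~\ref{prop:dist to the boundary in a direction} holds uniformly for all directions in $V$.
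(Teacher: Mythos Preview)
Your overall strategy—shearing between $k$-planes through $v$ and $k$-planes through $v_0$ via the decomposition $v = s\mathbf{n} + v_0$—is the same as the paper's, and your treatment of $\mathfrak{q}^{(k)}_\Omega(p;s\mathbf{n}) \lesssim \mathfrak{q}^{(k)}_\Omega(p;v)$ is essentially identical. The mechanism you use to compare the sheared slices differs: you absorb the shift $-cs\mathbf{n}$ by convex-combining the slice ball with the full Euclidean ball $B_{\Kb^d}(p,\delta_\Omega(p))$, whereas the paper replaces one vector of an orthonormal basis and checks that the new basis remains nearly orthonormal (its Observation~\ref{obs:almost orthogonal vectors}), so that the sheared subspace still contains a ball of comparable radius. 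Both devices are valid for the inequality $\mathfrak{q}^{(k)}_\Omega(p;s\mathbf{n}) + \mathfrak{q}^{(k)}_\Omega(p;v_0) \leq C\,\mathfrak{q}^{(k)}_\Omega(p;v)$, though your stipulation that $V_0$ be orthogonal to both $\mathbf{n}$ and $v$ is impossible when $\mathbf{n}\in V$; that case (where already $v_0\in V$) must be split off.

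There is, however, a real gap in your argument for the other inequality $\mathfrak{q}^{(k)}_\Omega(p;v) \leq C\bigl(\mathfrak{q}^{(k)}_\Omega(p;s\mathbf{n}) + \mathfrak{q}^{(k)}_\Omega(p;v_0)\bigr)$. You invoke the ``moreover'' part to produce a \emph{tangential} $V_*\ni v_0$ with $(x+V_*)\cap\Omega=\emptyset$, so that Proposition~\ref{prop:dist to the boundary in a direction} applies to every direction $\alpha v + w$ in the sheared $V$. But the ``moreover'' statement is only asserted under the stronger hypothesis $(x+\Kb\cdot v_0)\cap\Omega=\emptyset$, whereas the main inequality must hold under merely $(x+\Rb\cdot v_0)\cap\Omega=\emptyset$. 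When $\Kb=\Cb$ these are genuinely different: a real supporting hyperplane through $x+\Rb v_0$ need not contain $\Cb v_0$, so there is in general no tangential $\Kb$-subspace $V_*$ containing $v_0$ at all, and your construction collapses. Your concluding remark that ``the lower bound of (i) is only possible because (ii) supplies a tangential $V_*$'' is thus incorrect. The paper sidesteps this completely: after splitting off the case $|s|\geq\epsilon\|v\|$ (where $\delta^{(k)}_\Omega(p;v)\geq\delta_\Omega(p)\gtrsim\delta_\Omega(p;v)$ directly), it takes the optimal—not tangential—$V_*$ for $v_0$, replaces $v_0/\|v_0\|$ by $v/\|v\|$ in an orthonormal basis, and uses $|s|<\epsilon\|v\|$ to verify near-orthogonality. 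No appeal to the ``moreover'' part is needed. Your convex-combination mechanism can be salvaged the same way: combine the $V_*$-slice ball (radius $\approx\delta^{(k)}_\Omega(p;v_0)$) with the segment $p+\delta_\Omega(p;\mathbf{n})(\Db\cap\Kb)\mathbf{n}$, which yields the desired bound once $|s|\lesssim\|v_0\|$, and handle $|s|\gtrsim\|v_0\|$ separately as the paper does.
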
 

We will use the following observation several times. 

\begin{observation}\label{obs:almost orthogonal vectors} There exists $\epsilon=\epsilon(k) \in (0,1)$ such that: if $u_1,\dots, u_k \in \Kb^d$ are unit vectors with 
$$
\min_{1 \leq j < l \leq k} \abs{\ip{u_j,u_l}} \leq \epsilon,
$$
then $u_1,\dots, u_k$ are linearly independent and 
$$
B_{\Kb^d}(0,\epsilon) \cap {\rm Span}_{\Kb}\{u_1,\dots, u_k\}
$$
is contained in the convex hull of $(\Kb \cap \Db) u_1 \cup \dots \cup (\Kb \cap \Db)u_k$. 

\end{observation}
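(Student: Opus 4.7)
The plan is to prove this as a standard perturbation-of-the-identity fact about the Gram matrix, treating the statement with $\max$ in place of $\min$ (the case $k=2$ coincides, and with $\min$ the assertion is false for $k\geq 3$, e.g.\ $u_1=u_3$, $u_2=e_2$ gives min $=0$ but dependence).

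First I would establish linear independence. Set $G_{jl}=\langle u_j,u_l\rangle$, so $G$ is the Gram matrix. Under the hypothesis, $G_{jj}=1$ and $|G_{jl}|\le \epsilon$ for $j\neq l$. Any real linear combination $\sum a_j u_j$ with real coefficients $a_j$ (and analogously over $\Kb=\Cb$ by separating real/imaginary parts of the coefficients, or directly by viewing the Hermitian Gram matrix) satisfies $\|\sum a_j u_j\|^2 = a^{\ast} G a \ge (1-\epsilon(k-1))\sum|a_j|^2$ by Gershgorin / the triangle inequality. Hence for $\epsilon(k-1)<1$, $G$ is positive definite, forcing linear independence.

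Next I would verify the convex hull containment. The convex hull of $(\Kb\cap\Db)u_1\cup\cdots\cup(\Kb\cap\Db)u_k$ is exactly the set $\{\sum_j \lambda_j u_j : \lambda_j\in\Kb,\ \sum_j|\lambda_j|\le 1\}$, because each extreme-point vertex is of the form $zu_j$ with $|z|\le 1$ and a convex combination of such vertices produces $\sum_j t_j z_j u_j$ with $\sum_j t_j|z_j|\le 1$, and conversely any such combination can be realised by grouping the coefficients on each $u_j$. Given $v = \sum_j c_j u_j$ with $\|v\| < \epsilon$, write $a_j := |c_j|$ and $s := \sum_j a_j$. Expanding,
\[
\|v\|^2 \;=\; \sum_j |c_j|^2 + \sum_{j\ne l} c_j\overline{c_l}\langle u_j,u_l\rangle \;\ge\; \sum_j a_j^2 - \epsilon\sum_{j\ne l}a_j a_l \;=\; (1+\epsilon)\sum_j a_j^2 - \epsilon s^2.
\]
Combining with $\sum_j a_j^2 \ge s^2/k$ (Cauchy--Schwarz) yields
\[
\|v\|^2 \;\ge\; \frac{1-\epsilon(k-1)}{k}\,s^2.
\]

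Finally, I would choose $\epsilon = \epsilon(k)$ small enough that $\epsilon(k-1)\le 1/2$ and $\epsilon\,\sqrt{2k}\le 1$ (e.g.\ $\epsilon(k) := \min\{1/(2k), 1/\sqrt{2k}\}$). Then for $v$ with $\|v\|\le \epsilon$, the displayed inequality gives $s\le \sqrt{2k}\,\|v\| \le \sqrt{2k}\,\epsilon \le 1$, so $v$ lies in the convex hull described above. No step presents a genuine obstacle: the only care is to pick $\epsilon(k)$ making both the Gershgorin bound and the convex-hull bound simultaneously effective.
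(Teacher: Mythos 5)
The paper states this observation without proof, so there is no argument in the source to compare against --- you were asked to supply the missing proof, and you have done so correctly. Your most valuable contribution is noticing that the hypothesis as printed uses $\min$ where $\max$ is clearly intended: with $\min$ the conclusion is false for $k \geq 3$ (your example $u_1 = u_3$, $u_2 = e_2$ gives $\min = 0$ with linear dependence), and in the two places in Section~5 where the observation is invoked (Cases 2 of the two decomposition lemmas) the verification is in fact that \emph{every} pairwise inner product is small. This is a genuine typo in the paper. For the corrected $\max$ statement your proof is sound: the identification of the convex hull with $\{\sum_j \lambda_j u_j : \sum_j \abs{\lambda_j} \leq 1\}$ is correct, the Gram expansion
$\norm{v}^2 \geq (1+\epsilon)\sum_j a_j^2 - \epsilon s^2 \geq \tfrac{1-\epsilon(k-1)}{k}\, s^2$
is correct, and the choice $\epsilon(k) = \min\{1/(2k),\, 1/\sqrt{2k}\} = 1/(2k)$ makes both numbers work out. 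One minor streamlining worth noting: the separate Gershgorin step for linear independence is redundant, since your convex-hull estimate already gives it --- if $\sum_j c_j u_j = 0$ with some $c_j \neq 0$ then $s > 0$ while $\norm{v} = 0$, contradicting $\norm{v}^2 \geq \tfrac{1-\epsilon(k-1)}{k}s^2$ once $\epsilon(k-1) < 1$. A further small merit of your approach over the alternative compactness argument (``if no such $\epsilon$ existed, pass to a convergent subsequence of unit $k$-tuples'') is that it produces an explicit $\epsilon(k)$.
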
 

For the rest of the section fix $x$, $p=x+t\mathbf{n}$, and $v = s\mathbf n + v_0$ as in the statement of the proposition. It suffices to consider the case where $v_0 \neq 0$. We also fix $\epsilon =\epsilon(k) \in (0,1)$ satisfying Observation~\ref{obs:almost orthogonal vectors}.

Since $x \in \partial\Omega$ and  $\Omega$ contains the interior of the convex hull of $\{x\} \cup B_{\Kb^d}(x+r\mathbf{n}, \delta)$, we have 
\begin{equation}\label{eqn:t estimate}
\frac{\delta}{r} t \leq \delta_\Omega(p) \leq t.
\end{equation}

\subsection{The upper bound}

\begin{lemma} 
$\delta^{(k)}_\Omega(p;v) \geq \frac{\delta\epsilon}{r}  \min\left\{ \delta_\Omega(p;v), \delta^{(k)}_\Omega(p;v_0)\right\}$. 
\end{lemma}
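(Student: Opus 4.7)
The plan is to construct a single $k$-dimensional subspace $V \ni v$ such that $\dist(p,(p+V)\cap \partial\Omega)$ is bounded below by a constant multiple of $\min\{\delta_\Omega(p;v),\delta^{(k)}_\Omega(p;v_0)\}$; the lemma then follows immediately from $\delta^{(k)}_\Omega(p;v) \geq \dist(p,(p+V)\cap\partial\Omega)$.

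First, I would fix $V_0 \in \Gr_k(\Kb^d)$ with $v_0 \in V_0$ and $\delta_2 := \dist(p,(p+V_0)\cap\partial\Omega) \geq \tfrac{1}{2}\delta^{(k)}_\Omega(p;v_0)$. Next, I would choose a $(k-1)$-dimensional subspace $W \subset V_0$ with $W \perp v$: set $W := V_0 \cap v^\perp$ when $V_0 \not\subset v^\perp$ (a $(k-1)$-dimensional subspace by a dimension count), and otherwise let $W$ be any $(k-1)$-dimensional subspace of $V_0$ (which automatically satisfies $W \subset V_0 \subset v^\perp$). Setting $V := \Kb v \oplus W$, one checks that $V$ is a $k$-dimensional $\Kb$-subspace containing $v$: the key point is that $v \notin V_0$ in the degenerate case $V_0 \subset v^\perp$ (otherwise $v$ would be orthogonal to itself, forcing $v=0$), so $v \notin W$ in both cases. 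If $e_1,\dots,e_{k-1}$ is an orthonormal basis of $W$, then $\hat v := v/\|v\|,\,e_1,\dots,e_{k-1}$ is an orthonormal basis of $V$.

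The final step is to apply Observation~\ref{obs:almost orthogonal vectors} to this orthonormal basis. The inclusion $p + (\Kb \cap \Db)\delta_\Omega(p;v)\hat v \subset \Omega$ follows from convexity of $(p+\Kb v)\cap\Omega$, and $p + (\Kb \cap \Db)\delta_2 e_i \subset \Omega$ follows from convexity of $(p+V_0)\cap\Omega$ together with $e_i \in W \subset V_0$. Since $\hat v, e_1,\dots,e_{k-1}$ are mutually orthogonal, a rescaled form of Observation~\ref{obs:almost orthogonal vectors} together with convexity of $\Omega$ yields
\[
p + \bigl(B_{\Kb^d}(0,\,\epsilon \min(\delta_\Omega(p;v),\delta_2)) \cap V\bigr) \subset \Omega,
\]
so $\dist(p,(p+V)\cap \partial\Omega) \geq \epsilon \min(\delta_\Omega(p;v),\delta_2) \geq \tfrac{\epsilon}{2}\min(\delta_\Omega(p;v),\delta^{(k)}_\Omega(p;v_0))$. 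Since $\delta \leq r$, this is at least $(\delta\epsilon/r)\min(\delta_\Omega(p;v),\delta^{(k)}_\Omega(p;v_0))$, matching the lemma's bound.

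The main obstacle is constructing a $V$ that simultaneously contains $v$, has dimension exactly $k$, and admits an orthogonal decomposition compatible with $V_0$; the degenerate case $V_0 \subset v^\perp$ (which can indeed occur when the linear relation $\|v_0\|^2 = -s\langle \mathbf{n},v_0\rangle$ holds) is handled cleanly by the case split in the definition of $W$.
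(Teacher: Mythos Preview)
Your argument is correct and in fact cleaner than the paper's, but there is one small slip in the final constant. You obtain $\frac{\epsilon}{2}\min\{\delta_\Omega(p;v),\delta^{(k)}_\Omega(p;v_0)\}$ and then claim that since $\delta\le r$ this dominates $\frac{\delta\epsilon}{r}\min\{\cdots\}$. That inequality requires $r\ge 2\delta$, which is not assumed. The fix is trivial: the supremum defining $\delta^{(k)}_\Omega(p;v_0)$ is actually attained (the paper uses this freely), so you can take $\delta_2=\delta^{(k)}_\Omega(p;v_0)$ exactly, drop the factor $\tfrac12$, and arrive at the bound $\epsilon\min\{\cdots\}\ge\frac{\delta\epsilon}{r}\min\{\cdots\}$, which now follows from $\delta\le r$.

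The paper takes a different route. It splits on the size of the normal component: when $|s|\ge\epsilon\|v\|$, it uses the quasi-normal property directly to bound $\delta_\Omega(p;v)$ by a multiple of $\delta_\Omega(p)$ (this is where the factor $\delta/r$ enters); when $|s|<\epsilon\|v\|$, it builds the test subspace $W=\operatorname{Span}_{\Kb}\{v,u_2,\dots,u_k\}$ with $u_2,\dots,u_k$ orthogonal to $v_0$ (not to $v$), and then needs the smallness of $|s|$ to verify $|\langle v/\|v\|,u_j\rangle|<\epsilon$ in Observation~\ref{obs:almost orthogonal vectors}. Your choice $W\subset V_0\cap v^{\perp}$ produces a genuinely orthonormal basis $\hat v,e_1,\dots,e_{k-1}$, so the Observation applies with no hypothesis on $|s|$ and no case split on its size. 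This is a nicer argument and even yields the sharper constant $\epsilon$ in place of $\frac{\delta\epsilon}{r}$; the only residual case analysis is the harmless dimension count for $V_0\cap v^{\perp}$.
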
 

\begin{proof} We consider two cases based on the size of $s$. 

\medskip

\noindent \textbf{Case 1:} Assume $\abs{s} \geq \epsilon\norm{v}$. Notice that 
$$
p - \frac{t}{s} v= x - \frac{t}{s} v_0 \in x + \Rb \cdot v_0 \subset \Kb^d \setminus \Omega. 
$$
Hence by Equation~\eqref{eqn:t estimate},
$$
\delta_\Omega(p;v) \leq \frac{t}{\abs{s}}\norm{v} \leq \frac{t}{\epsilon} \leq \frac{r}{\delta\epsilon} \delta_\Omega(p).
$$
So
$$
\delta^{(k)}_\Omega(p;v) \geq \delta_\Omega(p) \geq \frac{\delta\epsilon}{r} \min\left\{ \delta_\Omega(p;v), \delta^{(k)}_\Omega(p;v_0)\right\}. 
$$ 

\noindent \textbf{Case 2:} Assume $\abs{s} < \epsilon\norm{v}$. Fix a $\Kb$-linear subspace $V \subset \Kb^d$ with $v_0 \in V$, $\dim_{\Kb} V = k$, and 
$$
\delta^{(k)}_\Omega(p;v_0)  = \dist_{\rm Euc}(p, (p+V) \cap \partial \Omega). 
$$
Then fix an orthonormal basis $u_1,\dots, u_k$ of $V$ with $u_1 = \frac{v_0}{\norm{v_0}}$.  Then let $W : = {\rm Span}_{\Kb}\left\{v, u_2, \dots, u_d\right\}$. 
Notice that 
$$
\delta_\Omega(p;u_j) \geq  \dist_{\rm Euc}(p, (p+V) \cap \partial \Omega) = \delta^{(k)}_\Omega(p;v_0).
$$
So $\Omega$ contains the convex hull of 
$$
p+t(\Kb \cap \Db)\frac{v}{\norm{v}} \cup t (\Kb \cap \Db)u_2 \cup \cdots \cup t(\Kb \cap \Db)u_k
$$
where 
$$
t := \min\left\{ \delta_\Omega(p;v),  \delta_\Omega^{(k)}(p;v)\right\}. 
$$
Also, 
$$
\abs{\ip{\frac{v}{\norm{v}},u_j}} = \abs{\ip{\frac{s\mathbf n}{\norm{v}}, u_j}} \leq \frac{\abs{s}}{\norm{v}} \norm{u_j} < \epsilon.
$$
So by our choice of $\epsilon > 0$, 
$$
p+B_{\Kb^d}(0,\epsilon t) \cap W \subset \Omega.
$$
So 
\begin{equation*}
\delta_\Omega^{(k)}(p;v) \geq \dist_{\rm Euc}(p, (p+W) \cap \partial \Omega) \geq \epsilon t = \epsilon \min\left\{  \delta_\Omega(p;v),  \delta^{(k)}_\Omega(p;v_0)\right\}. 
\end{equation*}
Since $\delta \leq r$, this completes the proof.
\end{proof} 

\begin{lemma}
$\mathfrak{q}^{(k)}_\Omega(p;v) \leq \frac{r^2}{\epsilon\delta^2} \left( \mathfrak{q}^{(k)}_\Omega(p;s \mathbf n)+\mathfrak{q}^{(k)}_\Omega(p;v_0)\right).$ 
\end{lemma}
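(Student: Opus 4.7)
The plan is to chain the previous lemma with the ``no-constant'' half of Proposition~\ref{prop:dist to the boundary in a direction} and two elementary monotonicity relations. The key observation is that whenever $V \supset \Kb w$, the inclusion $(p+V)\cap \partial\Omega \supset (p+\Kb w)\cap \partial\Omega$ yields $\dist_{\rm Euc}(p,(p+V)\cap \partial\Omega) \leq \delta_\Omega(p;w)$; taking the supremum over such $V$ of dimension $k$ gives
\[
\delta_\Omega(p) \leq \delta^{(k)}_\Omega(p;w) \leq \delta_\Omega(p;w)
\]
for every nonzero $w$.

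From the previous lemma,
\[
\mathfrak{q}^{(k)}_\Omega(p;v) \leq \frac{r}{\delta\epsilon} \max\Bigl\{\frac{\norm{v}}{\delta_\Omega(p;v)}, \frac{\norm{v}}{\delta^{(k)}_\Omega(p;v_0)}\Bigr\},
\]
so it suffices to bound each term inside the max by a multiple of $\mathfrak{q}^{(k)}_\Omega(p;s\mathbf n) + \mathfrak{q}^{(k)}_\Omega(p;v_0)$. For the first term, the convexity-based lower bound proved inside Proposition~\ref{prop:dist to the boundary in a direction} gives, with no extra constant,
\[
\frac{\norm{v}}{\delta_\Omega(p;v)} \leq \frac{\abs{s}}{\delta_\Omega(p;\mathbf n)} + \frac{\norm{v_0}}{\delta_\Omega(p;v_0)};
\]
replacing each $\delta_\Omega(p;w)$ in the denominator by the smaller quantity $\delta^{(k)}_\Omega(p;w)$ and recognizing the right-hand side as $\mathfrak{q}^{(k)}_\Omega(p;s\mathbf n) + \mathfrak{q}^{(k)}_\Omega(p;v_0)$ handles this term.

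For the second term, I would use $\norm{v} \leq \abs{s} + \norm{v_0}$ and then compare $\delta^{(k)}_\Omega(p;v_0)$ with $\delta^{(k)}_\Omega(p;\mathbf n)$: since the convex hull of $\{x\} \cup B_{\Kb^d}(x + r\mathbf n, \delta)$ lies in $\overline{\Omega}$, we get $\delta_\Omega(p) \geq \frac{\delta}{r} t$, while $\delta^{(k)}_\Omega(p;\mathbf n) \leq \delta_\Omega(p;\mathbf n) \leq t$ by the monotonicity above and Equation~\eqref{eqn:t estimate}. Combining these gives $\delta^{(k)}_\Omega(p;v_0) \geq \delta_\Omega(p) \geq \frac{\delta}{r}\delta^{(k)}_\Omega(p;\mathbf n)$, and therefore
\[
\frac{\norm{v}}{\delta^{(k)}_\Omega(p;v_0)} \leq \frac{\abs{s}}{\delta^{(k)}_\Omega(p;v_0)} + \mathfrak{q}^{(k)}_\Omega(p;v_0) \leq \frac{r}{\delta}\mathfrak{q}^{(k)}_\Omega(p;s\mathbf n) + \mathfrak{q}^{(k)}_\Omega(p;v_0).
\]
Multiplying the resulting maximum by the prefactor $\frac{r}{\delta\epsilon}$ produces the claimed constant $\frac{r^2}{\epsilon\delta^2}$. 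I do not expect any real obstacle: the argument is purely bookkeeping with the two monotonicity relations between $\delta^{(k)}_\Omega$ and $\delta_\Omega$, and all genuine geometric content has already been absorbed into the previous lemma and Proposition~\ref{prop:dist to the boundary in a direction}.
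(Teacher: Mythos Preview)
Your proposal is correct and follows essentially the same route as the paper's proof: both start from the previous lemma's max bound, handle the $\delta_\Omega(p;v)$ term via the no-constant half of Proposition~\ref{prop:dist to the boundary in a direction} together with the monotonicity $\delta_\Omega^{(k)} \leq \delta_\Omega$, and handle the $\delta^{(k)}_\Omega(p;v_0)$ term via the chain $\delta^{(k)}_\Omega(p;v_0) \geq \delta_\Omega(p) \geq \frac{\delta}{r}t \geq \frac{\delta}{r}\delta^{(k)}_\Omega(p;\mathbf n)$, yielding the same constant $\frac{r^2}{\epsilon\delta^2}$.
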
 

\begin{proof} By the previous lemma 
\begin{equation}\label{eqn:max of two annoying things}
\mathfrak{q}^{(k)}_\Omega(p;v) \leq \frac{r}{\epsilon\delta} \max\left\{\frac{\norm{v}}{\delta_\Omega(p;v)},  \frac{\norm{v}}{\delta_\Omega^{(k)}(p;v_0)}\right\}. 
\end{equation}
By Proposition~\ref{prop:dist to the boundary in a direction}, 
$$
\frac{\norm{v}}{\delta_\Omega(p;v)} \leq \frac{\norm{s\mathbf n}}{\delta_\Omega(p;s\mathbf n)}+\frac{\norm{v_0}}{\delta_\Omega(p;v_0)}.
$$
By definition $\delta_\Omega(p;s\mathbf n) \geq \delta_\Omega^{(k)}(p;s\mathbf n)$ and $\delta_\Omega(p;v_0) \geq \delta_\Omega^{(k)}(p;v_0)$, so 
$$
\frac{\norm{v}}{\delta_\Omega(p;v)} \leq \mathfrak{q}^{(k)}_\Omega(p;s\mathbf n)+\mathfrak{q}^{(k)}_\Omega(p;v_0).
$$
For the other term in~\eqref{eqn:max of two annoying things}, notice that Equation~\eqref{eqn:t estimate} implies that 
\begin{equation*}
\mathfrak{q}^{(k)}_\Omega(p;s \mathbf n) = \frac{\norm{s \mathbf n}}{\delta_\Omega(p;s\mathbf n)} \geq \frac{\norm{ s\mathbf n}}{t} \geq \frac{\delta}{r} \frac{\norm{ s\mathbf n}}{\delta_\Omega(p)} \geq \frac{\delta}{r}\frac{\norm{ s\mathbf n}}{\delta_\Omega^{(k)}(p;v_0)}
\end{equation*} 
and so 
\begin{equation*}
\frac{\norm{v}}{\delta_\Omega^{(k)}(p;v_0)} \leq \frac{\norm{s \mathbf n}+\norm{v_0}}{\delta_\Omega^{(k)}(p;v_0)} \leq \frac{r}{\delta} \mathfrak{q}^{(k)}_\Omega(p;s \mathbf n)+\mathfrak{q}^{(k)}_\Omega(p;v_0). \qedhere
\end{equation*}
\end{proof}

\subsection{The lower bound} Let $C = C(\Omega, r,\delta) > 1$ be as in Proposition~\ref{prop:dist to the boundary in a direction}. Then 
\begin{align*}
\delta_\Omega^{(k)}(p;v) & \leq \delta_\Omega(p;v) \leq C \frac{\norm{v}}{\abs{s}} \delta_\Omega(p; \mathbf n) \leq C \frac{\norm{v}}{\abs{s}} t.
\end{align*}
Hence by Equation~\eqref{eqn:t estimate},
\begin{align}\label{eqn:estimate for distance to boundary} 
\delta_\Omega^{(k)}(p;v) \leq C  \frac{\norm{v}}{\abs{s}}\frac{r}{\delta} \delta_\Omega(p)  \leq C  \frac{\norm{v}}{\abs{s}}\frac{r}{\delta} \delta_\Omega^{(k)}(p; \mathbf n).
\end{align}
So 
\begin{equation}\label{eqn:lower bound in terms of normal metric} 
\mathfrak{q}^{(k)}_\Omega(p;v) = \frac{\norm{v}}{\delta_\Omega^{(k)}(p;v)} \geq \frac{\delta}{Cr} \frac{\abs{s}}{\delta_\Omega^{(k)}(p; \mathbf n)} = \frac{\delta}{Cr} \mathfrak{q}^{(k)}_\Omega(p;s \mathbf n). 
\end{equation} 

Since $\delta_\Omega(x+r\mathbf n) \geq \delta$ and $(x + \Rb \cdot v_0) \cap \Omega = \emptyset$, the angle between $\Rb \cdot \mathbf n$ and $\Rb \cdot v_0$ is bounded below by a constant depending only on $r$ and $\delta$. So there exists a constant $C'> 1$, which only depends on $r$ and $\delta$, such that 
$$
\norm{v_0} \leq C'\norm{v}.
$$

\begin{lemma}\label{lem:lower bound in terms of tangential metric} 
$
\mathfrak{q}^{(k)}_\Omega(p;v_0) \leq  \frac{3CC' r}{\delta \epsilon}  \mathfrak{q}_\Omega^{(k)}(p;v). 
$
\end{lemma}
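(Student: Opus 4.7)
The plan is to reduce the lemma to proving the inequality $\delta^{(k)}_\Omega(p;v)\leq C_0\,\delta^{(k)}_\Omega(p;v_0)$ for a suitable constant $C_0$, since combining with $\norm{v_0}\leq C'\norm{v}$ then gives the lemma. I would split the argument into two cases based on the ratio $\abs{s}/\norm{v_0}$.

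\textbf{Case 1: $\abs{s}\geq\epsilon\norm{v_0}$.} Here $v$ has a substantial normal component, so the already-established estimate~\eqref{eqn:estimate for distance to boundary}, namely $\delta^{(k)}_\Omega(p;v)\leq\frac{Cr\norm{v}}{\delta\abs{s}}\delta_\Omega(p)$, is effective. Combining with the elementary inequality $\delta_\Omega(p)\leq \delta^{(k)}_\Omega(p;v_0)$ (since every $k$-plane $V\ni v_0$ yields $\dist_{\rm Euc}(p,(p+V)\cap\partial\Omega)\geq \delta_\Omega(p)$), multiplying by $\norm{v_0}/\norm{v}$, and invoking $\norm{v_0}/\abs{s}\leq 1/\epsilon$, one obtains $\mathfrak{q}^{(k)}_\Omega(p;v_0)\leq \frac{Cr}{\delta\epsilon}\,\mathfrak{q}^{(k)}_\Omega(p;v)$.

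\textbf{Case 2: $\abs{s}<\epsilon\norm{v_0}$.} Here $v$ is nearly tangential and the Case~1 bound is insufficient; I would use Observation~\ref{obs:almost orthogonal vectors} to construct a $k$-plane through $v_0$ inheriting enough freedom from a near-optimal $k$-plane through $v$. Fix $\eta>0$ and choose $W\in\Gr_k(\Kb^d)$ with $v\in W$ and $\rho:=\dist_{\rm Euc}(p,(p+W)\cap\partial\Omega)\geq \delta^{(k)}_\Omega(p;v)-\eta$. A direct computation gives $\Real\langle v,v_0\rangle\geq \norm{v_0}^2 - \abs{s}\norm{v_0}>(1-\epsilon)\norm{v_0}^2>0$, so $v\notin v_0^\perp$; hence $W_0:=W\cap v_0^\perp$ has $\Kb$-dimension $k-1$ and $W=\Kb v\oplus W_0$. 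Setting $V:=\Kb v_0\oplus W_0$, one has $\dim_\Kb V = k$ and $v_0\in V$. Taking $u_1:=v_0/\norm{v_0}$ together with a Hermitian-orthonormal basis $u_2,\ldots,u_k$ of $W_0$, the vectors $u_1,\ldots,u_k$ are pairwise orthonormal, so Observation~\ref{obs:almost orthogonal vectors} applies. For $i\geq 2$, $u_i\in W$ gives $p+\rho(\Kb\cap\Db)u_i\subset B(p,\rho)\cap(p+W)\subset\Omega$; and for $i=1$, $p+\delta_\Omega(p;v_0)(\Kb\cap\Db)u_1\subset\Omega$ by definition. Thus $B(p,\epsilon\min(\rho,\delta_\Omega(p;v_0)))\cap(p+V)\subset\Omega$, which yields
\[
\delta^{(k)}_\Omega(p;v_0)\geq \dist_{\rm Euc}(p,(p+V)\cap\partial\Omega)\geq \epsilon\min(\rho,\delta_\Omega(p;v_0)).
\]
If $\rho\leq\delta_\Omega(p;v_0)$, this gives $\delta^{(k)}_\Omega(p;v_0)\geq\epsilon\rho$ directly; if $\rho>\delta_\Omega(p;v_0)$, Proposition~\ref{prop:dist to the boundary in a direction} yields $\rho\leq C\frac{\norm{v}}{\norm{v_0}}\delta_\Omega(p;v_0)$, hence $\delta^{(k)}_\Omega(p;v_0)\geq \epsilon\rho\norm{v_0}/(C\norm{v})$. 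Letting $\eta\to 0$ and multiplying by $\norm{v_0}/\norm{v}$ gives $\mathfrak{q}^{(k)}_\Omega(p;v_0)\leq \max(C,C')/\epsilon\cdot\mathfrak{q}^{(k)}_\Omega(p;v)$ in either sub-case.

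Combining the two cases yields the claimed inequality with overall constant on the order of $CC'r/(\delta\epsilon)$. The main obstacle I anticipate is Case~2: the construction of $V$ relies on the dimension identity $W=\Kb v\oplus(W\cap v_0^\perp)$, which is precisely what the case assumption $\abs{s}<\epsilon\norm{v_0}$ supplies through the positivity of $\Real\langle v,v_0\rangle$. Observation~\ref{obs:almost orthogonal vectors} then packages the orthogonality of $u_1,\ldots,u_k$ together with the boundary information coming from $W$ (distance $\rho$) and from the $v_0$ direction (distance $\delta_\Omega(p;v_0)$) into a uniform Euclidean ball inside $\Omega\cap(p+V)$.
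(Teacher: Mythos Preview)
Your proof is correct and follows essentially the same two-case strategy as the paper, using the estimate~\eqref{eqn:estimate for distance to boundary} when $\abs{s}$ is large and Observation~\ref{obs:almost orthogonal vectors} to build a good $k$-plane through $v_0$ when $\abs{s}$ is small. The only cosmetic difference is in Case~2: the paper replaces $u_1=v/\norm{v}$ by $v_0/\norm{v_0}$ in an orthonormal basis of the optimal $k$-plane for $v$ (yielding a \emph{nearly} orthonormal system to which the Observation applies), whereas you intersect the near-optimal plane with $v_0^\perp$ and adjoin $v_0$ to obtain an \emph{exactly} orthonormal system; both routes produce the same key inequality $\delta^{(k)}_\Omega(p;v_0)\geq\epsilon\min\{\delta_\Omega(p;v_0),\,\delta^{(k)}_\Omega(p;v)\}$ and finish identically via Proposition~\ref{prop:dist to the boundary in a direction}.
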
 

\begin{proof} Notice that if $\abs{s} \geq \frac{\epsilon}{2}\norm{v}$, then Equation~\eqref{eqn:estimate for distance to boundary} implies that 
$$
\delta^{(k)}_\Omega(p;v)  \leq C  \frac{\norm{v}}{\abs{s}}\frac{r}{\delta} \delta_\Omega(p) \leq \frac{2C r}{ \delta\epsilon} \delta_\Omega(p).
$$
Then 
$$
\mathfrak{q}^{(k)}_\Omega(p;v_0) \leq \frac{\norm{v_0}}{\delta_\Omega(p)} \leq \frac{2C r}{\delta\epsilon }\frac{\norm{v_0}}{\delta^{(k)}_\Omega(p;v)}  \leq \frac{2CC' r}{\delta\epsilon } \frac{\norm{v}}{\delta^{(k)}_\Omega(p;v) }= \frac{2CC' r}{\delta \epsilon} \mathfrak{q}^{(k)}_\Omega(p;v).
$$
Thus we may assume that $\abs{s} < \frac{\epsilon}{2}\norm{v}$.

Fix a $\Kb$-linear subspace $V \subset \Kb^d$ with $v \in V$, $\dim_{\Kb} V = k$, and 
$$
\delta^{(k)}_\Omega(p;v)  = \dist_{\rm Euc}(p, (p+V) \cap \partial \Omega). 
$$
Then fix an orthonormal basis $u_1,\dots, u_k$ of $V$ with $u_1 = \frac{v}{\norm{v}}$.  Then let $W : = {\rm Span}_{\Kb}\left\{v_0, u_2, \dots, u_d\right\}$. Notice that 
$$
\delta_\Omega(p;u_j) \geq  \dist_{\rm Euc}(p, (p+V) \cap \partial \Omega) = \delta^{(k)}_\Omega(p;v).
$$
So $\Omega$ contains the convex hull of 
$$
p+t(\Kb \cap \Db)\frac{v_0}{\norm{v_0}} \cup  t (\Kb \cap \Db)u_2 \cup \cdots \cup t(\Kb \cap \Db)u_k
$$
where 
$$
t := \min\left\{ \delta_\Omega(p;v_0),  \delta_\Omega^{(k)}(p;v)\right\}. 
$$
Also, since $\epsilon \in (0,1)$, 
$$
\abs{\ip{\frac{v_0}{\norm{v_0}}, u_j}} = \abs{\ip{\frac{v}{\norm{v_0}}, u_j} - \ip{\frac{s \mathbf n}{\norm{v_0}}, u_j}}=\abs{ \ip{\frac{s \mathbf n}{\norm{v_0}}, u_j}} < \frac{\frac{\epsilon}{2}}{1-\frac{\epsilon}{2}}< \epsilon.
$$
So by our choice of $\epsilon > 0$, 
$$
p+B_{\Kb^d}(0, \epsilon t) \cap W \subset \Omega.
$$
Thus 
$$
\delta^{(k)}_\Omega(p;v_0) \geq \epsilon t = \epsilon  \min\left\{ \delta_\Omega(p;v_0),  \delta_\Omega^{(k)}(p;v)\right\}. 
$$

Since $C > 1$ is the constant in Proposition~\ref{prop:dist to the boundary in a direction} and $\epsilon \in (0,1)$, 
$$
 \delta_\Omega(p;v)  \leq C\frac{\norm{v}}{\norm{v_0}}\delta_\Omega(p;v_0)  < \frac{C}{1-\frac{\epsilon}{2}}\delta_\Omega(p;v_0) \leq 2C \delta_\Omega(p;v_0).
$$
By definition $ \delta_\Omega(p;v) \geq  \delta_\Omega^{(k)}(p;v)$ and so
$$
\delta^{(k)}_\Omega(p;v_0) \geq \epsilon \min\left\{ \delta_\Omega(p;v_0),  \delta_\Omega^{(k)}(p;v)\right\} \geq \frac{\epsilon}{2C} \delta_\Omega^{(k)}(p;v).
$$
Hence
\begin{equation*}
\mathfrak{q}^{(k)}_\Omega(p;v_0) = \frac{\norm{v_0}}{\delta^{(k)}_\Omega(p;v_0) } \leq \frac{2C\left(1+\frac{\epsilon}{2}\right)}{\epsilon}  \frac{\norm{v}}{\delta^{(k)}(p;v)}=\frac{3C}{\epsilon}\mathfrak{q}^{(k)}_\Omega(p;v). 
\end{equation*} 
Since $C'> 1$ and $r \geq \delta$, the lemma is true. 
\end{proof} 

Finally, Equation~\eqref{eqn:lower bound in terms of normal metric} and Lemma~\ref{lem:lower bound in terms of tangential metric} imply that 
\begin{align*}
\mathfrak{q}_\Omega^{(k)}(p;v) &  \geq \max\left\{ \frac{\delta}{Cr} \mathfrak{q}^{(k)}_\Omega(p;s \mathbf n), \frac{\delta \epsilon}{3CC' r}\mathfrak{q}^{(k)}_\Omega(p;v_0)\right\} \\
&\geq \frac{\delta \epsilon}{6CC' r}\left( \mathfrak{q}^{(k)}_\Omega(p;s \mathbf n)+ \mathfrak{q}^{(k)}_\Omega(p;v_0)\right). 
\end{align*}

\subsection{The Moreover Part} Suppose that $v_0 \neq 0$ and $(x+ \Kb \cdot v_0) \cap \Omega = \emptyset$. Let 
$$
D: = \sup\left\{ \dist_{\rm Euc}(p, (p+V) \cap \partial \Omega)  : 
    V  \in \Gr_k(\Kb^d), \, v_0 \in V, \text{ and } (x+V) \cap \Omega = \emptyset
 \right\}.
$$

Fix a $\Kb$-linear subspace $V \subset \Kb^d$ with $v \in V$, $\dim_{\Kb} V = k$, and 
$$
\delta^{(k)}_\Omega(p;v_0)  = \dist_{\rm Euc}(p, (p+V) \cap \partial \Omega). 
$$
Then fix an orthonormal basis $u_1,\dots, u_k$ of $V$ with $u_1 = \frac{v_0}{\norm{v_0}}$.   

Since $\Omega$ is convex and $(x+\Kb \cdot v_0) \cap \Omega = \emptyset$, there exists a  codimension one $\Kb$-linear subspace $H$ with $(x+H) \cap \Omega = \emptyset$ and $\Kb \cdot v_0 \subset H$. Since $(x + \Rb \cdot \mathbf{n}) \cap \Omega \neq \emptyset$, we have $H \oplus \Kb\cdot\mathbf{n} = \Kb^d$. So we can write 
$$
u_j = u_j' +\lambda_j \mathbf{n}
$$
 where $u_j' \in H$ and $\lambda_j \in \Kb$. 

Recall that $\epsilon  \in  (0,1)$ is the constant in Observation~\ref{obs:almost orthogonal vectors} and $C  > 1$ is the constant in Proposition~\ref{prop:dist to the boundary in a direction}. Fix $0 < \eta < \frac{1}{2}$ such that 
$$
 \frac{ 2\eta(1+\eta)+\eta^2}{(1-\eta)^2} < \epsilon.
 $$

\begin{lemma}\label{lem:case when abs of lambda is large} If $\abs{\lambda_j} \geq \eta$ for some $1 \leq j \leq k$, then 
$$
\delta^{(k)}_\Omega(p;v_0) \leq \frac{C r}{\delta \eta} D.
$$
\end{lemma}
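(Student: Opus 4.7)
The plan is to exploit that $u_j$ lies inside $V$ (so it gives an upper bound on $\delta^{(k)}_\Omega(p;v_0)$) while $D$ admits a lower bound coming from the Euclidean ball $B_{\Kb^d}(x+r\mathbf{n},\delta)\subset\Omega$ built into the definition of a quasi-normal vector. The hypothesis $\abs{\lambda_j}\geq \eta$ is precisely what lets us convert the size of $\delta_\Omega(p;u_j)$ into a multiple of the normal-direction distance $\delta_\Omega(p;\mathbf{n})$ via Proposition~\ref{prop:dist to the boundary in a direction}.

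First, since $u_j$ is one of the basis vectors of $V$, the affine line $p+\Kb\cdot u_j$ is contained in $p+V$, and therefore
\[
\delta^{(k)}_\Omega(p;v_0) \;=\; \dist_{\rm Euc}\bigl(p,(p+V)\cap\partial\Omega\bigr) \;\leq\; \delta_\Omega(p;u_j).
\]
To estimate the right hand side I would apply Proposition~\ref{prop:dist to the boundary in a direction} to the decomposition $u_j = \lambda_j\mathbf{n}+u_j'$. This application is legitimate because $u_j'\in H$ and $(x+H)\cap\Omega=\emptyset$, so in particular $(x+\Rb\cdot u_j')\cap\Omega=\emptyset$. Using $\norm{u_j}=1$, $\abs{\lambda_j}\geq\eta$, and the upper bound $\delta_\Omega(p;\mathbf{n})\leq t$ from~\eqref{eqn:normal distance versus t}, the proposition gives
\[
\delta_\Omega(p;u_j) \;\leq\; C\,\frac{\norm{u_j}}{\abs{\lambda_j}}\,\delta_\Omega(p;\mathbf{n}) \;\leq\; \frac{C}{\eta}\,t,
\]
so $\delta^{(k)}_\Omega(p;v_0)\leq (C/\eta)\,t$.

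Next I would bound $D$ from below. Since $v_0\in H$ and $\dim_\Kb H = d-1\geq k$, I can choose a $k$-dimensional $\Kb$-subspace $V'\subset H$ with $v_0\in V'$; because $V'\subset H$, the affine subspace $x+V'$ is disjoint from $\Omega$, so $V'$ is an admissible competitor in the definition of $D$. Convexity of $\Omega$, together with $x\in\partial\Omega$ and $B_{\Kb^d}(x+r\mathbf{n},\delta)\subset\Omega$, shows (via $(1-t/r)x + (t/r)y$ for $y\in B_{\Kb^d}(x+r\mathbf{n},\delta)$) that $p+B_{\Kb^d}(0,\delta t/r)\subset\Omega$. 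Consequently $\dist_{\rm Euc}(p,(p+V')\cap\partial\Omega)\geq \delta t/r$, hence $D\geq \delta t/r$, i.e.\ $t\leq (r/\delta)D$.

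Combining these two estimates yields
\[
\delta^{(k)}_\Omega(p;v_0) \;\leq\; \frac{C}{\eta}\,t \;\leq\; \frac{C}{\eta}\cdot\frac{r}{\delta}\,D \;=\; \frac{Cr}{\delta\eta}\,D,
\]
which is the desired inequality. The only nontrivial input is the application of Proposition~\ref{prop:dist to the boundary in a direction} in the first step; the rest is a routine convexity calculation, so I do not anticipate a serious obstacle.
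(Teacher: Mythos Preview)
Your argument is essentially the paper's own proof; both bound $\delta^{(k)}_\Omega(p;v_0)\leq\delta_\Omega(p;u_j)$, convert this to a multiple of $t$ via Proposition~\ref{prop:dist to the boundary in a direction}, and then use $t\leq (r/\delta)\,\delta_\Omega(p)\leq (r/\delta)\,D$. One small point to patch: Proposition~\ref{prop:dist to the boundary in a direction} requires the normal coefficient to be \emph{real}, but when $\Kb=\Cb$ the scalar $\lambda_j$ may be complex; the paper handles this by first replacing $u_j$ with $\tfrac{\bar\lambda_j}{|\lambda_j|}u_j$ (which does not change $\delta_\Omega(p;\cdot)$ and keeps the tangential part in $H$) so that the normal coefficient becomes $|\lambda_j|\in\Rb$, and you should do the same.
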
 

\begin{proof} Suppose $\abs{\lambda_j} \geq \eta$. Then  the ``in particular'' part of Proposition~\ref{prop:dist to the boundary in a direction} and Equation~\eqref{eqn:t estimate} imply that
\begin{align*}
\delta^{(k)}_\Omega(p;v_0)  &= \dist_{\rm Euc}(p, (p+V) \cap \partial \Omega) \leq \delta_\Omega(p;u_j) = \delta_\Omega\left(p;\frac{\bar\lambda_j}{\abs{\lambda_j}} u_j\right)  \\
& \leq \frac{C}{\abs{\lambda_j}} \delta_\Omega(p;\mathbf{n}) \leq  \frac{C}{\abs{\lambda_j}}t \leq  \frac{Cr}{\delta\abs{\lambda_j}} \delta_\Omega(p) \leq \frac{C r}{\delta \eta} D.  \qedhere
\end{align*}
\end{proof}

\begin{lemma}\label{lem:case when abs of lambda is small} If $\abs{\lambda_j} < \eta$ for all $1 \leq j \leq k$, then 
$$
\delta^{(k)}_\Omega(p;v_0) \leq \frac{2C}{\epsilon} D. 
$$
\end{lemma}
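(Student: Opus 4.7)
My plan is to construct a tangential $k$-plane $V'\subset H$ that approximates $V$, exploit the definition of $D$ on $V'$, and then transfer the resulting estimate back to $V$ using Observation~\ref{obs:almost orthogonal vectors} and Proposition~\ref{prop:dist to the boundary in a direction}.

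First, I would set $V' := \mathrm{Span}_{\Kb}\{u_1,u_2',\ldots,u_k'\}$. Because $v_0\in H$ and the splitting $\Kb^d = H\oplus\Kb\mathbf{n}$ is unique, the decomposition of $u_1=v_0/\norm{v_0}\in H$ forces $\lambda_1=0$ and $u_1'=u_1$, so $v_0\in V'$. Also $V'\subset H$, hence $(x+V')\cap\Omega\subset(x+H)\cap\Omega=\emptyset$. To see $\dim_{\Kb} V'=k$, I would verify that the normalized vectors $\hat u_j':=u_j'/\norm{u_j'}$ are almost orthonormal: expanding $\delta_{jl} = \ip{u_j,u_l} = \ip{u_j'+\lambda_j\mathbf{n},\,u_l'+\lambda_l\mathbf{n}}$ gives, for $j\neq l$,
\[
\ip{u_j',u_l'} = -\bar\lambda_l\ip{u_j',\mathbf{n}} - \lambda_j\overline{\ip{u_l',\mathbf{n}}} - \lambda_j\bar\lambda_l,
\]
so $\abs{\ip{u_j',u_l'}}\leq 2\eta(1+\eta)+\eta^2$, using $|\lambda_j|<\eta$ and $\norm{u_j'}\leq\norm{u_j}+|\lambda_j|\leq 1+\eta$. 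Combined with $\norm{u_j'}\geq 1-\eta$, this yields $\abs{\ip{\hat u_j',\hat u_l'}}\leq \frac{2\eta(1+\eta)+\eta^2}{(1-\eta)^2}<\epsilon$, which (by Observation~\ref{obs:almost orthogonal vectors}) forces linear independence and so $\dim V'=k$.

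Next, since $V'$ is a valid competitor in the supremum defining $D$, we have $\dist_{\rm Euc}(p,(p+V')\cap\partial\Omega)\leq D$. Setting $R:=\min_j \delta_\Omega(p;u_j')$, each disk $(\Kb\cap R\overline{\Db})\hat u_j'$ lies in $\Omega-p$ by definition of $\delta_\Omega$, so by convexity so does their convex hull, and Observation~\ref{obs:almost orthogonal vectors} places $B_{\Kb^d}(0,\epsilon R)\cap V'$ inside $\Omega-p$. This gives $\epsilon R\leq D$, hence some index $j^*$ with $\delta_\Omega(p;u_{j^*}')\leq D/\epsilon$. To transfer to $V$, I would choose $\theta$ so that $e^{-i\theta}\lambda_{j^*}=|\lambda_{j^*}|\in\Rb$ (trivially $\theta=0$ when $\Kb=\Rb$) and set $v:=e^{-i\theta}u_{j^*}\in V$, a unit vector with decomposition $v = |\lambda_{j^*}|\mathbf{n} + e^{-i\theta}u_{j^*}'$. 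Since $e^{-i\theta}u_{j^*}'\in H$ (as $H$ is $\Kb$-linear), Proposition~\ref{prop:dist to the boundary in a direction} applies and yields
\[
\delta_\Omega(p;v) \;\leq\; C\,\frac{\norm{v}}{\norm{e^{-i\theta}u_{j^*}'}}\,\delta_\Omega(p;e^{-i\theta}u_{j^*}') \;=\; \frac{C}{\norm{u_{j^*}'}}\,\delta_\Omega(p;u_{j^*}') \;\leq\; \frac{C}{1-\eta}\cdot\frac{D}{\epsilon} \;\leq\; \frac{2C}{\epsilon}D,
\]
using $\eta<1/2$. Since $v$ is a unit vector in $V$, we conclude $\delta^{(k)}_\Omega(p;v_0)=\dist_{\rm Euc}(p,(p+V)\cap\partial\Omega)\leq \delta_\Omega(p;v)\leq \frac{2C}{\epsilon}D$.

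The main obstacle is Step~2: because the splitting $\Kb^d = H\oplus\Kb\mathbf{n}$ is algebraic rather than orthogonal, the bounds on $\ip{u_j',u_l'}$ and $\norm{u_j'}$ must be derived using only $|\lambda_j|<\eta$, and this is precisely what dictates the specific constant $\frac{2\eta(1+\eta)+\eta^2}{(1-\eta)^2}$ fixed in the choice of $\eta$ before the lemma. Once almost-orthonormality is in place, Observation~\ref{obs:almost orthogonal vectors} and Proposition~\ref{prop:dist to the boundary in a direction} do the rest.
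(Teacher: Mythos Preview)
Your proof is correct and follows essentially the same approach as the paper's: you build the tangential $k$-plane $V'=\mathrm{Span}_{\Kb}\{u_1',\dots,u_k'\}\subset H$ (your observation that $u_1'=u_1$ since $v_0\in H$ makes explicit why $v_0\in V'$, which the paper leaves implicit), verify almost-orthonormality of the $\hat u_j'$ via the same inner-product expansion, invoke Observation~\ref{obs:almost orthogonal vectors} to get $\epsilon\min_j\delta_\Omega(p;u_j')\leq D$, and then transfer back to $V$ using the phase rotation and Proposition~\ref{prop:dist to the boundary in a direction}. The only cosmetic differences are that the paper transfers every $u_j$ to $u_j'$ and then takes the minimum, whereas you take the minimum first and transfer a single index $j^*$; and the paper splits into the cases $\lambda_j\in\Rb$ and $\lambda_j\notin\Rb$, whereas your uniform treatment via $e^{-i\theta}$ handles both at once.
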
 

\begin{proof} Notice that if $1 \leq j < l \leq k$, then 
$$
0 = \ip{u_j, u_l} = \ip{u_j', u_l'} +\lambda_j \ip{ \mathbf{n}, u_l'} + \bar\lambda_l \ip{u_j', \mathbf{n}} + \lambda_j \bar\lambda_l
$$
and so 
\begin{align*}
\abs{\ip{ \frac{u_j'}{\norm{u_j'}}, \frac{u_l'}{\norm{u_l'}}}} &  \leq \frac{1}{(1-\eta)^2} \abs{\ip{u_j', u_l'}}\leq \frac{ \eta\norm{u_j'} + \eta\norm{u_l'} + \eta^2}{(1-\eta^2)} \\
& \leq \frac{ 2\eta(1+\eta)+\eta^2}{(1-\eta)^2} < \epsilon. 
\end{align*}
Let $W =  {\rm Span}_{\Kb}\left\{u_1', \dots, u_k'\right\}$. Then by construction $(x+W) \cap \Omega \subset (x+H) \cap \Omega = \emptyset$ and Observation~\ref{obs:almost orthogonal vectors} implies that 
$$
\dist( p, (p+W) \cap \partial\Omega) \geq \epsilon \min_{1 \leq j \leq k} \delta_\Omega(p; u_j'). 
$$
If $\lambda_j \in \Rb$, then by the  ``in particular'' part of Proposition~\ref{prop:dist to the boundary in a direction} we have 
\begin{align*}
\delta_\Omega(p;u_j)\leq \frac{C}{\norm{u_j'}} \delta_\Omega(p;u_j') \leq \frac{C}{1-\eta} \delta_\Omega(p;u_j') \leq 2C  \delta_\Omega(p;u_j').
\end{align*} 
If $\lambda_j \notin \Rb$, then $\Kb = \Cb$.  Fix $\theta_j \in \Rb$ such that $e^{i\theta_j} \lambda_j \in \Rb$. Then by ``in particular'' part of Proposition~\ref{prop:dist to the boundary in a direction},
\begin{align*}
\delta_\Omega(p;u_j) & = \delta_\Omega(p;e^{i\theta_j}u_j)   \leq \frac{C}{\norm{e^{i \theta_j} u_j' }} \delta_\Omega(p;e^{i\theta_j}u_j' ) \leq \frac{C}{1-\eta} \delta_\Omega(p;u_j' ) \\
& \leq 2 C\delta_\Omega(p;u_j' ).
\end{align*}
Hence 
\begin{align*}
D & \geq \dist( p, (p+W) \cap \partial\Omega) \geq \epsilon \min_{1 \leq j \leq k} \delta_\Omega(p; u_j') \geq \frac{\epsilon}{2C} \min_{1 \leq j \leq k} \delta_\Omega(p;u_j) \\
& \geq  \frac{\epsilon}{2C}  \dist( p, (p+V) \cap \partial \Omega) =  \frac{\epsilon}{2C}  \delta_\Omega^{(k)}(p; v). \qedhere
\end{align*}

\end{proof} 

By Lemmas~\ref{lem:case when abs of lambda is large} and~\ref{lem:case when abs of lambda is small}, we have 
$$
\delta^{(k)}_\Omega(p;v_0) \leq  \max\left\{\frac{C r}{\delta \eta}, \, \frac{2C}{\epsilon} \right\}D 
$$
and so the ``moreover'' part is true.


\section{A sufficient condition for Gromov hyperbolicity}\label{sec:a sufficient condition for GH}


In this section we prove a strengthening of the implication (2) $\Rightarrow$ (1) in Theorem~\ref{thm:characterization in nonsmooth case}.

\begin{theorem}\label{thm:2 implies 1 in main theorem} Suppose $\Omega \subset \Kb^d$ is a bounded convex domain. Fix $p \in \Omega$. Then for $x \in \partial\Omega$ and $t \in [0,1]$, let 
$$
x_t : = (1-t)x+tp.
$$
Assume there exist $C, \lambda > 0$ such that: if $0 <  s < t \leq 1$, $x \in \partial \Omega$, $V \in \Gr_k(\Kb^d)$, and $(x+V) \cap \Omega = \emptyset$, then 
$$
 \dist_{\rm Euc}(x_s, (x_s+V) \cap \partial \Omega) \leq C \left( \frac{s}{t} \right)^{\lambda}  \dist_{\rm Euc}(x_t, (x_t+V) \cap \partial \Omega).
$$
Then  $(\Omega, \dist_\Omega^{(k)})$ is Gromov hyperbolic.
\end{theorem}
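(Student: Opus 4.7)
The plan is to apply Theorem~\ref{thm:expanding metrics} to the strongly integrable pseudo-metric $\mathfrak{q}^{(k)}_\Omega$ (Proposition~\ref{prop:q is strongly integrable}) using a polar parametrization from $p$. Set $D = (\dimension_{\Rb} \Kb) d$, $M = \Sb^{D-1}$, and for $u \in M$ let $\rho(u)$ be the unique positive scalar with $x(u) := p + \rho(u) u \in \partial\Omega$; the function $\rho$ is Lipschitz and bounded away from $0$ and $\infty$ since $\Omega$ is bounded convex and $p$ is an interior point. Define
\[
\Phi : M \times (0, 1/2] \to \Omega, \qquad \Phi(u, s) := (1-s) x(u) + s p,
\]
which is a locally biLipschitz homeomorphism onto its image with relatively compact complement. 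By Example~\ref{ex:non-smooth quasi-normal vectors}, $\mathbf{n}(u) := -u$ is a $(r,\delta)$-quasi-normal vector at $x(u)$ with constants independent of $u$, so Propositions~\ref{prop:dist to the boundary in a direction} and~\ref{prop:estimate on QH in tang/non-tang direction} apply at each $x(u)$ with uniform constants.

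Condition~\ref{item:vertical estimate} is immediate: $\frac{d}{dt}\Phi(u, \sigma(t)) = \sigma'(t) \rho(u) \mathbf{n}(u)$, and combining the upper bound $\delta^{(k)}_\Omega(\Phi(u,s); \mathbf{n}) \leq \delta_\Omega(\Phi(u,s); \mathbf{n}) \asymp s\rho(u)$ (Proposition~\ref{prop:dist to the boundary in a direction}) with the lower bound $\delta^{(k)}_\Omega(\Phi(u,s); \mathbf{n}) \geq \delta_\Omega(\Phi(u,s)) \gtrsim s$ (concavity of $\delta_\Omega$ along the segment from $p$ to $x(u)$) gives $\delta^{(k)}_\Omega(\Phi(u,s); \mathbf{n}) \asymp s$, hence the norm is comparable to $|\sigma'(t)|/\sigma(t)$. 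For Condition~\ref{item:almost_orthogonal_splitting}, fix an absolutely continuous $\sigma = (\sigma^1, \sigma^2)$. At almost every $t$ the Lipschitz composition $x \circ \sigma^1$ is differentiable, and its derivative $w(t)$ is a limit of chord directions on $\partial\Omega$, so both $w(t)$ and $-w(t)$ lie in the tangent cone to $\overline\Omega$ at $x(\sigma^1(t))$; hence $w(t)$ is in the lineality space of that cone, which is contained in a supporting hyperplane to $\Omega$ at $x(\sigma^1(t))$. Therefore $v_0(t) := (1-\sigma^2(t))w(t) = \partial_u\Phi(\sigma(t)) \cdot \sigma^{1\prime}(t)$ satisfies $(x(\sigma^1(t))+\Rb v_0(t)) \cap \Omega = \emptyset$, and writing
\[
\frac{d}{dt}\Phi(\sigma(t)) = \alpha(t)\, \mathbf{n}(\sigma^1(t)) + v_0(t), \qquad \alpha(t) := \sigma^{2\prime}(t)\,\rho(\sigma^1(t)),
\]
the upper inequality in Proposition~\ref{prop:estimate on QH in tang/non-tang direction} applied to this decomposition is exactly Condition~\ref{item:almost_orthogonal_splitting}.

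For Condition~\ref{item:expansion near boundary}, fix an absolutely continuous $\sigma : [a,b] \to M$ and $0 < s_1 \leq s_2 \leq 1/2$. At a.e.\ $t$ set $w(t) := \tfrac{d}{dt}(x \circ \sigma)(t)$, which is strictly tangential at $x(\sigma(t))$ by the same lineality argument, so that $\tfrac{d}{dt}\Phi(\sigma(t), s_i) = (1 - s_i)w(t)$. The ``moreover'' part of Proposition~\ref{prop:estimate on QH in tang/non-tang direction} gives an upper bound on $\delta^{(k)}_\Omega(\Phi(\sigma(t), s_i); w(t))$ by a constant times the supremum of $\dist_{\rm Euc}(\Phi(\sigma(t), s_i), (\Phi(\sigma(t), s_i)+V) \cap \partial\Omega)$ over $V \in \Gr_k(\Kb^d)$ with $w(t) \in V$ and $(x(\sigma(t)) + V) \cap \Omega = \emptyset$; the trivial reverse inequality $\delta^{(k)}_\Omega(\cdot; w(t)) \geq$ this supremum makes these comparable. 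The theorem's hypothesis applied to each such $V$ yields
\[
\delta^{(k)}_\Omega(\Phi(\sigma(t), s_1); w(t)) \leq C' \left( \frac{s_1}{s_2} \right)^{\lambda} \delta^{(k)}_\Omega(\Phi(\sigma(t), s_2); w(t)),
\]
which, together with the fact that $(1 - s_i) \in [1/2, 1]$, translates immediately to Condition~\ref{item:expansion near boundary}.

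The hard step is verifying the strict tangentiality hypothesis $(x + \Rb v_0) \cap \Omega = \emptyset$ in Proposition~\ref{prop:estimate on QH in tang/non-tang direction}: the polar parametrization only builds in Euclidean orthogonality $\dot u \perp u$, which is genuinely weaker than tangentiality at $x(u)$. The remedy is the convex-geometric lineality argument above, which forces derivatives of absolutely continuous curves into $\partial\Omega$ to land in a supporting hyperplane almost everywhere; for the ``moreover'' part of Proposition~\ref{prop:estimate on QH in tang/non-tang direction} an additional check of $\Kb$-tangentiality may be needed in the complex case, but once all three hypotheses are secured, Theorem~\ref{thm:expanding metrics} concludes that $(\Omega, \dist^{(k)}_\Omega)$ is Gromov hyperbolic.
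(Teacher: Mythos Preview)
Your approach is essentially identical to the paper's: both use the radial parametrization from $p$, verify the three conditions of Theorem~\ref{thm:expanding metrics}, and invoke Proposition~\ref{prop:estimate on QH in tang/non-tang direction} for the tangential/normal splitting. Your verification of Conditions~\ref{item:vertical estimate} and~\ref{item:almost_orthogonal_splitting}, and of Condition~\ref{item:expansion near boundary} when $\Kb = \Rb$, are complete and match the paper (your lineality argument is exactly the content of the paper's Lemma~\ref{lem:tangent to boundary}).

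The gap you flag in your last paragraph is real and not minor. The ``moreover'' clause of Proposition~\ref{prop:estimate on QH in tang/non-tang direction} requires $(x + \Kb \cdot v_0) \cap \Omega = \emptyset$, i.e.\ that the full $\Kb$-line through $x$ in direction $v_0$ misses $\Omega$. Your lineality argument only delivers $(x + \Rb \cdot v_0) \cap \Omega = \emptyset$, and when $\Kb = \Cb$ this is strictly weaker: the derivative of a boundary curve need not be complex-tangential. The paper does \emph{not} secure $\Cb$-tangentiality (which generically fails); instead it performs a further decomposition. At each $t$ one chooses a supporting real hyperplane $H_t$ through $\xi(t)$ containing $\xi'(t)$, writes $H_t = H_t^{\Cb} \oplus \Rb \cdot e^{i\theta_t}\xi(t)$ where $H_t^{\Cb} = H_t \cap iH_t$ is the maximal complex subspace, and splits $\xi'(t) = u_t + a_t e^{i\theta_t}\xi(t)$ with $u_t \in H_t^{\Cb}$, $a_t \in \Rb$. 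After multiplying by $e^{-i\theta_t}$ the vector becomes a real multiple of the normal direction $\xi(t)$ plus a piece $e^{-i\theta_t}u_t$ which \emph{is} complex-tangential (since $\Cb \cdot u_t \subset H_t^{\Cb} \subset H_t$). Proposition~\ref{prop:estimate on QH in tang/non-tang direction} then splits $\mathfrak{q}^{(k)}_\Omega$ accordingly; the complex-tangential part is handled by the real-case expansion argument with exponent $\lambda$, and the normal part expands with exponent $1$ directly from $\delta_\Omega \asymp s$. Without this extra step your verification of Condition~\ref{item:expansion near boundary} is incomplete for $\Kb = \Cb$.
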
 

\begin{remark} In Theorem~\ref{thm:2 implies 1 in main theorem} we only consider subspaces $V$ that are ``tangential'' to $\partial \Omega$ at $x$, while (2)  in Theorem~\ref{thm:characterization in nonsmooth case} considers all subspaces. 

\end{remark} 

The rest of the section is devoted to the proof of Theorem~\ref{thm:2 implies 1 in main theorem}. By translating we can assume that $p=0$. For each $x \in \Sb : = \Sb^{(\dim_{\Rb} \Kb)d-1}$ let $w(x) \in (0,\infty)$ be the unique positive number such that $w(x)x \in \partial \Omega$. Then define $\Phi :  \Sb\times [0,1] \rightarrow \overline\Omega$ by 
$$
\Phi(x,r) = \left( 1- \frac{r}{2} \right) w(x) x = (w(x)x)_{\frac{r}{2}}. 
$$
Since $\Omega$ is convex, the map $\Phi$ is biLipschitz and a homeomorphism onto its image. Further, $\Omega \setminus \Phi(\Sb \times (0,1])$ is relatively compact in $\Omega$. Thus to prove that $(\Omega, \dist_\Omega^{(k)})$ is Gromov hyperbolic it suffices to verify the three conditions in Theorem~\ref{thm:expanding metrics}. We start with a preliminary lemma. 

\begin{lemma}\label{lem:distance estimates for Phi} There exists $C_0 > 1$ such that: if $(x,r) \in \Sb \times (0,1]$, then
$$
\frac{1}{C_0} r \leq \delta_\Omega\big( \Phi(x,r)\big) \leq \delta_\Omega\big( \Phi(x,r); x\big) \leq C_0 r.
$$

\end{lemma}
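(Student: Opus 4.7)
The plan is to establish each of the three inequalities using only convexity of $\Omega$ and the boundedness of $\Omega$; no estimate from the main theorem is needed here. Since $\Omega$ is a bounded convex domain containing $0$, I first fix $r_0, R > 0$ so that $B_{\Kb^d}(0, r_0) \subset \Omega \subset B_{\Kb^d}(0, R)$, which immediately gives the uniform bounds $r_0 \leq w(x) \leq R$ for all $x \in \Sb$.

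The middle inequality $\delta_\Omega(\Phi(x,r)) \leq \delta_\Omega(\Phi(x,r); x)$ is immediate from the definitions, since $(\Phi(x,r) + \Kb \cdot x) \cap \partial\Omega$ is a subset of $\partial \Omega$ and so the Euclidean distance from $\Phi(x,r)$ to the former dominates the distance to the latter. For the upper bound $\delta_\Omega(\Phi(x,r); x) \leq C_0 r$, I simply note that
$$\Phi(x,r) + \tfrac{w(x) r}{2}\, x = w(x) x \in \partial \Omega,$$
so the directional distance is at most $w(x) r / 2 \leq (R/2)\, r$.

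For the lower bound, I exploit convexity: for any $u \in B_{\Kb^d}(0, r_0)$, the point
$$\Phi(x,r) + \tfrac{r}{2} u = \bigl(1 - \tfrac{r}{2}\bigr)\, w(x) x + \tfrac{r}{2}\, u$$
is a convex combination of the interior point $u \in \Omega$ (with positive weight $r/2$) and the boundary point $w(x) x$ (with weight strictly less than $1$), hence lies in $\Omega$. Therefore $B_{\Kb^d}(\Phi(x,r),\, r r_0 / 2) \subset \Omega$, giving $\delta_\Omega(\Phi(x,r)) \geq r_0 r / 2$. Taking $C_0 = \max\{R/2,\, 2/r_0\}$ completes the proof. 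There is no real obstacle in this lemma; it is a short warm-up calibrating the coordinate $r$ against the Euclidean distance to the boundary, and the estimates will be used in the next step to verify hypothesis \ref{item:vertical estimate} of Theorem~\ref{thm:expanding metrics}.
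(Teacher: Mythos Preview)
Your proof is correct and follows essentially the same approach as the paper: the upper bound comes from the observation that $w(x)x \in \partial\Omega$ lies at Euclidean distance $\tfrac{w(x)r}{2}$ from $\Phi(x,r)$, and the lower bound comes from the convexity argument that $\Phi(x,r)$ is a convex combination of the boundary point $w(x)x$ and points in a fixed interior ball about the origin. The paper writes the constants as $\tfrac{1}{2}\max_y w(y)$ and $\delta_\Omega(0)/2$, which are exactly your $R/2$ and $r_0/2$.
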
 

\begin{proof} Notice that 
$$
\delta_\Omega( \Phi(x,r);x) \leq \norm{\Phi(x,r) -w(x)x} \leq \frac{1}{2}\left(\max_{y \in \Sb} w(y)\right) r. 
$$
Further, since $\Omega$ is convex, 
$$
\delta_\Omega( \Phi(x,r)) \geq \frac{\delta_\Omega(0)}{2} r.
$$
Thus $C_0 : = \max\left\{ \frac{2}{\delta_\Omega(0)},  \frac{1}{2}\max_{y \in \Sb} w(y) \right\}$ suffices. 
\end{proof}

\begin{lemma}[{Condition~\ref{item:vertical estimate} in Theorem~\ref{thm:expanding metrics}}] There exists $C_1 > 1$ such that: if $\sigma : [a,b] \rightarrow (0,1]$ is absolutely continuous and $x \in \Sb$, then
$$
\frac{1}{C_1}\frac{\abs{\sigma'(t)}}{\sigma(t)} \leq \mathfrak{q}_\Omega^{(k)}\left(\Phi(x,\sigma(t));\frac{d}{dt} \Phi(x,\sigma(t)) \right) \leq C_1\frac{\abs{\sigma'(t)}}{\sigma(t)}
$$
for almost every $t \in [a,b]$. 
\end{lemma}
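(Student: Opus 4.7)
The plan is a direct computation combined with Lemma~\ref{lem:distance estimates for Phi} and an elementary monotonicity of $\delta_\Omega^{(k)}$ in the dimension of the slicing subspace. From the formula $\Phi(x,r) = (1-r/2)w(x)x$ we immediately get
$$
\frac{d}{dt}\Phi(x,\sigma(t)) = -\frac{\sigma'(t)\,w(x)}{2}\, x,
$$
so the tangent vector points along $\pm x$, a quasi-normal direction at $w(x)x \in \partial\Omega$ (cf.\ Example~\ref{ex:non-smooth quasi-normal vectors} with $p_0 = 0$). Its Euclidean norm equals $|\sigma'(t)|\,w(x)/2$. Since $\Omega$ is bounded and contains the origin, the continuous function $w : \Sb \to (0,\infty)$ is bounded away from $0$ and $\infty$, so there is $C_w > 1$ with $w(x) \in [1/C_w, C_w]$ for every $x \in \Sb$.

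Next, since $\delta_\Omega^{(k)}(p;v)$ depends only on the $\Kb$-line through $v$, one has
$$
\mathfrak{q}_\Omega^{(k)}\!\left(\Phi(x,\sigma(t)); \tfrac{d}{dt}\Phi(x,\sigma(t))\right) = \frac{|\sigma'(t)|\, w(x)/2}{\delta_\Omega^{(k)}\!\big(\Phi(x,\sigma(t));\,x\big)}.
$$
Thus the claim reduces to showing $\delta_\Omega^{(k)}(\Phi(x,r);x) \asymp r$, uniformly in $x \in \Sb$ and $r \in (0,1]$. For this, observe the universal sandwich
$$
\delta_\Omega\big(\Phi(x,r)\big) \;\leq\; \delta_\Omega^{(k)}\big(\Phi(x,r);x\big) \;\leq\; \delta_\Omega\big(\Phi(x,r);x\big).
$$
The left inequality holds because $(\Phi(x,r)+V) \cap \partial\Omega \subset \partial\Omega$ for every admissible $V$. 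The right inequality follows because any $k$-dimensional $V$ containing $x$ satisfies $(\Phi(x,r)+V) \cap \partial\Omega \supset (\Phi(x,r) + \Kb\cdot x) \cap \partial\Omega$, and enlarging the slice can only bring closer boundary points, so the distance to the nearest boundary point decreases. Both ends of the sandwich are comparable to $r$ by Lemma~\ref{lem:distance estimates for Phi}, giving $\frac{1}{C_0} r \leq \delta_\Omega^{(k)}(\Phi(x,r);x) \leq C_0 r$.

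Substituting $r = \sigma(t)$ and combining with $w(x) \in [1/C_w, C_w]$ yields
$$
\frac{1}{2 C_0 C_w}\,\frac{|\sigma'(t)|}{\sigma(t)} \;\leq\; \mathfrak{q}_\Omega^{(k)}\!\left(\Phi(x,\sigma(t)); \tfrac{d}{dt}\Phi(x,\sigma(t))\right) \;\leq\; \frac{C_0 C_w}{2}\,\frac{|\sigma'(t)|}{\sigma(t)}
$$
for almost every $t \in [a,b]$, so one may take $C_1 := 2 C_0 C_w$. There is no real obstacle: the content is already packaged in Lemma~\ref{lem:distance estimates for Phi}, and the only additional ingredient — monotonicity of the slice distance under enlargement of $V$ — is immediate from the definition of $\delta_\Omega^{(k)}$.
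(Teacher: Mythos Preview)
Your proof is correct and essentially identical to the paper's: both compute the derivative of $\Phi(x,\sigma(t))$, reduce to the sandwich $\delta_\Omega(\Phi(x,r)) \leq \delta_\Omega^{(k)}(\Phi(x,r);x) \leq \delta_\Omega(\Phi(x,r);x)$, and then invoke Lemma~\ref{lem:distance estimates for Phi}. You are merely a bit more explicit about the constants and the justification of the sandwich, which the paper leaves implicit.
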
 

\begin{proof} Notice that 
$$
\frac{d}{dt} \Phi(x,\sigma(t)) = \frac{\sigma'(t)}{2} w(x)x
$$
for almost every $t \in [a,b]$. Hence 
$$
\mathfrak{q}_\Omega^{(k)}\left(\Phi(x,\sigma(t));\frac{d}{dt} \Phi(x,\sigma(t)) \right)  = \frac{\abs{\sigma'(t)} w(x)}{2\delta_\Omega^{(k)}(\Phi(x,\sigma(t)); x)}. 
$$
Since $w : \Sb \rightarrow (0,\infty)$ is continuous and 
$$
\delta_\Omega\big(\Phi(x,\sigma(t))\big) \leq \delta_\Omega^{(k)}\big(\Phi(p,\sigma(t)); x\big) \leq \delta_\Omega\big(\Phi(p,\sigma(t)); x\big),
$$
the desired estimate follows from Lemma~\ref{lem:distance estimates for Phi}.
\end{proof}

We will use Proposition~\ref{prop:estimate on QH in tang/non-tang direction}  to verify the other two Conditions. To that end, for $x \in \partial \Omega$, let $\mathbf{n}_x = - \frac{x}{\norm{x}}$. Then there exists $r,\delta > 0$ such that each $\mathbf{n}_x$ is a $(r,\delta)$-quasi-normal vector at $x$. Further, relative to this choice of quasi-normal vectors, we have
 $$
 \Phi(x,r) = w(x)x + \frac{rw(x)}{2}  \mathbf{n}_{w(x)x}.
 $$
 Fix $C' > 1$ satisfying Proposition~\ref{prop:estimate on QH in tang/non-tang direction} for $k,r,\delta$.

\begin{lemma}\label{lem:tangent to boundary} If $\sigma : [a,b] \rightarrow \Sb$ is absolutely continuous and $\xi(t) := w(\sigma(t))\sigma(t)$, then $\xi$ is absolutely continuous and 
$$
\left( \xi(t) + \Rb \cdot \xi'(t) \right) \cap \Omega = \emptyset
$$
for almost every $t \in [a,b]$. 
\end{lemma}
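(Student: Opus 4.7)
The plan has two parts: first establish absolute continuity of $\xi$, then establish the tangency/supporting line property. For the first part, I would show that $w : \Sb \to (0,\infty)$ is Lipschitz. Since $0 \in \Omega$ and $\Omega$ is a bounded convex domain, there exist constants $0 < r_1 < r_2 < \infty$ with $B_{\Kb^d}(0,r_1) \subset \Omega \subset B_{\Kb^d}(0,r_2)$, and the Minkowski gauge $\mu_\Omega(x) = \inf\{\lambda > 0 : x \in \lambda \Omega\}$ is sublinear with $\mu_\Omega(x) \leq \|x\|/r_1$, hence globally $\frac{1}{r_1}$-Lipschitz. Since $w(x) = 1/\mu_\Omega(x)$ on $\Sb$ and $w$ is bounded between $r_1$ and $r_2$, $w$ is Lipschitz on $\Sb$. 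Therefore $w \circ \sigma$ is absolutely continuous and bounded, and $\xi = (w \circ \sigma) \cdot \sigma$ is a product of bounded absolutely continuous functions, hence absolutely continuous.

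For the second part, since $\xi$ is absolutely continuous, $\xi'(t)$ exists for almost every $t \in [a,b]$; I will prove the claim at every such $t$. Suppose for contradiction that $\xi(t) + \lambda_0 \xi'(t) \in \Omega$ for some $\lambda_0 \in \Rb \setminus \{0\}$. Then since $\Omega$ is open there exists $\rho > 0$ with $B_{\Kb^d}(\xi(t) + \lambda_0 \xi'(t), \rho) \subset \Omega$. By convexity, the convex hull of this ball with the single point $\xi(t) \in \partial\Omega$ contains, for every $\eta \in (0,1]$, the Euclidean ball
\[
B_{\Kb^d}\Big(\xi(t) + \eta \lambda_0 \xi'(t), \, \rho \eta\Big).
\]

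Now by differentiability, $\xi(t + h) = \xi(t) + h \xi'(t) + o(h)$ as $h \to 0$. Choose $h$ with the same sign as $\lambda_0$ and set $\eta = h/\lambda_0 \in (0,1]$. Then
\[
\big\| \xi(t+h) - \big(\xi(t) + \eta \lambda_0 \xi'(t)\big) \big\| = \|o(h)\| < \rho \eta = \rho h/\lambda_0
\]
for all sufficiently small $|h|$, so $\xi(t+h) \in \Omega$. This contradicts $\xi(t+h) = w(\sigma(t+h)) \sigma(t+h) \in \partial\Omega$, proving the claim.

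Neither step is really the ``main obstacle'' — the argument is essentially a packaging of two standard facts (Lipschitz regularity of the gauge of a bounded convex domain, and the fact that a differentiable boundary curve of a convex set admits the line through its derivative as a non-crossing line). The only mild subtlety is being careful to use that the ball around the interior point $\xi(t) + \lambda_0 \xi'(t)$ gives a ball of radius proportional to $\eta$ — not $\eta^2$ or similar — around the intermediate point; this is what allows the $o(h)$ error to be absorbed.
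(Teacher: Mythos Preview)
Your proof is correct and follows the same approach as the paper, which is extremely terse: the paper simply asserts that $x \mapsto w(x)x$ is Lipschitz and that convexity plus $\xi([a,b]) \subset \partial\Omega$ gives the tangency property, without supplying the arguments. Your Minkowski-gauge justification for the Lipschitz claim and your cone-of-balls argument for the tangency claim are exactly the standard details one would fill in, so there is no substantive difference in method.
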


\begin{proof} Since $x \mapsto w(x) x$ is Lipschitz, $\xi$ is absolutely continuous. Since $\Omega$ is convex and $\xi$ maps into $\partial \Omega$, we also have 
$$
\left( \xi(t) + \Rb \cdot \xi'(t) \right) \cap \Omega = \emptyset
$$
whenever $\xi'(t)$ exists. 

\end{proof}

\begin{lemma}[{Condition~\ref{item:almost_orthogonal_splitting} in Theorem~\ref{thm:expanding metrics}}] There exists $C_2 > 1$ such that: if $\sigma = (\sigma^1, \sigma^2) : [a,b] \rightarrow \Sb \times (0,1]$ is absolutely continuous, then 
\begin{align*}
\mathfrak{q}_\Omega^{(k)}& \left(\Phi(\sigma(t)); \left. \frac{d}{ds}\right|_{s=t} \Phi(\sigma^1(s),\sigma^2(t)) \right)+ \mathfrak{q}_\Omega^{(k)}\left(\Phi(\sigma(t)); \left. \frac{d}{ds}\right|_{s=t} \Phi(\sigma^1(t),\sigma^2(s))\right) \\
& \quad \quad \leq C_2\mathfrak{q}_\Omega^{(k)}\left(\Phi(\sigma(t)); \frac{d}{dt} \Phi(\sigma(t)) \right)
\end{align*}
for almost every $t \in [a,b]$. 
\end{lemma}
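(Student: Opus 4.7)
The plan is to apply Proposition~\ref{prop:estimate on QH in tang/non-tang direction} pointwise after identifying the decomposition $(\Phi\circ\sigma)'(t) = s\mathbf{n} + v_0$ coming from the product structure $\Sb \times (0,1]$.

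Fix $t \in [a,b]$ where $\sigma^1$ and $\sigma^2$ are differentiable, and let $\xi(t) := w(\sigma^1(t))\sigma^1(t) \in \partial \Omega$, so that Lemma~\ref{lem:tangent to boundary} gives $(\xi(t) + \Rb \cdot \xi'(t)) \cap \Omega = \emptyset$. Since $\mathbf{n}_{\xi(t)} = -\sigma^1(t)$, a direct computation shows
\begin{equation*}
\Phi(\sigma(t)) = \xi(t) + \tfrac{1}{2}\sigma^2(t) w(\sigma^1(t)) \, \mathbf{n}_{\xi(t)},
\end{equation*}
so that the point $p := \Phi(\sigma(t))$ has the form $\xi(t) + t_0 \mathbf{n}_{\xi(t)}$ with $t_0$ comparable to $\sigma^2(t)$. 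In particular, the hypotheses of Proposition~\ref{prop:estimate on QH in tang/non-tang direction} are satisfied at the point $p$ with quasi-normal vector $\mathbf{n} = \mathbf{n}_{\xi(t)}$ (the constants $r, \delta$ being chosen uniformly).

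Next, I would identify the two coordinate partial derivatives with the two components of the decomposition. A straightforward differentiation gives
\begin{align*}
\left.\tfrac{d}{ds}\right|_{s=t} \Phi(\sigma^1(t), \sigma^2(s)) &= -\tfrac{1}{2}\sigma^{2\prime}(t) \xi(t) = \tfrac{1}{2}\sigma^{2\prime}(t) w(\sigma^1(t)) \, \mathbf{n}_{\xi(t)}, \\
\left.\tfrac{d}{ds}\right|_{s=t} \Phi(\sigma^1(s), \sigma^2(t)) &= \bigl(1 - \tfrac{1}{2}\sigma^2(t)\bigr) \xi'(t).
\end{align*}
The first vector is exactly of the form $s\mathbf{n}_{\xi(t)}$, and the second is a nonzero scalar multiple of $\xi'(t)$, hence by Lemma~\ref{lem:tangent to boundary} it satisfies $(\xi(t) + \Rb \cdot v_0) \cap \Omega = \emptyset$. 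Therefore the sum $(\Phi\circ\sigma)'(t)$ has a natural splitting into a quasi-normal part and a tangential part in the precise sense required by Proposition~\ref{prop:estimate on QH in tang/non-tang direction}.

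Applying Proposition~\ref{prop:estimate on QH in tang/non-tang direction} with $k$ fixed and the constants $r,\delta$ chosen so that every $\mathbf{n}_x$ is an $(r,\delta)$-quasi-normal vector (this is the previously fixed $C'$), we obtain
\begin{equation*}
\mathfrak{q}^{(k)}_\Omega\!\left(p; \left.\tfrac{d}{ds}\right|_{s=t}\!\Phi(\sigma^1(t), \sigma^2(s))\right) + \mathfrak{q}^{(k)}_\Omega\!\left(p; \left.\tfrac{d}{ds}\right|_{s=t}\!\Phi(\sigma^1(s), \sigma^2(t))\right) \leq C' \, \mathfrak{q}^{(k)}_\Omega\!\left(p; \tfrac{d}{dt}\Phi(\sigma(t))\right),
\end{equation*}
which is the desired inequality with $C_2 := C'$. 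The only nontrivial ingredient is the uniform transversality of the quasi-normal and tangential directions, which has already been established; once the correct decomposition is identified, the lemma follows essentially immediately from Proposition~\ref{prop:estimate on QH in tang/non-tang direction}, so I do not expect any substantial obstacle here.
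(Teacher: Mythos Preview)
Your proposal is correct and follows essentially the same approach as the paper: define $\xi(t)=w(\sigma^1(t))\sigma^1(t)$, compute the two partial derivatives of $\Phi\circ\sigma$ to obtain the splitting $(\Phi\circ\sigma)'(t)=(1-\tfrac{1}{2}\sigma^2(t))\xi'(t)+\tfrac{1}{2}\sigma^{2\prime}(t)w(\sigma^1(t))\mathbf{n}_{\xi(t)}$, invoke Lemma~\ref{lem:tangent to boundary} for the tangency condition, and apply Proposition~\ref{prop:estimate on QH in tang/non-tang direction} with the uniform constant $C'$ to conclude $C_2=C'$. The only cosmetic difference is that you spell out the two partial derivatives separately before summing, whereas the paper writes the full derivative directly; the arguments are otherwise identical.
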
 

\begin{proof} Let $\xi(t) := w(\sigma^1(t))\sigma^{1}(t)$. Then $\xi(t) = -w(\sigma^1(t))\mathbf n_{\xi(t)}$. Further, 
$$
\Phi(\sigma(t)) =\left( 1- \frac{\sigma^2(t)}{2} \right)\xi(t)= \xi(t) + \frac{\sigma^2(t) w(\sigma^1(t))}{2} \mathbf{n}_{ \xi(t)}
$$
and 
\begin{align*}
\frac{d}{dt} \Phi(\sigma(t)) & = \left. \frac{d}{ds}\right|_{s=t} \Phi(\sigma^1(s),\sigma^2(t))+\left. \frac{d}{ds}\right|_{s=t} \Phi(\sigma^1(t),\sigma^2(s)) \\
& =\left( 1- \frac{\sigma^2(t)}{2} \right)\xi'(t) + \frac{\sigma^{2 \prime}(t)w(\sigma^1(t))}{2}\mathbf n_{\xi(t)}
\end{align*}
for almost every $t \in [a,b]$. Then Lemma~\ref{lem:tangent to boundary} and Proposition~\ref{prop:estimate on QH in tang/non-tang direction} imply the desired estimate with $C_2 = C'$. 
\end{proof} 

Let $\lambda' : = \min\{\lambda, 1\}$. 

\begin{lemma}[{Condition~\ref{item:expansion near boundary}  in Theorem~\ref{thm:expanding metrics}}] There exists $C_3> 0$ such that: if $\sigma : [a,b] \rightarrow \Sb$ is absolutely continuous and $0 < s_1 \leq s_2 \leq 1$, then 
$$
\mathfrak{q}_\Omega^{(k)}\left(\Phi(\sigma(t),s_1);\frac{d}{dt} \Phi(\sigma(t),s_1) \right) \geq C_3 \left(\frac{s_2}{s_1}\right)^{\lambda'} \mathfrak{q}_\Omega^{(k)}\left(\Phi(\sigma(t),s_2); \frac{d}{dt}\Phi(\sigma(t),s_2) \right)
$$
for almost every $t \in [a,b]$.

\end{lemma}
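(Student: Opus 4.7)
The plan is to reduce the claim to a comparison of generalized boundary distances and then combine the moreover part of Proposition~\ref{prop:estimate on QH in tang/non-tang direction} with the hypothesis of Theorem~\ref{thm:2 implies 1 in main theorem}. Setting $\xi(t) := w(\sigma(t))\sigma(t) \in \partial\Omega$, one has $\Phi(\sigma(t), s) = (1 - s/2)\xi(t)$ and hence $\tfrac{d}{dt}\Phi(\sigma(t), s) = (1 - s/2)\xi'(t)$; by Lemma~\ref{lem:tangent to boundary}, $\xi'(t)$ is $\Rb$-tangential at $\xi(t)$ wherever it exists. Since $1 - s_i/2 \in [1/2, 1]$, writing $p_i := \Phi(\sigma(t), s_i)$, after canceling $\|\xi'(t)\|$ the desired inequality is equivalent up to a bounded factor to
\[
\delta^{(k)}_\Omega(p_1; \xi'(t)) \leq C (s_1/s_2)^{\lambda'} \delta^{(k)}_\Omega(p_2; \xi'(t)).
\]

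Assume first that $(\xi(t) + \Kb \cdot \xi'(t)) \cap \Omega = \emptyset$. Then I would apply the moreover part of Proposition~\ref{prop:estimate on QH in tang/non-tang direction}, with $v_0 = \xi'(t)$, the trivial decomposition $v = 0 \cdot \mathbf{n}_{\xi(t)} + \xi'(t)$, and the uniform quasi-normal vectors from Example~\ref{ex:non-smooth quasi-normal vectors}, to conclude that $\delta^{(k)}_\Omega(p_1; \xi'(t))$ is bounded, up to a uniform constant, by
\[
\sup\bigl\{\dist(p_1, (p_1+V) \cap \partial \Omega) : V \in \Gr_k(\Kb^d),\ \xi'(t) \in V,\ (\xi(t) + V) \cap \Omega = \emptyset\bigr\}.
\]
For each such tangential $V$, the hypothesis of Theorem~\ref{thm:2 implies 1 in main theorem} (applied to $x = \xi(t)$ with parameters $s_1/2 < s_2/2$) yields $\dist(p_1, (p_1+V) \cap \partial\Omega) \leq C(s_1/s_2)^\lambda \dist(p_2, (p_2+V) \cap \partial\Omega) \leq C(s_1/s_2)^\lambda \delta^{(k)}_\Omega(p_2; \xi'(t))$, and taking the supremum over such $V$ delivers the required bound with exponent $\lambda$.

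When instead $(\xi(t) + \Kb \cdot \xi'(t)) \cap \Omega \neq \emptyset$ (possible only when $\Kb = \Cb$ with $\xi'(t)$ real- but not complex-tangential), every $V \in \Gr_k(\Kb^d)$ containing $\xi'(t)$ is non-tangential at $\xi(t)$, so the moreover part is inapplicable. The plan here is a direct convexity argument: because the $\Kb$-line $\xi(t) + \Kb \cdot \xi'(t)$ enters $\Omega$ through $\xi(t)$, the Lipschitz regularity of the convex boundary forces every slice $(p_i + V) \cap \partial\Omega$ to contain a point within distance $O(\|p_i - \xi(t)\|) = O(s_i)$ of $p_i$, uniformly in such $V$. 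Combined with $\delta_\Omega(p_i) \asymp s_i$ from Lemma~\ref{lem:distance estimates for Phi}, this pins $\delta^{(k)}_\Omega(p_i; \xi'(t)) \asymp s_i$, producing the expansion with exponent $1$; the choice $\lambda' := \min\{\lambda, 1\}$ unifies the two regimes. The main obstacle is this edge case: extracting a uniform Lipschitz constant independent of $V$, especially when $V$ is almost tangential, requires using the \emph{strict} non-tangentiality of $\Kb \cdot \xi'(t)$ quantitatively.
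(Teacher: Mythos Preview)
Your Case 1 is correct and coincides with the paper's argument when $\Kb = \Rb$. The gap you flag in Case 2, however, is genuine and not a mere technicality: the assertion $\delta^{(k)}_\Omega(p_i; \xi'(t)) \asymp s_i$ fails to hold with a uniform constant. For instance, on the unit ball in $\Cb^d$ with $\xi(t) = e_1$ and $\xi'(t) = e_2 + \epsilon i e_1$ (which is $\Rb$-tangential), the complex span $\Cb\cdot\xi'(t)$ is non-tangential, so you are in Case 2; yet for $k < d$ one has $\delta^{(k)}_\Omega(p_1; \xi'(t)) \geq c\, s_1/\epsilon$ (take $V = \Cb\cdot\xi'(t) \oplus \Cb\cdot e_j$ with $j \geq 3$, or simply $k=1$). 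Since $\xi'(t)$ can approach a $\Cb$-tangential direction as $t$ varies, $\epsilon$ is not bounded away from zero, and your Case 2 constant blows up precisely on the boundary of your dichotomy. So the dichotomy itself is the wrong organizing principle.

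The paper avoids it in the complex case as follows. Fix a real supporting hyperplane $H_t$ at $\xi(t)$ containing $\xi'(t)$, let $H_t^{\Cb} := H_t \cap iH_t$ be its maximal complex subspace, and write $\xi'(t) = u_t + a_t e^{i\theta_t}\xi(t)$ with $u_t \in H_t^{\Cb}$ and $a_t \in \Rb$. Then $e^{-i\theta_t}\xi'(t) = e^{-i\theta_t}u_t + a_t\xi(t)$ is an honest quasi-normal/tangential decomposition (the first summand is still $\Cb$-tangential, the second is a \emph{real} multiple of $\mathbf{n}_{\xi(t)}$), so the first part of Proposition~\ref{prop:estimate on QH in tang/non-tang direction} splits $\mathfrak{q}^{(k)}_\Omega(p_i; \xi'(t))$ into $\mathfrak{q}^{(k)}_\Omega(p_i; u_t) + \mathfrak{q}^{(k)}_\Omega(p_i; a_t\xi(t))$ with \emph{uniform} constants. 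The $u_t$ term is then handled exactly by your Case 1 argument (the ``moreover'' part applies since $u_t$ is $\Cb$-tangential), giving exponent $\lambda$; the $a_t\xi(t)$ term is purely normal and Lemma~\ref{lem:distance estimates for Phi} gives $\delta^{(k)}_\Omega(p_i; \xi(t)) \asymp s_i$ directly, giving exponent $1$. The point is that this decomposition puts the entire ``non-$\Cb$-tangential'' contribution of $\xi'(t)$ into a term whose expansion rate is $s_2/s_1$ regardless of the size of $a_t$, so nothing degenerates when $\xi'(t)$ is nearly $\Cb$-tangential.
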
 

\begin{proof} For notational convenience, let $\xi(t) := w(\sigma(t))\sigma(t)$, $p_{j,t} :=\Phi(\sigma(t),s_j)$, and $v_{j,t} : = \frac{d}{dt} \Phi(\sigma(t),s_j)$ when the derivative exists. By Lemma~\ref{lem:tangent to boundary}  there exists a full measure set $A \subset [a,b]$ such that 
$$
v_{j,t}= \left( 1- \frac{s_j}{2} \right) \xi'(t).
$$
and
$$
(\xi(t) + \Rb \cdot \xi'(t)) \cap \Omega = \emptyset
$$ 
for all $t \in A$. Notice that it suffices to verify the desired estimate for $t \in A$ with $\xi'(t) \neq 0$. So let $A':= \{ t \in A : \xi'(t) \neq 0\}$.

When $\Kb = \Rb$,  the ``moreover'' part of Proposition~\ref{prop:estimate on QH in tang/non-tang direction}  implies that for every $t \in A'$ there exists $V_t \in \Gr_k(\Rb^d)$ such that $\xi'(t) \in V_t$, $(\xi(t) + V_t) \cap \Omega = \emptyset$, and 
$$
\delta_\Omega^{(k)}( p_{1,t}; \xi'(t))\leq C' \dist( p_{1,t}, (p_{1,t}+V_t) \cap \partial \Omega).
$$
Then for $t \in A'$, we have
\begin{align*}
\delta_\Omega^{(k)}( p_{1,t}; \xi'(t))& \leq C' \dist( p_{1,t}, (p_{1,t}+V_t) \cap \partial \Omega) \leq CC' \left( \frac{s_2}{s_1} \right)^{\lambda}  \dist_{\rm Euc}(p_{2,t}, (p_{2,t}+V_t) \cap \partial \Omega) \\
& \leq CC' \left( \frac{s_2}{s_1} \right)^{\lambda} \delta_\Omega^{(k)}( p_{2,t}; \xi'(t)).
\end{align*}
Hence, the lemma is true with $C_3 : = \frac{1}{CC'}$. 

The case when $\Kb = \Cb$ is more complicated. For each $t \in A'$ fix a real codimension one linear subspace $H_t$ with $\xi'(t) \in H_t$ and $(\xi(t) + H_t) \cap \Omega = \emptyset$. Since $0 \in \Omega$, we have 
$$
(\xi(t) +\Rb \cdot \xi(t)) \cap \Omega \neq \emptyset
$$
Hence 
$$
\dim_{\Rb} \left( H_t \cap \Cb \cdot \xi(t)\right) = 1.
$$
So there exists $\theta_t \in \Rb$ such that 
$$
H_t \cap \Cb \cdot  \xi(t) = \Rb \cdot e^{i\theta_t} \xi(t). 
$$
Next let $H_t^{\Cb} : = H_t \cap i H_t$, i.e. $H_t^{\Cb}$ is the maximal complex linear subspace of $H_t$. Then 
$$
H_t = H_t^{\Cb} \oplus \Rb \cdot e^{i\theta_t} \xi(t). 
$$
So we can write 
$$
\xi'(t) = u_t + a_t e^{i\theta_t} \xi(t)
$$
where $u_t \in H_t^{\Cb}$ and $a_t \in \Rb$. 

If $t \in A'$, then Proposition~\ref{prop:estimate on QH in tang/non-tang direction} implies that 
\begin{align*}
\mathfrak{q}_\Omega^{(k)}& \left(p_{1,t};v_{1,t}\right) = \mathfrak{q}_\Omega^{(k)}\left(p_{1,t};e^{-i\theta_t} v_{1,t}\right) = \left(1-\frac{s_1}{2} \right) \mathfrak{q}_\Omega^{(k)}\left(p_{1,t};e^{-i\theta_t}u_t + a_t \xi(t)\right)\\
& \geq \frac{1}{2C'} \left( \mathfrak{q}_\Omega^{(k)}\left(p_{1,t};u_t \right) + \mathfrak{q}_\Omega^{(k)}\left(p_{1,t};a_t \xi(t) \right) \right). 
\end{align*}
Likewise, 
\begin{align*}
\mathfrak{q}_\Omega^{(k)}& \left(p_{2,t};v_{2,t}\right) \leq C' \left( \mathfrak{q}_\Omega^{(k)}\left(p_{2,t};u_t \right) + \mathfrak{q}_\Omega^{(k)}\left(p_{2,t};a_t \xi(t) \right) \right). 
\end{align*}
Since $(\xi(t) + \Cb \cdot u_t) \cap \Omega = \emptyset$, the proof in the real case implies that 
$$
\mathfrak{q}_\Omega^{(k)}\left(p_{1,t};u_t \right) \geq \frac{1}{CC'} \left(\frac{s_2}{s_1}\right)^{\lambda} \mathfrak{q}_\Omega^{(k)}\left(p_{2,t} ; u_t \right).
$$
By Lemma~\ref{lem:distance estimates for Phi},
$$
\delta_\Omega^{(k)}(p_{1,t};\xi(t))\leq \delta_\Omega(p_{1,t};\xi(t)) \leq C_0 s_1
$$
and 
$$
\delta_\Omega^{(k)}(p_{2,t};\xi(t))\geq \delta_\Omega(p_{2,t}) \geq \frac{1}{C_0} s_2.
$$
So
$$
\mathfrak{q}_\Omega^{(k)}\left(p_{1,t};a_t \xi(t) \right) \geq \frac{1}{C_0} \frac{ \norm{a_t\xi(t)} }{s_1} =  \frac{1}{C_0} \frac{s_2}{s_1} \frac{ \norm{a_t\xi(t)}}{s_2} \geq \frac{1}{C_0^2} \frac{s_2}{s_1} \mathfrak{q}_\Omega^{(k)}\left(p_{2,t};a_t \xi(t) \right).
$$
Hence
\begin{equation*}
\mathfrak{q}_\Omega^{(k)} \left(p_{1,t};v_{1,t}\right)  \geq \frac{1}{2C'^2} \min\left\{\frac{1}{CC'}, \frac{1}{C_0^2}\right\}  \left(\frac{s_2}{s_1}\right)^{\lambda'} \mathfrak{q}_\Omega^{(k)} \left(p_{2,t};v_{2,t}\right). \qedhere
\end{equation*}
  \end{proof}


\section{Distance estimates for generalized quasi-hyperbolic metrics}\label{sec:distance estimates for QH}


In this section we establish bounds for the generalized quasi-hyperbolic distance. Then we use these bounds to show that the distance is proper and to construct quasi-geodesics. 

\begin{proposition}\label{prop:lower bound for distance} Suppose $\Omega \subset \Kb^d$ is a convex domain.
\begin{enumerate}
\item If $p,q \in \Omega$ and $H \subset \Kb^d$ is a $\Kb$-affine hyperplane with $H \cap \Omega = \emptyset$, then 
$$
\abs{ \log \frac{\dist_{\rm Euc}(q,H)}{\dist_{\rm Euc}(p,H)}} \leq \dist^{(k)}_\Omega(p, q).
$$
\item If $p,q \in \Omega$ are contained in a $\Rb$-affine line $L$ and $x \in L \cap \partial \Omega$, then 
$$
\abs{ \log \frac{\norm{q-x}}{\norm{p-x}}} \leq \dist^{(k)}_\Omega(p, q).
$$
\end{enumerate}
\end{proposition}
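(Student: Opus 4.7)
For (1), fix an absolutely continuous curve $\sigma:[a,b]\to\Omega$ joining $p$ to $q$ and write $H = \{z\in\Kb^d : \ip{z,w}_\Kb = c\}$ for some $w\in\Kb^d$ of unit norm. Since $\Omega\cap H = \emptyset$, the map $t\mapsto \log\dist_{\rm Euc}(\sigma(t),H) = \log|\ip{\sigma(t),w}_\Kb - c|$ is absolutely continuous on $[a,b]$, and differentiating directly gives
\[
\left|\frac{d}{dt}\log\dist_{\rm Euc}(\sigma(t),H)\right| \leq \frac{|\ip{\sigma'(t),w}_\Kb|}{\dist_{\rm Euc}(\sigma(t),H)}
\]
almost everywhere. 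The plan is to upgrade the right-hand side to $\mathfrak{q}^{(k)}_\Omega(\sigma(t);\sigma'(t)) = \|\sigma'(t)\|/\delta^{(k)}_\Omega(\sigma(t);\sigma'(t))$, then integrate over $[a,b]$ and take the infimum over all such curves to conclude.

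This pointwise upgrade reduces to the geometric estimate
\[
\delta^{(k)}_\Omega(z;v)\cdot |\ip{v,w}_\Kb| \leq \dist_{\rm Euc}(z,H)\cdot \|v\|
\]
for $z\in\Omega$ and nonzero $v\in\Kb^d$; the case $\ip{v,w}_\Kb=0$ is vacuous since the derivative already vanishes. This is the main obstacle, and the key observation that unlocks it is that the $\Kb$-line $z+\Kb\cdot v$ is contained in \emph{every} competitor $V\in\Gr_k(\Kb^d)$ with $v\in V$. The equation $\ip{z+\zeta v,w}_\Kb = c$ has the solution $\zeta_* = (c - \ip{z,w}_\Kb)/\ip{v,w}_\Kb$ of modulus $\dist_{\rm Euc}(z,H)/|\ip{v,w}_\Kb|$, and since $z\in\Omega$ while $z+\zeta_*v\in H$ lies outside $\Omega$, the real segment $\{z+t\zeta_*v : t\in[0,1]\}$ must cross $\partial\Omega$, yielding a point of $(z+V)\cap\partial\Omega$ within Euclidean distance $|\zeta_*|\|v\|$ of $z$. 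Taking the supremum over $V$ gives the desired estimate.

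For (2), the plan is to reduce to (1) by producing a $\Kb$-affine hyperplane through $x$ that is disjoint from $\Omega$. When $\Kb=\Rb$, any real supporting hyperplane at $x$ works. When $\Kb=\Cb$, start with a real supporting hyperplane $\widetilde{H}$ at $x$; the maximal $\Cb$-linear subspace $T$ of $\widetilde{H}-x$ has complex codimension exactly $1$, because $\widetilde{H}-x$ has odd real dimension $2d-1$ so cannot itself be $\Cb$-linear, forcing $(\widetilde{H}-x)\cap i(\widetilde{H}-x)$ to have real codimension $2$. Then $H_x := x+T\subset\widetilde{H}$ is a $\Cb$-affine hyperplane through $x$ disjoint from $\Omega$. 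Writing $L = x+\Rb\cdot e$ with $\|e\|=1$ and $p=x+s_pe$, $q=x+s_qe$, the elementary identity $\dist_{\rm Euc}(x+se,H_x) = |s|\cdot\dist_{\rm Euc}(e,T)$ --- with $\dist_{\rm Euc}(e,T)>0$ since $e\notin T$, as $L$ meets $\Omega$ while $H_x$ does not --- gives $\dist_{\rm Euc}(q,H_x)/\dist_{\rm Euc}(p,H_x) = \|q-x\|/\|p-x\|$, so (2) follows from (1).
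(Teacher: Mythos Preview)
Your argument is correct and follows essentially the same route as the paper's proof: in (1) you bound $\delta_\Omega^{(k)}(z;v)$ by finding the point where the $\Kb$-line $z+\Kb v$ meets $H$ (the paper does the same after normalizing so that $H=\mathrm{Span}_{\Kb}\{e_2,\dots,e_d\}$, writing the bound as $\delta_\Omega^{(k)}(z;v)\le\delta_\Omega(z;v)\le|\langle z,e_1\rangle/\langle v,e_1\rangle|\,\|v\|$), and in (2) you reduce to (1) via a supporting $\Kb$-hyperplane at $x$ and the collinearity of $p,q,x$, exactly as the paper does. Your treatment of (2) is in fact more explicit than the paper's in constructing the complex hyperplane in the case $\Kb=\Cb$.
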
 

\begin{proof} (1). By rotating and translating we can assume that $H= {\rm Span}_{\Kb}\{ e_2,\dots, e_d\}$. Then for any $z \in \Omega$ and any $v \in \Kb^d$ with $\ip{v,e_1} \neq 0$, we have 
$$
z - \frac{\ip{z,e_1}}{\ip{v,e_1}}v \in H \subset \Kb^d \setminus \Omega
$$
and so
$$
\delta_\Omega^{(k)}(z; v) \leq \delta_\Omega(z; v) \leq \abs{\frac{\ip{z,e_1}}{\ip{v,e_1}}}\norm{v}.
$$

Fix an absolutely curve $\alpha=(\alpha^1,\dots, \alpha^d) : [a,b] \rightarrow \Omega$ joining $p$ to $q$. Then 
\begin{align*}
\int_a^b &  \frac{\norm{\alpha'(t)}}{\delta_\Omega^{(k)}(\alpha(t); \alpha'(t))} dt \geq \int_a^b \frac{\abs{\alpha^{1\prime}(t)}}{\abs{\alpha^1(t)}} dt \geq\abs{ \int_a^b \frac{d}{dt} \log \abs{\alpha_1(t)} dt} \\
& =\abs{ \log\abs{\alpha_1(b)} - \log \abs{\alpha_1(a)}} = \abs{\log \abs{ \frac{\ip{q,e_1}}{\ip{p,e_1}}}} = \abs{ \log \frac{\dist_{\rm Euc}(q,H)}{\dist_{\rm Euc}(p,H)}}.
\end{align*} 

(2). Since $\Omega$ is convex, there exists a $\Kb$-affine hyperplane with $x \in H$ and $H \cap \Omega = \emptyset$. Then, since $p,q,x$ are contained in a real line,
$$
\abs{ \log \frac{\norm{q-x}}{\norm{p-x}}} =\abs{ \log \frac{\dist_{\rm Euc}(q,H)}{\dist_{\rm Euc}(p,H)}} 
$$
and so the result follows from (1). 
\end{proof} 

\begin{corollary} If $\Omega \subset \Kb^d$ is a bounded convex domain, then $\dist_\Omega^{(k)}$ is a proper and geodesic metric. \end{corollary}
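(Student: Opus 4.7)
The plan is to reduce the corollary to Proposition~\ref{prop:HopfRinow}: since $\mathfrak{q}_\Omega^{(k)}$ is a strongly integrable pseudo-metric (Proposition~\ref{prop:q is strongly integrable}), the integrated distance $\dist_\Omega^{(k)}$ is automatically a length metric that generates the standard topology on $\Omega$. Thus once properness is established, the geodesic property is immediate from Proposition~\ref{prop:HopfRinow}. So the entire task is to prove properness.

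To prove properness, I would fix a basepoint $p_0 \in \Omega$ and $R > 0$ and show that the closed metric ball $B_R := \{q \in \Omega : \dist_\Omega^{(k)}(p_0, q) \leq R\}$ is compact. The key step is to show that $B_R$ is contained in a Euclidean-compact subset of $\Omega$, i.e.\ that there is some $\eta = \eta(R) > 0$ such that $\delta_\Omega(q) \geq \eta$ for every $q \in B_R$. Suppose for contradiction there were a sequence $q_n \in B_R$ with $\dist_{\rm Euc}(q_n, \partial\Omega) \to 0$. Since $\Omega$ is bounded we may pass to a subsequence with $q_n \to x \in \partial\Omega$ in the Euclidean sense. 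For each $n$ let $L_n$ be the real line through $p_0$ and $q_n$, and let $x_n \in L_n \cap \partial\Omega$ be the point on $L_n \cap \partial \Omega$ lying on the same side of $p_0$ as $q_n$ (which exists by boundedness and convexity of $\Omega$). Convexity plus $q_n \to x \in \partial \Omega$ forces $x_n \to x$ as well, so $\norm{q_n - x_n}\to 0$ while $\norm{p_0 - x_n} \to \norm{p_0 - x} > 0$. By Proposition~\ref{prop:lower bound for distance}(2),
\[
\dist_\Omega^{(k)}(p_0, q_n) \;\geq\; \abs{\log \tfrac{\norm{q_n - x_n}}{\norm{p_0 - x_n}}} \;\longrightarrow\; +\infty,
\]
contradicting $\dist_\Omega^{(k)}(p_0, q_n) \leq R$. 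This gives the desired Euclidean lower bound on $\delta_\Omega|_{B_R}$.

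Once we know $B_R$ sits inside the Euclidean-compact set $K := \{z \in \overline{\Omega} : \dist_{\rm Euc}(z,\partial\Omega) \geq \eta\} \subset \Omega$, compactness of $B_R$ follows formally: $B_R$ is closed in $(\Omega, \dist_\Omega^{(k)})$ (as the sublevel set of the continuous function $\dist_\Omega^{(k)}(p_0, \cdot)$), and since $\dist_\Omega^{(k)}$ generates the standard topology on $\Omega$, it is also closed in the Euclidean topology. Then $B_R$ is a Euclidean-closed subset of the Euclidean-compact set $K$, hence Euclidean-compact, hence $\dist_\Omega^{(k)}$-compact. This establishes properness.

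Finally, since $\dist_\Omega^{(k)}$ is a length metric and proper, Proposition~\ref{prop:HopfRinow} yields that it is geodesic, completing the proof. The only step requiring genuine content is the lower-bound estimate on $\delta_\Omega$ along $B_R$, and this is exactly where Proposition~\ref{prop:lower bound for distance}(2) is designed to be applied; I do not anticipate any further obstacle.
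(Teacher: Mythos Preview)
Your proof is correct and follows essentially the same route as the paper: reduce to properness via Proposition~\ref{prop:HopfRinow}, then show that any sequence approaching $\partial\Omega$ has $\dist_\Omega^{(k)}$-distance to a fixed basepoint tending to infinity by invoking Proposition~\ref{prop:lower bound for distance}. The only cosmetic difference is that you apply part~(2) of that proposition (using the real line through $p_0$ and $q_n$), whereas the paper phrases it via supporting hyperplanes, which is part~(1); since part~(2) is itself deduced from part~(1), the arguments are equivalent.
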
 

\begin{proof} By construction $\dist_\Omega^{(k)}$ is a length metric and so it suffices to show that $\dist_\Omega^{(k)}$ is proper (see Proposition~\ref{prop:HopfRinow}). Since $\mathfrak{q}_\Omega^{(k)}$ is a strongly integrable pseudo-metric (see Proposition~\ref{prop:q is strongly integrable}), $\dist_\Omega^{(k)}$ generates the standard topology. So to verify that $\dist_\Omega^{(k)}$ is a proper it suffices to fix $o \in \Omega$ and a sequence $\{p_n\}$ converging to the boundary, then show that $\dist_\Omega^{(k)}(o,p_n)$ converges to infinity. Since every boundary point is contained in a real affine hyperplane that does not intersect $\Omega$, this follows from  Proposition~\ref{prop:lower bound for distance}.

\end{proof} 

Next we use Proposition~\ref{prop:lower bound for distance} to construct quasi-geodesics.

\begin{proposition}\label{prop:building quasi geodesics}  Suppose $\Omega \subset \Kb^d$ is a convex domain. If $p \in \Omega$, $x \in \partial \Omega$, and 
$$
\epsilon \norm{x-p} \leq \dist_{\rm Euc}( p, (p+V) \cap \partial \Omega)
$$
for some $\epsilon > 0$ and some $V \in \Gr_k(\Kb^d)$ with $x-p \in V$, then the curve $\sigma: [0,\infty) \rightarrow \Omega$ defined by 
$$
\sigma(t) = x+e^{-t}(p-x)
$$
satisfies 
$$
\abs{t-s} \leq \dist^{(k)}_\Omega(\sigma(s), \sigma(t)) \leq \frac{1}{\epsilon}\abs{t-s},
$$
i.e. $\sigma$ is a $\left( \frac{1}{\epsilon}, 0\right)$-quasi-geodesic. 
\end{proposition}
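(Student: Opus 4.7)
The plan is to prove the two inequalities separately. Parametrizing, note $\sigma(s), \sigma(t), x$ are collinear on a real affine line, with $\|\sigma(r)-x\| = e^{-r}\|p-x\|$. So for the lower bound I would just apply Proposition~\ref{prop:lower bound for distance}(2) to $\sigma(s), \sigma(t)$ and $x \in \partial\Omega$, which immediately gives
\[
|t-s| = \left|\log \frac{\|\sigma(t)-x\|}{\|\sigma(s)-x\|}\right| \leq \dist_\Omega^{(k)}(\sigma(s), \sigma(t)).
\]

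For the upper bound, the first step is to bound $\mathfrak{q}_\Omega^{(k)}(\sigma(r); \sigma'(r))$ pointwise. Compute $\sigma'(r) = -e^{-r}(p-x)$, so $\|\sigma'(r)\| = e^{-r}\|p-x\|$ and $\sigma'(r) \in \Kb \cdot (x-p) \subset V$. Also, since $\sigma(r) - p = (1-e^{-r})(x-p) \in V$, the affine plane $\sigma(r)+V$ equals $p+V$. The key step is to show
\[
\dist_{\rm Euc}\bigl(\sigma(r),\, (\sigma(r)+V) \cap \partial\Omega\bigr) \;\geq\; \epsilon\, e^{-r} \|p-x\|,
\]
which would give $\delta_\Omega^{(k)}(\sigma(r);\sigma'(r)) \geq \epsilon e^{-r}\|p-x\|$ and hence $\mathfrak{q}_\Omega^{(k)}(\sigma(r);\sigma'(r)) \leq 1/\epsilon$.

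To prove this inequality, I would use convexity as follows. The hypothesis gives a ball $B$ in the plane $p+V$, centered at $p$, of radius $\epsilon\|x-p\|$, contained in $\Omega \cap (p+V)$. Since $x \in \partial\Omega \cap (p+V)$, the open cone-like region $\mathrm{conv}(\{x\} \cup B)$ lies in $\overline{\Omega} \cap (p+V)$. At the point $\sigma(r) = (1-e^{-r})x + e^{-r}p$, the parallel cross-section of this cone in $p+V$ is exactly the ball centered at $\sigma(r)$ of radius $e^{-r}\epsilon\|x-p\|$, which is therefore contained in $\overline{\Omega}$. This gives the claimed lower bound on $\dist_{\rm Euc}(\sigma(r), (\sigma(r)+V) \cap \partial\Omega)$.

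Assembling the pieces, for $0 \leq s \leq t$,
\[
\dist_\Omega^{(k)}(\sigma(s),\sigma(t)) \;\leq\; \int_s^t \mathfrak{q}_\Omega^{(k)}(\sigma(r);\sigma'(r))\, dr \;\leq\; \int_s^t \frac{1}{\epsilon}\, dr \;=\; \frac{|t-s|}{\epsilon},
\]
completing the proof. I do not anticipate a serious obstacle; the mildly delicate point is the convex-hull/cross-section argument, but it is a one-line observation once one writes $\sigma(r)$ as a convex combination of $x$ and $p$.
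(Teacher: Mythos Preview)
Your proposal is correct and follows essentially the same approach as the paper: the lower bound via Proposition~\ref{prop:lower bound for distance}(2), and the upper bound by using convexity to show $\dist_{\rm Euc}(\sigma(r),(\sigma(r)+V)\cap\partial\Omega)\geq e^{-r}\dist_{\rm Euc}(p,(p+V)\cap\partial\Omega)\geq \epsilon e^{-r}\|p-x\|$, then integrating. The paper states the convexity inequality in one line without spelling out the cone argument, but your elaboration is exactly the content of that step.
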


\begin{proof} By Proposition~\ref{prop:lower bound for distance} we have 
$$
\dist^{(k)}_\Omega(\sigma(s), \sigma(t)) \geq \abs{ \log \frac{\norm{\sigma(s)-x}}{\norm{\sigma(t)-x}}} = \abs{t-s}.
$$
For the upper bound, notice that the definition of $\delta_\Omega^{(k)}$ and the convexity of $\Omega$ implies that 
\begin{align*}
\delta_\Omega^{(k)}(\sigma(t); \sigma'(t)) & \geq \dist_{\rm Euc}(\sigma(t), (\sigma(t)+V) \cap \partial \Omega) \geq \frac{\norm{\sigma(t)-x}}{\norm{p-x}} \dist_{\rm Euc}(p, (p+V) \cap \partial \Omega) \\
& \geq \epsilon e^{-t}\norm{p-x} = \epsilon \norm{\sigma'(t)}. 
\end{align*} 
Then, assuming $s < t$, 
\begin{align*}
\dist^{(k)}_\Omega(\sigma(s), \sigma(t)) \leq \int_s^t \frac{\norm{\sigma'(u)}}{\delta_\Omega^{(k)}(\sigma(u); \sigma'(u)) } du \leq \int_s^t \frac{1}{\epsilon} du = \frac{1}{\epsilon}\abs{t-s}. 
\end{align*}
So we also have the desired upper bound. 
\end{proof} 


\section{A necessary condition for Gromov hyperbolicity}\label{sec:a necessary condition for GH}


In this section we prove a strengthening of the implication (1) $\Rightarrow$ (2) in Theorem~\ref{thm:characterization in nonsmooth case}. 

\begin{theorem}\label{thm:1 implies 2 in main theorem} Suppose $\Omega \subset \Kb^d$ is a bounded convex domain and $(\Omega, \dist^{(k)}_\Omega)$ is Gromov hyperbolic. For any $r,\delta > 0$ there exists $C,\lambda > 0$ such that: if $x \in \partial \Omega$, $\mathbf{n}$ is a $(\delta, r)$-quasi-normal vector at $x$, $0 < t_1 \leq t_2 \leq r$, and $V \in \Gr_k(\Kb^d)$, then
$$
\dist_{\Euc}\big( x+t_1\mathbf{n}, (x+t_1 \mathbf{n} + V) \cap \partial\Omega\big) \leq C \left(\frac{t_1}{t_2} \right)^{\lambda} \dist_{\Euc}\big(x+t_2 \mathbf{n}, (x+t_2 \mathbf{n} + V) \cap \partial\Omega\big).
$$
\end{theorem}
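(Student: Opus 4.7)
The plan is to prove the contrapositive: assume $(\Omega, \dist_\Omega^{(k)})$ is $\delta_{GH}$-hyperbolic, and show the ratio bound holds with $C, \lambda$ depending on $r, \delta, \delta_{GH}$. The argument combines Proposition~\ref{prop:building quasi geodesics} (to build concrete quasi-geodesic rays), the shadowing lemma (Theorem~\ref{thm:shadowing property}) (to compare with actual geodesics), and Proposition~\ref{prop:lower bound for distance} (to convert quasi-hyperbolic distance into a ratio of Euclidean distances to a supporting affine hyperplane).

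Setting $o := x + r\mathbf{n}$ and $p_i := x + t_i\mathbf{n}$, choose $x_i \in (p_i + V) \cap \partial\Omega$ realizing $r_i$ (in the empty-intersection case the conclusion is vacuous after enlarging $V$ by $\Kb\mathbf{n}$ and absorbing constants). Proposition~\ref{prop:building quasi geodesics} produces (i) a $(r/\delta, 0)$-quasi-geodesic $\rho(s) = x + e^{-s} r\mathbf{n}$ passing through each $p_i$ at time $\log(r/t_i)$ (the hypothesis holds because $\delta_\Omega(o) \geq \delta$), and (ii) $(1,0)$-quasi-geodesics $\tau_i(s) = x_i + e^{-s}(p_i - x_i)$ from $p_i$ to $x_i$. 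Concatenation yields quasi-geodesic rays $\gamma_i$ from $o$ to $x_i$ that share their initial segment up to $p_2$ and only diverge there. By Theorem~\ref{thm:shadowing property} these fellow-travel honest geodesic rays, and the standard linear divergence for two geodesic rays in a $\delta_{GH}$-hyperbolic space past their Gromov product gives, for $s$ sufficiently large,
\[
\dist_\Omega^{(k)}(\tau_1(s), \tau_2(s)) \ \geq \ 2s + 2\log(r/t_1) - 2(r/\delta)\log(r/t_2) - O_{r,\delta,\delta_{GH}}(1),
\]
since the Gromov product $(x_1 \mid x_2)_o$ is bounded above by the shared quasi-geodesic length $(r/\delta)\log(r/t_2) + O(1)$.

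On the other hand, fix a $\Kb$-affine hyperplane $H$ supporting $\partial\Omega$ at $x_1$. Then Proposition~\ref{prop:lower bound for distance}(1) gives
\[
\dist_\Omega^{(k)}(\tau_1(s), \tau_2(s)) \ \leq \ \log \frac{\dist_\Euc(\tau_2(s), H)}{\dist_\Euc(\tau_1(s), H)} \ \leq \ s + \log\frac{\dist_\Euc(x_2, H)}{r_1} + o(1),
\]
where we used $x_1 \in H$ so $\dist_\Euc(\tau_1(s), H) \leq e^{-s} r_1$, and passed to the limit $s \to \infty$ in the numerator. A convex-geometric argument using the tangential--normal decomposition of Section~\ref{sec:QH metric almost orthogonal directions} bounds $\dist_\Euc(x_2, H) \lesssim r_2$. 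Comparing with the divergence lower bound and taking $s \to \infty$ yields $\log(r_2/r_1) \gtrsim \log(t_2/t_1)$ with implicit constant $\delta/r$; exponentiating gives the desired power-law with $\lambda = \delta/r$ (after adjusting constants).

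The main obstacle is the bound $\dist_\Euc(x_2, H) \lesssim r_2$: because $H$ is chosen to support $\partial\Omega$ at $x_1$ (the point witnessing the scale $r_1$), it need not interact well with $x_2$, especially in the non-smooth setting where $V$ can be ``tangential to $\partial\Omega$ at $x$'' so $x_1, x_2$ are Euclidean-far apart. Overcoming this requires carefully exploiting convexity of $\Omega$ together with the quasi-normal splittings of Sections~\ref{sec:distance to the boundary}--\ref{sec:QH metric almost orthogonal directions}, likely by choosing $H$ not arbitrarily but as a separating hyperplane between $\Omega$ and an affine subspace through $p_2$ containing the direction $p_1 - x_1$, in order to control both scales $r_1$ and $r_2$ simultaneously.
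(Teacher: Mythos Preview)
Your proposal has a critical orientation error. Proposition~\ref{prop:lower bound for distance}(1) states
\[
\left|\log\frac{\dist_{\rm Euc}(q,H)}{\dist_{\rm Euc}(p,H)}\right| \ \leq \ \dist_\Omega^{(k)}(p,q),
\]
i.e.\ a \emph{lower} bound on the metric, not the upper bound you wrote. Since your divergence argument also produces a lower bound on $\dist_\Omega^{(k)}(\tau_1(s),\tau_2(s))$, you end up with two inequalities pointing the same way and nothing about $r_1/r_2$ can be extracted. There is a second orientation problem in the divergence step itself: a shared initial segment of length $L$ forces $(x_1\mid x_2)_o \geq L - O(1)$, not $\leq L + O(1)$. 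The rays $\gamma_1,\gamma_2$ may continue to fellow-travel far past $p_2$ --- this is exactly what happens in the regime where $r_1$ is not much smaller than $r_2$, which is the regime you are trying to rule out --- so your claimed upper bound on the Gromov product is circular. You also assert without argument that the concatenation $\gamma_i = \rho|_{[0,\log(r/t_i)]} * \tau_i$ is a quasi-geodesic, which is not automatic for concatenations. Finally, as you yourself note, the bound $\dist_{\rm Euc}(x_2,H)\lesssim r_2$ is left open.

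The paper's route is quite different and avoids all of these issues by arguing by contradiction in two stages. First (Proposition~\ref{prop:no affine disks in boundary}) it shows hyperbolicity forces $\dist_{\rm Euc}(p,(p+V)\cap\partial\Omega)\to 0$ uniformly as $p\to\partial\Omega$, by exhibiting arbitrarily non-slim quasi-geodesic triangles when this fails. Second (Proposition~\ref{prop:definite contraction}) it shows that over any interval of length $T$ along the quasi-normal ray, for some fixed $T$, the quantity contracts by a definite factor $(1-\epsilon)$; if not, one builds non-slim quasi-geodesic \emph{rectangles} whose four sides are each separately shown to be quasi-geodesics via Proposition~\ref{prop:building quasi geodesics}, and the distance lower bounds certifying non-slimness come precisely from Proposition~\ref{prop:lower bound for distance} used in its correct direction. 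Iterating the contraction then gives the exponential decay.
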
 

\begin{remark} Notice that Theorem~\ref{thm:1 implies 2 in main theorem} considers arbitrary quasi-normal vectors, while (2) in Theorem~\ref{thm:characterization in nonsmooth case} only considers a certain family of quasi-normal vectors (see Example~\ref{ex:non-smooth quasi-normal vectors}). 
\end{remark}

Notice that the theorem is equivalent to showing that there exists 
$C,\lambda > 0$ such that: if $x \in \partial \Omega$ and $\mathbf{n}$ is a $(\delta, r)$-quasi-normal vector at $x$, then 
\begin{align*}
\dist_{\Euc} & ( x+re^{-s_1} \mathbf{n}, (x+re^{-s_1} \mathbf{n} + V) \cap \partial\Omega)\\
&  \leq C e^{-\lambda(s_1-s_2)} \dist_{\Euc}( x+re^{-s_2} \mathbf{n}, (x+re^{-s_2} \mathbf{n} + V) \cap \partial\Omega)
\end{align*}
for all $0 < s_2 \leq s_1$ and $V \in \Gr_k(\Kb^d)$. To prove this exponential decay, we first prove convergence to zero and then prove that there is definite decay over any sufficiently large interval. 

\begin{proposition}\label{prop:no affine disks in boundary} 
$$
0=\lim_{\epsilon \searrow 0} \ \sup\left\{ \dist_{\Euc}( p, (p+ V) \cap \partial\Omega) : p \in \Omega, \ \dist_{\Euc}(p,\partial\Omega) \leq \epsilon, \ V \in \Gr_k(\Kb^d) \right\}.
$$
Equivalently (by convexity), for all $x \in \partial \Omega$ and $V \in \Gr_k(\Kb^d)$, the set $(x+V) \cap \partial\Omega$ has empty interior in $x+V$. 
\end{proposition}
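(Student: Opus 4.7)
The plan is to first verify the equivalence stated in the parenthetical (a fact of convex geometry independent of Gromov hyperbolicity) and then deduce the main limit by contradiction, using Proposition~\ref{prop:building quasi geodesics} and the fellow-traveling property (Theorem~\ref{thm:shadowing property}) in Gromov hyperbolic spaces.

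For the equivalence, suppose first the limit fails: extract sequences $p_n \to y \in \partial\Omega$ and $V_n \to V$ with $\dist_{\Euc}(p_n, (p_n + V_n) \cap \partial\Omega) \geq \epsilon_0$. The lower bound forces $B_{p_n + V_n}(p_n, \epsilon_0) \subset \overline{\Omega}$, so in the limit $B_{y+V}(y, \epsilon_0) \subset \overline{\Omega}$. Since $y \in \partial\Omega$, a standard relative-interior argument gives $(y+V) \cap \Omega = \emptyset$, hence $B_{y+V}(y, \epsilon_0) \subset (y+V) \cap \partial\Omega$. Conversely, if $B_{x+V}(y_0, r_0) \subset (x+V) \cap \partial\Omega$, then $K := (x+V) \cap \overline{\Omega}$ lies in $\partial\Omega$ with $y_0$ in its relative interior, and any real supporting hyperplane $H$ of $\Omega$ at $y_0$ contains $K$. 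Choosing an inward direction $\mathbf{m}$ with $y_0 + s\mathbf{m} \in \Omega$ for small $s>0$, a continuity-of-depth argument shows that for $\delta$ small enough, $p_\delta := y_0 + \delta\mathbf{m}$ satisfies $B_{p_\delta + V}(p_\delta, r_0/2) \subset \Omega$, contradicting the limit condition.

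For the main implication, I argue by contradiction: suppose $(x+V) \cap \partial\Omega \supset B_{x+V}(y_0, r_0)$, and build $p_\delta$ as above. For each $y \in B_{x+V}(y_0, r_0/4) \cap \partial\Omega$, choose a $k$-plane $V_y := \Kb \cdot (y - p_\delta) + W_y \in \Gr_k(\Kb^d)$ where $W_y \subset V$ is a $(k-1)$-$\Kb$-dimensional subspace orthogonal (in $V$) to $y - y_0$. A direct computation shows that $y$ is the nearest boundary point to $p_\delta$ inside the affine plane $p_\delta + V_y$, so Proposition~\ref{prop:building quasi geodesics} (with $\epsilon = 1$) produces a geodesic ray $\sigma_y(t) := y + e^{-t}(p_\delta - y)$ from $p_\delta$ to $y$. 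For two such boundary points $y_1, y_2$, the difference $\sigma_{y_1}(t) - \sigma_{y_2}(t) = (1-e^{-t})(y_1-y_2)$ lies in $V$, and the Euclidean segment between $\sigma_{y_1}(t)$ and $\sigma_{y_2}(t)$ remains in the flat tube where $\delta^{(k)}_\Omega(\,\cdot\,;v) \geq r_0/2$ for $v \in V$; integrating $\mathfrak{q}^{(k)}_\Omega$ along this segment yields a uniform bound $\dist^{(k)}_\Omega(\sigma_{y_1}(t), \sigma_{y_2}(t)) \leq C$ for all $t \geq 0$.

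The contradiction with Gromov hyperbolicity then comes via Theorem~\ref{thm:shadowing property}: two quasi-geodesic rays from the same basepoint at uniformly bounded $\dist^{(k)}_\Omega$-distance must be at bounded Hausdorff distance, forcing them to converge to the same point of the Gromov boundary. Combined with Proposition~\ref{prop:lower bound for distance} applied to the line through $\sigma_{y_1}(t)$ and $\sigma_{y_2}(t)$ extended to boundary points of $\Omega$, this must force $y_1 = y_2$, contradicting the freedom to vary $y$ in $B_{x+V}(y_0, r_0/4)$. The main obstacle is precisely this last step: because $y_1$ and $y_2$ lie in a common supporting hyperplane $H \supset K$, Proposition~\ref{prop:lower bound for distance}(1) applied with $H$ gives only indeterminate log-ratios; one must select generic lines not contained in $H$ whose extensions meet $\partial\Omega$ at points converging to the endpoints of the segment $[y_1, y_2]$ inside the flat face $K$, and exploit Proposition~\ref{prop:lower bound for distance}(2) to extract a divergent logarithmic contribution that contradicts the bounded fellow-traveling distance.
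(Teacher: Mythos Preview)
Your argument has a genuine gap at the final step, and the difficulty you flag is not a technicality but the heart of the matter. Having two quasi-geodesic rays $\sigma_{y_1},\sigma_{y_2}$ from a common basepoint with $\dist_\Omega^{(k)}(\sigma_{y_1}(t),\sigma_{y_2}(t))\leq C$ for all $t$ is \emph{not} a contradiction to Gromov hyperbolicity: fellow-travelling rays simply land at the same Gromov boundary point, and distinct points $y_1,y_2$ of a flat face in $\partial\Omega$ are perfectly allowed to represent the same ideal point. Your proposed fix cannot work either: the line through $\sigma_{y_1}(t)$ and $\sigma_{y_2}(t)$ has direction $(1-e^{-t})(y_1-y_2)\in V$, so it lies in the slice $(x+V)+e^{-t}\delta\mathbf{m}$, and by your own tube estimate that slice meets $\partial\Omega$ only outside a ball of radius $r_0/2$; hence both endpoints of the extended segment stay at Euclidean distance bounded below from $\sigma_{y_i}(t)$, and the logarithmic ratio in Proposition~\ref{prop:lower bound for distance}(2) remains bounded. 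There is no ``generic line'' through \emph{both} $\sigma_{y_1}(t)$ and $\sigma_{y_2}(t)$ other than this one, so Proposition~\ref{prop:lower bound for distance}(2) cannot be used here to force divergence.

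The paper's proof hinges on exactly the opposite phenomenon. One takes $x$ in the interior of the flat face and $y$ on its \emph{relative boundary} in $x+V$. The point is then to show that the rays $\sigma_x$ and $\sigma_y$ from a fixed $p\in\Omega$ \emph{diverge}: if $t_n,s_n\to\infty$, the Euclidean ray $\overrightarrow{\sigma_x(t_n)\sigma_y(s_n)}$ has direction tending to $y-x$, and because $y$ lies on the edge of the face this ray exits $\overline\Omega$ at a point $b_n\to y$; then $\norm{\sigma_y(s_n)-b_n}\to 0$ while $\norm{\sigma_x(t_n)-b_n}\to\norm{x-y}>0$, and Proposition~\ref{prop:lower bound for distance}(2) gives $\dist_\Omega^{(k)}(\sigma_x(t_n),\sigma_y(s_n))\to\infty$. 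Combining this with the fact that the transverse segments $[\sigma_x(T),\sigma_y(T)]$ are uniform quasi-geodesics produces arbitrarily non-slim quasi-geodesic triangles. In short, the contradiction comes from \emph{divergence} of rays aimed at the edge of the face, not from closeness of rays aimed at its interior.
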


\begin{proposition}\label{prop:definite contraction} There exist $T > 0$ and $\epsilon \in (0,1)$ such that:  if $x \in \partial \Omega$, $\mathbf{n}$ is a $(\delta, r)$-quasi-normal vector at $x$, $s_2>0$, $s_1 \geq s_2+T$, and $V \in \Gr_k(\Kb^d)$, then 
$$
\dist_{\Euc}(  x+re^{-s_1} \mathbf{n}, (x+re^{-s_1} \mathbf{n} + V) \cap \partial\Omega) \leq (1-\epsilon) \dist_{\Euc}( x+re^{-s_2} \mathbf{n}, (x+re^{-s_2} \mathbf{n} + V) \cap \partial\Omega).
$$

\end{proposition}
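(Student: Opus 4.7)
The plan is to argue by contradiction using Gromov hyperbolicity. Suppose the conclusion fails: then there are $\epsilon_n \searrow 0$, $x_n \in \partial\Omega$, $(\delta, r)$-quasi-normal vectors $\mathbf{n}_n$ at $x_n$, scales $0 < s_{2,n} \leq s_{1,n}$ with $T_n := s_{1,n} - s_{2,n} \to \infty$, and subspaces $V_n \in \Gr_k(\Kb^d)$ such that, writing $p_{j,n} := x_n + re^{-s_{j,n}}\mathbf{n}_n$ and $R_{j,n} := \dist_{\Euc}(p_{j,n}, (p_{j,n}+V_n) \cap \partial\Omega)$, one has $R_{1,n} \geq (1 - \epsilon_n) R_{2,n}$. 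Fix $q_{j,n} \in (p_{j,n}+V_n) \cap \partial\Omega$ realizing $R_{j,n}$.

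First I would dispose of the case $\sup_n s_{2,n} < \infty$. After passing to subsequences, $x_n \to x_\infty \in \partial\Omega$, $\mathbf{n}_n \to \mathbf{n}_\infty$, and $s_{2,n} \to s_{2,\infty} \in (0,\infty)$, so $p_{2,n}$ converges to an interior point of $\Omega$ and $R_{2,n}$ is bounded below by a positive constant. Since $s_{1,n} \to \infty$, we have $p_{1,n} \to x_\infty \in \partial\Omega$, and Proposition~\ref{prop:no affine disks in boundary} gives $R_{1,n} \to 0$, contradicting $R_{1,n} \geq (1 - \epsilon_n) R_{2,n}$.

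The remaining case $s_{2,n} \to \infty$ is where Gromov hyperbolicity is genuinely needed. Here both $p_{j,n}$ approach the boundary. Using Proposition~\ref{prop:building quasi geodesics}, I would construct three families of quasi-geodesic rays in $(\Omega, \dist_\Omega^{(k)})$ with quality constants independent of $n$: the normal ray $\gamma_n(t) := x_n + re^{-s_{2,n}-t}\mathbf{n}_n$ from $p_{2,n}$ toward $x_n$, which passes through $p_{1,n}$ at parameter $T_n$ and is a quasi-geodesic because any $k$-plane containing $\mathbf{n}_n$ has slice depth at least $\delta$ at the deep point $x_n + r\mathbf{n}_n$; and the tangential rays $\sigma_{j,n}(t) := q_{j,n} + e^{-t}(p_{j,n}-q_{j,n})$ from $p_{j,n}$ toward $q_{j,n}$, where the proposition applies with $\epsilon = 1$. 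I then compare the quasi-geodesic $\sigma_{2,n}$ with the concatenation $\gamma_n|_{[0,T_n]} \ast \sigma_{1,n}$, a broken path from $p_{2,n}$ through $p_{1,n}$ to $q_{1,n}$. The assumption that $R_{1,n}$ is comparable to $R_{2,n}$ says the tangential ``scales'' at $p_{1,n}$ and $p_{2,n}$ agree, so morally both candidate paths aim at the ``same'' Gromov boundary point, while the Euclidean geometry says they leave $p_{2,n}$ in almost transverse directions (normal versus tangential).

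The main obstacle is turning this heuristic into a rigorous contradiction. The delicate steps I anticipate are: verifying that the broken path is itself a quasi-geodesic with uniform constants, which reduces to a Gromov-product bound at the joining point $p_{1,n}$ controlled by the uniform transversality of $\mathbf{n}_n$ and $v_{1,n} := (q_{1,n}-p_{1,n})/R_{1,n}$ via Proposition~\ref{prop:estimate on QH in tang/non-tang direction}; and then applying Theorem~\ref{thm:shadowing property} together with a quantitative calculation showing that $\sigma_{2,n}(T_n)$, which sits at Euclidean distance $e^{-T_n} R_{2,n}$ from $q_{2,n}$ in the tangential direction $v_{2,n}$ from $p_{2,n}$, cannot lie within uniformly bounded $\dist_\Omega^{(k)}$-distance of the broken path (whose point at parameter $T_n$ is near $p_{1,n}$, in the normal direction from $p_{2,n}$) once $T_n$ is large enough. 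Closing this quantitative gap---essentially another application of the almost-orthogonal splitting from Section~\ref{sec:QH metric almost orthogonal directions}---is the crux of the argument.
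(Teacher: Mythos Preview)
Your overall strategy—argue by contradiction, build quasi-geodesic configurations based at the failing sequence, and derive arbitrarily non-slim figures—is exactly the paper's strategy. The initial reduction disposing of bounded $s_{2,n}$ via Proposition~\ref{prop:no affine disks in boundary} is also correct (the paper does this implicitly by deducing $\|y_n-x_{n,a_n}\|\to 0$).

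The gap is in the main step. You want to compare the tangential ray $\sigma_{2,n}$ with the broken path $\gamma_n|_{[0,T_n]}\ast\sigma_{1,n}$ using Theorem~\ref{thm:shadowing property}. But that theorem requires the two quasi-geodesics to share \emph{both} endpoints, and yours only share the initial point $p_{2,n}$: one heads to $q_{2,n}$, the other to $q_{1,n}$. Your assertion that they ``aim at the same Gromov boundary point'' is exactly the missing ingredient, and it is not a consequence of $R_{1,n}\approx R_{2,n}$ alone. Establishing it amounts to showing that a path joining the deep-tangential end near $q_{2,n}$ to the deep-tangential end near $q_{1,n}$ has bounded $\dist_\Omega^{(k)}$-length—and that is precisely the role of the paper's fourth side $[p_n,p_n']$ and Lemma~\ref{lem:pnpn' QI geodesic}, whose proof is where the hypothesis $R_{1,n}\ge (1-1/n)R_{2,n}$ is actually used. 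Without closing the figure you have no slim-triangle inequality to violate; once you close it you have essentially reproduced the paper's quasi-geodesic rectangle
\[
[x_{n,a_n},x_{n,b_n}]\cup[x_{n,b_n},p_n']\cup[p_n',p_n]\cup[p_n,x_{n,a_n}].
\]

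A secondary issue: even your preliminary step—that the concatenation $\gamma_n\ast\sigma_{1,n}$ is a uniform quasi-geodesic—requires bounding the Gromov product $(p_{2,n}\,|\,q_{1,n})_{p_{1,n}}$, which is itself a nontrivial $\dist_\Omega^{(k)}$-estimate. The paper sidesteps this entirely by never concatenating: it works with four separate segments, each shown directly to be a uniform quasi-geodesic via Proposition~\ref{prop:building quasi geodesics}, and then proves the rectangle is not $M$-slim using the hyperplane estimates of Lemma~\ref{lem:distance to Hn}. That packaging is what makes the argument close.
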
 

Assuming the propositions we prove Theorem~\ref{thm:1 implies 2 in main theorem}. 

\begin{proof}[Proof of Theorem~\ref{thm:1 implies 2 in main theorem}] By Proposition~\ref{prop:no affine disks in boundary} there exists $T_0 > 0$ such that
$$
\dist_{\Euc}( p, (p + V) \cap \partial\Omega) \leq \delta
$$
whenever $p \in \Omega$, $\dist_{\Euc}(p,\partial \Omega) \leq e^{-T_0}$, and $V \in \Gr_k(\Kb^d)$. Let $T>0$ and  $\epsilon \in (0,1)$ be as in Proposition~\ref{prop:definite contraction}. Then let 
$$
\lambda : =- \frac{\log(1-\epsilon)}{T} \quad \text{and} \quad C := \max\left\{  \frac{{\rm diam}(\Omega)}{e^{-T_0}\delta}  e^{\lambda T_0}, e^{\lambda T}, \frac{1}{1-\epsilon}\right\}. 
$$

Fix $x \in \partial \Omega$, a $(\delta,r)$-quasi-normal vector $\mathbf{n}$ at $x \in \partial \Omega$, and $0 \leq s_2 < s_1$. Let $x_s : = x+re^{-s} \mathbf{n}$. Since $\mathbf{n}$ is a $(\delta,r)$-quasi-normal vector, $\delta_\Omega(x_0)=\delta_\Omega(x+r\mathbf{n}) \geq \delta$. So by convexity, 
$$
\delta_\Omega(x_s) \geq e^{-s} \delta. 
$$

\medskip

\noindent \textbf{Case 1:} Assume $s_1 < T_0$. Then 
\begin{align*} 
\dist_{\Euc} & ( x_{s_1}, (x_{s_1} + V) \cap \partial\Omega) \leq {\rm diam}(\Omega) \leq \frac{{\rm diam}(\Omega)}{\delta_\Omega(x_{s_2})}  \dist_{\Euc}( x_{s_2}, (x_{s_2} + V) \cap \partial\Omega) \\
& \leq  \frac{{\rm diam}(\Omega)}{e^{-T_0}\delta}  e^{\lambda T_0} e^{-\lambda(s_1-s_2)} \dist_{\Euc}( x_{s_2}, (x_{s_2} + V) \cap \partial\Omega) \\
& \leq C e^{-\lambda(s_1-s_2)} \dist_{\Euc}( x_{s_2}, (x_{s_2} + V) \cap \partial\Omega).
\end{align*}

\medskip

\noindent \textbf{Case 2:} Assume $s_1 \geq T_0$. Then by convexity 
\begin{align*}
\dist_{\Euc}&( x_{s_2}, (x_{s_2} + V) \cap \partial\Omega) \geq \min\left\{ \dist_{\Euc}( x_{0}, (x_{0} + V) \cap \partial\Omega), \, \dist_{\Euc}( x_{s_1}, (x_{s_1} + V) \cap \partial\Omega)  \right\} \\
& \geq \min\left\{ \delta, \, \dist_{\Euc}( x_{s_1}, (x_{s_1} + V) \cap \partial\Omega)\right\}= \dist_{\Euc}( x_{s_1}, (x_{s_1} + V) \cap \partial\Omega). 
\end{align*}

\noindent \textbf{Case 2(a):} Assume $s_1-s_2 \leq T$. Then
\begin{align*} 
\dist_{\Euc} & ( x_{s_1}, (x_{s_1} + V) \cap \partial\Omega) \leq\dist_{\Euc}  ( x_{s_2}, (x_{s_2} + V) \cap \partial\Omega) \\
& \leq e^{\lambda T} e^{-\lambda(s_1-s_2)} \dist_{\Euc}  ( x_{s_2}, (x_{s_2} + V) \cap \partial\Omega) \\
& \leq C  e^{-\lambda(s_1-s_2)} \dist_{\Euc}( x_{s_2}, (x_{s_2} + V) \cap \partial\Omega).
\end{align*}

\noindent \textbf{Case 2(b):} Assume $s_1-s_2 \geq T$. Then we can divide $[s_2,s_1]$ into $n$ intervals of length at least $T$ where $n \geq \frac{s_1-s_2}{T}-1$.  Then applying Proposition~\ref{prop:definite contraction} to each subinterval, we have
\begin{align*} 
\dist_{\Euc} & ( x_{s_1}, (x_{s_1} + V) \cap \partial\Omega) \leq (1-\epsilon)^n \dist_{\Euc}  ( x_{s_2}, (x_{s_2} + V) \cap \partial\Omega) \\
& \leq \frac{1}{1-\epsilon} e^{-\lambda(s_1-s_2)} \dist_{\Euc}  ( x_{s_2}, (x_{s_2} + V) \cap \partial\Omega) \\
&  \leq C e^{-\lambda(s_1-s_2)} \dist_{\Euc}  ( x_{s_2}, (x_{s_2} + V) \cap \partial\Omega). \qedhere
\end{align*}
\end{proof} 

\subsection{Proof of Proposition~\ref{prop:no affine disks in boundary}} The proof is similar to the proof of~\cite[Theorem 3.1]{ZimmerMathAnn2016}. 

Suppose not. Then there exist  $\{p_n\} \subset \Omega$ and $\{V_n\} \subset \Gr_k(\Kb^d)$ such that $\dist_{\Euc}(p_n, \partial\Omega) \rightarrow 0$ and 
$$
\rho: = \lim_{n \rightarrow \infty} \dist_{\Euc}( p_n, (p_n + V_n) \cap \partial \Omega) > 0. 
$$
Passing to a subsequence we can suppose that $p_n \rightarrow x
\in \partial \Omega$ and $V_n \rightarrow V \in \Gr_k(\Kb^d)$. Then
$$
B_{\Kb^d}(x,\rho) \cap (x+V) \subset \overline{\Omega}.
$$
Since $x \in \partial \Omega$, convexity implies that 
$$
B_{\Kb^d}(x,\rho) \cap (x+V) \subset \partial \Omega.
$$

Next fix $p \in \Omega$ and fix $y$ in the boundary of $(x+V) \cap \partial \Omega$ in $x+V$. Notice that $\norm{x-y} \geq \rho$ and hence $x \neq y$. Let 
$$
\sigma_x(t) : = x + e^{-t}(p-x) \quad \text{and} \quad \sigma_y(t) : = y + e^{-t}(p-y).
$$
By Proposition~\ref{prop:building quasi geodesics}, there exists $A > 1$ such that $\sigma_x$, $\sigma_y$ are $(A,0)$-quasi-geodesics. 

\medskip 

\noindent \textbf{Claim 1:} After possibly increasing $A > 1$, for any $T \geq 0$ the Euclidean line segment $[\sigma_x(T), \sigma_y(T)]$ can be parametrized to be a $(A,0)$-quasi-geodesic. 

\medskip

Since $B_{\Kb^d}(x,\rho) \cap (x+V) \subset \partial \Omega$, convexity implies that
$$
\dist_{\Euc}(\sigma_x(T),  (\sigma_x(T)+V) \cap \partial \Omega) \geq \min\{ \rho, \delta_\Omega(p)\}. 
$$
For $T > 0$, let $a_T$ be the point of intersection between the Euclidean ray $\overrightarrow{\sigma_x(T)\sigma_y(T)}$ and $\partial \Omega$. Since $\Omega$ is bounded, 
$$
R : = \sup_{T > 0} \norm{\sigma_x(T) - a_T} 
$$
is finite. Then by Proposition~\ref{prop:building quasi geodesics},  the Euclidean line segment $[\sigma_x(T), \sigma_y(T)]$ can be parametrized to be a $(A',0)$-quasi-geodesic where $A' : = \frac{R}{\min\{ \rho, \delta_\Omega(p)\}}$.

\medskip 

\noindent \textbf{Claim 2:} $\lim_{t \rightarrow \infty} \dist_\Omega^{(k)}(\sigma_x(t), [p,y)) = +\infty$. 

\medskip

Suppose not. Then there exist sequences $\{t_n\}$ and $\{s_n\}$ where $t_n \rightarrow +\infty$ and 
$$
\sup_{n \geq 1}  \dist_\Omega^{(k)}(\sigma_x(t_n), \sigma_y(s_n)) < + \infty. 
$$
Since $\dist_\Omega^{(k)}$ is a proper distance, we must have $s_n \rightarrow +\infty$. Let $b_n$ be the point of intersection between the Euclidean ray $\overrightarrow{\sigma_x(t_n) \sigma_y(s_n)}$ and $\partial \Omega$. Since $y$ is in the boundary of $(x+V) \cap \partial \Omega$ in $x+V$ we must have $b_n \rightarrow y$. Hence 
$$
\lim_{n \rightarrow \infty} \norm{\sigma_y(s_n)-b_n}=0 \quad \text{and} \quad \lim_{n \rightarrow \infty}\norm{\sigma_x(t_n)-b_n} = \norm{x-y} \geq \rho.
$$
Then by Proposition~\ref{prop:lower bound for distance}, 
$$
\lim_{n \rightarrow \infty}  \dist_\Omega^{(k)}(\sigma_x(t_n), \sigma_y(s_n))  \geq \lim_{n \rightarrow \infty}   \log\frac{\norm{\sigma_x(t_n)-b_n}}{\norm{\sigma_y(s_n)-b_n}} = +\infty.
$$
So we have a contradiction. 

\medskip 

\noindent \textbf{Claim 3:} For any $M > 0$, there exists $T > 0$ such that the quasi-geodesic triangle 
$$[p,\sigma_x(T)] \cup [\sigma_x(T), \sigma_y(T)] \cup [\sigma_y(T), p]$$
 is not $M$-slim. 

\medskip

By Claim 2 there exists $t_0 > 0$ such that 
$$
 \dist_\Omega^{(k)}(\sigma_x(t_0), [p,y))  > M.
 $$
Since $\dist_\Omega^{(k)}$ is a proper distance, we can fix $T > t_0$ such that 
$$
 \dist_\Omega^{(k)}(\sigma_x(t_0), [\sigma_x(T), \sigma_y(T)])  > M.
 $$
 Then 
 $$
 \dist_\Omega^{(k)}(\sigma_x(t_0), [\sigma_x(T), \sigma_y(T)] \cup [\sigma_y(T), p])  > M
 $$
 and so $[p,\sigma_x(T)] \cup [\sigma_x(T), \sigma_y(T)] \cup [\sigma_y(T), p]$ is not $M$-slim. 

Finally, Theorem~\ref{thm:shadowing property} and Claim 3 imply that $\dist_\Omega^{(k)}$ is not Gromov hyperbolic and so we have a contradiction. 

\subsection{Proof of Proposition~\ref{prop:definite contraction}}

\begin{figure} 
\begin{tikzpicture}[
  declare function={
    func(\x)= (\x < 0) * (-.3*x)   +
              (\x >= 0) * (x*x*x)
   ;
  }
]
\begin{axis}[scale=1.6, hide axis, xmin=-.9, xmax=1,ymin=-1,ymax=1]
  \addplot[smooth, domain=-.56780:.9, samples=1000] {func(x)};
    
    \draw[dashed] (0,0)--(0,.8);
    \draw[dashed] (0,.25)--(0.62996,.25);
     \draw[dashed] (0,.5)--(0.7937,.5);
     \draw[dashed] (.4,.8)--(.4,.4*.4*.4);
    
\node[fill,circle,inner sep=1.5pt,label={below left:$x_n$}] at
(0,0){};     
\node[fill,circle,inner sep=1.5pt,label={left:$x_{n,b_n}$}] at
(0,.25){};     
\node[fill,circle,inner sep=1.5pt,label={below right:$p_n'$}] at
(0.4,.25){};  
\node[fill,circle,inner sep=1.5pt,label={right:$y_n'$}] at
(0.62996,.25){};  
\node[fill,circle,inner sep=1.5pt,label={left:$x_{n,a_n}$}] at
(0,.5){};  
\node[fill,circle,inner sep=1.5pt,label={above right :$p_n$}] at
(0.4,.5){};  
\node[fill,circle,inner sep=1.5pt,label={right:$y_n$}] at
(0.7937,.5){};  
\node[fill,circle,inner sep=1.5pt,label={left:$x_{n,0}$}] at
(0,.8){};  
\node[fill,circle,inner sep=1.5pt,label={right:$q_{n}$}] at
(.4,.8){};  
\node[fill,circle,inner sep=1.5pt,label={below right:$z_{n}$}] at
(.4,.4*.4*.4){};  
\end{axis}
\end{tikzpicture}
\caption{The many points in the proof of Proposition~\ref{prop:definite contraction}}
\end{figure}
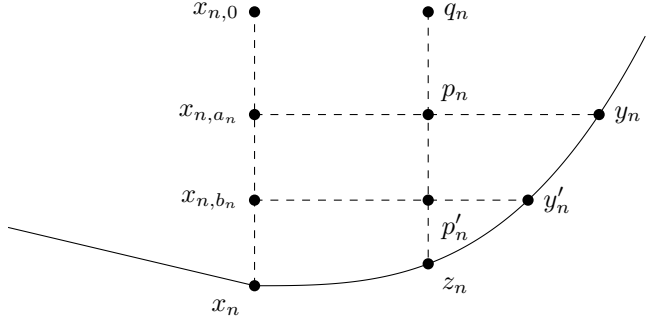 

 Suppose no such $T$ and $\epsilon$ exist. Then for every $n \in \Nb$ we can find $x_n \in \partial \Omega$, a $(r,\delta)$-quasi-normal vector $\mathbf{n}_n$ at $x_n$, positive numbers $a_n,b_n$,  and $V_n \in \Gr_k(\Kb^d)$ such that 
 $$
a_n+n <  b_n
 $$
 and  
$$
\dist_{\Euc}( x_{n,b_n}, (x_{n,b_n} + V_n) \cap \partial\Omega) \geq \left(1-\frac{1}{n} \right) \dist_{\Euc}( x_{n,a_n}, (x_{n,a_n} + V_n) \cap \partial\Omega),
$$
where we use the notation 
$$
x_{n,t} := x_n + re^{-t} \mathbf{n}_n.
$$
Using these points we will construct non-slim quasi-geodesic rectangles.

Fix $y_n \in (x_{n,a_n} + V_n) \cap \partial\Omega$ such that 
\begin{equation}\label{eqn:definition of y_n}
\norm{y_n-x_{n,a_n}} = \dist_{\Euc}( x_{n,a_n}, (x_{n,a_n} + V_n) \cap \partial\Omega).
\end{equation}
Then let
$$
p_n : = x_{n,a_n} + \left(1-\frac{2}{n} \right)(y_n - x_{n,a_n}) \quad \text{and} \quad p_n' : = x_{n,b_n} + \left(1-\frac{2}{n} \right)(y_n - x_{n,a_n}).
$$

We will show that the Euclidean line segments 
$$
[x_{n,a_n},x_{n,b_n}] \cup [x_{n,b_n}, p_n'] \cup [p_n',p_n] \cup [p_n, x_{n,a_n}]
$$
form a quasi-geodesic rectangle and then show that this family is not uniformly slim. 

Since each $\mathbf{n}_n$ is a $(r,\delta)$-quasi-normal vector, Proposition~\ref{prop:building quasi geodesics} implies that  the Euclidean line segments $[x_{n,a_n},x_{n,b_n}]$ can be parametrized to be a quasi-geodesic with constants independent of $n$. Further, Equation~\eqref{eqn:definition of y_n} and Proposition~\ref{prop:building quasi geodesics} imply that $[x_{n,a_n}, p_n] \subset [x_{n,a_n}, y_n)$ can be parametrized to be a quasi-geodesic with constants independent of $n$. Showing that the other two line segments are quasi-geodesics requires some preliminary estimates. 

Since $b_n > a_n +n > n$ and each $\mathbf{n}_n$ is a unit vector, we have 
$$
\lim_{n\rightarrow \infty} \dist_{\rm Euc}(x_{n,b_n}, \partial \Omega) \leq \lim_{n\rightarrow \infty} re^{-n} = 0. 
$$
So by Proposition~\ref{prop:no affine disks in boundary},
$$
\lim_{n \rightarrow \infty} \dist_{\Euc}( x_{n,b_n}, (x_{n,b_n} + V_n) \cap \partial\Omega)=0.
$$
Hence 
\begin{equation}\label{eqn:distance betwene y_n and x_n}
\lim_{n \rightarrow \infty} \norm{y_n-x_{n,a_n}} \leq \lim_{n \rightarrow \infty} \left(1-\frac{1}{n} \right)^{-1}\dist_{\Euc}( x_{n,b_n}, (x_{n,b_n} + V_n) \cap \partial\Omega)=0.
\end{equation}

Let $y_n' \in (x_{n,b_n}+V_n) \cap \partial \Omega$ be the point where 
$$
\{y_n'\} = (x_{n,b_n} + \Rb_{>0} \cdot (y_n-x_{n,a_n}) ) \cap \partial \Omega. 
$$

\begin{lemma}\label{lem:yn versus ynprime} After possibly passing to a tail of our sequences, 
$$
 \left(1-\frac{1}{n} \right)\norm{y_n-x_{n,a_n}} \leq \norm{y_n' - x_{n,b_n}} \leq \norm{y_n-x_{n,a_n}}
$$
for all $n \geq 1$. Hence $[p_n, p_n'] \subset \Omega$ for all $n \geq 1$. 
\end{lemma}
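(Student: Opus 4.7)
The plan is to treat the two inequalities separately; only the upper bound requires real work. The lower bound is immediate from the way the sequence was chosen: since $y_n' \in (x_{n,b_n} + V_n) \cap \partial\Omega$,
\[
\norm{y_n' - x_{n,b_n}} \geq \dist_{\Euc}(x_{n,b_n}, (x_{n,b_n} + V_n) \cap \partial\Omega) \geq \left(1-\tfrac{1}{n}\right)\norm{y_n - x_{n,a_n}}.
\]

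For the upper bound I would argue by contradiction. Write $s_n := \norm{y_n - x_{n,a_n}}$ and $u_n := (y_n - x_{n,a_n})/s_n \in V_n$. By~\eqref{eqn:distance betwene y_n and x_n} we have $s_n \to 0$, so after passing to a tail we may assume $s_n < \delta$. Suppose $\norm{y_n' - x_{n,b_n}} > s_n$; then $z_1 := x_{n,b_n} + s_n u_n$ lies strictly in the interior of the segment from $x_{n,b_n} \in \Omega$ to $y_n' \in \partial\Omega$, so $z_1 \in \Omega$. Since $\mathbf{n}_n$ is an $(r,\delta)$-quasi-normal vector at $x_n$, we have $B_{\Kb^d}(x_n + r\mathbf{n}_n, \delta) \subset \Omega$, and $\norm{s_n u_n} = s_n < \delta$ forces $z_2 := x_n + r\mathbf{n}_n + s_n u_n \in \Omega$ as well. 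Using $x_{n,t} = x_n + re^{-t}\mathbf{n}_n$ and $y_n = x_{n,a_n} + s_n u_n$, a direct computation yields
\[
y_n - z_1 = r(e^{-a_n} - e^{-b_n})\mathbf{n}_n, \qquad z_2 - z_1 = r(1 - e^{-b_n})\mathbf{n}_n,
\]
so $y_n = (1-\theta_n)z_1 + \theta_n z_2$ with $\theta_n = (e^{-a_n}-e^{-b_n})/(1-e^{-b_n}) \in (0,1)$. Convexity of $\Omega$ then gives $y_n \in \Omega$, contradicting $y_n \in \partial\Omega$.

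For the inclusion $[p_n, p_n'] \subset \Omega$: the Euclidean segment from $x_{n,a_n}$ to $p_n$ lies in the affine plane $x_{n,a_n} + V_n$, has length $(1-2/n)s_n < s_n$, and hence by the very definition of $s_n$ meets $\partial\Omega$ nowhere; since $x_{n,a_n} \in \Omega$, we conclude $p_n \in \Omega$. The identical argument at $x_{n,b_n}$, combined with the lower bound $\norm{y_n' - x_{n,b_n}} \geq (1-1/n)s_n > (1-2/n)s_n$ just established, gives $p_n' \in \Omega$. Since $\Omega$ is convex, this forces $[p_n, p_n'] \subset \Omega$.

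The main obstacle is locating the correct convex decomposition for the upper bound: the key is to pair the candidate point $z_1 = x_{n,b_n} + s_n u_n$ with $z_2 = x_n + r\mathbf{n}_n + s_n u_n$, obtained by shifting the interior depth $x_n + r\mathbf{n}_n$ by the same tangential vector, so that both $y_n - z_1$ and $z_2 - z_1$ become scalar multiples of $\mathbf{n}_n$. The quantitative $(r,\delta)$-quasi-normal hypothesis controls $z_2$, while the qualitative convergence $s_n \to 0$ coming from Proposition~\ref{prop:no affine disks in boundary} ensures $s_n < \delta$ eventually; without both ingredients, this part of the argument collapses.
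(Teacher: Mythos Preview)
Your proof is correct and follows the same approach as the paper. The paper's version compresses your upper-bound contradiction into the single line ``convexity implies $\norm{y_n-x_{n,a_n}} \geq \min\{\delta,\norm{y_n'-x_{n,b_n}}\}$,'' which is exactly the content of your $z_1,z_2$ decomposition; your explicit convex combination just unpacks why that minimum holds.
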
 

\begin{proof} First notice that 
\begin{align*}
& \norm{y_n'-x_{n,b_n}} \geq \dist_\Omega( x_{n,b_n}, (x_{n,b_n} + V_n) \cap \partial\Omega) \\
& \quad \geq \left(1-\frac{1}{n} \right) \dist_\Omega( x_{n,a_n}, (x_{n,a_n} + V_n) \cap \partial\Omega) = \left(1-\frac{1}{n} \right)\norm{y_n-x_{n,a_n}}. 
\end{align*}

Since $\mathbf{n}_n$ is a $(\delta,r)$-quasi-normal vector,
$$
\delta_\Omega( x _n+ r \mathbf{n}_n) \geq \delta.
$$
Then convexity implies that
$$
\norm{y_n-x_{n,a_n}} \geq \min\{ \delta, \norm{y_n' - x_{n,b_n}}\}. 
$$
Equation~\eqref{eqn:distance betwene y_n and x_n} implies that $\norm{y_n-x_{n,a_n}} < \delta$ for $n$ sufficiently large. So 
$$
\norm{y_n-x_{n,a_n}} \geq \norm{y_n' - x_{n,b_n}} 
$$
for $n$ sufficiently large.
\end{proof}

\begin{lemma} The Euclidean line segment $[x_{n,b_n}, y_n')$ can be parametrized to be a quasi-geodesic with constants independent of $n$. 
\end{lemma}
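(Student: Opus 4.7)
My plan is to apply Proposition~\ref{prop:building quasi geodesics} directly, using $x = y_n'$ and $p = x_{n,b_n}$. That proposition will then produce the $(A,0)$-quasi-geodesic parametrization $t \mapsto y_n' + e^{-t}(x_{n,b_n} - y_n')$ of $[x_{n,b_n}, y_n')$, provided I can exhibit some subspace $V \in \Gr_k(\Kb^d)$ with $y_n' - x_{n,b_n} \in V$ and a uniform constant $\epsilon > 0$ satisfying
$$\epsilon \norm{y_n' - x_{n,b_n}} \leq \dist_{\Euc}\bigl(x_{n,b_n},\, (x_{n,b_n} + V) \cap \partial \Omega\bigr).$$

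I expect the natural choice to be $V = V_n$, and the verification will come in two clean steps. First, because $y_n \in (x_{n,a_n} + V_n) \cap \partial \Omega$ by~\eqref{eqn:definition of y_n}, the vector $y_n - x_{n,a_n}$ lies in $V_n$, and $y_n' - x_{n,b_n}$ is by construction a positive real multiple of $y_n - x_{n,a_n}$, so $y_n' - x_{n,b_n} \in V_n$ too. Second, the contradiction hypothesis combined with~\eqref{eqn:definition of y_n} gives
$$\dist_{\Euc}\bigl(x_{n,b_n},\, (x_{n,b_n} + V_n) \cap \partial \Omega\bigr) \geq \left(1 - \tfrac{1}{n}\right) \norm{y_n - x_{n,a_n}},$$
while Lemma~\ref{lem:yn versus ynprime} supplies $\norm{y_n' - x_{n,b_n}} \leq \norm{y_n - x_{n,a_n}}$. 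Chaining these produces
$$\dist_{\Euc}\bigl(x_{n,b_n},\, (x_{n,b_n} + V_n) \cap \partial \Omega\bigr) \geq \left(1 - \tfrac{1}{n}\right) \norm{y_n' - x_{n,b_n}} \geq \tfrac{1}{2} \norm{y_n' - x_{n,b_n}}$$
once $n \geq 2$, so Proposition~\ref{prop:building quasi geodesics} applies with $\epsilon = 1/2$ and yields a $(2,0)$-quasi-geodesic parametrization of $[x_{n,b_n}, y_n')$ uniformly in $n$, after passing to a tail.

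There is no real obstacle to this step; it is pure bookkeeping that glues the contradiction hypothesis on $\{V_n\}$ to Lemma~\ref{lem:yn versus ynprime}. The conceptual point is simply that the very same subspace $V_n$ that witnesses the lower bound at the base point $x_{n,a_n}$ continues to do its job at the slightly displaced base point $x_{n,b_n}$, because the failure assumption was designed precisely to prevent $(x_{n,t} + V_n) \cap \partial \Omega$ from retreating by more than the factor $(1 - 1/n)$ between $t = a_n$ and $t = b_n$.
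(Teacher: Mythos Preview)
Your proof is correct and follows essentially the same approach as the paper: both arguments apply Proposition~\ref{prop:building quasi geodesics} with $V = V_n$, using Lemma~\ref{lem:yn versus ynprime} together with the contradiction hypothesis to bound $\norm{y_n' - x_{n,b_n}}$ by a uniform multiple of $\dist_{\Euc}(x_{n,b_n}, (x_{n,b_n} + V_n) \cap \partial\Omega)$. You are slightly more explicit in verifying $y_n' - x_{n,b_n} \in V_n$, but otherwise the reasoning is the same.
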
 

\begin{proof} By Lemma~\ref{lem:yn versus ynprime},
\begin{align*}
\norm{y_n'-x_{n,b_n}} & \leq \norm{y_n- x_{n,a_n}}  = \dist_{\Euc}( x_{n,a_n}, (x_{n,a_n} + V_n) \cap \partial\Omega)\\
&  \leq  \left(1-\frac{1}{n} \right)^{-1}\dist_{\Euc}( x_{n,b_n}, (x_{n,b_n} + V_n) \cap \partial\Omega).
\end{align*}
Thus 
$$
\sup_{n \geq 1} \, \frac{\norm{y_n'-x_{n,b_n}}}{\dist_{\Euc}( x_{n,b_n}, (x_{n,b_n} + V_n) \cap \partial\Omega)} < +\infty.
$$
So Proposition~\ref{prop:building quasi geodesics} implies that  $[x_{n,b_n}, y_n')$ can be parametrized to be a quasi-geodesic with constants independent of $n$. 
\end{proof}

\begin{lemma}\label{lem:pnpn' QI geodesic} After possibly passing to a tail of our sequences, the Euclidean line segment $[p_n, p_n']$ can be parametrized to be a quasi-geodesic with constants independent of $n$. Moreover, there exists $A > 1$ such that: if $t \in [a_n, a_n+n/2]$, then 
$$
\dist_\Omega^{(k)}\left(p_n, x_{n,t} +  \left(1-\frac{2}{n} \right)(y_n - x_{n,a_n}) \right) \leq A (t-a_n). 
$$
\end{lemma}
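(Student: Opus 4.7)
I would parametrize $[p_n, p_n']$ using Proposition~\ref{prop:building quasi geodesics}. Let $z_n \in \partial\Omega$ be the point where the Euclidean ray from $p_n$ in direction $-\mathbf{n}_n$ meets the boundary and set $\tau_n := \|p_n - z_n\|$; then $z_n$ lies on the line through $p_n, p_n'$ (both being shifts of points on the normal segment through $x_n$ by the tangential vector $w_n := (1-2/n)(y_n-x_{n,a_n})$), and since $\|p_n - p_n'\| = r(e^{-a_n} - e^{-b_n}) < \tau_n$ (because $p_n' \in \Omega$), we have $p_n' \in (p_n, z_n)$. Thus $\sigma_n(t) := z_n + e^{-t}\tau_n\mathbf{n}_n$ parametrizes $[p_n, p_n']$ on an interval $[0, T_n]$ with $T_n = \log(\tau_n/\|p_n' - z_n\|)$. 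To apply Proposition~\ref{prop:building quasi geodesics}, take any $V \in \Gr_k(\Kb^d)$ containing $\mathbf{n}_n$; the key bound needed is $\dist_{\Euc}(p_n, (p_n + V) \cap \partial\Omega) \geq c\tau_n$ uniformly in $n$, for some $c > 0$.

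The main work is to establish this bound. First, Equation~\eqref{eqn:distance betwene y_n and x_n} gives $\|w_n\| \to 0$, and since $z_n$ lies on the translated normal line $x_n + w_n + \Rb\mathbf{n}_n$, convexity forces $\|z_n - x_n\| \to 0$; consequently $\delta_\Omega(z_n + r\mathbf{n}_n) \geq \delta/2$ for all large $n$, so $\mathbf{n}_n$ is a $(r, \delta/2)$-quasi-normal vector at $z_n$. Let $H_n$ be a real supporting hyperplane to $\Omega$ at $z_n$ with unit inward normal $\nu_n$. Since $B(z_n + r\mathbf{n}_n, \delta/2) \subset \Omega \subset \{y : \Real\ip{y - z_n, \nu_n} \geq 0\}$, testing against the extremal point $z_n + r\mathbf{n}_n - (\delta/2)\nu_n$ yields $r\Real\ip{\mathbf{n}_n, \nu_n} \geq \delta/2$, hence $\Real\ip{\mathbf{n}_n, \nu_n} \geq \delta/(2r) =: c_0$. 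Since $p_n - z_n = \tau_n\mathbf{n}_n$, this gives
\[
\dist_{\Euc}(p_n, (p_n + V) \cap \partial\Omega) \geq \dist(p_n, \partial\Omega) \geq \dist(p_n, H_n) = \tau_n \Real\ip{\mathbf{n}_n, \nu_n} \geq c_0 \tau_n.
\]
Proposition~\ref{prop:building quasi geodesics} then makes $\sigma_n$ a $(1/c_0, 0)$-quasi-geodesic with constants independent of $n$.

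For the moreover part, I would integrate $\mathfrak{q}_\Omega^{(k)}$ along the alternative parametrization $\gamma_n(t) := x_{n,t} + w_n$. Writing $\gamma_n(t) = z_n + s(t)\mathbf{n}_n$ with $s(t) := \tau_n + r(e^{-t} - e^{-a_n})$, one has $\gamma_n'(t) = -re^{-t}\mathbf{n}_n$, so
\[
\mathfrak{q}_\Omega^{(k)}(\gamma_n(t); \gamma_n'(t)) \leq \frac{re^{-t}}{\delta_\Omega(\gamma_n(t); \mathbf{n}_n)} \leq \frac{Cre^{-t}}{s(t)},
\]
where the second inequality is Proposition~\ref{prop:dist to the boundary in a direction} applied at the quasi-normal vector $\mathbf{n}_n$ at $z_n$. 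A direct integration gives $\int_{a_n}^{t} Cre^{-u}/s(u) \, du = -C\log(1 - \beta_n(1 - e^{-(t - a_n)}))$ with $\beta_n := re^{-a_n}/\tau_n$. Since $p_n' \in \Omega$ forces $\tau_n > r(e^{-a_n} - e^{-b_n}) \geq re^{-a_n}(1 - e^{-n})$, we have $\beta_n \leq 1/(1 - e^{-n})$, and therefore on the truncated range $t \in [a_n, a_n + n/2]$ the quantity $1 - \beta_n(1 - e^{-(t - a_n)})$ remains bounded below by a positive constant multiple of $e^{-(t - a_n)}$. An elementary bound then gives $-\log(1 - \beta_n(1 - e^{-(t - a_n)})) \leq A(t - a_n)$ uniformly in $n$, yielding the claim.

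\textbf{Main obstacle.} The delicate step is the supporting-hyperplane argument $\dist(p_n, \partial\Omega) \gtrsim \tau_n$: it rests on transferring the quasi-normality of $\mathbf{n}_n$ from $x_n$ to the nearby boundary point $z_n$, which requires $\|w_n\| \to 0$. The truncation to $[a_n, a_n + n/2]$ in the moreover part is needed precisely because the ratio $\beta_n$ may approach $1$, so the integrand $\beta_n e^{-(t - a_n)}/(1 - \beta_n(1 - e^{-(t - a_n)}))$ only stays uniformly bounded on the first half of the parameter range.
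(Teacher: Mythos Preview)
Your overall strategy matches the paper's: parametrize the segment $[p_n,p_n']$ as a sub-arc of a ray to a boundary point $z_n$ and invoke Proposition~\ref{prop:building quasi geodesics}; for the ``moreover'' part, bound the integrated metric along the normal direction and show the result is linear in $t-a_n$ on $[a_n,a_n+n/2]$. The paper uses the deeper point $q_n := x_{n,0}+w_n$ (where $\delta_\Omega(q_n)\geq\delta/2$ is immediate) rather than $p_n$ as the base point for Proposition~\ref{prop:building quasi geodesics}, and for the ``moreover'' part it uses a monotonicity trick to replace $\log\frac{\|p_n-z_n\|}{\|q_{n,t}-z_n\|}$ by $\log\frac{e^{-a_n}-e^{-b_n}}{e^{-t}-e^{-b_n}}$ before differentiating; but these are variations on the same theme.

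However, two of your displayed inequalities point the wrong way. First, in the chain
\[
\dist_{\Euc}(p_n,(p_n+V)\cap\partial\Omega)\;\geq\;\dist(p_n,\partial\Omega)\;\geq\;\dist(p_n,H_n),
\]
the second inequality is false: since $H_n$ is a supporting hyperplane, $\overline\Omega$ lies in one closed half-space, so the ball $B(p_n,\delta_\Omega(p_n))\subset\Omega$ lies in that half-space, giving $\delta_\Omega(p_n)\leq\dist(p_n,H_n)$, not $\geq$. The fix is to bypass $H_n$ entirely: once you know $\mathbf{n}_n$ is a $(r,\delta/2)$-quasi-normal vector at $z_n$, the cone argument (as in Equation~\eqref{eqn:t estimate}) gives directly $\delta_\Omega(z_n+\tau_n\mathbf{n}_n)\geq\frac{\delta}{2r}\tau_n$, which is the bound you need.

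Second, in the ``moreover'' part you write
\[
\mathfrak{q}_\Omega^{(k)}(\gamma_n(t);\gamma_n'(t))\;\leq\;\frac{re^{-t}}{\delta_\Omega(\gamma_n(t);\mathbf{n}_n)}.
\]
This is backwards: in general $\delta_\Omega^{(k)}(p;v)\leq\delta_\Omega(p;v)$, so dividing gives the opposite inequality. Replace $\delta_\Omega(\gamma_n(t);\mathbf{n}_n)$ by $\delta_\Omega(\gamma_n(t))$, which satisfies $\delta_\Omega(\gamma_n(t))\leq\delta_\Omega^{(k)}(\gamma_n(t);\mathbf{n}_n)$ and is bounded below by $\frac{\delta}{2r}s(t)$ via the same cone argument. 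With these two corrections (which are really the same correction applied twice), your integration and the bound $\beta_n\leq(1-e^{-n})^{-1}$ go through: the derivative $h'(\tau)=\beta_n e^{-\tau}/\bigl(1-\beta_n(1-e^{-\tau})\bigr)$ stays uniformly bounded on $[0,n/2]$, yielding the linear estimate.
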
 

\begin{proof} Let $q_n : = x_{n,0} +  \left(1-\frac{2}{n} \right)(y_n - x_{n,a_n})$. By Equation~\eqref{eqn:distance betwene y_n and x_n}, 
$$
\lim_{n \rightarrow \infty} \dist_{\rm Euc}( x_{n,0}, q_n) = 0.
$$
Since $\mathbf{n}_n$ is a $(\delta,r)$-quasi-normal vector, $\delta_\Omega( x_{n,0} ) \geq \delta$. So there exists $N > 0$ such that $q_n \in \Omega$ and $\delta_\Omega(q_n) \geq \delta/2$ when $n \geq N$.

For $n \geq N$, let $z_n \in \partial \Omega$ be the point where 
$$
\{z_n\} = (q_n - \Rb_{>0} \cdot \mathbf{n}_n) \cap \partial \Omega. 
$$
Then $[p_n, p_n'] \subset [q_n, z_n)$.  Further, since $\Omega$ is bounded, 
$$
\sup_{n \geq N} \frac{\norm{q_n-z_n}}{\delta_\Omega(q_n)} \leq \frac{\delta}{2} \sup_{n \geq N} \norm{q_n-z_n} < +\infty.
$$
So by Proposition~\ref{prop:building quasi geodesics} there exists $A_0 > 0$ such that 
$$
t \mapsto z_n + e^{-t_n}(q_n-z_n)
$$
is a $(A_0,0)$-quasi-geodesic when $n \geq N$.

Now fix $n \geq N$. For $t \in [a_n, a_n+n/2]$ let 
$$
q_{n,t}: = x_{n,t} +  \left(1-\frac{2}{n} \right)(y_n - x_{n,a_n}) \in [p_n,p_n']. 
$$
Then 
$$
\dist_\Omega^{(k)}\left(p_n, q_{n,t} \right) \leq A_0 \log \frac{\norm{p_n-z_n}}{\norm{q_{n,t}-z_n}} = A_0 \log \frac{\norm{p_n-p_n'}+\norm{p_n'-z_n}}{\norm{q_{n,t}-p_n'}+\norm{p_n'-z_n}}. 
$$
Notice that 
$$
f_{n,t}(s) =  \frac{\norm{p_n-p_n'}+s}{\norm{q_{n,t}-p_n'}+s} = 1 + \frac{\norm{p_n-p_n'}-\norm{q_{n,t}-p_n'}}{\norm{q_{n,t}-p_n'}+s}
$$
is decreasing on $[0,\infty)$. Hence 
\begin{align*}
\dist_\Omega^{(k)}\left(p_n, q_{n,t} \right) &  \leq A_0\log f_{n,t}(\norm{p_n'-z_n}) \leq A \log f_{n,t}(0)= A_0 \log \frac{\norm{p_n-p_n'}}{\norm{q_{n,t}-p_n'}} \\
& = A_0 \log \frac{e^{-a_n}-e^{-b_n}}{e^{-t}-e^{-b_n}}.
\end{align*}
Now the function $g_n(t) = A_0 \log \frac{e^{-a_n}-e^{-b_n}}{e^{-t}-e^{-b_n}}$ vanishes at $t=a_n$ and has derivative 
$$
g_n'(t) = \frac{A_0}{1-e^{-(b_n-t)}}.
$$
Since $b_n > a_n +n$, when $t \in [a_n, a_n+n/2]$ we have
$$
g_n'(t) \leq \frac{A_0}{1-e^{-n/2}} \leq \frac{A_0}{1-e^{-1/2}}.
$$
Hence when $t \in [a_n, a_n+n/2]$,
\begin{equation*}
\dist_\Omega^{(k)}\left(p_n, q_{n,t} \right) \leq \frac{A_0}{1-e^{-1/2}}(t-a_n). \qedhere
\end{equation*}
\end{proof} 

Thus  
$$
[x_{n,a_n}, x_{n,b_n}] \cup [x_{n,b_n}, p_n'] \cup [p_n',p_n] \cup [p_n, x_{n,a_n}]
$$
is a quasi-geodesic rectangle with constants independent of $n$. Our next aim is to show that this family is not uniformly slim. To establish the necessary distance estimates we will need a preliminary lemma.

Since $\Omega$ is convex, there is an $\Kb$-affine hyperplane $H_n$ with $x_n \in H_n$ and $H_n \cap \Omega = \emptyset$. Then there exists a unit vector $v_n$ such that 
$$
H_n = \{ z \in \Kb^d : \ip{z-x_n, v_n} = 0\}
$$
and 
\begin{equation}\label{eqn:half plane inclusion} 
\Omega \subset  \{ z \in \Kb^d : {\rm Re} \ip{z-x_n, v_n} > 0\}.
\end{equation}

\begin{lemma}\label{lem:distance to Hn} After possibly passing to a tail of our sequences, there exists $c> 0$ such that:
\begin{enumerate}
\item If $p \in [x_{n,a_n}, y_n)$, then 
$$
\dist_{\Euc}(p, H_n) \geq c e^{-a_n}.
$$
\item If $t \in [a_n,b_n]$ and $\lambda \in [0,1-1/n]$, then 
$$
\dist_{\Euc}(x_{n,t} + \lambda (y_n-x_{n,a_n}), H_n) \leq \frac{1}{c} e^{-t}. 
$$
\end{enumerate}
\end{lemma}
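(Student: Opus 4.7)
Plan. I will choose the supporting hyperplane $H_n$ (equivalently, the unit inward normal $v_n$ defining it) carefully so that both estimates reduce to linear algebra. Denote by $\psi_n(z) := {\rm Re}\ip{z - x_n, v_n}$; this is an affine function with $\psi_n \geq 0$ on $\overline{\Omega}$, and $\dist(z, H_n) \geq \psi_n(z)$ for $z \in \overline{\Omega}$ (with equality in the real case, since the real codim-1 wall $\{\psi_n = 0\}$ then coincides with $H_n$). The quasi-normal hypothesis $B(x_n + r\mathbf{n}_n, \delta) \subset \Omega$ applied to any supporting hyperplane at $x_n$ yields $\psi_n(x_n + r\mathbf{n}_n) \geq \delta$, hence ${\rm Re}\ip{\mathbf{n}_n, v_n} \geq \delta/r$; in particular, $\psi_n(x_{n,t}) = re^{-t} {\rm Re}\ip{\mathbf{n}_n, v_n} \in [\delta e^{-t}, re^{-t}]$ for every choice of supporting hyperplane at $x_n$.

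The key step is to choose $v_n$ in the inward normal cone $N_{x_n}$ at $x_n$ satisfying $\ip{y_n - x_{n,a_n}, v_n} = 0$ (as a $\Kb$-valued inner product, so the real \emph{and} imaginary parts vanish). With such a choice, $\psi_n(y_n) = \psi_n(x_{n,a_n})$. By linearity of $z \mapsto \ip{z-x_n, v_n}$ on affine segments, I then deduce:
\begin{enumerate}
\item[(1)] For $p \in [x_{n,a_n}, y_n)$, $\psi_n(p) = \psi_n(x_{n,a_n}) \geq \delta e^{-a_n}$, so $\dist(p, H_n) \geq \psi_n(p) \geq \delta e^{-a_n}$.
\item[(2)] For $(t, \lambda) \in [a_n, b_n] \times [0, 1 - 1/n]$, the $\lambda$-term vanishes identically, leaving $\ip{x_{n,t} + \lambda(y_n - x_{n,a_n}) - x_n, v_n} = re^{-t}\ip{\mathbf{n}_n, v_n}$, whence $\dist(x_{n,t} + \lambda(y_n - x_{n,a_n}), H_n) = re^{-t}|\ip{\mathbf{n}_n, v_n}| \leq re^{-t}$.
\end{enumerate}
This gives the lemma with $c := \min(\delta, 1/r)$. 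Note in particular that enforcing \emph{complex} (not merely real) orthogonality is what lets me turn the modulus estimate into a bound by $re^{-t}$ in the $\Kb = \Cb$ case.

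The main obstacle is the existence of $v_n \in N_{x_n}$ with $\ip{y_n - x_{n,a_n}, v_n} = 0$. When $\partial\Omega$ is not smooth at $x_n$, $N_{x_n}$ has positive dimension and one finds such $v_n$ by intersecting $N_{x_n}$ with the $\Kb$-linear hyperplane $\{v : \ip{y_n - x_{n,a_n}, v} = 0\}$; non-emptiness follows from convexity unless $\pm(y_n - x_{n,a_n})$ lies in the \emph{interior} of the tangent cone at $x_n$, a case ruled out by $y_n \in \partial\Omega$ together with the geometry established so far. When $\partial\Omega$ is smooth at $x_n$ (so $v_n$ is uniquely determined), one must instead exploit the contradiction setup itself: the hypothesis $\|y_n' - x_{n,b_n}\| \geq (1 - 1/n)\|y_n - x_{n,a_n}\|$ combined with $b_n - a_n > n$ and $\|y_n - x_{n,a_n}\| \to 0$ is precisely what rules out the smooth finite-order-contact case and forces $\ip{y_n - x_{n,a_n}, v_n} = o(e^{-a_n})$ along a subsequence, which suffices to absorb the error and obtain the lemma with a slightly worse constant $c$.
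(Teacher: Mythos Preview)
Your framework is right: set $\psi_n(z) = {\rm Re}\ip{z-x_n,v_n}$, and both parts follow once you control $\ip{y_n - x_{n,a_n}, v_n}$. But the execution has a genuine gap.

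The primary strategy---choose $v_n$ in the normal cone with $\ip{y_n - x_{n,a_n}, v_n} = 0$---cannot be carried out in general. At a smooth point the normal cone is a single ray, so there is no freedom, and for a generic $V_n$ the unique $v_n$ will not be $\Kb$-orthogonal to $y_n - x_{n,a_n}$. Your claim that in the non-smooth case the bad configuration is ``ruled out by $y_n \in \partial\Omega$'' is not justified either: $y_n - x_{n,a_n}$ differs from $y_n - x_n$ by $re^{-a_n}\mathbf n_n$, so $y_n \in \partial\Omega$ gives no direct orthogonality. The smooth/non-smooth dichotomy is therefore a red herring.

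Your fallback is the correct route and is exactly what the paper does, uniformly for \emph{any} supporting hyperplane $H_n$, but you have not supplied the key step. Here it is. Pick $e^{i\theta} \in \Kb \cap \Sb^1$ so that $\ip{e^{i\theta}(y_n - x_{n,a_n}), v_n} = -\abs{\ip{y_n - x_{n,a_n}, v_n}}$. Since $V_n$ is a $\Kb$-subspace, $e^{i\theta}(y_n - x_{n,a_n}) \in V_n$, and the \emph{original} contradiction hypothesis (not merely the $y_n'$ estimate, which controls only one real direction) gives
\[
x_{n,b_n} + e^{i\theta}\Bigl(1-\tfrac1n\Bigr)(y_n - x_{n,a_n}) \in \overline{\Omega}.
\]
Applying $\psi_n \geq 0$ to this point yields
\[
\abs{\ip{y_n - x_{n,a_n}, v_n}} \leq \frac{r e^{-b_n}}{1-\tfrac1n}\,{\rm Re}\ip{\mathbf n_n, v_n} \leq \frac{r e^{-b_n}}{1-\tfrac1n}.
\]
Since $b_n > a_n + n$, this is $o(e^{-a_n})$ as you anticipated, and in fact $O(e^{-b_n})$, which is what parts (1) and (2) need. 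The $e^{i\theta}$-rotation is essential in the complex case: invoking only $y_n'$ would bound ${\rm Re}\ip{y_n - x_{n,a_n}, v_n}$ from below but not $\abs{\ip{y_n - x_{n,a_n}, v_n}}$, and then (2) would fail since $\dist_{\Euc}(\cdot, H_n)$ involves the modulus.
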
 

\begin{proof} 
Since $\mathbf{n}_n$ is a $(\delta,r)$-quasi-normal vector, $\delta_\Omega( x _n+ r \mathbf{n}_n) \geq \delta.$
So $x_n + r\mathbf{n}_n - \delta v_n \in \overline\Omega$. Hence~\eqref{eqn:half plane inclusion}  implies that
\begin{equation}\label{eqn:vn component of n(x_n)}
{\rm Re} \ip{ \mathbf{n}_n,v_n} \geq \frac{\delta}{r}. 
\end{equation}
Pick $e^{i\theta} \in \Kb \cap \mathbb{S}^1$ such that 
$$
\ip{e^{i\theta}(y_n-x_{n,a_n}), v_n} = -\abs{ \ip{y_n-x_{n,a_n}, v_n}}. 
$$
By hypothesis, 
\begin{align*}
\dist_{\rm Euc}  (x_{n,b_n}, (x_{n,b_n} + V_n) \cap \partial \Omega) & \geq \left( 1 - \frac{1}{n}\right)\dist_{\rm Euc}(x_{n,a_n}, (x_{n,a_n} + V_n) \cap \partial \Omega)\\
& =  \left( 1 - \frac{1}{n}\right)\norm{y_n-x_{n,a_n}}.
\end{align*}
Thus
$$
x_{n,b_n} + e^{i\theta}\left(1-\frac{1}{n}\right)(y_n-x_{n,a_n}) \in \overline{\Omega}.
$$
So by~\eqref{eqn:half plane inclusion},
\begin{align*}
0 & \leq {\rm Re} \ip{x_{n,b_n} + e^{i\theta}\left(1-\frac{1}{n}\right)(y_n-x_{n,a_n})-x_n ,v_n} \\
& = re^{-b_n} {\rm Re} \ip{\mathbf{n}_n, v_n} - \left(1-\frac{1}{n}\right)\abs{ \ip{y_n-x_{n,a_n}, v_n}}.
\end{align*}
Hence 
\begin{equation}\label{eqn:yn-xn component of n(x_n)}
\abs{ \ip{y_n-x_{n,a_n}, v_n}} \leq \frac{re^{-b_n}}{1-\frac{1}{n}} {\rm Re}\ip{\mathbf{n}_n, v_n} \leq \frac{re^{-b_n}}{1-\frac{1}{n}} \abs{\ip{\mathbf{n}_n, v_n}}. 
\end{equation}

(1). Fix $p \in [x_{n,a_n}, y_n)$. Then $p = x_{n,a_n} + \lambda (y_n -  x_{n,a_n} )$ for some $\lambda \in [0,1)$. Then by Equations~\eqref{eqn:yn-xn component of n(x_n)} and~\eqref{eqn:vn component of n(x_n)},
\begin{align*}
\dist_{\Euc}(p, H_n) & = \abs{\ip{p-x_n, v_n}} = \abs{\ip{r e^{-a_n} \mathbf{n}_n + \lambda (y_n -  x_{n,a_n} ), v_n}} \\
& \geq re^{-a_n} \abs{\ip{\mathbf{n}_n ,v_n}} - \abs{\ip{y_n -  x_{n,a_n} , v_n}} \\
& \geq \left( re^{-a_n} - \frac{re^{-b_n}}{1-\frac{1}{n}}\right) \abs{\ip{\mathbf{n}_n ,v_n}} \geq \left( re^{-a_n} - \frac{re^{-b_n}}{1-\frac{1}{n}}\right) \frac{\delta}{r}. 
\end{align*}
Since $b_n > a_n + n$, this completes the proof of (1). 

(2). Fix $t \in [a_n,b_n]$ and $\lambda \in [0,1-1/n]$. By Equation~\eqref{eqn:yn-xn component of n(x_n)},
\begin{align*}
\dist_{\Euc} & (x_{n,t} + \lambda (y_n-x_{n,a_n}), H_n) =\abs{\ip{x_{n,t} + \lambda (y_n-x_{n,a_n})-x_n, v_n}} \\
& = \abs{\ip{re^{-t} \mathbf{n}_n + \lambda (y_n-x_{n,a_n}), v_n}} \\
& \leq re^{-t} \abs{\ip{\mathbf{n}_n ,v_n}} + \frac{re^{-b_n}}{1-\frac{1}{n}} \abs{\ip{\mathbf{n}_n, v_n}}.
\end{align*}
Since $t \leq b_n$ and $\mathbf{n}_n, v_n$ are unit vectors, this completes the proof of (2).

\end{proof} 

Recall that $A > 1$ was introduced in Lemma~\ref{lem:pnpn' QI geodesic}. By Proposition~\ref{prop:building quasi geodesics} we can increase $A > 1$ and also assume that 
\begin{equation}\label{eqn:upper bound on x_nt distances}
\dist_\Omega^{(k)}(x_{n,t}, x_{n,s}) \leq A\abs{t-s}
\end{equation} 
for all $n \geq 1$ and $s,t \in [0,\infty)$.

\begin{lemma}\label{lem:non slim QG rectangles} For every $M > 0$, the quasi-geodesic rectangle
$$
[x_{n,a_n}, x_{n,b_n}] \cup [x_{n,b_n}, p_n'] \cup [p_n',p_n] \cup [p_n, x_{n,a_n}]
$$
is not $M$-slim when $n$ is sufficiently large (i.e. there is a point on one of the segments which is not in the $M$-neighborhood of any of the other segments). 
\end{lemma}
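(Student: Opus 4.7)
The plan is to take $t_n := (a_n+b_n)/2$ and exhibit the point $x_{n,t_n}$ on the side $[x_{n,a_n}, x_{n,b_n}]$ as a witness to the failure of $M$-slimness for $n$ large, by proving that $\dist^{(k)}_\Omega(x_{n,t_n}, p) \to \infty$ uniformly as $p$ ranges over each of the other three sides. All three bounds will be obtained from Proposition~\ref{prop:lower bound for distance}(1) applied with a hyperplane disjoint from $\Omega$: the hyperplane $H_n$ already introduced handles the two short ``horizontal'' sides, while the ``parallel'' side $[p_n, p_n']$ calls for a second hyperplane, and that is where the main obstacle lies.

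For the two horizontal sides the argument is routine. Since $[p_n, x_{n,a_n}] \subset [x_{n,a_n}, y_n)$, Lemma~\ref{lem:distance to Hn}(1) gives $\dist_{\Euc}(p, H_n) \geq c\, e^{-a_n}$ for every $p$ on this segment, while Lemma~\ref{lem:distance to Hn}(2) (with $\lambda = 0$, $t = t_n$) gives $\dist_{\Euc}(x_{n,t_n}, H_n) \leq c^{-1} e^{-t_n}$; Proposition~\ref{prop:lower bound for distance}(1) then yields
\[
\dist^{(k)}_\Omega(x_{n,t_n}, p) \;\geq\; 2\log c + (t_n - a_n) \;\geq\; 2\log c + n/2.
\]
Symmetrically, for $p \in [x_{n,b_n}, p_n']$, Lemma~\ref{lem:distance to Hn}(2) (with $t = b_n$, $\lambda \leq 1-2/n$) gives $\dist_{\Euc}(p, H_n) \leq c^{-1} e^{-b_n}$, while \eqref{eqn:vn component of n(x_n)} gives $\dist_{\Euc}(x_{n,t_n}, H_n) = re^{-t_n}\Real\ip{\mathbf{n}_n, v_n} \geq \delta e^{-t_n}$, so $\dist^{(k)}_\Omega(x_{n,t_n}, p) \geq \log(c\delta) + (b_n - t_n) \geq \log(c\delta) + n/2$.

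The hard case is the estimate against $[p_n, p_n']$: for $q_s := x_{n,s} + (1-2/n)(y_n - x_{n,a_n})$ with $s$ close to $t_n$, the hyperplane $H_n$ only produces an $O(1)$ ratio. I would use instead a support hyperplane $\widetilde H_n$ to $\Omega$ at $y_n \in \partial \Omega$, with outward unit normal $w_n'$, and set $f(z) := -\Real\ip{z - y_n, w_n'}$, the signed distance from $z$ to $\widetilde H_n$ (affine in $z$, nonnegative on $\overline\Omega$, vanishing on $\widetilde H_n$). Writing $\alpha := f(x_n) \geq 0$, $\beta := -\Real\ip{\mathbf{n}_n, w_n'}$, and $A_n := f(x_{n,a_n}) = \alpha + re^{-a_n}\beta$, the containment $\widetilde H_n \subset \Kb^d \setminus \Omega$ gives $f(z) \geq \delta_\Omega(z)$ on $\Omega$, hence $f(x_n + r\mathbf{n}_n) \geq \delta$ and $\alpha + r\beta \geq \delta$; combined with $\alpha \leq \norm{x_n - y_n} \leq re^{-a_n} + r_n \to 0$ (one first deduces $a_n \to \infty$ from $r_n \geq \delta_\Omega(x_{n,a_n}) \geq \delta e^{-a_n}$), this forces $\beta > 0$ for $n$ large. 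The affineness of $f$ and the identity $q_s - x_{n,s} = (1-2/n)(y_n - x_{n,a_n})$ yield
\[
f(x_{n,t}) = \alpha + re^{-t}\beta, \qquad f(q_s) = f(x_{n,s}) - (1-2/n)A_n,
\]
so $f(q_s)$ is decreasing in $s$ and attains its maximum $(2/n)A_n$ at $s = a_n$. The key step is to exploit $p_n' = q_{b_n} \in \overline\Omega$ (Lemma~\ref{lem:yn versus ynprime}): this forces $f(q_{b_n}) \geq 0$, i.e.\ $\alpha + re^{-b_n}\beta \geq (1-2/n)A_n$, and combined with $e^{-t_n} \geq e^{-b_n}$ this upgrades to $f(x_{n,t_n}) \geq (1-2/n)A_n \geq A_n/2$ for $n \geq 4$. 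Since the proof of Proposition~\ref{prop:lower bound for distance}(1) applies verbatim to the real affine hyperplane $\widetilde H_n$ (using Cauchy--Schwarz on $\Real\ip{\alpha', w_n'}$ when $\Kb = \Cb$), one obtains
\[
\dist^{(k)}_\Omega(x_{n,t_n}, q_s) \;\geq\; \log\frac{A_n/2}{(2/n)A_n} \;=\; \log(n/4)
\]
uniformly in $s \in [a_n, b_n]$.

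Combining the three estimates, $x_{n,t_n}$ lies at $\dist^{(k)}_\Omega$-distance at least $\min\{2\log c + n/2,\ \log(c\delta) + n/2,\ \log(n/4)\}$ from the union of the three other sides; this tends to infinity with $n$, so for any fixed $M > 0$ the rectangle fails to be $M$-slim once $n$ is sufficiently large.
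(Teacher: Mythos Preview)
Your proof is correct and follows a genuinely different route from the paper's. The paper selects the witness point $u_n$ on the side $[x_{n,a_n},p_n]$, chosen so that $\dist^{(k)}_\Omega(u_n,x_{n,a_n})$ and $\dist^{(k)}_\Omega(u_n,p_n)$ both tend to infinity, and then uses only the single hyperplane $H_n$ through $x_n$ together with the triangle inequality and the ``moreover'' bound of Lemma~\ref{lem:pnpn' QI geodesic}: each parallel side is split into a ``far'' subsegment (handled by $H_n$ via Proposition~\ref{prop:lower bound for distance}) and a ``near'' subsegment (handled by the triangle inequality and the upper bound $\dist^{(k)}_\Omega(p_n,q_{n,t})\le A(t-a_n)$). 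You instead put the witness on the side $[x_{n,a_n},x_{n,b_n}]$ at the midpoint parameter $t_n$, use $H_n$ for the two short sides, and for the parallel side $[p_n,p_n']$ introduce a second support hyperplane $\widetilde H_n$ at $y_n$. The key step---that $p_n'\in\Omega$ forces the linear inequality $f(x_{n,b_n})\ge (1-2/n)A_n$---then gives the uniform ratio $f(x_{n,t_n})/f(q_s)\ge n/4$ without any case analysis.

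Your argument for the hard side is cleaner: it avoids the case split and does not need the ``moreover'' part of Lemma~\ref{lem:pnpn' QI geodesic}. The cost is two small extra ingredients: you must check $a_n\to\infty$ (which you do correctly from $\|y_n-x_{n,a_n}\|\to 0$ and $\|y_n-x_{n,a_n}\|\ge\delta_\Omega(x_{n,a_n})\ge\delta e^{-a_n}$) to ensure $\beta>0$, and you must note that Proposition~\ref{prop:lower bound for distance}(1) extends to \emph{real} affine hyperplanes when $\Kb=\Cb$. The latter is true (for $H=\{z:\Real z_1=0\}$ one has $\delta_\Omega(z;v)\le(|\Real z_1|/|v_1|)\|v\|$, whence $\mathfrak q^{(k)}_\Omega(z;v)\ge|v_1|/|\Real z_1|\ge|\Real v_1|/|\Real z_1|$, and the integration goes through), but deserves a sentence of justification rather than the bare parenthetical.
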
 

\begin{proof} By Proposition~\ref{prop:lower bound for distance} and the definition of $p_n$, 
$$
\dist^{(k)}_\Omega(x_{n,a_n}, p_n) \geq \abs{ \log \frac{\norm{x_{n,a_n}-y_n}}{\norm{p_n-y_n}}} = \abs{\log \frac{n}{2}}. 
$$
So for each $n$ we can pick $u_n \in [x_{n,a_n}, p_n]$ such that 
$$
\lim_{n \rightarrow \infty} \dist_\Omega^{(k)}(u_n, x_{n,a_n}) = +\infty = \lim_{n \rightarrow \infty} \dist_\Omega^{(k)}(u_n, p_n).
$$

Fix $M > 0$. Then fix $N_1 \geq 1$ such that: if $n \geq N_1$, then 
\begin{equation}\label{eqn:defn of N1}
\min\left\{\dist_\Omega^{(k)}(u_n, x_{n,a_n}) , \dist_\Omega^{(k)}(u_n, p_n)\right\} > (A+1)M +2A\log c.
\end{equation} 
Then let 
$$
N_2 : = \max\{ N_1, \, M-\log c, \, 2M-2\log c\}. 
$$

\medskip

\noindent \textbf{Claim 1:} If $n \geq N_2$, then 
$$
\dist_\Omega^{(k)}(u_n, [x_{n,b_n}, p_n']) > M.
$$

\medskip

\noindent By Lemma~\ref{lem:distance to Hn} and Proposition~\ref{prop:lower bound for distance},
$$
\dist_\Omega^{(k)}(u_n, [x_{n,b_n}, p_n']) \geq \log \frac{ c e^{-a_n}}{c^{-1}e^{-b_n}} = b_n -a_n + 2\log c   > n +2\log c \geq M.
$$

\medskip

\noindent \textbf{Claim 2:} If $n \geq N_1$, then 
$$
\dist_\Omega^{(k)}(u_n, [x_{n,a_n}, x_{n,b_n}]) > M.
$$

\medskip

\noindent Fix $t \in [a_n,b_n]$. If $t > a_n+M-2\log c$, then by  Lemma~\ref{lem:distance to Hn} and Proposition~\ref{prop:lower bound for distance},
$$
\dist_\Omega^{(k)}(u_n, x_{n,t}) \geq \log \frac{ c e^{-a_n}}{c^{-1}e^{-t}} = t-a_n + 2\log c   > M.
$$
If $t \leq a_n+M -2\log c$, then by Equations~\eqref{eqn:upper bound on x_nt distances} and~\eqref{eqn:defn of N1}, 
\begin{align*}
\dist_\Omega^{(k)}(u_n, x_{n,t})& \geq \dist_\Omega^{(k)}(u_n, x_{n,a_n})-\dist_\Omega^{(k)}(x_{n,a_n}, x_{n,t})  \\
& > (A+1)M +2A\log c-A(t-a_n) \geq M.
\end{align*}

\medskip

\noindent \textbf{Claim 3:} If $n \geq N_2$, then 
$$
\dist_\Omega^{(k)}(u_n, [p_n, p_n']) > M.
$$ 

\medskip

\noindent Fix $q \in [p_n, p_n']$. Then $q = x_{n,t}+ \left(1-\frac{2}{n} \right)(y_n - x_{n,a_n})$ for some $t \in [a_n,b_n]$.  If $t > a_n+M-2\log c$, then by  Lemma~\ref{lem:distance to Hn} and Proposition~\ref{prop:lower bound for distance},
$$
\dist_\Omega^{(k)}(u_n, q) \geq \log \frac{ c e^{-a_n}}{c^{-1}e^{-t}} = t-a_n + 2\log c   > M.
$$
 If $t \leq a_n+M-2\log c$, then 
 $$
 t \leq a_n + \frac{N_2}{2} \leq a_n + \frac{n}{2}. 
 $$
 So by Equation~\eqref{eqn:defn of N1} and Lemma~\ref{lem:pnpn' QI geodesic}
\begin{align*}
\dist_\Omega^{(k)}(u_n, q)& \geq \dist_\Omega^{(k)}(u_n, p_n)-\dist_\Omega^{(k)}(p_n, q)  \\
& > (A+1)M +2A\log c-A(t-a_n) \geq M. 
\end{align*}

Hence when $n \geq N_2$, the quasi-geodesic rectangle 
$$
[x_{n,a_n}, x_{n,b_n}] \cup [x_{n,b_n}, p_n'] \cup [p_n',p_n] \cup [p_n, x_{n,a_n}]
$$
is not $M$-slim
\end{proof} 

\begin{lemma} $(\Omega, \dist_\Omega^{(k)})$ is not Gromov hyperbolic. \end{lemma}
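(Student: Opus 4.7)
The plan is to derive a contradiction from Gromov hyperbolicity by comparing the quasi-geodesic rectangles constructed in Lemma~\ref{lem:non slim QG rectangles} to actual geodesic rectangles, using Theorem~\ref{thm:shadowing property}. Recall that each of the four Euclidean line segments
\[
[x_{n,a_n}, x_{n,b_n}], \quad [x_{n,b_n}, p_n'], \quad [p_n',p_n], \quad [p_n, x_{n,a_n}]
\]
has been parametrized (by Proposition~\ref{prop:building quasi geodesics} and Lemma~\ref{lem:pnpn' QI geodesic}) as an $(A_0,0)$-quasi-geodesic with $A_0$ independent of $n$.

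Now I argue by contradiction: assume $(\Omega, \dist_\Omega^{(k)})$ is $\delta$-hyperbolic for some $\delta \geq 0$. By Theorem~\ref{thm:shadowing property}, there exists $D = D(A_0,0)$ such that each of the four quasi-geodesic sides lies within Hausdorff distance $D$ of a geodesic joining its endpoints. Replacing the four sides with such geodesics produces an honest geodesic quadrilateral. Decomposing this quadrilateral into two geodesic triangles by adding a diagonal and applying $\delta$-slimness to each triangle, we see that any point on any side of the geodesic quadrilateral lies within distance $2\delta$ of the union of the other three sides. Combining this with the $D$-shadowing, every point on any side of the original quasi-geodesic rectangle lies within distance $M_0 : = 2D+2\delta$ of the union of the other three sides. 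That is, the quasi-geodesic rectangle is $M_0$-slim, with $M_0$ independent of $n$.

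However, Lemma~\ref{lem:non slim QG rectangles} (applied with $M = M_0+1$) provides, for all sufficiently large $n$, a point on one side of the rectangle whose $\dist_\Omega^{(k)}$-distance to the union of the other three sides exceeds $M_0+1 > M_0$. This contradicts the $M_0$-slimness derived above, so $(\Omega,\dist_\Omega^{(k)})$ is not Gromov hyperbolic, completing the proof of Theorem~\ref{thm:1 implies 2 in main theorem}. The main work has already been done in the previous lemmas; the only obstacle here is the standard fact that quasi-geodesic polygons in a Gromov hyperbolic space are uniformly slim, which follows from Theorem~\ref{thm:shadowing property} together with slimness of geodesic triangles.
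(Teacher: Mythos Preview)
Your proof is correct and follows essentially the same approach as the paper: both use Theorem~\ref{thm:shadowing property} to pass between the quasi-geodesic rectangle and a geodesic quadrilateral, split the quadrilateral into two geodesic triangles via a diagonal, and invoke Lemma~\ref{lem:non slim QG rectangles} to reach a contradiction. The only difference is cosmetic --- the paper transfers non-slimness from the quasi-geodesic rectangle to the geodesic triangles, while you transfer slimness in the opposite direction --- but these are contrapositives of the same argument.
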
 

\begin{proof} For each $n$, fix geodesic segments joining the points in $\{x_{n,a_n}, x_{n,b_n}, p_n', p_n\}$. Using these geodesic segments, let $Q_n$ be the geodesic rectangle with vertices $x_{n,a_n}, x_{n,b_n}, p_n', p_n$; let $T_{n}$ be the geodesic triangle with vertices $x_{n,a_n}, x_{n,b_n}, p_n'$; and let $T_n'$ be the geodesic triangle with vertices $x_{n,a_n}, p_n', p_n$.  

By  Theorem~\ref{thm:shadowing property} each side of $Q_n$ has uniformly bounded Hausdorff distance to the corresponding side in the quasi-geodesic rectangle 
$$
[x_{n,a_n}, x_{n,b_n}] \cup [x_{n,b_n}, p_n'] \cup [p_n',p_n] \cup [p_n, x_{n,a_n}].
$$
So Lemma~\ref{lem:non slim QG rectangles} implies that for every $M > 0$ the geodesic rectangle $Q_n$ is not $M$-slim when $n$ is sufficiently large. Then, for such $n$, at least one of $T_n$ or $T_n'$ is not $M/2$-slim. Thus $(\Omega, \dist_\Omega^{(k)})$ is not Gromov hyperbolic.
\end{proof}


\section{Gromov hyperbolicity in the smooth case}\label{sec:the smooth case}


In this section we prove Theorem~\ref{thm:characterization in smooth case}, which we restate here.

\begin{theorem}\label{thm:smooth case} Suppose $\Omega \subset \Kb^d$ is a smoothly bounded convex domain. Then the generalized $k$-quasi-hyperbolic metric on $\Omega$ is Gromov hyperbolic if and only if every $\Kb$-affine $k$-plane has finite order contact with $\partial \Omega$. 
\end{theorem}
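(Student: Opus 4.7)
The plan is to obtain Theorem~\ref{thm:smooth case} as a direct consequence of Theorem~\ref{thm:characterization in nonsmooth case} by showing that, when $\Omega$ is smoothly bounded and convex, condition~(2) of that theorem is equivalent to every $\Kb$-affine $k$-plane having finite order of contact with $\partial\Omega$.

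For the direction ``finite order of contact implies Gromov hyperbolic,'' I would first argue that the order-of-contact function $(x,V) \mapsto \text{order of contact of } x+V \text{ with } \partial\Omega \text{ at } x$ is upper semicontinuous on the compact space $\partial \Omega \times \Gr_k(\Kb^d)$. Hence the finite-order hypothesis yields a uniform integer bound $M$ on the order of contact across all $(x,V)$. Then, using convexity together with a local Taylor expansion of a defining function, I would upgrade this pointwise bound into the quantitative estimate
$$
\dist_{\rm Euc}(x_s, (x_s+V) \cap \partial\Omega) \leq C \left(\frac{s}{t}\right)^{1/M} \dist_{\rm Euc}(x_t, (x_t+V) \cap \partial\Omega)
$$
uniformly in $x \in \partial\Omega$, $V \in \Gr_k(\Kb^d)$, and $0 < s \leq t \leq 1$. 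This is exactly condition~(2) of Theorem~\ref{thm:characterization in nonsmooth case} with $\lambda = 1/M$, so that theorem gives Gromov hyperbolicity.

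For the converse I would argue by contrapositive. Suppose some $\Kb$-affine $k$-plane $x + V$ has infinite order of contact with $\partial\Omega$ at $x$. Then a local defining function restricted to $x+V$ vanishes at $x$ to arbitrarily high order, and by convexity this forces
$$
\dist_{\rm Euc}(x_s, (x_s+V) \cap \partial\Omega) \geq c_m \, s^{1/m}
$$
for every $m \geq 1$, with $c_m > 0$ and $s$ sufficiently small, while $\dist_{\rm Euc}(x_1, (x_1+V) \cap \partial\Omega)$ is a fixed positive constant. Hence no polynomial decay bound of the form appearing in condition~(2) of Theorem~\ref{thm:characterization in nonsmooth case} can hold, so $(\Omega, \dist_\Omega^{(k)})$ is not Gromov hyperbolic.

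The main obstacle will be establishing the uniform cross-section estimate used in the forward direction. Two subtleties arise. First, promoting pointwise finite order of contact to the uniform bound $M$ requires upper semicontinuity of the order function on $\partial \Omega \times \Gr_k(\Kb^d)$, which in the complex case is analogous to, and can be proved along the lines of, the corresponding upper semicontinuity of D'Angelo type. Second, when $V$ is only partially tangent to $\partial\Omega$ at $x$, the cross-section estimate must combine a transverse (essentially linear) contribution with a tangential (polynomial of exponent $1/M$) contribution; this decomposition can be handled by splitting $V$ into tangential and quasi-normal summands exactly as in Section~\ref{sec:QH metric almost orthogonal directions}, and then applying the convexity scaling in each summand separately before recombining.
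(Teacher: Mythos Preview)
Your approach is essentially the same as the paper's, with one simplification you are missing. For the forward direction the paper does not verify condition~(2) of Theorem~\ref{thm:characterization in nonsmooth case} for \emph{all} $V \in \Gr_k(\Kb^d)$; instead it invokes Theorem~\ref{thm:2 implies 1 in main theorem}, which only requires the estimate for those $V$ with $(x+V) \cap \Omega = \emptyset$. This eliminates precisely the complication you flag concerning partially tangent $V$, and the argument reduces to a clean contradiction: assume a violating sequence $(x_n, V_n, s_n, t_n)$, normalize by uniformly biLipschitz affine maps $T_n$ to a fixed model, Taylor-expand the graph functions as $f_n = P_n + E_n$, and use $\inf_n \norm{P_n} > 0$ (from $L_k(\Omega) < \infty$) together with equivalence-of-norms estimates on polynomials of degree at most $L$ (Lemma~\ref{lem:estimates on polynomials}) to compare $\dist_{\rm Euc}(te_1, (te_1 + V_0)\cap \partial\Omega_n)$ with $\min\{ r : \max_{\|y\|\le r} |P_n(y)| \ge t\}$. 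Your proposed splitting via Section~\ref{sec:QH metric almost orthogonal directions} would also work but is unnecessary once one uses Theorem~\ref{thm:2 implies 1 in main theorem}.

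For the converse the paper argues exactly as you outline, though it phrases the contradiction via Theorem~\ref{thm:1 implies 2 in main theorem} (with the normal $e_1$ as the quasi-normal vector) rather than condition~(2) directly; the two are equivalent here. Your concern about promoting pointwise finite order to a uniform bound is handled by the Observation following Definition~\ref{defn:finite order contact}: a compactness argument on orthogonally-normalized affine embeddings shows that if $L_k(\Omega)=\infty$ then the supremum is actually realized by some $T$.
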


This section is divided into four subsections. In Section~\ref{sec:definition of finite type} we recall the definition of finite order contact, in Section~\ref{sec:GH implies finite type} we prove the $(\Rightarrow)$ direction of the theorem, in Section~\ref{sec:estimates on polynomials} we establish some estimates for polynomials, and in Section~\ref{sec:finite type implies GH} we prove the $(\Leftarrow)$ direction of the theorem.

\subsection{The definitions}\label{sec:definition of finite type} Given a smooth function $f : U \rightarrow \Rb$ defined in a neighborhood $U$ of $0$ in $\Kb^k$, let $\nu(f)\in \Nb \cup \{\infty\}$ denote the order of vanishing of $f$ at $0$, i.e. if $\Kb = \Rb$, then 
$$
\nu(f) = \sup\left\{ m : \frac{\partial^{\abs{\alpha}} f}{\partial x^{\alpha}}(0) = 0 \text{ when } \abs{\alpha} < m \right\}
$$
and if $\Kb = \Cb$, then 
$$
\nu(f) = \sup\left\{ m : \frac{\partial^{\abs{\alpha}+\abs{\beta}} f}{\partial z^{\alpha}\partial \bar z^{\beta} }(0) = 0 \text{ when } \abs{\alpha}+\abs{\beta} < m \right\}.
$$

Given a smoothly bounded domain $\Omega \subset \Kb^d$, fix a smooth defining function $r : \Kb^d\rightarrow \Rb$ i.e. $\nabla r \neq 0$ near $\partial \Omega$ and $\Omega = \{ z : r(z) < 0\}$. Then for $p \in \partial\Omega$, let 
$$
L_k(\Omega, p) : = \sup\left\{ \nu( r \circ T):    T : \Kb^k \rightarrow \Kb^d \text{ is an $\Kb$-affine embedding with $T(0)=p$}\right\}. 
$$
Recall that if $\tilde r : \Kb^d \rightarrow \Rb$ is another defining function for $\Omega$, then there exists a neighborhood $\Oc$ of $\partial \Omega$ and a smooth function $\lambda : \Oc \rightarrow \Rb$ such that $\tilde r = \lambda r$ on $\Oc$ and $\lambda$ is non-vanishing on $\partial\Omega$. Hence $L_k(\Omega, p)$ is independent of the choice of defining function. 

\begin{definition}\label{defn:finite order contact} Suppose $\Omega \subset \Kb^d$ is a smoothly bounded domain. If 
$$
L_k(\Omega) := \sup_{p \in \partial \Omega} L_k(\Omega, p) \in \{2,3,\dots\} \cup \{+\infty\}
$$
is finite, then we say that \emph{every $\Kb$-affine $k$-plane has finite order contact with $\partial \Omega$.}
\end{definition} 

\begin{observation} If $\Omega \subset \Kb^d$ is a smoothly bounded domain, then there exists an $\Kb$-affine embedding $T : \Kb^k \rightarrow \Kb^d$ where 
$$
L_k(\Omega) = \nu(r \circ T). 
$$
\end{observation}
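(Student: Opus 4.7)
Fix a smooth defining function $r$ for $\Omega$. First I would observe that for a $\Kb$-affine embedding $T : \Kb^k \to \Kb^d$ with $T(0) = p$, the value $\nu(r \circ T)$ depends only on $p$ and on the $\Kb$-linear subspace $V := {\rm image}(T) - p \in \Gr_k(\Kb^d)$: any two such parametrizations of $p+V$ differ by precomposition with a $\Kb$-linear automorphism of $\Kb^k$ fixing the origin, and such a precomposition preserves the order of vanishing at $0$ (apply the chain rule to the lowest-order non-vanishing jet). Thus
\[
\Phi(p,V) := \nu(r \circ T_{p,V}),
\]
for any choice of $\Kb$-affine parametrization $T_{p,V}$ of $p+V$ with $T_{p,V}(0)=p$, is a well-defined function
\[
\Phi : \partial\Omega \times \Gr_k(\Kb^d) \to \{2,3,\dots\} \cup \{+\infty\},
\]
and $L_k(\Omega) = \sup \Phi$.

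Next I would establish that $\Phi$ is upper semicontinuous. Given $(p_n,V_n) \to (p,V)$, I can choose $\Kb$-affine parametrizations $T_n := T_{p_n,V_n}$ that converge smoothly on compact sets to $T := T_{p,V}$ (by picking orthonormal frames for $V_n$ converging to one for $V$). Since $r$ is smooth, each fixed partial derivative of $r \circ T_n$ at $0$ converges to the corresponding partial derivative of $r \circ T$ at $0$. Hence if $\Phi(p_n,V_n) \geq m$ for all $n$, then every partial derivative of $r \circ T$ at $0$ of total order less than $m$ vanishes, so $\Phi(p,V) \geq m$. Combined with the compactness of $\partial\Omega$ (which is compact because $\Omega$ is bounded with smooth boundary) and of $\Gr_k(\Kb^d)$, upper semicontinuity immediately yields that the supremum $L_k(\Omega) = \sup \Phi$ is attained at some $(p^\star,V^\star)$, and the corresponding $T_{p^\star,V^\star}$ is the desired affine embedding.

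The only point requiring care is the case $L_k(\Omega) = +\infty$, which I expect to be the main bookkeeping obstacle. There one selects $(p_n,V_n)$ with $\Phi(p_n,V_n) \to \infty$, extracts a convergent subsequence $(p_n,V_n) \to (p^\star,V^\star)$ by compactness, and applies the upper semicontinuity bound with every integer $m$ to conclude that all partial derivatives of $r \circ T_{p^\star,V^\star}$ at $0$ vanish. By the definition of $\nu$ this is exactly $\nu(r \circ T_{p^\star,V^\star}) = +\infty$, completing the proof in that case as well.
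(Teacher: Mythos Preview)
Your proof is correct and follows essentially the same approach as the paper's: both normalize the affine embeddings to isometric parametrizations (you via orthonormal frames for $V$, the paper via the singular value decomposition of the linear part of $T_n$), extract a convergent subsequence by compactness of $\partial\Omega \times \Gr_k(\Kb^d)$, and use smoothness of $r$ to pass the vanishing of low-order derivatives to the limit. Your reformulation in terms of an upper semicontinuous function on the Grassmannian is a clean packaging that treats the finite and infinite cases more uniformly, but the underlying argument is the same (one cosmetic point: the codomain of $\Phi$ should include $1$, since $\Phi(p,V)=1$ when $V$ is transverse to $\partial\Omega$ at $p$).
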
 

\begin{proof} 
Clearly such an affine map exists when $L_k(\Omega)$ is finite. So assume $L_k(\Omega) = \infty$. Fix $\{p_n\} \subset \partial\Omega$ such that $L_k(\Omega, p_n) \rightarrow +\infty$. Then fix $\{T_n\}$ such that $T_n : \Kb^k \rightarrow \Kb^d$ is an $\Kb$-affine embedding with $T_n(0)=p_n$ and $\nu(r \circ T_n) \rightarrow +\infty$. Using the singular value decomposition of the linear part of $T$, for each $n$ we can pick an invertible matrix $A_n \in \mathsf{GL}(k,\Kb)$ such that 
$$
\norm{T_n A_n(x) - T_n A_n(y)} = \norm{x-y}
$$
for all $x,y \in \Kb^k$. Then passing to a subsequence we can suppose that $\{T_n A_n\}$ converges to an $\Kb$-affine map $T : \Kb^k \rightarrow \Kb^d$. Then $T$ is an $\Kb$-affine embedding with $T(0) \in \partial\Omega$ and 
\begin{equation*}
\nu(r \circ T) \geq \limsup_{n \rightarrow \infty} \nu(r \circ (T_nA_n)) = \limsup_{n \rightarrow \infty} \nu(r \circ T_n) = +\infty.  \qedhere
\end{equation*}
\end{proof}

\subsection{Gromov hyperbolicity implies finite order contact}\label{sec:GH implies finite type} Suppose $(\Omega, \dist_\Omega^{(k)})$ is Gromov hyperbolic and suppose for a contradiction that $L_k(\Omega)=+\infty$.

After translating we can assume that $0 \in \partial \Omega$ and $L_k(\Omega,0)=+\infty$. Then after rotating, we can assume that there exist
\begin{enumerate} 
\item an open neighborhood $\Oc$ of $0$ in $V:={\rm Span}_{\Kb}\{e_2,\dots, e_{k+1}\}$, 
\item a smooth function $f : \Oc \rightarrow [0,\infty)$
\end{enumerate} 
such that 
$$
\Omega \cap \big( (-2\epsilon,2\epsilon) \times \Oc \big)  = \{ (x, z_1,\dots, z_k,0,\dots,0) \in \Kb^d : x > f(z_1,\dots, z_k) \}
$$
and 
\begin{equation*}
\lim_{z \rightarrow 0} \frac{ f(z)}{\norm{z}^n} = 0 
\end{equation*}
for all $n \geq 0$. Then
$$
\lim_{t \searrow 0} \frac{ \dist_{\rm Euc}( te_1, (te_1+V) \cap \partial \Omega)}{t^{1/n}} = +\infty
$$
for all $n \geq 1$. 

For each $n \geq 1$, fix
$$
C_n > \max\left\{ n, \frac{ \dist_{\rm Euc}( \epsilon e_1, (\epsilon e_1+V) \cap \partial \Omega)}{\epsilon^{1/n}} \right\}. 
$$
Then fix $t_n \in (0,\epsilon)$ such that 
$$
 \frac{ \dist_{\rm Euc}( t_n e_1, (t_n e_1+V) \cap \partial \Omega)}{t_n^{1/n}} = C_n
$$
and 
$$
 \frac{ \dist_{\rm Euc}( t e_1, (t e_1+V) \cap \partial \Omega)}{t^{1/n}} < C_n
$$
for all $t \in (t_n, \epsilon]$. Then 
$$
\dist_{\rm Euc}( t_n e_1, (t_n e_1+V) \cap \partial \Omega) \geq \left(\frac{t_n}{t}\right)^{1/n}\dist_{\rm Euc}( t e_1, (t e_1+V) \cap \partial \Omega)
$$
for all $n \geq 1$ and $t \in (t_n, \epsilon]$. Also notice that since $C_n \rightarrow +\infty$ and $\Omega$ is bounded, we must have 
$$
\lim_{n \rightarrow \infty} t_n = 0. 
$$

Since $(\Omega, \dist_\Omega^{(k)})$ is Gromov hyperbolic, by Theorem~\ref{thm:1 implies 2 in main theorem}  there exist $C,\lambda > 0$ such that
$$
\dist_{\rm Euc}( t_n e_1, (t_n e_1+V) \cap \partial \Omega) \leq C\left(\frac{t_n}{t}\right)^{\lambda}\dist_{\rm Euc}( t e_1, (t e_1+V) \cap \partial \Omega)
$$
for all $n \geq 1$ and $t \in (t_n, \epsilon]$. Then 
$$
\left(\frac{t_n}{t}\right)^{1/n} \leq C\left(\frac{t_n}{t}\right)^{\lambda}
$$
for all $n \geq 1$ and $t \in (t_n, \epsilon]$, which is clearly impossible since $t_n \rightarrow 0$. 

\subsection{Estimates on polynomials}\label{sec:estimates on polynomials} Before starting the proof that finite order contact implies Gromov hyperbolicity, we establish some estimates for polynomials. 

Given a polynomial $P : \Rb^d \rightarrow \Rb$ of degree at most $L$ we can write $P(x) = \sum_{\abs{\alpha} \leq L} c_\alpha x^{\alpha}$ where $\alpha = (\alpha_1,\dots, \alpha_d) \in \Zb_{\geq 0}^d$, $\abs{\alpha} = \alpha_1+\cdots + \alpha_d$,  and $x^{\alpha} = x_1^{\alpha_1} \cdots x_d^{\alpha_d}$. Then we define 
$$
\norm{P}  := \sum_{\abs{\alpha} \leq L} \abs{c_\alpha}.
$$

\begin{lemma}\label{lem:estimates on polynomials} For any $d, L \in \Nb$ there exists $A=A(d,L)> 1$ such that: if $P : \Rb^d \rightarrow \Rb$ is a polynomial of degree at most $L$ with $P(0)=0$, then

\begin{enumerate} 
\item For all $0 \leq r \leq R$,  
$$
\frac{1}{A}\left( \frac{r}{R} \right)^L\max_{\norm{x} \leq R} \abs{P(x)}  \leq \max_{\norm{x} \leq r} \abs{P(x)} \leq A \frac{r}{R} \max_{\norm{x} \leq R} \abs{P(x)}.
$$
\item For all $0 \leq r \leq 1$, 
$$
\frac{1}{A} r^L\norm{P} \leq \max_{\norm{x} \leq r} \abs{P(x)} \leq A r \norm{P}.
$$
\end{enumerate}
\end{lemma}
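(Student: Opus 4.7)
The key observation is that the space $\mathcal{P}_{L,0}$ of polynomials $P : \Rb^d \to \Rb$ of degree at most $L$ satisfying $P(0) = 0$ is finite dimensional. On this space, the quantity $\norm{P}$ defined in the text is evidently a norm, and the functional $M(P) := \max_{\norm{x} \leq 1} \abs{P(x)}$ is also a norm (it vanishes only on the zero polynomial, since a polynomial vanishing on an open set is identically zero). Hence by equivalence of norms on a finite dimensional vector space there exist constants $c_1, c_2 > 0$, depending only on $d$ and $L$, such that
$$c_1 \norm{P} \leq M(P) \leq c_2 \norm{P}$$
for every $P \in \mathcal{P}_{L,0}$. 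This equivalence will do almost all the work.

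\textbf{Step 1: prove (2) by rescaling.} Given $P \in \mathcal{P}_{L,0}$ and $r \in (0,1]$, consider $P_r(x) := P(rx)$, which also lies in $\mathcal{P}_{L,0}$. Writing $P = \sum_{1 \leq \abs{\alpha} \leq L} c_\alpha x^\alpha$ (the sum starting at $\abs{\alpha} = 1$ because $P(0) = 0$), we have $P_r = \sum c_\alpha r^{\abs{\alpha}} x^\alpha$ and so
$$r^L \norm{P} \leq \norm{P_r} \leq r \norm{P}$$
for $r \in (0,1]$. Applying the norm equivalence to $P_r$ and noting that $\max_{\norm{x} \leq r} \abs{P(x)} = M(P_r)$ immediately yields (2) with $A = \max\{1/c_1,\ c_2\}$.

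\textbf{Step 2: reduce (1) to (2).} For $R > 0$, apply Step~1 to the polynomial $Q(x) := P(Rx) \in \mathcal{P}_{L,0}$ with radius $r/R \in [0,1]$. Since $\max_{\norm{x} \leq r/R} \abs{Q(x)} = \max_{\norm{y} \leq r} \abs{P(y)}$, this gives
$$\frac{1}{A}\left(\frac{r}{R}\right)^L \norm{Q} \leq \max_{\norm{y} \leq r}\abs{P(y)} \leq A\,\frac{r}{R}\, \norm{Q}.$$
A second application of the norm equivalence to $Q$ itself yields $\norm{Q}$ comparable to $M(Q) = \max_{\norm{y} \leq R} \abs{P(y)}$, and combining with the previous display produces (1) with a slightly enlarged constant.

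\textbf{Main obstacle.} There is essentially no serious difficulty here---this is a routine application of equivalence of norms on a finite dimensional space together with a scaling argument. The only point requiring care is in the upper bounds (both in (2) and the right half of (1)): the hypothesis $P(0) = 0$ is essential to guarantee that every monomial in $P$ carries at least one power of the scaling parameter, which is what yields the linear (rather than constant) factor as $r \to 0$ or $r/R \to 0$.
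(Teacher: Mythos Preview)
Your proof is correct and follows essentially the same approach as the paper: both arguments rest on the equivalence of the coefficient norm $\norm{P}$ and the sup norm $M(P)=\max_{\norm{x}\le 1}\abs{P(x)}$ on the finite-dimensional space of polynomials with $P(0)=0$, combined with the scaling $x\mapsto rx$ and the observation that every monomial has degree at least $1$. The only cosmetic difference is that the paper proves (1) first by a direct chain of inequalities and then (2), whereas you prove (2) first and cleanly reduce (1) to it via $Q(x)=P(Rx)$; the constants and ideas are the same.
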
 

\begin{proof} Since norms on finite dimensional vector spaces are equivalent, there exists $A_0 = A_0(d,L) > 0$ such that: if $P : \Rb^d \rightarrow \Rb$ is a polynomial of degree at most $L$, then 
$$
\frac{1}{A_0} \max_{\norm{x} \leq 1} \abs{P(x)} \leq \norm{P} \leq A_0 \max_{\norm{x} \leq 1}\abs{P(x)}.
$$

Now if $0 \leq r \leq R$ and $P=\sum_{\abs{\alpha} \leq L} c_\alpha x^{\alpha}$ is a polynomial of degree at most $L$ with $P(0)=0$, then 
\begin{align*}
\max_{\norm{x} \leq r}& \abs{P(x)} = \max_{\norm{x} \leq 1}\abs{P(rx)} \leq A_0 \sum_{\alpha} \abs{c_\alpha} r^{\abs{\alpha}} \leq A_0 \left( \frac{r}{R}\right)\sum_{\alpha} \abs{c_\alpha} R^{\abs{\alpha}} \\
& \leq A_0^2  \left( \frac{r}{R}\right)\max_{\norm{x} \leq 1}\abs{P(Rx)}=A_0^2  \left( \frac{r}{R}\right)\max_{\norm{x} \leq R}\abs{P(x)}
\end{align*}
and likewise
\begin{align*}
\max_{\norm{x} \leq r}& \abs{P(x)} = \max_{\norm{x} \leq 1}\abs{P(rx)} \geq \frac{1}{A_0} \sum_{\alpha} \abs{c_\alpha} r^{\abs{\alpha}} \geq  \frac{1}{A_0}  \left( \frac{r}{R}\right)^L \sum_{\alpha} \abs{c_\alpha} R^{\abs{\alpha}} \\
& \geq  \frac{1}{A_0^2}  \left( \frac{r}{R}\right)^L\max_{\norm{x} \leq 1}\abs{P(Rx)}=\frac{1}{A_0^2}  \left( \frac{r}{R}\right)^L\max_{\norm{x} \leq R}\abs{P(x)}.
\end{align*}
Thus (1) is true. 

For (2), notice that when $0 \leq r \leq 1$ we have 
$$
\max_{\norm{x} \leq r} \abs{P(x)} = \max_{\norm{x} \leq 1}\abs{P(rx)} \leq A_0 \sum_{\alpha} \abs{c_\alpha} r^{\abs{\alpha}} \leq A_0 r \norm{P}
$$
and 
\begin{equation*}
\max_{\norm{x} \leq r} \abs{P(x)}= \max_{\norm{x} \leq 1}\abs{P(rx)} \geq \frac{1}{A_0} \sum_{\alpha} \abs{c_\alpha} r^{\abs{\alpha}} \geq \frac{1}{A_0} r^L \norm{P}. \qedhere
\end{equation*}

\end{proof}

\subsection{Finite order contact implies Gromov hyperbolicity}\label{sec:finite type implies GH} Finally we prove that finite order contact implies Gromov hyperbolicity. Suppose 
$$
L : = L_k(\Omega)
$$
is finite.

Fix $p \in \Omega$. Then for $x \in \partial\Omega$ and $t \in [0,1]$, let 
$$
x_t : = (1-t)x+tp.
$$
Using Theorem~\ref{thm:2 implies 1 in main theorem} it suffices to prove the following.

\begin{proposition} There exists $C > 0$ such that: if $0 <  s < t \leq 1$, $x \in \partial \Omega$, $V \in \Gr_k(\Kb^d)$, and $(x+V) \cap \Omega = \emptyset$, then 
$$
 \dist_{\rm Euc}(x_s, (x_s+V) \cap \partial \Omega) \leq C \left( \frac{s}{t} \right)^{1/L}  \dist_{\rm Euc}(x_t, (x_t+V) \cap \partial \Omega).
$$
\end{proposition}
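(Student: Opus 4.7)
The plan is to invoke Theorem~\ref{thm:2 implies 1 in main theorem} with $\lambda = 1/L$. Fix a smooth defining function $r$ for $\Omega$, and let $\mathcal{A} \subset \partial\Omega \times \Gr_k(\Kb^d)$ denote the closed (hence compact) set of admissible pairs $\{(x, V) : (x + V) \cap \Omega = \emptyset\}$. For each $(x, V) \in \mathcal{A}$, convexity forces $V \subset T_x^\Kb \partial\Omega$. Choose locally a $\Kb$-linear isometry $A_V : \Kb^k \to V$ and let $P_{x,V}$ be the degree-$L$ Taylor polynomial of $u \mapsto r(x + A_V(u))$ at $u = 0$; tangency kills the constant and linear terms, while $L_k(\Omega) \leq L$ guarantees $P_{x,V} \neq 0$. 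Set $\beta(x) := -\langle \nabla r(x), p - x\rangle$. Compactness of $\mathcal{A}$ produces uniform constants $c_0, C', C'', \beta_0, \beta_1, \rho_0 > 0$ such that for all $(x,V) \in \mathcal{A}$, $|u| \leq \rho_0$, $s \in [0,1]$:
\begin{enumerate}
\item[(a)] Lemma~\ref{lem:estimates on polynomials} applies uniformly: $\max_{|v| \leq r} |P_{x,V}(v)| \geq c_0 r^L$ for $0 < r \leq 1$;
\item[(b)] $P_{x,V}(u) \geq -C'|u|^{L+1}$ (from $r(x + A_V(u)) \geq 0$);
\item[(c)] $\beta_0 \leq \beta(x) \leq \beta_1$;
\item[(d)] the master identity $r(x_s + A_V(u)) = P_{x,V}(u) - s\beta(x) + E(s, u)$ with $|E(s,u)| \leq C''(|u|^{L+1} + s|u| + s^2)$, obtained by successive Taylor expansion of $r$ in $s$ and $u$.
\end{enumerate}

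As a preliminary step, I would first establish a rough bound $\delta_s(x, V) := \dist_\Euc(x_s, (x_s+V)\cap\partial\Omega) \leq C_1 s^{1/L}$ uniformly on $\mathcal{A} \times (0,1]$. For $s$ bounded below, $\delta_s \leq \diam(\Omega)$ gives the bound, so assume $s$ small. By (a) there is $|u_0| \leq r_* := (2\beta_1 s/c_0)^{1/L}$ with $|P_{x,V}(u_0)| \geq 2 s \beta_1$. If $P_{x,V}(u_0) < 0$, then (b) would force $C' r_*^{L+1} \geq 2 s \beta_1$, yielding $s^{1/L} \gtrsim 1$, which fails once $s$ lies below a uniform threshold. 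Hence $P_{x,V}(u_0) \geq 2s\beta(x)$, and (d) with $E(s, u_0) = o(s)$ gives $r(x_s + A_V(u_0)) > 0$; the intermediate value theorem along $\{x_s + \tau A_V(u_0) : \tau \in [0,1]\}$ produces a boundary point within distance $r_* \leq C_1 s^{1/L}$ of $x_s$.

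For the main estimate, fix $0 < s < t \leq 1$ and $(x, V) \in \mathcal{A}$. If $t$ exceeds a uniform threshold $\tau_0$, then convexity gives $\delta_t \geq \delta_\Omega(x_t) \geq c t \geq c \tau_0$ and the rough bound yields $\delta_s/\delta_t \leq C_1 s^{1/L}/(c\tau_0) \leq C (s/t)^{1/L}$ using $t \leq 1$. Otherwise $t \leq \tau_0$ and the rough bound yields $\delta_t \leq C_1 t^{1/L}$. Pick $u_t$ with $|u_t| = \delta_t$ and $r(x_t + A_V(u_t)) = 0$; the master identity (d) together with $\delta_t \leq C_1 t^{1/L}$ forces $|P_{x,V}(u_t)| \geq t\beta(x)/2$ for $\tau_0$ sufficiently small. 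Apply Lemma~\ref{lem:estimates on polynomials}(1) with $r_1 = \lambda \delta_t$, $r_2 = \delta_t$, and $\lambda := (4As/t)^{1/L}$ to locate $|u_0| \leq \lambda \delta_t$ with $|P_{x,V}(u_0)| \geq (\lambda^L/A)\, |P_{x,V}(u_t)| \geq 2 s \beta(x)$. The same sign argument as before---using (b) combined with the rough bound $\delta_t \leq C_1 t^{1/L}$---forces $P_{x,V}(u_0) > 0$, so $P_{x,V}(u_0) \geq 2 s \beta(x)$; then (d) and IVT yield $\delta_s \leq \lambda \delta_t = (4A)^{1/L}(s/t)^{1/L} \delta_t$. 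Combining the two regimes gives the desired uniform constant $C$.

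The main obstacle is that Lemma~\ref{lem:estimates on polynomials}(1) controls only the absolute value $|P_{x,V}(u_0)|$, whereas (d) requires the sign $P_{x,V}(u_0) > 0$ to conclude $r(x_s + A_V(u_0)) > 0$. Resolving this requires the one-sided nonnegativity (b), derived from $(x + V) \cap \Omega = \emptyset$, combined with the rough upper bound $\delta_t \leq C_1 t^{1/L}$ produced in the preliminary step; this bootstrap from rough to sharp scaling is the heart of the argument. Remaining technicalities---compactness of $\mathcal{A}$, continuity of Taylor data in $(x, V)$ via local smooth trivializations of the bundle $V \mapsto A_V$, the complex-case modifications in which one expands in $(u, \bar u)$ and uses that $V \subset T_x^\Cb \partial\Omega$ kills both $\partial/\partial u_j$ and $\partial/\partial \bar u_j$ derivatives at the origin---are routine.
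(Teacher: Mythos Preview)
Your approach is essentially the same as the paper's --- Taylor expand the defining function along the tangential affine $k$-plane, reduce to the polynomial estimates of Lemma~\ref{lem:estimates on polynomials}, and control the error via the one-sided bound coming from $(x+V)\cap\Omega=\emptyset$. The only structural difference is that you argue directly via compactness of $\mathcal A$ and uniform Taylor data, while the paper argues by contradiction with sequences $(s_n,t_n,x_n,V_n)$; both routes land on the same computation (compare your master identity (d) with the paper's Lemma relating $\dist_{\rm Euc}(te_1,(te_1+V_0)\cap\partial\Omega_n)$ to $\min\{r:\max_{|y|\le r}|P_n(y)|\ge t\}$).

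There is, however, one gap. Your application of Lemma~\ref{lem:estimates on polynomials}(1) with $r_1=\lambda\delta_t$ and $r_2=\delta_t$ requires $r_1\le r_2$, i.e.\ $\lambda=(4As/t)^{1/L}\le 1$, i.e.\ $s\le t/(4A)$. You never address the complementary range $t/(4A)<s<t$. This is not fixable by the crude bounds alone: $\delta_s\le C_1 s^{1/L}$ together with $\delta_t\ge \delta_\Omega(x_t)\ge ct$ only yields $\delta_s/\delta_t\lesssim s^{1/L}/t$, which is \emph{not} dominated by $(s/t)^{1/L}$ when $t$ is small. The paper handles exactly this in its ``Case 2'': when $As/t>1$ one uses the \emph{other} inequality in Lemma~\ref{lem:estimates on polynomials}(1) (equivalently, the linear upper bound with the roles of $r$ and $R$ swapped) to find $|u_0|\le C_5\,\delta_t$ with $|P_{x,V}(u_0)|\ge 2s\beta(x)$, after which your sign argument via (b) and the rough bound $\delta_t\le C_1 t^{1/L}$ go through verbatim and give $\delta_s\le C_5\,\delta_t\le C_5(4A)^{1/L}(s/t)^{1/L}\delta_t$. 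With this one-line case added, your proof is complete.
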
 

Suppose not. Then for every $n \geq 1$ there exist $0 < s_n < t_n \leq 1$, $x_n \in \partial \Omega$, and $V_n \in \Gr_k(\Kb^d)$ such that $(x_n+V_n) \cap \Omega = \emptyset$ and 
$$
 \dist_{\rm Euc}(x_{s_n}, (x_{s_n}+V_n) \cap \partial \Omega) \geq n \left( \frac{s_n}{t_n} \right)^{1/L}  \dist_{\rm Euc}(x_{t_n}, (x_{t_n}+V_n) \cap \partial \Omega).
$$

\begin{lemma}\label{lem:sn converges to zero} $\lim_{n \rightarrow \infty} s_n = 0=\lim_{n \rightarrow \infty} \dist_{\rm Euc}(x_{s_n}, (x_{s_n}+V_n) \cap \partial \Omega)$. \end{lemma}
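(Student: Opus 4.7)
The plan is to prove both parts by contradiction, using only convexity of $\Omega$, compactness of $\partial\Omega\times\Gr_k(\Kb^d)$, and the finiteness of $L=L_k(\Omega)$.

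I would first treat $s_n\to 0$. Suppose along a subsequence $s_n\to s_*>0$. Because $p\in\Omega$ and $\Omega$ is convex, for every $x\in\partial\Omega$ and every $t\in(0,1]$ the ball $B_{\Kb^d}(x_t,t\delta_\Omega(p))$ sits inside $\Omega$, so $\delta_\Omega(x_t)\geq t\delta_\Omega(p)$. Since $t_n\geq s_n$ and $\dist_{\rm Euc}(x_{t_n},(x_{t_n}+V_n)\cap\partial\Omega)\geq\delta_\Omega(x_{t_n})$, the standing inequality, whose left-hand side is at most $\diam(\Omega)$, would give
\[
n\left(\frac{s_n}{t_n}\right)^{1/L}\frac{s_*}{2}\delta_\Omega(p)\leq\diam(\Omega)
\]
for all sufficiently large $n$, forcing $s_n/t_n\to 0$. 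But $t_n\leq 1$ together with $s_n\to s_*>0$ gives $s_n/t_n\geq s_*/2$ eventually, a contradiction.

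For the second claim, suppose along some subsequence $\dist_{\rm Euc}(x_{s_n},(x_{s_n}+V_n)\cap\partial\Omega)\geq\rho>0$. By compactness I would pass to a further subsequence with $x_n\to x_*\in\partial\Omega$ and $V_n\to V_*\in\Gr_k(\Kb^d)$; since $s_n\to 0$, also $x_{s_n}\to x_*$. Convexity of $\Omega$ together with the lower bound on the tangential distance forces $B_{\Kb^d}(x_{s_n},\rho)\cap(x_{s_n}+V_n)\subset\Omega$, and letting $n\to\infty$ yields $B_{\Kb^d}(x_*,\rho)\cap(x_*+V_*)\subset\overline\Omega$. The hypothesis $(x+V)\cap\Omega=\emptyset$ is closed on $\partial\Omega\times\Gr_k(\Kb^d)$ (because $\Omega$ is open and every $v\in V_*$ is a limit of some $v_n\in V_n$), so $(x_*+V_*)\cap\Omega=\emptyset$, and the inclusion refines to $B_{\Kb^d}(x_*,\rho)\cap(x_*+V_*)\subset\partial\Omega$. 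Parametrizing $x_*+V_*$ by an $\Kb$-affine embedding $T\colon\Kb^k\to\Kb^d$ with $T(0)=x_*$ would then force $r\circ T\equiv 0$ on a neighborhood of $0$, whence $\nu(r\circ T)=\infty$ and $L_k(\Omega,x_*)=\infty$, contradicting $L<\infty$.

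The only delicate point is the closed-graph step in the second paragraph: I need to be sure that the tangency condition $(x_n+V_n)\cap\Omega=\emptyset$ genuinely survives the limit $(x_n,V_n)\to(x_*,V_*)$, as this is the precise mechanism by which finite order contact is converted into the desired smallness. Everything else is just convexity and compactness.
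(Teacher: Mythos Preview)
Your argument is correct and follows essentially the same route as the paper. For $s_n\to 0$ you reproduce the paper's contradiction: bounded left-hand side, $\dist_{\rm Euc}(x_{t_n},(x_{t_n}+V_n)\cap\partial\Omega)\geq t_n\delta_\Omega(p)$ bounded below, and $t_n/s_n$ bounded. For the second limit the paper simply writes ``Since $L_k(\Omega)<+\infty$, it suffices to show $s_n\to 0$'' and leaves the justification implicit (it is the equivalence recorded in Proposition~\ref{prop:no affine disks in boundary}); your compactness argument spells out exactly that step, and the ``closed-graph'' worry you flag is resolved precisely as you indicate, using that $\Omega$ is open.
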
 

\begin{proof} Since $L_k(\Omega)<+\infty$, it suffices to show that $\lim_{n \rightarrow \infty} s_n =0$. Suppose not. Then after passing to a subsequence we can suppose that  $\inf \{s_n\} > 0$. Since $\Omega$ is bounded, 
$$
\sup \{ \dist_{\rm Euc}(x_{s_n}, (x_{s_n}+V_n) \cap \partial \Omega)\} < +\infty.
$$
Since $\inf t_n \geq \inf s_n> 0$, we must have 
$$
\inf \left\{\dist_{\rm Euc}(x_{t_n}, (x_{t_n}+V_n) \cap \partial \Omega) \right\}> 0. 
$$
Thus 
$$
+\infty = \sup\{n\} \leq \sup \left\{ \left( \frac{t_n}{s_n} \right)^{1/L}  \frac{ \dist_{\rm Euc}(x_{s_n}, (x_{s_n}+V_n) \cap \partial \Omega)}{\dist_{\rm Euc}(x_{t_n}, (x_{t_n}+V_n) \cap \partial \Omega) } \right\} <+\infty,
$$
which is a contradiction. 
   \end{proof} 

Let $d_0 : =(\dim_{\Rb} \Kb)k$. For each $n \geq 1$ fix a $\Rb$-affine isomorphism 
$$
T_n : \Rb^{d_0+1} \rightarrow \left[ x_n + \Rb( p-x_n) \oplus V_n\right]
$$
such that $T_n(0)=x_n$, $T_n(e_1) = p$, and 
$$
T_n( {\rm Span}_{\Rb}\{e_2,\dots, e_{  d_0+1}\}) = x_n+V_n.
$$

Since $p \in \Omega$ and  $(x_n + V_n) \cap \Omega =\emptyset$, the angle between $V_n$ and $\Rb(p-x_n)$ is uniformly bounded from below. Hence, we can also assume that $\{T_n\}$ is a unifomly biLipschitz sequence of maps. 

Let $\Omega_n : = T_n^{-1}(\Omega) \subset \Rb^{d_0+1}$ and $V_0 :={\rm Span}_{\Rb}\{e_2,\dots, e_{  d_0+1}\}$. Since $T(e_1) = p \in \Omega$ and $T(V_0) = x_n+V_n$ does not intersect $\Omega$, 
$$
\Omega_n \subset (0,\infty) \times \Rb^{d_0}. 
$$
Further, since $\{T_n\}$ is unifomly biLipschitz, there exists $C_0 > 0$ such that 
\begin{equation}\label{eqn:sntnOmega_n}
 \dist_{\rm Euc}(s_n e_1, (s_n e_1+V_0) \cap \partial \Omega_n) \geq C_0 n \left( \frac{s_n}{t_n} \right)^{1/L}  \dist_{\rm Euc}(t_n e_1, (t_n e_1+V_0) \cap \partial \Omega_n)
\end{equation}
for all $n \geq 1$. 

Next we can find $a,b \in (0,1)$ and for each $n$ a smooth function $f_n : (-2b, 2b)^{d_0} \rightarrow [0, a)$ such that 
$$
\Omega_n \cap (-a, a)\times (-b,b)^{d_0}  = \left\{ (t, y) \in  (-a, a)\times (-b,b)^{d_0} : t > f_n(y) \right\}. 
$$
Let $f_n = P_n + E_n$ be the order $L$ Taylor series decomposition of $f_n$. Since $\{T_n\}$ is uniformly biLipschitz,  there exists a constant $C_1 > 0$ such that 
\begin{equation}\label{eqn:error term}
\norm{E_n(y)} \leq C_1 \norm{y}^{L+1}
\end{equation}
for all $n \geq 1$ and all $y \in [-b, b]^{d_0}$.

Since $L=L_k(\Omega)$ and $\{T_n\}$ is uniformly biLipschitz, we have 
$$
\inf_{n \geq 1} \norm{P_n} > 0
$$
(recall that $\norm{P_n}$ was defined in the previous subsection). Thus by Lemma~\ref{lem:estimates on polynomials} there exists $A > 1$ such that:
\begin{itemize} 
\item For all $0 \leq r \leq R$ and $n \geq 1$,  
\begin{equation}\label{eqn:comparing maxs on different balls}
\frac{1}{A}\left( \frac{r}{R} \right)^L\max_{\norm{y} \leq R} \abs{P_n(y)}  \leq \max_{\norm{x} \leq r} \abs{P_n(y)} \leq A \frac{r}{R} \max_{\norm{y} \leq R} \abs{P_n(y)}.
\end{equation}
\item For all $0 \leq r \leq 1$ and $n \geq 1$, 
\begin{equation}\label{eqn:bounds on max of Pn}
\frac{1}{A} r^L \leq \max_{\norm{y} \leq r} \abs{P_n(y)} \leq A r. 
\end{equation} 
\end{itemize}

\begin{lemma}\label{lem:distance to boundary versus max P} There exist $C_2, T > 0$ such that: if $0 < t \leq T$ and $n \geq 1$, then
$$
\frac{1}{C_2} \dist_{\rm Euc}(t e_1, (t e_1+V_0) \cap \Omega_n) \leq \min\left\{ r : \max_{\norm{y} \leq r} \abs{P_n(y)} \geq t\right\} \leq C_2 \dist_{\rm Euc}(t e_1, (t e_1+V_0) \cap \Omega_n).
$$
\end{lemma}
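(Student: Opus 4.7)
The plan is to interpret both sides of the inequality as thresholds: the right-hand quantity is $\rho_n(t) := \dist_{\rm Euc}(te_1, (te_1+V_0)\cap\partial\Omega_n)$, which by the local description $\Omega_n\cap((-a,a)\times(-b,b)^{d_0})=\{(s,y): s>f_n(y)\}$ together with $f_n(0)=0$ equals $\min\{\|y\|: f_n(y)=t\}$ for $t$ small enough that the minimum is attained strictly inside $(-b,b)^{d_0}$. The left-hand quantity is $r_n^*(t):=\min\{r:\max_{\|y\|\leq r}|P_n(y)|\geq t\}$. The goal is to show $\rho_n(t)\asymp r_n^*(t)$ with constants uniform in $n$, by exploiting the fact that $E_n = f_n-P_n$ is lower-order ($|E_n(y)|\le C_1\|y\|^{L+1}$) compared to the polynomial $P_n$ which, by Lemma~\ref{lem:estimates on polynomials}, behaves quasi-homogeneously of degree $L$ on small balls. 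Note that Lemma~\ref{lem:estimates on polynomials}(2) gives the universal a priori bounds $t/A\leq r_n^*(t)\leq(At)^{1/L}$, which in particular force $(r_n^*)^{L+1}=o(t)$ as $t\to 0$.

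For the direction $\rho_n\leq C_2 r_n^*$: applying Lemma~\ref{lem:estimates on polynomials}(1) with $r=r_n^*$ and $R=4Ar_n^*$ upgrades the defining inequality to $\max_{\|y\|\leq 4Ar_n^*}|P_n(y)|\geq 4t$. The key ``positivity transfer'' step---which I expect to be the main technical obstacle---is to show that for $r$ sufficiently small (independently of $n$), $\max_{\|y\|\leq r}P_n(y)\geq\tfrac{1}{2}\max_{\|y\|\leq r}|P_n(y)|$: otherwise $P_n$ attains a value $-M$ at some $y_0$ with $M=\max_{\|y\|\leq r}|P_n|$, and since $f_n(y_0)\geq 0$ the error bound forces $M\leq C_1 r^{L+1}$, contradicting the lower bound $M\geq r^L/A$ from Lemma~\ref{lem:estimates on polynomials}(2) once $r<1/(AC_1)$. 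Picking $y_1$ with $\|y_1\|\leq 4Ar_n^*$ and $P_n(y_1)\geq 2t$, the error estimate together with $(r_n^*)^{L+1}=o(t)$ yields $f_n(y_1)\geq 2t-C_1(4Ar_n^*)^{L+1}\geq t$; the intermediate value theorem applied to $s\mapsto f_n(sy_1)$ on $[0,1]$ then produces $y^*$ with $f_n(y^*)=t$ and $\|y^*\|\leq 4Ar_n^*$.

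For the direction $r_n^*\leq C_2\rho_n$: at a minimizer $y^*$ with $\|y^*\|=\rho_n$ and $f_n(y^*)=t$ we have $|P_n(y^*)|\geq t-C_1\rho_n^{L+1}$, hence $\max_{\|y\|\leq\rho_n}|P_n(y)|\geq t-C_1\rho_n^{L+1}$. Applying Lemma~\ref{lem:estimates on polynomials}(1) with $r=\rho_n$ and $R=2A\rho_n$ amplifies this to $\max_{\|y\|\leq 2A\rho_n}|P_n(y)|\geq 2(t-C_1\rho_n^{L+1})$; the previously established bound $\rho_n\leq 4Ar_n^*\lesssim t^{1/L}$ absorbs the error term to give $\geq t$ for $t$ small, so $r_n^*\leq 2A\rho_n$. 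All constants that appear depend only on $A(d_0,L)$, $C_1$, and $L$, hence are uniform in $n$, yielding the desired $C_2$ and $T$.
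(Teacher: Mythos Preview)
Your proof is correct and follows essentially the same route as the paper: both directions compare $\rho_n(t)$ and $r_n^*(t)$ by amplifying via Lemma~\ref{lem:estimates on polynomials}(1) (scaling the ball by a factor $2A$) and absorbing the Taylor remainder using $r_n^*(t)\lesssim t^{1/L}$ so that $(r_n^*)^{L+1}=o(t)$. The paper uses the constant $2A$ where you use $4A$, and for the step you single out as the ``positivity transfer'' the paper simply writes $\max_{\|y\|\le r_1} f_n(y)\ge \max_{\|y\|\le r_1}|P_n(y)|-C_1 r_1^{L+1}$; this is justified in one line because $f_n\ge 0$ forces $P_n(y)\ge -C_1\|y\|^{L+1}$, so either $\max|P_n|=\max P_n$ or $\max|P_n|\le C_1 r_1^{L+1}$, and in both cases the inequality holds. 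Your more explicit argument for this point is equally valid (and in fact shows $\max P_n=\max|P_n|$, not merely $\ge\tfrac12\max|P_n|$).
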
 

\begin{proof} Since $L_k(\Omega) < +\infty$ and $\{T_n\}$ is unifomly biLipschitz, we have  
$$
\lim_{t \searrow 0} \,  \sup_{n \geq 1} \, \dist_{\rm Euc}(t e_1, (t e_1+V_0) \cap \Omega_n) = 0.
$$
Thus we can fix $T > 0$ such that: if $n \geq 1$ and $0 < t \leq T$, then 
\begin{align}
C_12^{L+1}A^{\frac{(L+1)^2}{L}}t^{\frac{1}{L}} & \leq 1/2,\label{eqn:crazy product less than one} \\
2A^{\frac{L+1}{L}} t^{\frac{1}{L}} & \leq b<1, \text{ and} \label{eqn:t smaller than a} \\
\dist_{\rm Euc}(t e_1, (t e_1+V_0) \cap \Omega_n)&  \leq b<1. \label{eqn:t small enough so that graph(f) realizes distance} 
\end{align}

Now fix $0 < t \leq T$ and $n \geq 1$. Let 
$$
r_0 : = \min\left\{ r : \max_{\norm{y} \leq r} \abs{P_n(y)} \geq t\right\} \quad \text{and} \quad r_1 : = 2Ar_0. 
$$
Then by Equations~\eqref{eqn:bounds on max of Pn} and~\eqref{eqn:t smaller than a}
\begin{equation}\label{eqn:upper bound on r0}
r_0 < r_1 \leq 2A^{\frac{L+1}{L}} t^{\frac{1}{L}} < b < 1. 
\end{equation} 
Further,  
\begin{align*}
\max_{\norm{y} \leq r_1} f_n(y) & \geq  \max_{\norm{y} \leq r_1} \abs{P_n(y)} - C_1r_1^{L+1}&  \text{(by Equation~\eqref{eqn:error term})}\\
& \geq \frac{1}{A} \frac{r_1}{r_0}  \max_{\norm{y} \leq r_0} \abs{P_n(y)}- C_1r_1^{L+1}&  \text{(by Equation~\eqref{eqn:comparing maxs on different balls})} \\
& \geq  2t -C_12^{L+1}A^{\frac{(L+1)^2}{L}} t^{\frac{L+1}{L}} \geq t &  \text{(by Equation~\eqref{eqn:crazy product less than one})}. 
\end{align*} 
Hence 
\begin{equation}\label{eqn:distance to boundary leq max of P}
\dist_{\rm Euc}(t e_1, (t e_1+V_0) \cap \Omega_n) \leq r_1 = 2A \min\left\{ r : \max_{\norm{x} \leq r} \abs{P_n(x)} \geq t\right\}. 
\end{equation}

By Equation~\eqref{eqn:t small enough so that graph(f) realizes distance}, there exists $y_0 \in (-b,b)^{d_0}$ such that $f(y_0) = t$ and 
$$
\norm{y_0} = \dist_{\rm Euc}(t e_1, (t e_1+V_0) \cap \Omega_n).
$$
By Equations~\eqref{eqn:distance to boundary leq max of P} and~\eqref{eqn:upper bound on r0},
$$
\norm{y_0} \leq r_1 \leq 2A^{\frac{L+1}{L}} t^{\frac{1}{L}} < 1. 
$$
Then by Equations~\eqref{eqn:error term} and~\eqref{eqn:crazy product less than one}
$$
\max_{\norm{y} \leq \norm{y_0}} \abs{P_n(y)} \geq f_n(y_0) - \abs{E_n(y_0)} \geq t - C_1 \norm{y_0}^{L+1} \geq t - C_1 2^{L+1}A^{\frac{(L+1)^2}{L}} t^{\frac{L+1}{L}} \geq \frac{1}{2}t.  
$$
Then by Equation~\eqref{eqn:comparing maxs on different balls}
$$
\max_{\norm{y} \leq 2A\norm{y_0}} \abs{P_n(y)} \geq \frac{1}{A}\frac{ 2A\norm{y_0}}{ \norm{y_0}} \max_{\norm{y} \leq \norm{y_0}} \abs{P_n(y)}\geq t.
$$
Thus 
\begin{equation*}
 \min\left\{ r : \max_{\norm{y} \leq r} \abs{P_n(y)} \geq t\right\} \leq 2A\norm{y_0} = 2A \dist_{\rm Euc}(t e_1, (t e_1+V_0) \cap \Omega_n). \qedhere
 \end{equation*}

\end{proof} 

\begin{lemma} $\lim_{n \rightarrow \infty} t_n =0= \lim_{n \rightarrow \infty} \dist_{\rm Euc}(t_n e_1, (t_n e_1+V_0) \cap \partial \Omega_n)$. \end{lemma}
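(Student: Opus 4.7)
The plan is to argue the two limits separately, first showing $t_n \to 0$ by contradiction and then deducing the distance statement from the lemma just proved.

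For the first claim, suppose for contradiction that, after passing to a subsequence, $t_n \geq t_0 > 0$ for all $n$. Since $p \in \Omega$, convexity gives $\delta_\Omega(x_{t_n}) \geq t_n \delta_\Omega(p) \geq t_0 \delta_\Omega(p)$, and so $\dist_{\rm Euc}(x_{t_n}, (x_{t_n}+V_n)\cap \partial\Omega) \geq t_0 \delta_\Omega(p)$. Because $\{T_n\}$ is uniformly biLipschitz, there is $C_3>0$ independent of $n$ such that $T_n^{-1}$ sends distances down by at most a factor $C_3$, and hence
\[
\dist_{\rm Euc}\bigl(t_n e_1,\, (t_n e_1 + V_0)\cap \partial\Omega_n\bigr) \;\geq\; \frac{t_0\,\delta_\Omega(p)}{C_3} \;=:\; c' \;>\;0.
\]
Substituting this bound and $t_n \leq 1$ into \eqref{eqn:sntnOmega_n} yields
\[
\dist_{\rm Euc}\bigl(s_n e_1,\, (s_n e_1 + V_0)\cap \partial\Omega_n\bigr) \;\geq\; C_0\, c'\, n\, s_n^{1/L}.
\]

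For the opposite (upper) bound, note that by Lemma~\ref{lem:sn converges to zero} we have $s_n \to 0$, so for all large $n$ both $s_n \leq T$ and $(As_n)^{1/L} \leq 1$. By the lower bound in \eqref{eqn:bounds on max of Pn}, the choice $r = (As_n)^{1/L}$ satisfies $\max_{\norm{y}\leq r}\abs{P_n(y)} \geq r^L/A = s_n$, so $\min\{r : \max_{\norm{y}\leq r}\abs{P_n(y)} \geq s_n\} \leq (As_n)^{1/L}$, and Lemma~\ref{lem:distance to boundary versus max P} gives
\[
\dist_{\rm Euc}\bigl(s_n e_1,\, (s_n e_1 + V_0)\cap \partial\Omega_n\bigr) \;\leq\; C_2\, A^{1/L}\, s_n^{1/L}.
\]
Comparing the two bounds forces $n \leq C_2 A^{1/L}/(C_0 c')$, which fails for all large $n$. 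This contradiction shows $t_n \to 0$.

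For the second claim, once $t_n \to 0$ we may apply exactly the same upper-bound argument to $t_n$ in place of $s_n$: for $n$ large, $t_n \leq T$ and $(At_n)^{1/L} \leq 1$, so Lemma~\ref{lem:distance to boundary versus max P} together with \eqref{eqn:bounds on max of Pn} gives
\[
\dist_{\rm Euc}\bigl(t_n e_1,\, (t_n e_1 + V_0)\cap \partial\Omega_n\bigr) \;\leq\; C_2\, A^{1/L}\, t_n^{1/L} \;\longrightarrow\; 0.
\]

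The main subtlety is the transfer step that produces the uniform lower bound $c'$: one must be careful that the bound $\delta_\Omega(x_{t_n}) \geq t_0 \delta_\Omega(p)$ (which uses convexity and that $p$ is fixed in $\Omega$) passes through the varying maps $T_n^{-1}$ with a uniform constant, which is exactly what the uniform biLipschitz property of $\{T_n\}$ provides. Everything else is a direct combination of \eqref{eqn:sntnOmega_n}, Lemma~\ref{lem:sn converges to zero}, Lemma~\ref{lem:distance to boundary versus max P}, and the polynomial estimate \eqref{eqn:bounds on max of Pn}.
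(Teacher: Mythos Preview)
Your proof is correct and uses essentially the same ingredients as the paper: the upper bound $\dist_{\rm Euc}(s_n e_1,(s_n e_1+V_0)\cap\partial\Omega_n)\leq C_2 A^{1/L} s_n^{1/L}$ obtained from \eqref{eqn:bounds on max of Pn} and Lemma~\ref{lem:distance to boundary versus max P}, combined with \eqref{eqn:sntnOmega_n}. The only difference is the order of the two conclusions: you first prove $t_n\to 0$ by contradiction and then deduce the distance limit, whereas the paper substitutes the upper bound and $t_n\leq 1$ directly into \eqref{eqn:sntnOmega_n} to get $C_2 A^{1/L}\geq C_0 n\cdot \dist_{\rm Euc}(t_n e_1,(t_n e_1+V_0)\cap\partial\Omega_n)$, so the distance tends to $0$ immediately without any contradiction argument or lower bound $c'$, and $t_n\to 0$ then follows.
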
 

\begin{proof} It suffices to show that $\lim_{n \rightarrow \infty} \dist_{\rm Euc}(t_n e_1, (t_n e_1+V_0) \cap \partial \Omega_n) =0$. Since $s_n \rightarrow 0$ (see Lemma~\ref{lem:sn converges to zero}),  Equation~\eqref{eqn:bounds on max of Pn} implies that
$$
\max_{\norm{x} \leq A^{1/L}s_n^{1/L}} \abs{P_n(x)} \geq s_n
$$ 
when $n$ is sufficiently large. Then Lemma~\ref{lem:distance to boundary versus max P} implies that 
$$
 \dist_{\rm Euc}(s_n e_1, (s_n e_1+V_0) \cap \partial \Omega_n) \leq C_2 A^{1/L} s_n^{1/L}
$$
when $n$ is sufficiently large. Since $t_n \leq 1$, Equation~\eqref{eqn:sntnOmega_n} then implies that 
$$
C_2 A^{1/L} \geq C_0 n  \dist_{\rm Euc}(t_n e_1, (t_n e_1+V_0) \cap \partial \Omega_n) 
$$
when $n$ is sufficiently large. Thus  $\lim_{n \rightarrow \infty} \dist_{\rm Euc}(t_n e_1, (t_n e_1+V_0) \cap \partial \Omega_n) =0$.

\end{proof} 

We obtain a contradiction with Equation~\eqref{eqn:sntnOmega_n} by proving the following. 

\begin{lemma} For $n$ sufficiently large, 
$$
 \dist_{\rm Euc}(s_n e_1, (s_n e_1+V_0) \cap \partial \Omega_n) \leq C_2^2 A \left( \frac{s_n}{t_n} \right)^{1/L}  \dist_{\rm Euc}(t_n e_1, (t_n e_1+V_0) \cap \partial \Omega_n).
$$
\end{lemma}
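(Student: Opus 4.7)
The plan is to reduce the Euclidean distance $\dist_{\rm Euc}(s_n e_1, (s_n e_1+V_0) \cap \partial \Omega_n)$ to the ``first time the polynomial $P_n$ reaches height $s_n$'' via Lemma~\ref{lem:distance to boundary versus max P}, then compare that radius across two heights using the polynomial-scaling estimate in Equation~\eqref{eqn:comparing maxs on different balls}. Since $s_n \to 0$ and $t_n \to 0$ by Lemma~\ref{lem:sn converges to zero} and the preceding lemma, we may assume $n$ is large enough that both $s_n, t_n \le T$, so Lemma~\ref{lem:distance to boundary versus max P} applies at both heights.

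Concretely, for each large $n$ define
\[
r_s := \min\Big\{ r : \max_{\norm{y}\leq r} \abs{P_n(y)} \geq s_n\Big\}, \qquad r_t := \min\Big\{ r : \max_{\norm{y}\leq r} \abs{P_n(y)} \geq t_n\Big\}.
\]
By continuity of $P_n$ and the fact that $r \mapsto \max_{\norm{y}\leq r} \abs{P_n(y)}$ is continuous and nondecreasing (with value $0$ at $r=0$), these minima exist and satisfy
\[
\max_{\norm{y}\leq r_s}\abs{P_n(y)} = s_n, \qquad \max_{\norm{y}\leq r_t}\abs{P_n(y)} = t_n,
\]
and since $s_n < t_n$ we get $r_s \leq r_t$. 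Lemma~\ref{lem:distance to boundary versus max P} then gives, for $n$ large enough,
\[
\dist_{\rm Euc}(s_n e_1,(s_ne_1+V_0)\cap\partial\Omega_n) \leq C_2\, r_s
\qquad\text{and}\qquad
r_t \leq C_2\, \dist_{\rm Euc}(t_n e_1,(t_n e_1+V_0)\cap\partial\Omega_n).
\]

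The core estimate is the relation between $r_s$ and $r_t$. Applying the left inequality of Equation~\eqref{eqn:comparing maxs on different balls} with $r = r_s \leq R = r_t$ yields
\[
s_n \,=\, \max_{\norm{y}\leq r_s}\abs{P_n(y)} \;\geq\; \frac{1}{A}\left(\frac{r_s}{r_t}\right)^{L}\max_{\norm{y}\leq r_t}\abs{P_n(y)} \;=\; \frac{t_n}{A}\left(\frac{r_s}{r_t}\right)^{L},
\]
hence $r_s \leq A^{1/L}(s_n/t_n)^{1/L} r_t$.

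Chaining the three inequalities gives, using $A>1$ and $L\geq 1$ so that $A^{1/L} \leq A$,
\[
\dist_{\rm Euc}(s_n e_1, (s_ne_1+V_0)\cap\partial\Omega_n) \leq C_2 r_s \leq C_2 A^{1/L} \left(\frac{s_n}{t_n}\right)^{1/L} r_t \leq C_2^2 A \left(\frac{s_n}{t_n}\right)^{1/L} \dist_{\rm Euc}(t_ne_1,(t_ne_1+V_0)\cap\partial\Omega_n),
\]
which is the claimed bound. Compared with Equation~\eqref{eqn:sntnOmega_n}, this contradicts $n \to \infty$ and completes the proof. There is no real obstacle here beyond correctly reading off the polynomial-scaling bound: the whole argument is a two-line computation once $r_s, r_t$ are introduced, and the only subtlety is ensuring $n$ is large enough that Lemma~\ref{lem:distance to boundary versus max P} applies on both sides.
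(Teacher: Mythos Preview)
Your proof is correct and follows the same strategy as the paper: reduce both distances to the minimal radii $r_s, r_t$ at which $|P_n|$ first reaches $s_n, t_n$ via Lemma~\ref{lem:distance to boundary versus max P}, then compare these radii using the polynomial scaling estimate~\eqref{eqn:comparing maxs on different balls}. Your execution is in fact slightly cleaner than the paper's: by applying the lower bound of~\eqref{eqn:comparing maxs on different balls} directly with $r=r_s$ and $R=r_t$ (using $r_s\le r_t$), you obtain $r_s \le A^{1/L}(s_n/t_n)^{1/L} r_t$ uniformly, whereas the paper constructs an explicit candidate radius and splits into the cases $As_n/t_n \le 1$ and $As_n/t_n > 1$.
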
 

\begin{proof} Fix $n$ sufficiently large so that $ t_n \leq T$. Let
$$
R:=\min\left\{ r : \max_{\norm{y} \leq r} \abs{P_n(y)} \geq t_n\right\} \leq C_2 \dist_{\rm Euc}(t_n e_1, (t_n e_1+V_0) \cap \partial \Omega_n).
$$

\noindent \textbf{Case 1:} Assume $\frac{As_n}{t_n} \leq 1$. Let $r : = \left(\frac{As_n}{t_n}\right)^{1/L} R$. Then Equation~\eqref{eqn:comparing maxs on different balls} implies that 
$$
\max_{\norm{y} \leq r} \abs{P_n(y)} \geq \frac{1}{A} \left( \frac{r}{R}\right)^{L} \max_{\norm{y} \leq R} \abs{P(y)}=s_n.
$$
Hence Lemma~\ref{lem:distance to boundary versus max P} implies that
\begin{align*}
 \dist_{\rm Euc} & (s_n e_1, (s_n e_1+V_0) \cap \partial \Omega_n) \leq C_2 r= C_2 \left(\frac{As_n}{t_n}\right)^{1/L} R \\
 &  < C_2^2 A \left(\frac{s_n}{t_n}\right)^{1/L}  \dist_{\rm Euc}(t_n e_1, (t_n e_1+V_0) \cap \partial \Omega_n)
\end{align*}
(the last line follows since $A > 1$ and $L > 2$). 

\medskip

\noindent \textbf{Case 2:} Assume $\frac{As_n}{t_n} > 1$. Let $r : = \frac{As_n}{t_n} \cdot R$. Then Equation~\eqref{eqn:comparing maxs on different balls} implies that 
$$
\max_{\norm{y} \leq r} \abs{P_n(y)} \geq \frac{1}{A} \frac{r}{R} \max_{\norm{y} \leq R} \abs{P(y)}=s_n.
$$
Hence Lemma~\ref{lem:distance to boundary versus max P} implies that
\begin{align*}
 \dist_{\rm Euc} & (s_n e_1, (s_n e_1+V_0) \cap \partial \Omega_n) \leq C_2 r= C_2  \frac{As_n}{t_n} \cdot R \\
 &  \leq C_2^2 A \left(\frac{s_n}{t_n}\right)^{1/L}  \dist_{\rm Euc}(t_n e_1, (t_n e_1+V_0) \cap \partial \Omega_n). \qedhere
\end{align*}

\end{proof}


\section{The minimal metric}\label{sec:minimal metric}


In this section we recall the definition of the minimal pseudo-metric from~\cite{FK2021} and then, using results from~\cite{Fiacchi2023,DDF2021},  we observe that the minimal metric is biLipschitz to $\mathfrak{q}_\Omega^{(2)}$. 

Let $\Db \subset \Rb^2$ denote the unit disk. Recall that a $\Cc^2$-smooth map $f : \Db \rightarrow \Rb^d$ is
\begin{enumerate} 
\item \emph{conformal} if for every $p \in \Db$ the derivative $df_p$ preserves angles, 
\item \emph{harmonic} if every component $f_k$ of $f$ is a harmonic function. 
\end{enumerate} 
Then, given a domain $\Omega \subset \Rb^d$ with $d \geq 3$, let ${\rm CH}(\Db, \Omega)$ denote the set of harmonic conformal maps $\Db \rightarrow \Omega$. The \emph{minimal pseudo-metric} on $\Omega$ is then defined by
$$
\mathfrak{m}_\Omega(p;v) : = \inf\{ \norm{\xi} : \exists f \in {\rm CH}(\Db, \Omega) \text{ and $\xi \in \Rb^2 \simeq T_0\Db$ with } f(0) = p, \, df_0(\xi) = v\} 
$$
where $p \in \Omega$ and $v \in \Rb^d \simeq T_p \Omega$.

\begin{proposition}\label{prop:minimal_metruc_is_Lip_to_affine} If $d \geq 3$ and $\Omega \subset \Rb^d$ is a convex domain, then 
$$
\frac{1}{2} \mathfrak{q}^{(2)}_\Omega(p;v) \leq \mathfrak{m}_\Omega(p;v) \leq  \mathfrak{q}^{(2)}_\Omega(p;v)
$$
for all $p \in \Omega$ and $v \in \Rb^d\simeq T_p \Omega$.
\end{proposition}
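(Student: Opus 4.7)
The plan is to prove the two inequalities separately, each with a short direct argument built on the observation that a conformal linear map $\Rb^2 \to \Rb^d$ is $\lambda$ times a linear isometry onto a $2$-plane.

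For the upper bound $\mathfrak{m}_\Omega(p;v) \leq \mathfrak{q}^{(2)}_\Omega(p;v)$, I would exhibit explicit affine conformal maps. Given nonzero $v \in \Rb^d \simeq T_p\Omega$ and any $2$-plane $V \subset \Rb^d$ with $v \in V$, set $r := \dist_{\rm Euc}(p,(p+V) \cap \partial\Omega)$ and pick an orthonormal basis $(e_1,e_2)$ of $V$. The map $f(x+iy) := p + r(xe_1 + ye_2)$ is affine (hence harmonic) and conformal. Convexity of $\Omega$ gives that the open disk of radius $r$ in $p+V$ centered at $p$ is contained in $\Omega$, so $f \in {\rm CH}(\Db,\Omega)$. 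Writing $v = ae_1 + be_2$, the vector $\xi = (a/r, b/r) \in \Rb^2$ satisfies $df_0(\xi) = v$ and $\norm{\xi} = \norm{v}/r$. Optimizing $V$ to make $r \to \delta^{(2)}_\Omega(p;v)$ gives $\mathfrak{m}_\Omega(p;v) \leq \mathfrak{q}^{(2)}_\Omega(p;v)$.

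For the lower bound $\tfrac{1}{2}\mathfrak{q}^{(2)}_\Omega(p;v) \leq \mathfrak{m}_\Omega(p;v)$, fix any $f \in {\rm CH}(\Db,\Omega)$ with $f(0) = p$ and $df_0(\xi) = v \neq 0$. Conformality forces $df_0 = \lambda R$ for some $\lambda > 0$ and a linear isometry $R : \Rb^2 \to \Rb^d$; let $V := R(\Rb^2)$, so $v \in V$ and $\norm{\xi} = \norm{v}/\lambda$. Set $r := \dist_{\rm Euc}(p,(p+V)\cap\partial\Omega)$. It suffices to show $\lambda \leq 2r$, since then $\norm{\xi} \geq \norm{v}/(2r) \geq \tfrac{1}{2}\mathfrak{q}^{(2)}_\Omega(p;v)$. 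Pick $q \in (p+V)\cap\partial\Omega$ with $\norm{q-p} = r$. By convexity there is a unit vector $w \in \Rb^d$ and $c \in \Rb$ so that $\ell(\cdot) := \ip{\cdot,w}$ satisfies $\ell(q) = c$ and $\Omega \subset \{\ell < c\}$. Then $u := c - \ell \circ f$ is positive and harmonic on $\Db$, so Harnack's inequality at the origin (equivalently, Schwarz--Pick applied to the holomorphic function whose real part is $u$) gives $\abs{\nabla u(0)} \leq 2u(0)$. Now $u(0) = \ell(q-p) \leq \norm{\ell|_V} \cdot r$ (since $q-p \in V$ and $\norm{q-p} = r$), while $\abs{\nabla u(0)} = \lambda\norm{\ell \circ R}_{\rm op} = \lambda \norm{\ell|_V}$ because $R$ is an isometry onto $V$. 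Note $\ell|_V \neq 0$: otherwise $\ell(q) = \ell(p)$, contradicting $\ell(p) < c = \ell(q)$. Dividing yields $\lambda \leq 2r$.

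The argument is essentially self-contained and mirrors the spirit of the estimates in \cite{Fiacchi2023, DDF2021}. No step presents a genuine obstacle. The only mild subtlety is the observation that conformality forces $df_0$ to have rank $2$ (when nonzero), which is precisely what reduces the bound on $\lambda$ to a one-dimensional Harnack estimate applied to the harmonic function obtained from a supporting hyperplane of $\Omega$ at the nearest boundary point within $p+V$.
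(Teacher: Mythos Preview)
Your proof is correct and follows essentially the same approach as the paper: the upper bound via affine conformal disks, and the lower bound via a supporting hyperplane at the nearest boundary point in $p+V$. The only difference is presentational---the paper quotes the half-space formula $\mathfrak{m}_{\mathcal{H}}(p;v) = \tfrac{|v_1|}{2|p_1|}$ from \cite{Fiacchi2023} and uses monotonicity of $\mathfrak{m}$ under inclusion, whereas you derive the needed inequality directly from the Harnack/Schwarz--Pick bound $|\nabla u(0)| \le 2u(0)$ for positive harmonic functions on $\Db$, which is precisely what underlies that half-space formula.
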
 

\begin{proof} 
The upper bound has already been observed in~\cite[Lemma 5.5]{Fiacchi2023} and is a consequence of the fact that an isometric affine embedding of a disc is always conformal and harmonic. For the lower bound we use the following calculation from~\cite{Fiacchi2023} (see also~\cite[Section 5]{DDF2021}). 

\begin{lemma}[{\cite[Lemma 5.3]{Fiacchi2023}}] If $\mathcal{H} : = (0,\infty) \times \Rb^{d-1}$, then 
$$
\mathfrak{m}_{\mathcal{H}}(p;v) = \frac{\abs{v_1}}{2\abs{p_1}}
$$
for all $p=(p_1, \dots, p_d) \in \mathcal{H}$ and $v=(v_1,\dots, v_d) \in \Rb^d$. 
\end{lemma}

Fix $p \in \Omega$ and non-zero $v \in \Rb^d \simeq T_p \Omega$. Suppose that $\varphi : \Db \rightarrow \Omega$ is harmonic conformal, $\varphi(0) = p$, and $\varphi'(0)\xi = v$ for some $\xi \in \Rb^2 \simeq T_0\Db$. Let $V : = \varphi'(0)\Rb^2 \in \Gr_2(\Rb^d)$ and 
$$
\delta: = \dist_{\rm Euc}(p, (p+V) \cap \partial \Omega).
$$
If $\delta = \infty$, then 
$$
\frac{1}{2}\mathfrak{q}^{(2)}_\Omega(p;v)=0 \leq \mathfrak{m}_\Omega(p;v). 
$$
So assume that $\delta <\infty$. Fix a unit vector $u \in \Rb^2$ such that 
$$
x:=p+\frac{\delta}{\norm{\varphi'(0)}}\varphi'(0) u \in \partial \Omega.
$$
Since $\Omega$ is convex, there exists a real affine hyperplane $H$ such that $\Omega \cap H = \emptyset$ and $x \in H$. Let $\Omega'$ be the connected component of $\Rb^d \setminus H$ containing $\Omega$. Then by the lemma
$$
\norm{\xi} = \norm{\norm{\xi} u} \geq \mathfrak{m}_\Omega\big(p; \varphi'(0)(\norm{\xi}u)\big) \geq \mathfrak{m}_{\Omega'}\big(p; \varphi'(0)(\norm{\xi}u)\big) = \frac{\norm{\varphi'(0)(\norm{\xi}u)}}{\delta} .
$$
Since $\varphi$ is conformal, $\norm{\varphi'(0)(\norm{\xi}u)}=\norm{v}$. Hence 
$$
\norm{\xi} \geq \frac{\norm{v}}{2 \delta} \geq \frac{\norm{v}}{2\delta^{(2)}_\Omega(p;v)}=\frac{1}{2} \mathfrak{q}^{(2)}_\Omega(p;v).
$$

Since $\varphi : \Db \rightarrow \Omega$ was an arbitrary conformal map with $\varphi(0) = p$ and $v \in \varphi'(0)\Rb^2$, we then obtain that 
\begin{equation*}
\mathfrak{m}_\Omega(p;v) \geq \frac{1}{2} \mathfrak{q}^{(2)}_\Omega(p;v). \qedhere
\end{equation*}
\end{proof} 

As a corollary to Proposition~\ref{prop:minimal_metruc_is_Lip_to_affine} and Theorem~\ref{thm:smooth case} we obtain the following. 

\begin{corollary}\label{cor:characterization in smooth case for minimal metric in paper} Suppose $d \geq 3$ and $\Omega \subset \Rb^d$ is a smoothly bounded convex domain. Then the minimal metric on $\Omega$ is Gromov hyperbolic if and only if every real affine $2$-plane has finite order contact with $\partial \Omega$. 
\end{corollary}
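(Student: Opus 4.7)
The plan is essentially to chain together Proposition~\ref{prop:minimal_metruc_is_Lip_to_affine} and Theorem~\ref{thm:smooth case}. The key point is that Proposition~\ref{prop:minimal_metruc_is_Lip_to_affine} gives the pointwise biLipschitz comparison
\[
\tfrac{1}{2}\,\mathfrak{q}^{(2)}_\Omega(p;v) \leq \mathfrak{m}_\Omega(p;v) \leq \mathfrak{q}^{(2)}_\Omega(p;v)
\]
for all $p \in \Omega$ and $v \in \Rb^d$. Integrating over absolutely continuous curves, this immediately yields the same inequalities between the induced length distances: if $\dist^{\mathfrak{m}}_\Omega$ denotes the distance on $\Omega$ induced by $\mathfrak{m}_\Omega$, then
\[
\tfrac{1}{2}\,\dist^{(2)}_\Omega(p,q) \leq \dist^{\mathfrak{m}}_\Omega(p,q) \leq \dist^{(2)}_\Omega(p,q)
\]
for all $p,q \in \Omega$. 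In particular the two distances are biLipschitz, hence quasi-isometric.

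Since Gromov hyperbolicity is a quasi-isometric invariant among proper geodesic metric spaces (see Theorem~1.9 in Chapter~III.H of~\cite{BH1999}), the space $(\Omega, \dist^{\mathfrak{m}}_\Omega)$ is Gromov hyperbolic if and only if $(\Omega, \dist^{(2)}_\Omega)$ is Gromov hyperbolic. I would briefly note that properness and the geodesic property of $(\Omega,\dist_\Omega^{(2)})$ were established in Section~\ref{sec:distance estimates for QH}, and that Proposition~\ref{prop:q is strongly integrable} together with the biLipschitz comparison imply that $\mathfrak{m}_\Omega$ is also strongly integrable, so $(\Omega, \dist^{\mathfrak{m}}_\Omega)$ inherits properness and hence is geodesic.

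Finally I would apply Theorem~\ref{thm:smooth case} in the case $\Kb = \Rb$ and $k = 2$: the metric $\mathfrak{q}_\Omega^{(2)}$ is Gromov hyperbolic if and only if every real affine $2$-plane has finite order contact with $\partial\Omega$. Combining this with the quasi-isometric equivalence gives the claimed characterization of Gromov hyperbolicity for the minimal metric. There is essentially no obstacle here: the whole point of Proposition~\ref{prop:minimal_metruc_is_Lip_to_affine} is to reduce this corollary to a one-line deduction from Theorem~\ref{thm:smooth case}, and the only minor care needed is to verify the hypotheses (properness and the geodesic property) required to invoke quasi-isometric invariance of Gromov hyperbolicity.
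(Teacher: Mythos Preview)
Your proposal is correct and matches the paper's approach exactly: the paper treats this corollary as immediate from Proposition~\ref{prop:minimal_metruc_is_Lip_to_affine} and Theorem~\ref{thm:smooth case}, and you have simply spelled out the routine details (integrating the pointwise biLipschitz bound to a biLipschitz distance bound, and invoking quasi-isometric invariance of Gromov hyperbolicity).
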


\bibliographystyle{alpha}
\bibliography{complex}

\end{document}